% This is in AMSLaTeX.
% On NeXT and darkwing, use LaTeX.
\documentclass[10pt]{amsart}
\usepackage{amssymb}
\usepackage{amscd}

\def\today{\number\day\space\ifcase\month\or   January\or February\or
   March\or April\or May\or June\or   July\or August\or September\or
   October\or November\or December\fi\   \number\year}

\theoremstyle{definition}
\newtheorem{thm}{Theorem}[section]
\newtheorem{lem}[thm]{Lemma}
\newtheorem{prp}[thm]{Proposition}
\newtheorem{dfn}[thm]{Definition}
\newtheorem{cor}[thm]{Corollary}

\newtheorem{rmk}[thm]{Remark}

\newtheorem{exa}[thm]{Example}
\newtheorem{pbm}[thm]{Problem}

\newcommand{\beq}{\begin{equation}}
\newcommand{\eeq}{\end{equation}}
\newcommand{\beqr}{\begin{eqnarray*}}
\newcommand{\eeqr}{\end{eqnarray*}}
\newcommand{\bal}{\begin{align*}}
\newcommand{\eal}{\end{align*}}
\newcommand{\bei}{\begin{itemize}}
\newcommand{\eei}{\end{itemize}}

\newcommand{\af}{\alpha}
\newcommand{\bt}{\beta}
\newcommand{\gm}{\gamma}

\newcommand{\ep}{\varepsilon}
\newcommand{\zt}{\zeta}
\newcommand{\et}{\eta}

\newcommand{\te}{\theta}
\newcommand{\ld}{\lambda}

\newcommand{\kp}{\kappa}
\newcommand{\ph}{\varphi}

\newcommand{\ta}{\tau}

\newcommand{\Gm}{\Gamma}

\newcommand{\Th}{\Theta}

\newcommand{\Q}{{\mathbb{Q}}}
\newcommand{\Z}{{\mathbb{Z}}}
\newcommand{\R}{{\mathbb{R}}}
\newcommand{\C}{{\mathbb{C}}}
\newcommand{\N}{{\mathbb{Z}}_{> 0}}

\pagenumbering{arabic}

\newcommand{\id}{{\mathrm{id}}}

\newcommand{\Prim}{{\mathrm{Prim}}}

\newcommand{\rank}{{\mathrm{rank}}}

\newcommand{\card}{{\mathrm{card}}}
\newcommand{\Aut}{{\mathrm{Aut}}}
\newcommand{\Ad}{{\mathrm{Ad}}}

\newcommand{\SL}{{\mathrm{SL}}}

\newcommand{\dirlim}{\varinjlim}

\newcommand{\Zq}[1]{\Z_{#1}}
\newcommand{\Zqt}{\Zq{2}}

\newcommand{\andeqn}{\,\,\,\,\,\, {\mbox{and}} \,\,\,\,\,\,}

%%%%%%%%%%%%%%%%%%%%%%%%%%%%%%%%%%%%%%%%%%%%%%%%%%%%%%%%%%%%%%%%%%%%%%%%

\newcommand{\tfae}{the following are equivalent}
\newcommand{\ifo}{if and only if}

\newcommand{\ca}{C*-algebra}
\newcommand{\uca}{unital C*-algebra}

\newcommand{\hm}{homomorphism}

\newcommand{\am}{automorphism}
\newcommand{\fd}{finite dimensional}
\newcommand{\tst}{tracial state}
\newcommand{\hsa}{hereditary subalgebra}

\newcommand{\pj}{projection}
\newcommand{\mops}{mutually orthogonal \pj s}

\newcommand{\mvnt}{Murray-von Neumann equivalent}

\newcommand{\cp}{crossed product}
\newcommand{\tgca}{transformation group \ca}

\newcommand{\rp}{Rokhlin property}
\newcommand{\trp}{tracial Rokhlin property}
% \newcommand{\cp}{crossed product}
% \newcommand{\cp}{crossed product}

%%%%%%%%%%%%%%%%%%%%%%%%%%%%%%%%%%%%%%%%%%%%%%%%%%%%%%%%%%%%%%%%%%%%%%%%
\newcounter{TmpEnumi}

% Temporary abbreviations:

\renewcommand{\S}{\subset}

\title[Freeness of actions on C*-algebras]{Freeness
    of actions of finite groups on C*-algebras}

\author{N.~Christopher Phillips}

\date{19~February 2009}

\address{Department of Mathematics, University  of Oregon,
       Eugene OR 97403-1222, USA.}

\email[]{ncp@darkwing.uoregon.edu}

\subjclass[2000]{Primary 46L55;
 Secondary 46L35, 46L40.}
\thanks{Research partially supported by NSF grant DMS-0701076.}

\begin{document}

\begin{abstract}
We describe some of the forms of freeness of group actions
on noncommutative C*-algebras that have been used,
with emphasis on actions of finite groups.
We give some indications of their strengths, weaknesses,
applications, and relationships to each other.
The properties discussed include the Rokhlin property,
K-theoretic freeness, the tracial Rokhlin property,
pointwise outerness, saturation,
hereditary saturation,
and the requirement that the strong Connes spectrum be the
entire dual.
\end{abstract}

\maketitle

\indent
Recall that an action $(g, x) \mapsto g x$
of a group $G$ on a space $X$
is free if whenever $g \in G \setminus \{ 1 \}$ and $x \in X,$
then $g x \neq x.$
That is, every nontrivial group element acts without fixed points.
So what is a free action on a \ca?
% Free actions of groups on locally compact spaces
% have a number of nice properties.
% (See Section~\ref{Sec:Comm}.)

There are several reasons for being interested in free actions
on \ca s.
First,
there is the general principle of noncommutative topology:
one should find the \ca\  analogs of useful concepts from topology.
Free (or free and proper) actions of locally compact groups
on locally compact Hausdorff spaces have a number of good properties,
some of which are visible from topological considerations
and some of which become apparent only when one looks at
\cp\  \ca s.
We describe some of these in Section~\ref{Sec:Comm}.
Second,
analogs of freeness,
particularly pointwise outerness and the Rokhlin property,
have proved important in von Neumann algebras,
especially for the classification of group actions
on von Neumann algebras.
Again,
this fact suggests that one should see
to what extent the concepts and theorems
carry over to \ca s.
Third,
the classification of group actions on \ca s
is intrinsically interesting.
Experience both with the commutative case
and with von Neumann algebras
suggests that free actions are easier to understand and classify
than general actions.
(A free action of a finite group on a path connected space
corresponds to a finite covering space.)
Fourth,
noncommutative analogs of freeness
play an important role in questions
about the structure of crossed products.
Freeness hypotheses are important for results
on both simplicity and classifiability of crossed products.

It turns out that there are many versions of noncommutative freeness.
They vary enormously in strength,
from saturation (or full Arveson spectrum)
all the way up to the Rokhlin property.
The various conditions have different uses.
The main point of this article
is to describe some of the forms of noncommutative freeness
that have been used,
and give some indications of their strengths, weaknesses,
applications, and relationships to each other.
To keep things simple,
and to keep the focus on freeness,
% (see the discussion of properness below),
we restrict whenever convenient to actions of finite groups.
For one thing, our knowledge is more complete in this case.
Also, we would
otherwise have to deal with noncommutative properness;
we discuss this issue briefly below.

We give a rough summary of the different conditions and their uses,
in approximate decreasing order of strength.
The strongest is free action on the primitive ideal space.
Outside the class of type~I \ca s,
this condition seems too strong for almost all purposes,
and we accordingly say little about it.
Next is the Rokhlin property.
This is the hypothesis in most theorems
on classification of group actions.
When the group is finite,
it also implies very strong structure preservation results
for crossed products.
K-theoretic freeness is close to the \rp,
at least when the K-theory is sufficiently nontrivial.
Unlike the \rp,
it agrees with freeness in the commutative case.
The \trp\  is weaker than the \rp,
and much more common;
its main use is in classification theorems for crossed products.
The main use of pointwise outerness, at least so far,
has been for proving simplicity of crossed products.
Hereditary saturation and having full strong Connes spectrum
are weaker conditions which give exactly what is needed
for crossed products by minimal actions to be simple.
Unfortunately, they are hard to verify.
Saturation is the condition which makes the crossed product
naturally Morita equivalent to the fixed point algebra.

For an action of a noncompact group,
there is a big difference between actions that are merely free
and those that are both free and proper.
Recall that an action of a locally compact group $G$
on a locally compact space $X$
is proper if for every compact set $K \subset X,$
the set
$\{ g \in G \colon g K \cap K = \varnothing \}$
is compact in~$G.$
Equivalently,
the map $(g, x) \mapsto (x, g x)$ is a proper map,
that is, inverse images of compact sets are compact.

One of the good things about a free action of a compact Lie group
on a locally compact space $X$
is that $X$ is a principal $G$-bundle over the orbit space $X / G.$
(See Theorem~\ref{CT:Bundle}.)
As a special case,
if $G$ is finite then $X$ is a covering space
(not necessarily connected) of $X / G.$
As discussed after Theorem~\ref{CT:Bundle},
this remains true for noncompact $G$ if the action is proper.
It fails otherwise.
One should compare the action of $\Z$ on $\R$ by translation
(free and proper)
with the action of $\Z$ on the circle $S^1$
generated by an irrational rotation
(free but not proper).
In the second case,
the orbit space is an uncountable set with the
indiscrete topology,
and the quotient Borel space is not countably separated.
% Reference:
% Theorem 3.4.3 and the discussion on page 79 in Arveson's book.
However, free actions of this type are very important.
Here, for example, the crossed product is the well known irrational
rotation algebra.

There may be nearly as many versions of properness of actions on \ca s
as there are of freeness of actions on \ca s,
but the subject has been less well studied.
We refer to the work of Rieffel.
(For example, see~\cite{Rf3}.)
In this survey, we simply avoid the issue.
This is not to say that properness of actions on \ca s is not important.
Rather, it is a subject for a different paper.
In most situations in this paper in which the issue arises,
we will consider only the analog of freeness without properness.

Returning to freeness,
many of our examples will involve simple \ca s,
since much of what has been done has involved simple \ca s.
Indeed,
for one of our conditions,
the \trp,
a satisfactory definition is  so far known only in the simple case.
% This makes it awkward to compare the condition with, say,
% freeness of an action on a commutative \ca.
For similar reasons, we have much less to say
about actions on purely infinite \ca s
than about actions on stably finite \ca s.
It seems possible
(although proofs are still missing)
that the differences between some of our conditions
disappear in the purely infinite case.
See the discussion at the end of Section~\ref{Sec:Outer}.

This paper is organized as follows.
In Section~\ref{Sec:Comm},
we recall a number of theorems which characterize
freeness of actions of finite or compact groups
on compact or locally compact spaces.
Some involve \ca s,
while others are purely in terms of topology.
These results suggest properties
which might be expected of free actions on \ca s.
Several of them implicitly or explicitly motivate
various definitions of noncommutative freeness.
In each of the remaining four sections,
we discuss a notion of noncommutative freeness,
or a group of notions
which seem to us to be roughly comparable in strength
(with one exception: some of the conditions
in Section~\ref{Sec:Connes} are much weaker than the others).
See the further discussion at the end of Section~\ref{Sec:Comm}.
The order of sections is roughly from strongest to weakest.
The comparisons are inexact partly because
some versions of noncommutative freeness,
in their present form,
are useful only for restricted classes of groups or \ca s,
and sometimes there are no interesting examples in the overlap.
For example,
we don't know how to properly define the tracial Rokhlin property
for actions on \ca s which are not simple,
which makes it awkward to compare this property
with freeness of an action on a commutative \ca.

As will become clear in the discussion,
there are a number of directions in which further work is needed.

We describe some standard notation.
Throughout, groups will be at least locally compact.
All groups and spaces
(except primitive ideal spaces and spaces of irreducible
representations of \ca s)
will be Hausdorff.
For an action of a group $G$ on a locally compact space~$X,$
we let $C^* (G, X)$ denote the \tgca,
% (or $C^* (G, X, \af)$ if the action is called $\af$),
and we let $X / G$ denote the orbit space.
For an action $\af \colon G \to \Aut (A)$ of $G$ on a \ca~$A,$
written $g \mapsto \af_g,$
we denote the \cp\  by $C^* (G, A, \af).$
% or by $C^* (G, A)$ if $\af$ is understood.
We further denote the fixed point algebra
\[
\{ a \in A \colon {\mbox{$\af_g (a) = a$ for all $g \in G$}} \}
\]
by $A^{\af},$ or by $A^G$ if $\af$ is understood.
The action $\af \colon G \to \Aut (C_0 (X))$
coming from an action of $G$ on~$X$
is $\af_g (f) (x) = f (g^{-1} x).$
Note that in this case $C_0 (X)^G$ can be canonically
identified with $C_0 (X / G).$
The restriction of $\af \colon G \to \Aut (A)$
to a subgroup~$H \subset G$
is $\af |_H,$
and the restriction to an invariant subalgebra~$B \subset A$
is $\af_{( \cdot )} |_B.$

All ideals in \ca s are assumed closed and two sided.
% If $A$ is a \uca,
% then saying $B$ is a unital subalgebra
% means that $B$ contains the identity $1_A$ of~$A.$
We will denote the cyclic group $\Z / m \Z$ by $\Zq{m}.$
(No confusion with the $m$-adic integers should occur.)
For a Hilbert space~$H,$
we denote by $L (H)$ and $K (H)$ the algebras of bounded and compact
operators on~$H.$

We would like to thank
Dawn Archey,
George Elliott,
Akitaka Kishimoto,
Hiroyuki Osaka,
Cornel Pasnicu,
Costel Peligrad,
Marc Rieffel,
Masamichi Takesaki,
and Dana Williams
for useful discussions and email correspondence.

\section{The commutative case}\label{Sec:Comm}

\indent
In the main part of this section,
we give some characterizations and properties of free actions
of finite (sometimes more general) groups on compact spaces.
In some parts of the rest of this survey,
we will concentrate on simple \ca s,
so, without proper interpretation,
what one sees here may provide little guidance.

\begin{thm}\label{CT:Bundle}
Let $G$ be a compact Lie group and let $X$ be a
locally compact $G$-space.
The action of $G$ on $X$ is free \ifo\  the map
$X \to X / G$ is the projection map of a principal $G$-bundle.
\end{thm}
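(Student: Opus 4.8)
The reverse implication is immediate from the definitions: the total space of a principal $G$-bundle carries a free $G$-action whose orbits are exactly the fibres, so an action isomorphic to such a bundle action is free. All the work is in the forward direction.

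For that direction, the plan is to show that under the hypotheses the orbit map $\pi \colon X \to X / G$ admits a continuous section near every orbit. Since the action is free, a local section over an open set $V$ gives a local trivialization by $(v, g) \mapsto g \cdot s(v)$, whose inverse is continuous because $G$ is compact (the map $(g, x) \mapsto (gx, x)$ is then a closed injection, hence an embedding), so $\pi$ acquires the structure of a principal $G$-bundle. Since $G$ is compact, the action is automatically proper, $X / G$ is locally compact Hausdorff, and every orbit $G x_0$ is compact with a basis of saturated open neighborhoods. Fix $x_0 \in X$. It suffices to produce a saturated open neighborhood $U$ of $G x_0$ together with a continuous $G$-equivariant map $h \colon U \to G$ (with $G$ acting on itself by left translation): given such an $h$, the formula $s(\pi(y)) = h(y)^{-1} y$ is independent of the choice of $y$ in its fibre (by equivariance of $h$), continuous, and a section of $\pi$ over $V = \pi(U)$.

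To construct $h$ I would run Gleason's slice argument, which is where the Lie hypothesis enters. By the Peter--Weyl theorem, embed $G$ as a closed subgroup of a unitary group $U(n)$ (enlarging $n$ if necessary so that some vector $v_0 \in \C^n$ has trivial $G$-stabilizer), so $G$ acts linearly on $\C^n$ and $G v_0 \cong G$ as $G$-spaces. A suitable continuous function on $X$ supported near $x_0$, averaged against normalized Haar measure on $G$, produces a continuous $G$-equivariant map $f$ from some saturated neighborhood of $G x_0$ into $\C^n$ whose restriction to $G x_0$ is a homeomorphism onto $G v_0$; here freeness is used to know $G x_0 \cong G$ and to arrange that the averaged map separates the points of the orbit. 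Since $G v_0$ is a compact submanifold of $\C^n$, it has a $G$-invariant tubular neighborhood with $G$-equivariant retraction $r$; by the tube lemma (compactness of $G x_0$) the neighborhood of $G x_0$ may be shrunk so that $f$ lands in this tube, and then $r \circ f$ followed by the homeomorphism $G v_0 \cong G$ is the desired map $h$. Equivalently, one packages this as the slice theorem for compact Lie group actions: every orbit has a saturated neighborhood $G$-homeomorphic to $G \times_{G_{x_0}} S$, which for a free action collapses to $G \times S$, i.e. exactly a local trivialization.

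The main obstacle is precisely this construction of the equivariant map $h$ (equivalently, of the slice) on a genuinely saturated neighborhood of the orbit: one must upgrade freeness of the action on the single compact orbit $G x_0$ to uniform control on a neighborhood while keeping everything continuous. Both hypotheses on $G$ are essential here — compactness supplies Haar averaging, compactness of orbits, and the tube lemma needed to pass from pointwise to neighborhood statements, while the Lie structure supplies a faithful finite-dimensional unitary representation and the linear/smooth geometry in which the equivariant tubular retraction is built. Neither hypothesis can simply be dropped: the conclusion can fail for compact non-Lie groups, and, as remarked after the theorem, for noncompact $G$ one must also require the action to be proper.
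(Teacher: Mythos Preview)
The paper does not actually prove this theorem; it states the result, recalls the definition of a principal $G$-bundle, and then refers to Palais~\cite{Ps} for the more general statement covering free proper actions of locally compact Lie groups. So there is no in-paper argument to compare against.

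Your sketch is the classical Gleason slice argument, which is exactly the approach underlying the cited result of Palais (specialized to compact $G$, where properness is automatic). The outline is correct: Peter--Weyl gives a faithful unitary representation, Haar averaging produces an equivariant map into $\C^n$ carrying the orbit $G x_0$ homeomorphically onto $G v_0$, and an equivariant tubular retraction onto $G v_0$ then yields the equivariant map $h \colon U \to G$ from which local sections are read off. The one place I would urge a bit more care in a full write-up is the assertion that the averaged map restricts to a homeomorphism on $G x_0$: one should first choose a continuous $\C^n$-valued function on $X$ whose values near the points of $G x_0$ already separate those points well enough that averaging against Haar measure gives an injection on the orbit; this is where one genuinely uses both freeness (so $G x_0 \cong G$) and compactness of $G$ (so finitely many representation coefficients suffice). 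With that detail filled in, your argument is sound and is precisely the standard proof.
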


A principal $G$-bundle is a locally trivial bundle with fiber $G,$
and where the transition maps between trivializations
are given by continuous maps to~$G,$
regarded as acting on itself by translation.

This result is also true for actions of locally compact
Lie groups which are free and proper.
See the theorem in Section~4.1 of~\cite{Ps}.
The definition of properness given there is different,
but for locally compact~$X$ it is equivalent.
See Condition~(5) in Theorem 1.2.9 of~\cite{Ps};
the notation is in the introduction of~\cite{Ps} and
Definition 1.1.1 there.

\begin{thm}\label{CT:Cantor}
Let $G$ be a finite group,
and let $X$ be a totally disconnected $G$-space.
The action of $G$ on $X$ is free \ifo\  $X$ is equivariantly
homeomorphic to a $G$-space of the form $G \times Y,$
where $G$ acts on itself by translation and acts trivially on $Y.$
\end{thm}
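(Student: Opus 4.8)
The ``if'' direction is immediate: the translation action of $G$ on itself is free, so the action on $G \times Y$ with $G$ acting trivially on the second coordinate is free, and freeness passes through equivariant homeomorphisms. For the converse, the plan is to produce a \emph{transversal} for the action --- a clopen set $Y \subset X$ meeting every $G$-orbit in exactly one point --- and then read off the conclusion. As in the rest of this section we take $X$ to be compact (and Hausdorff); being totally disconnected, it then has a base of clopen sets. Granting a transversal $Y$: every $x \in X$ has the form $g y$ for a unique $g \in G$ and $y \in Y$ (uniqueness by freeness), so $X = \bigsqcup_{g \in G} g Y$; the map $G \times Y \to X$, $(g, y) \mapsto g y$, is therefore a continuous equivariant bijection, and it is open because on each sheet $\{ g \} \times Y$ it is the homeomorphism $y \mapsto g y$ of $Y$ onto the clopen set $g Y$; and the quotient map $\pi \colon X \to X / G$ restricts to a continuous bijection $Y \to X / G$ of a compact space onto a Hausdorff one, hence a homeomorphism. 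Composing, $X \cong G \times (X / G)$ equivariantly, with $G$ translating the first factor. (Alternatively one could invoke Theorem~\ref{CT:Bundle}, a finite group being a $0$-dimensional compact Lie group: freeness makes $\pi$ a principal $G$-bundle, and the task is to trivialize it; the slices below do this by hand.)

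To construct $Y$ I would first build local slices. Fix $x \in X$. For each $g \in G \setminus \{ 1 \}$, freeness gives $g x \neq x$, so there is a clopen $A_g$ with $x \in A_g$ and $g x \notin A_g$; put $W_g = A_g \setminus g^{-1} (A_g)$ and $U_x = \bigcap_{g \in G \setminus \{ 1 \}} W_g$. A routine check shows that $U_x$ is clopen, contains $x$, and satisfies $g U_x \cap U_x = \varnothing$ for every $g \in G \setminus \{ 1 \}$. Hence $\pi|_{U_x}$ is injective; since $\pi$ is open (as for any group action), $\pi|_{U_x}$ is a homeomorphism of $U_x$ onto $\pi (U_x)$; and $\pi (U_x)$ is clopen in $X / G$ because $\pi^{-1} (\pi (U_x)) = \bigcup_{g \in G} g U_x$ is a finite union of clopen sets. (That $X / G$ is compact Hausdorff follows from compactness of $X$ together with finiteness of $G$.)

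Finally I would globalize, and this is where the real work is. The clopen sets $\pi (U_x)$, for $x \in X$, cover $X / G$; by compactness, finitely many $\pi (U_{x_1}), \dots, \pi (U_{x_n})$ suffice. Disjointify these to a clopen partition of $X / G$ by setting $V_i = \pi (U_{x_i}) \setminus \bigcup_{j < i} \pi (U_{x_j})$, and put $Y_i = U_{x_i} \cap \pi^{-1} (V_i)$ and $Y = \bigcup_{i = 1}^{n} Y_i$. Then $Y$ is clopen; the $Y_i$ are pairwise disjoint since they lie over the pairwise disjoint $V_i$, and $\pi (Y_i) = V_i$; and because $\pi|_{U_{x_i}}$ is injective, each orbit meets $Y_i$ in at most one point, hence meets $Y$ in exactly one point. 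Thus $Y$ is the required transversal. The main obstacle is precisely this passage from local slices to one global transversal: the slices are cheap, but assembling them uses compactness essentially, to pass to a finite --- and hence disjointifiable --- clopen cover of $X / G$; if $X$ were merely locally compact one should expect to need extra (strong zero-dimensionality) hypotheses to carry out the patching.
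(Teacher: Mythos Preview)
Your proof is correct and follows essentially the same route as the paper's. Both arguments build, around each point, a clopen set whose $G$-translates are pairwise disjoint (your $U_x = \bigcap_{g \neq 1} (A_g \setminus g^{-1} A_g)$ is exactly the paper's $K = \bigcap_{g \neq 1} (L_g \cap g^{-1} M_g)$ with $L_g = A_g$ and $M_g = X \setminus A_g$), then pass to a finite subcover by compactness and disjointify by subtracting the $G$-saturations of earlier slices. The only cosmetic difference is that you phrase the disjointification in the quotient $X/G$ (setting $V_i = \pi(U_{x_i}) \setminus \bigcup_{j<i} \pi(U_{x_j})$ and pulling back), whereas the paper stays upstairs in $X$ throughout; unwinding $\pi^{-1}(V_i)$ shows your $Y_i$ coincides with the paper's $L_i$.
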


\begin{proof}
We first claim that for every $x \in X,$
there is a compact open set $K \subset X$ such that
$x \in K$ and the sets
$g K,$ for $g \in G,$ are disjoint.
To see this,
for each $g \in G$ choose disjoint compact open sets
$L_g$ and $M_g$ such that $x \in L_g$ and $g x \in M_g.$
Then take
\[
K = \bigcap_{g \in G \setminus \{ 1 \}} (L_g \cap g^{-1} M_g).
\]

Since $X$ is compact,
we can now find compact open sets $K_1, K_2, \ldots, K_n \subset X$
which cover $X$ and such that, for each~$m,$
the sets $g K_m,$ for $g \in G,$ are disjoint.
Set
\[
L_m = K_m \cap \left( X \setminus \bigcup_{g \in G}
        g (K_1 \cup K_2 \cup \cdots \cup K_{m - 1}) \right).
\]
(This set may be empty.)
One verifies by induction on~$m$
that the sets $g L_j,$ for $g \in G$ and $j = 1, 2, \ldots, m,$
are disjoint
and cover $\bigcup_{g \in G} g (K_1 \cup K_2 \cup \cdots \cup K_m).$
Set $Y = L_1 \cup L_2 \cup \cdots \cup L_n.$
Then the sets $g Y,$ for $g \in G,$ form a partition of~$X.$
The conclusion follows.
\end{proof}

\begin{thm}\label{CT:MorEq}
Let $G$ be a compact group and let $X$ be a locally compact $G$-space.
The action of $G$ on $X$ is free \ifo\  appropriate
formulas (see Situation~2 of~\cite{Rf2}) make a suitable completion of
$C_{\mathrm{c}} (X)$ into a $C_0 (X / G)$--$C^* (G, X)$
Morita equivalence bimodule.
\end{thm}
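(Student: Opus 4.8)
The plan is to realize $C_{\mathrm{c}}(X)$, equipped with the formulas of Situation~2 of~\cite{Rf2}, as a pre-imprimitivity bimodule in the sense of Rieffel, and then to complete it. Since $G$ is compact it is unimodular and every action of $G$ is automatically proper, so no question of properness arises and $C^*(G, X) = C_0(X) \rtimes G$. The data on $C_{\mathrm{c}}(X)$ are: a left action of $C_0(X/G)$, regarded as the $G$-invariant functions in $C_0(X)$, by pointwise multiplication; a right action of $C^*(G, X)$ by a convolution-type formula; and the two inner products
\[
  {}_{C_0(X/G)} \langle \xi, \eta \rangle (G x) = \int_G \xi(s x) \, \overline{\eta(s x)} \, ds
  \qquad \text{and} \qquad
  \langle \xi, \eta \rangle_{C^*(G, X)} (s, x) = \overline{\xi(x)} \, \eta(s x) ,
\]
the first lying in $C_{\mathrm{c}}(X/G)$ and the second in $C_{\mathrm{c}}(G \times X)$, a dense subalgebra of $C^*(G, X)$ (up to a normalization, and possibly the substitution $s \mapsto s^{-1}$, neither of which affects anything below). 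The only feature of the second formula I will use is that $\langle \xi, \eta \rangle_{C^*(G, X)}(s, x)$ depends on $(s, x)$ only through the pair $(x, s x)$. The completion in question is the one making $C_{\mathrm{c}}(X)$ a right Hilbert $C^*(G, X)$-module.

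First I would dispose of the axioms that hold for an arbitrary action. The compatibility identity ${}_{C_0(X/G)} \langle \xi, \eta \rangle \cdot \zeta = \xi \cdot \langle \eta, \zeta \rangle_{C^*(G, X)}$, and the interaction of each module action with each inner product, are direct computations using bi-invariance of Haar measure. Positivity of ${}_{C_0(X/G)} \langle \xi, \xi \rangle$ is obvious; positivity of $\langle \xi, \xi \rangle_{C^*(G, X)}$ in $C^*(G, X)$ is the familiar verification (for instance, in any covariant representation of $(C_0(X), G)$ the corresponding operator is manifestly of the form $T^* T$). Fullness of the left inner product also holds for any action: cutting $C_{\mathrm{c}}(X)$ down by functions supported in small invariant tubes shows that the span of ${}_{C_0(X/G)} \langle C_{\mathrm{c}}(X), C_{\mathrm{c}}(X) \rangle$ already exhausts $C_{\mathrm{c}}(X/G)$, which is dense in $C_0(X/G)$.

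The substance of the theorem is the fullness of $\langle \cdot, \cdot \rangle_{C^*(G, X)}$, and this is the one place where freeness enters; here I would argue as follows. Consider the continuous map $\Phi \colon G \times X \to X \times X$ given by $\Phi(s, x) = (x, s x)$. Freeness of the action is exactly the statement that $\Phi$ is injective, and since $G$ is compact $\Phi$ is also proper; hence $\Phi$ is a homeomorphism of $G \times X$ onto the orbit equivalence relation $R = \{ (x, y) \in X \times X : y \in G x \}$, which is closed in $X \times X$. Transporting along $\Phi$ identifies the dense subspace $C_{\mathrm{c}}(G \times X)$ of $C^*(G, X)$ with $C_{\mathrm{c}}(R)$, and under this identification $\langle \xi, \eta \rangle_{C^*(G, X)}$ becomes the restriction to $R$ of $(x, y) \mapsto \overline{\xi(x)} \, \eta(y)$ — a well-defined function on $R$ precisely because, as noted above, the original formula factors through $\Phi$. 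The linear span of these functions is a self-adjoint subalgebra of $C_0(R)$ that separates points of $R$ and vanishes nowhere, so by Stone--Weierstrass its sup-norm closure is $C_0(R)$; since $G$ is compact, the $C^*$-norm on $C_{\mathrm{c}}(G \times X)$ is dominated by a multiple of the sup norm, so this span is also $C^*$-norm dense in $C^*(G, X)$. Thus $\langle \cdot, \cdot \rangle_{C^*(G, X)}$ is full, $C_{\mathrm{c}}(X)$ is a pre-imprimitivity bimodule, and its completion in the common module norm is the desired $C_0(X/G)$--$C^*(G, X)$ equivalence bimodule; this is the ``if'' direction.

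For the converse, suppose the action is not free, and fix $x_0$ whose stabilizer $H = \{ s \in G : s x_0 = x_0 \}$ is nontrivial. Since an equivalence bimodule requires $\langle \cdot, \cdot \rangle_{C^*(G, X)}$ to be full, it suffices to exhibit a representation $\pi$ of $C^*(G, X)$ with $\pi(\langle \xi, \eta \rangle_{C^*(G, X)}) = 0$ for all $\xi, \eta \in C_{\mathrm{c}}(X)$. When $x_0$ is fixed by all of $G$, one may take the one-dimensional representation $f \mapsto \int_G f(s, x_0) \, \chi(s) \, ds$ attached to a nontrivial character $\chi$ of $G$: since $\langle \xi, \eta \rangle_{C^*(G, X)}(s, x_0) = \overline{\xi(x_0)} \, \eta(x_0)$ is independent of $s$ and $\int_G \chi(s) \, ds = 0$, this representation kills every inner product. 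In general one uses the representation of $C^*(G, X)$ induced from the orbit $G x_0 \cong G/H$ together with a nontrivial irreducible representation of $H$; it factors through restriction of functions to the orbit, where the inner products depend on the group element only through its action on the orbit and so lie in the part of the algebra that such an induced representation annihilates. (More conceptually, an equivalence bimodule would identify $\Prim C^*(G, X)$ homeomorphically with $X/G$, whereas the fibre of the canonical continuous surjection $\Prim C^*(G, X) \to X/G$ over $G x_0$ is $\widehat{H}$, which reduces to a point only when $H$ is trivial.) I expect the main obstacle to be making this induced-representation argument precise for points with nontrivial proper stabilizer; on the ``if'' side, the only nonformal steps are the identification of $C^*(G, X)$ with a completion of $C_{\mathrm{c}}(R)$ and the density argument that follows it. One can also extract the ``if'' direction from Green's imprimitivity theorem, or from Rieffel's symmetric imprimitivity theorem, but the argument above makes the role of freeness most transparent.
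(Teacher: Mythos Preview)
Your argument is correct, and it is genuinely different from the paper's, which is pure citation: the paper takes the forward direction from Situation~2 of~\cite{Rf2} and both directions from Proposition~7.1.12 and Theorem~7.2.6 of~\cite{Ph1}, with no further explanation.

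Your direct route via the homeomorphism $\Phi \colon G \times X \to R$, Stone--Weierstrass in $C_0(R)$, and the sup-norm domination of the crossed-product norm is a clean and elementary way to see why freeness is exactly what makes the right inner product full; it makes the mechanism visible in a way the citations do not. One small point worth tightening: the span of the $\langle \xi, \eta \rangle$ is a subalgebra of $C_0(R)$ only under \emph{pointwise} multiplication, not under the groupoid convolution that $C^*(G,X)$ carries, so you are really applying Stone--Weierstrass in the auxiliary commutative algebra $C_0(R)$ and then transferring density back via the norm inequality---this is fine, but it is worth saying explicitly, since a reader might expect the ``subalgebra'' to refer to the crossed-product structure. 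For the converse, your parenthetical $\Prim$ argument is actually the cleanest and most complete of the three you offer (and is essentially the mechanism the paper itself uses later, in the proof of Theorem~\ref{CT:Ideals}): the Rieffel correspondence would force $\Prim C^*(G,X) \to X/G$ to be a bijection, while the fibre over $G x_0$ is $\widehat{H}$. Your induced-representation sketch is correct in spirit but, as you note, needs the identification of the ideal of inner products over a single orbit with the trivial-representation summand of $C^*(H)$ inside $C^*(G,G/H) \cong K(L^2(G/H)) \otimes C^*(H)$; promoting the $\Prim$ remark to the main argument would remove any doubt. The paper's approach buys brevity and places the result in the framework of saturation developed in~\cite{Ph1}; yours buys transparency about exactly where freeness enters.
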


\begin{proof}
That freeness implies Morita equivalence is Situation~2 of~\cite{Rf2}.
(It actually covers proper actions
of locally compact but not necessarily compact groups.)
Both directions together follow from
Proposition~7.1.12 and Theorem~7.2.6 of~\cite{Ph1}.
\end{proof}

\begin{thm}\label{CT:Outer}
Let $G$ be a compact group and let $X$ be a
locally compact $G$-space.
The action of $G$ on $X$ is free \ifo\  for every
$g \in G \setminus \{ 1 \}$ and every
$g$-invariant ideal $I \subset C (X),$
the action of $g$ on $C (X) / I$ is nontrivial.
\end{thm}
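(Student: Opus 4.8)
The plan is to translate both sides of the equivalence into topology by Gelfand duality and then compare them. Recall that the ideals of $C_0(X)$ are exactly the sets $I_U = \{ f \in C_0(X) : f|_{X \setminus U} = 0 \}$ for open $U \S X$, that $I_U$ is $\af_g$-invariant precisely when the open set $U$ (equivalently, its closed complement $F = X \setminus U$) is $g$-invariant, and that restriction of functions gives an isomorphism $C_0(X) / I_U \cong C_0(F)$ which carries the automorphism of $C_0(X)/I_U$ induced by $\af_g$ to the automorphism of $C_0(F)$ coming from the homeomorphism $g|_F$ of $F$. Thus failure of the right-hand side means: there exist $g \in G \setminus \{1\}$ and a nonempty closed $g$-invariant set $F \S X$ such that $\af_g$ induces the identity on $C_0(F)$. (We read the hypothesis as quantifying over proper ideals $I$; the case $I = C_0(X)$, where the quotient is $0$, is to be excluded.)

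The one substantive ingredient is a form of Urysohn's lemma: if $h$ is a homeomorphism of a locally compact Hausdorff space $F$ whose induced automorphism $f \mapsto f \circ h^{-1}$ of $C_0(F)$ is the identity, then $h = \id_F$, because $C_0(F)$ separates the points of $F$. Applied with $h = g|_F$, this shows that $\af_g$ induces the identity on $C_0(F)$ if and only if $g$ fixes $F$ pointwise, i.e.\ $F \S \{ x \in X : g x = x \}$. Note also that, since $X$ is Hausdorff, the fixed-point set $\{ x \in X : g x = x \}$ is always closed (it is the preimage of the diagonal of $X \times X$ under $x \mapsto (x, g x)$), and it is trivially $g$-invariant.

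The theorem now follows. If the action is free, then $\{ x \in X : g x = x \} = \varnothing$ for every $g \neq 1$, so there is no nonempty closed $g$-invariant $F$ with $\af_g|_{C_0(F)} = \id$; hence $\af_g$ acts nontrivially on $C_0(X)/I$ for every $g \neq 1$ and every proper $g$-invariant ideal $I$. Conversely, if the action is not free, choose $g \in G \setminus \{1\}$ and $x_0 \in X$ with $g x_0 = x_0$; then $F = \{ x_0 \}$ is closed and $g$-invariant, the set $I = \{ f \in C_0(X) : f(x_0) = 0 \}$ is a proper $g$-invariant ideal, and $\af_g$ induces the identity on $C_0(X)/I \cong \C$. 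This proves the contrapositive of the remaining direction.

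I do not expect a genuine obstacle: the only inputs are Gelfand duality and the separation of points of a locally compact Hausdorff space by $C_0$. The points needing a little care are bookkeeping only --- verifying that the isomorphism $C_0(X)/I_U \cong C_0(F)$ intertwines the two $g$-actions, which is immediate from $\af_g(f)(x) = f(g^{-1} x)$, and deciding how to treat the improper ideal $I = C_0(X)$. It is worth noting that compactness of $G$ is never used; the argument works verbatim for an arbitrary topological group acting on $X$ by homeomorphisms.
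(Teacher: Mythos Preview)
Your proof is correct and is exactly the argument the paper has in mind: the paper does not give a formal proof at all, merely remarking that the statement ``is really just a restatement of the requirement that $g$ have no fixed points.'' You have supplied the Gelfand-duality bookkeeping that makes this restatement precise, including the (necessary) exclusion of the improper ideal, and your observation that compactness of $G$ plays no role is correct.
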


This is really just a restatement of the requirement that $g$
have no fixed points.
It is included for comparison
with the conditions in Section~\ref{Sec:Outer}.

\begin{thm}\label{CT:Ideals}
Let $G$ be a compact group and let $X$ be a
locally compact $G$-space.
The action of $G$ on $X$ is free \ifo\  every ideal
$I \subset C^* (G, X)$ has the form $C^* (G, U)$
for some $G$-invariant open set $U \subset X.$
\end{thm}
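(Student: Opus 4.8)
The plan is to prove both directions, but the substantive content is the forward implication, so I will deal with the easy direction first. Suppose every ideal of $C^* (G, X)$ has the stated form. If the action of $G$ on $X$ is not free, then some $g \in G \setminus \{ 1 \}$ fixes a point $x_0 \in X$. I would exhibit an ideal of $C^* (G, X)$ which is not induced from a $G$-invariant open subset of $X$. A convenient choice is to pass to the closed invariant set $Y = \overline{G x_0}$, which carries a $G$-fixed point, so that $C^* (G, Y)$ has a one-dimensional representation coming from evaluation at that fixed point followed by the trivial representation of~$G$; this produces an ideal of $C^* (G, Y)$ (hence, after pulling back along the surjection $C^* (G, X) \to C^* (G, Y)$, an ideal of $C^* (G, X)$) whose quotient is too small to be a transformation group algebra over a nonempty open set — for instance, its quotient has a nonzero finite-dimensional representation that does not factor through any $C^* (G, U)$. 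Making this precise is a short computation using the fact that $C^* (G, \{\mathrm{pt}\}) = C^* (G)$, whose ideal structure (for $G$ compact, nontrivial) is visibly richer than that of any $C^* (G, U)$ with the free action inherited from~$X$. I expect this direction to be routine modulo bookkeeping.

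For the forward implication, assume the action is free. Let $I \subset C^* (G, X)$ be an ideal. First I would reduce to understanding $C^* (G, X)$ concretely: by Theorem~\ref{CT:MorEq}, freeness gives a $C_0 (X / G)$--$C^* (G, X)$ Morita equivalence bimodule, so the lattice of ideals of $C^* (G, X)$ is isomorphic, via the Rieffel correspondence, to the lattice of ideals of $C_0 (X / G)$. Every ideal of $C_0 (X / G)$ has the form $C_0 (V)$ for an open set $V \subset X / G$, and such $V$ corresponds exactly to a $G$-invariant open set $U \subset X$ with $U / G = V$. So the plan is: (1) start with $I$, produce the corresponding open $V \subset X / G$ under Rieffel induction, set $U = q^{-1}(V)$ where $q \colon X \to X/G$ is the quotient map, and (2) verify that the ideal $C^* (G, U) \subset C^* (G, X)$ is precisely the one that Rieffel induction associates to $C_0(V)$, i.e. that $C^*(G,U)$ corresponds to $C_0(V)$ under the bimodule equivalence. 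Step (2) is where the work lies.

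The heart of step~(2) is a compatibility statement: restriction of the equivalence bimodule to the open subset $U$ realizes the $C_0(V)$--$C^*(G,U)$ bimodule, so that the ideal of $C^*(G,X)$ induced from $C_0(V)$ equals the closure of $C_\mathrm{c}(G \times U)$ inside $C^*(G,X)$, which is exactly $C^*(G,U)$. Concretely, one checks that for $f \in C_\mathrm{c}(X)$ supported in $U$ and $\xi, \eta$ ranging over the bimodule, the inner products land in $C_0(V)$ on one side and in $C^*(G,U)$ on the other, and that these spaces generate the relevant ideals; this is a density argument using the explicit formulas of Situation~2 of~\cite{Rf2} together with the observation that $C_\mathrm{c}(G \times U)$ is dense in $C^*(G,U)$ and that elements of $C^*(G,U)$ can be approximated by finite sums of products $\langle \xi, \eta \rangle$ with everything supported over~$U$. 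I expect \emph{this} verification — matching the Rieffel correspondence with the geometric operation $U \mapsto C^*(G,U)$ — to be the main obstacle, since it requires unwinding the bimodule structure rather than quoting it as a black box; once it is in place, the theorem follows immediately from the bijectivity of the Rieffel correspondence. An alternative to invoking Rieffel induction directly would be to cite Proposition~7.1.12 and Theorem~7.2.6 of~\cite{Ph1}, which (as noted in the proof of Theorem~\ref{CT:MorEq}) already package the equivalence in a form that tracks ideals; I would likely organize the final writeup around whichever of these gives the cleanest statement about the ideal lattice.
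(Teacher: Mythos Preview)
Your forward implication is fine and is a legitimate alternative to the paper's. The paper simply cites Theorem~14 of~\cite{Gr1} (Green) for this direction, whereas you route it through Theorem~\ref{CT:MorEq}: Morita equivalence plus the Rieffel correspondence identifies the ideal lattice of $C^*(G,X)$ with that of $C_0(X/G)$, and the remaining verification that the ideal induced from $C_0(V)$ is exactly $C^*(G,U)$ with $U=q^{-1}(V)$ is the kind of density-and-support check you describe. This is essentially how Green's result is proved anyway, so the two approaches converge; yours is more self-contained within the paper, while the paper's is shorter by outsourcing.

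Your reverse direction has a genuine gap. From ``some $g\in G\setminus\{1\}$ fixes $x_0$'' you jump to ``$Y=\overline{Gx_0}$ carries a $G$-fixed point,'' and then build everything on evaluation at that fixed point followed by the trivial representation of~$G$. But $x_0$ is only fixed by~$g$, not by all of~$G$, and in general the orbit (or its closure) has no $G$-fixed point at all: take $G=\Zqt\times\Zqt$ acting on a two-point set by having one generator swap the points and the other act trivially. The paper's fix is exactly what you need: since $G$ is compact the orbit $S=Gx_0$ is already closed and equivariantly homeomorphic to $G/H$ where $H$ is the (nontrivial) stabilizer of~$x_0$. Then $C^*(G,X)$ surjects onto $C^*(G,G/H)\cong K(L^2(G/H))\otimes C^*(H)$ (Corollary~2.10 of~\cite{Gr2}), and any nontrivial ideal $J\subset C^*(H)$---for instance the kernel of the trivial representation---gives an ideal $\pi^{-1}\bigl(K(L^2(G/H))\otimes J\bigr)$ of $C^*(G,X)$ that is not of the form $C^*(G,U)$. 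Your instinct to pass to a small invariant piece and exploit the nonsimple structure of a group algebra was right; the repair is to use $C^*(H)$ for the stabilizer~$H$ rather than $C^*(G)$ at a nonexistent global fixed point.
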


\begin{proof}
Suppose the action of $G$ on $X$ is free.
The conclusion follows
(in fact, in the more general case of a free and proper
action of a locally compact group)
from Theorem~14 of~\cite{Gr1} and its proof.

Suppose the action of $G$ on $X$ is not free.
Our argument is very close to
the proof of Proposition~7.1.12 of~\cite{Ph1}.
Choose $x \in X$ and $g \in G \setminus \{ 1 \}$ such that $g x = x.$
Let $H \subset G$ be the subgroup given by
$H = \{ g \in G \colon g x = x \},$
and set $S = G x.$
Then $S$ is a closed subset of $X$ which is equivariantly
homeomorphic to $G / H.$
So, using Corollary~2.10 of~\cite{Gr2} for the
second isomorphism, $C^* (G, X)$ has a quotient
\[
C^* (G, S)
 \cong C^* (G, \, G / H)
 \cong K (L^2 (G / H)) \otimes C^* (H).
\]
Let $\pi \colon C^* (G, X) \to K (L^2 (G / H)) \otimes C^* (H)$
be the composition of this isomorphism with the quotient map.

Since $H$ is a nontrivial compact group,
$C^* (H)$ is not simple.
(For example, consider the kernel of the map to $\C$
induced by the one dimensional trivial representation.)
Let $J \subset C^* (H)$ be a nontrivial ideal.
Then $\pi^{-1} \big( K (L^2 (G / H)) \otimes J \big)$ is an ideal
in $C^* (G, X)$
which does not have the form $C^* (G, U)$
for any $G$-invariant open set $U \subset X.$
\end{proof}

For the next characterization,
recall the equivariant K-theory $K^*_G (X)$ of a locally compact
$G$-space $X,$ introduced in~\cite{Sg2}.
(Also see Section~2.1 of~\cite{Ph1}.)
It is a module over the representation ring $R (G)$
(see~\cite{Sg1}),
which can be thought of as the equivariant K-theory of a point,
or as the Grothendieck group of the abelian semigroup of
equivalence classes of finite dimensional unitary representations
of~$G,$
with addition given by direct sum.
The ring multiplication is tensor product.
There is a standard \hm\  $R (G) \to \Z$ which sends
a representation to its dimension,
and its kernel is called the augmentation ideal
and written $I (G).$
(See the example before Proposition~3.8 of~\cite{Sg1}.)
We also need localization of rings and modules,
as discussed in Chapter~3 of~\cite{AM}.
Our notation follows Part~(1) of the example on page~38 of~\cite{AM}.

The following two results are parts of Theorem~1.1.1 of~\cite{Ph1}.
They are essentially due to Atiyah and Segal
(Proposition~4.3 of~\cite{ASg} and Proposition~4.1 of~\cite{Sg2}).

\begin{thm}[Atiyah and Segal]\label{CT:KThy}
Let $G$ be a compact Lie group and let $X$ be a compact $G$-space.
The action of $G$ on $X$ is free \ifo\  for every prime ideal $P$
in the representation ring $R (G)$ which does not contain $I (G),$
the localization $K^*_G (X)_P$ is zero.
\end{thm}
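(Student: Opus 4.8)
The plan is to deduce the theorem from R.~Segal's localization theorem for equivariant K-theory together with his description of $\mathrm{Spec}\, R(G)$ for compact Lie~$G$ (\cite{Sg1}); the statement is part of Theorem~1.1.1 of~\cite{Ph1}. Two facts are needed. First: every prime ideal $P \subset R(G)$ is supported at a unique conjugacy class of topologically cyclic subgroups $C \subset G$ (subgroups topologically generated by a single element), in the sense that $\ker(R(G) \to R(C)) \subseteq P$, with $C$ minimal for this; and $I(G) \subseteq P$ if and only if $C = \{ 1 \}$, since the primes containing $I(G) = \ker(R(G) \to \Z)$ are exactly the preimages of primes of $\Z = R(\{1\})$ under the augmentation. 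Second (the localization theorem, Proposition~4.1 of~\cite{Sg2}): if $P$ is supported at $C$, then the inclusion $X^C \hookrightarrow X$ induces an isomorphism $K^*_G (X)_P \xrightarrow{\;\cong\;} K^*_G (X^C)_P$.

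First I would treat the forward implication. If the action is free then $X^C = \varnothing$ for every subgroup $C \neq \{ 1 \}$, in particular for every nontrivial topologically cyclic~$C$. So, given a prime $P \subset R(G)$ with $I(G) \not\subseteq P$, the first fact exhibits $P$ as supported at some topologically cyclic $C \neq \{ 1 \}$, whence $K^*_G (X)_P \cong K^*_G (X^C)_P = K^*_G (\varnothing)_P = 0$ by the localization theorem. (Alternatively, for this direction one can invoke the Atiyah--Segal completion theorem: a free action identifies $K^*_G (X) \cong K^* (X / G) \cong K^* (EG \times_G X)$ with its own $I(G)$-adic completion, which forces the localizations away from $V(I(G))$ to vanish.)

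For the converse, suppose the action is not free and choose $x \in X$ with nontrivial stabilizer $H = G_x$. Since $G$ is compact Lie it has a faithful finite-dimensional representation $V$, and $V|_H$ is then a faithful --- hence nontrivial --- representation of~$H$, so the restriction map $\rho \colon R(G) \to R(H)$ carries $V - \dim V \in I(G)$ to a nonzero element of $R(H)$. As $R(H)$ embeds in the ring of continuous class functions on~$H$ it is reduced, so the nonzero ideal $\rho(I(G))$ is not contained in the nilradical; pick a prime $Q \subset R(H)$ omitting $\rho(I(G))$ and set $P := \rho^{-1}(Q)$, a prime of $R(G)$ with $\ker(\rho) \subseteq P$ and $I(G) \not\subseteq P$. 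The orbit $G x$ is a closed $G$-invariant subset of $X$ equivariantly homeomorphic to $G / H$, so restriction gives an $R(G)$-module map $K^0_G (X) \to K^0_G (G / H) \cong R(H)$ sending the class $1$ of the trivial bundle to $1 \in R(H)$. Localizing at~$P$, the image of $1 \in K^0_G (X)_P$ in $R(H)_P$ equals $1$, which is nonzero because no $s \in R(G) \setminus P$ has $\rho(s) = 0$ (such an $s$ would lie in $\ker(\rho) \subseteq P$). Hence $K^0_G (X)_P \neq 0$, so the right-hand condition fails.

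The real content, and the main obstacle, is the localization theorem of~\cite{Sg2} together with Segal's classification of $\mathrm{Spec}\, R(G)$ in~\cite{Sg1}; everything else is formal manipulation of localizations and restriction maps. One should also pay mild attention to the hypothesis that $X$ is merely a compact $G$-space, so that $X^C$ and the orbit $G x$ must be recognized as closed invariant subspaces in order for the restriction maps in $K^*_G$ and the localization theorem to apply --- for compact spaces this is unproblematic, but it is the point requiring a little care.
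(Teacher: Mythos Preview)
The paper does not actually prove this statement; it simply cites it as part of Theorem~1.1.1 of~\cite{Ph1} and attributes the content to Atiyah and Segal (Proposition~4.3 of~\cite{ASg} and Proposition~4.1 of~\cite{Sg2}). Your sketch is precisely the classical argument underlying those references: the forward direction is Segal's localization theorem together with his description of the support of prime ideals in $R(G)$ from~\cite{Sg1}, and the converse is the standard orbit reduction to $G/H$ with $K^0_G(G/H) \cong R(H)$.

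One cosmetic point: $\rho(I(G))$ need not itself be an ideal of $R(H)$, so the phrase ``the nonzero ideal $\rho(I(G))$'' is imprecise. But your argument only requires that the single element $\rho(V - \dim V)$ be nonzero, hence (since $R(H)$ is reduced) not nilpotent, so some prime $Q$ of $R(H)$ avoids it; then $V - \dim V \notin P = \rho^{-1}(Q)$ gives $I(G) \not\subseteq P$. This is exactly what you need, and the rest of the localization check ($\ker\rho \subseteq P$, so $1 \in R(H)_P$ is nonzero) is correct.
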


\begin{thm}[Atiyah and Segal]\label{CT:KXG}
Let $G$ be a compact Lie group and let $X$ be a compact $G$-space.
The action of $G$ on $X$ is free \ifo\  %
the natural map $K^* (X / G) \to K^*_G (X)$ is an isomorphism.
\end{thm}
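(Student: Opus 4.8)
The plan is to treat the two implications entirely separately, since they have nothing in common. ``Free $\Rightarrow$ isomorphism'' is soft and lives at the level of vector bundles, starting from Theorem~\ref{CT:Bundle}; ``isomorphism $\Rightarrow$ free'' is the substantive direction, and rather than attack it directly I would deduce it from the $K$-theoretic criterion already recorded in Theorem~\ref{CT:KThy}, bridging the two with a short localization argument. Both halves are due in essence to Atiyah and Segal.

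For the forward direction, assume the action is free. By Theorem~\ref{CT:Bundle} the orbit map $\pi\colon X\to X/G$ is a principal $G$-bundle. Pullback of a vector bundle $E$ on $X/G$ along $\pi$, carrying the tautological equivariant structure (the $G$-action living only on the base), produces a $G$-equivariant bundle $\pi^*E$ on $X$; conversely, a $G$-equivariant bundle $F$ on $X$ descends to the quotient bundle $F/G$ on $X/G$. Over a trivializing open set $U\S X/G$ one has $\pi^{-1}(U)\cong U\times G$, and there the assertion reduces to the triviality that a $G$-equivariant bundle on $U\times G$ is determined by its restriction to $U\times\{1\}$; patching these local statements shows that the two constructions are mutually inverse equivalences of categories, so $\pi^*$ is an isomorphism on $K^0$. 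For the remaining degrees a routine suspension argument does it: applying the same reasoning to the free $G$-space $X\times\R^n$ (trivial action on $\R^n$, orbit space $(X/G)\times\R^n$) gives $K^{-n}_G(X)\cong K^{-n}(X/G)$, and Bott periodicity finishes it. Since $X$ is compact, all the spaces here are locally compact Hausdorff and there is no point-set trouble.

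For the converse, assume $\pi^*\colon K^*(X/G)\to K^*_G(X)$ is an isomorphism; I would verify the criterion of Theorem~\ref{CT:KThy}. As $\pi^*$ is a ring isomorphism, transport the $R(G)$-module structure of $K^*_G(X)$ across it: then $R(G)$ acts on $K^*(X/G)$ by multiplication through a unital ring homomorphism $\ell=(\pi^*)^{-1}\circ c^*\colon R(G)\to K^0(X/G)$, where $c\colon X\to\mathrm{pt}$. Because $\pi^*$ preserves ranks and is injective on $H^0(\,\cdot\,;\Z)$, the class $\ell([V])$ has rank $\dim V$ on every component of $X/G$, so $\ell$ carries the augmentation ideal $I(G)$ into the reduced group $\widetilde K^0(X/G)$. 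Provided $X/G$ is finite-dimensional---true in the cases of interest, e.g.\ when $X$ is a finite $G$-CW complex or a compact $G$-manifold---the Atiyah--Hirzebruch filtration shows $\widetilde K^0(X/G)$ is a nilpotent ideal, so $\ell(y)$ is a nilpotent element of $K^0(X/G)$ for each $y\in I(G)$. Now let $P\S R(G)$ be a prime ideal with $I(G)\not\S P$, and choose $y\in I(G)\setminus P$. On the localization $K^*_G(X)_P\cong K^*(X/G)_P$ the element $y$ acts invertibly (it lies outside $P$), while that same action is multiplication by the nilpotent element $\ell(y)$; an operator simultaneously invertible and nilpotent can only act on the zero module, so $K^*_G(X)_P=0$. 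Hence $K^*_G(X)_P=0$ for every prime $P$ not containing $I(G)$, and Theorem~\ref{CT:KThy} then yields freeness.

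The soft direction is genuinely routine; all the content is in the converse, and there the real work is exactly what Theorem~\ref{CT:KThy} packages---the Atiyah--Segal fixed-point localization theorem---for which I know no cheap substitute. (A hands-on attempt, restricting to an orbit $Gx\cong G/H$ with $H\ne\{1\}$ and comparing the images of $K^*(X/G)$ and $K^*_G(X)$ in $K^*_G(G/H)\cong R(H)$, does close the gap when $G$ is finite, since then $R(H)$ is spanned by restrictions of representations of $G$, making the restriction $K^*_G(X)\to R(H)$ surjective while the composite through $K^*(X/G)$ is not; for positive-dimensional stabilizers this fails and one is forced back onto localization.) The one further point needing care is the nilpotence of $\widetilde K^0(X/G)$, which uses finite-dimensionality; for a general compact $G$-space one writes $X$ as an inverse limit of compact $G$-ENRs, passes to the limit, or simply cites~\cite{Ph1}.
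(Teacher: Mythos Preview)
The paper does not give its own proof; it records this as part of Theorem~1.1.1 of~\cite{Ph1}, attributed to Atiyah--Segal. So there is no argument in the paper to compare against.

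Your forward direction is standard and correct. Your main argument for the converse---transporting the $R(G)$-module structure across $\pi^*$ and using nilpotence of $\widetilde K^0(X/G)$ to force $K^*_G(X)_P=0$---is valid under the finite-dimensionality hypothesis you flag, and you honestly note the residual gap for general compact~$X$.

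The parenthetical, however, contains an error and is undersold. Your claim that $R(G)\to R(H)$ is surjective for finite $G$ is false: for $G=S_3$ and $H=\Zq{3}$ the image is spanned by $[1]$ and $[\omega]+[\overline\omega]$, missing $[\omega]$ itself. But surjectivity is irrelevant; all you need is that the image of $R(G)\to R(H)$ strictly exceeds $\Z\cdot[1_H]$, i.e., that some finite-dimensional $G$-representation restricts nontrivially to~$H$. This holds for \emph{every} compact Lie group $G$ and every nontrivial closed subgroup~$H$, since $G$ admits a faithful finite-dimensional representation on which $H$ cannot act trivially. With that correction, the orbit-restriction argument shows directly that $\pi^*$ fails to be surjective whenever the action is not free---no finite-dimensionality hypothesis, no appeal to Theorem~\ref{CT:KThy}. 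So far from failing for positive-dimensional stabilizers, your parenthetical is actually the cleanest route to the converse and handles the general compact case that your main argument does not.
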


Freeness is also related to C*-index theory.
The C*-basic construction used in the following result
is found in Sections 2.1 and~2.2 of~\cite{Wt};
the two versions are the same by Lemma~2.2.9 of~\cite{Wt}.

\begin{thm}\label{T:IndFin}
Let $G$ be a finite group,
and let $X$ be a compact $G$-space.
Suppose the set of points $x \in X$ with trivial stabilizer $G_x$
is dense in $X.$
Define a conditional expectation $E \colon C (X) \to C (X)^G$
by
\[
E (f) (x) = \frac{1}{\card (G)} \sum_{g \in G} f (g x).
\]
Then $E$ has index-finite type in the sense of Watatani
(see Definition~1.2.2 of~\cite{Wt},
and see Lemma~2.1.6 of~\cite{Wt} for the \ca\  version)
\ifo\  the action of $G$ on $X$ is free.
Moreover, in this case, the C*-basic construction
gives an algebra isomorphic to $C^* (G, X).$
\end{thm}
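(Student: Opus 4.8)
The plan is to prove both directions by reducing to a computation of the basic construction and its index element. First I would recall what index-finite type means: there exist $u_1, \ldots, u_n \in C(X)$ (a ``quasi-basis'') with $\sum_{i=1}^n u_i E(u_i^* a) = a$ for all $a \in C(X)$, and then the index element is $\sum_{i=1}^n u_i u_i^* \in C(X)^G$, which is invertible. For the forward direction, suppose $E$ has index-finite type, and suppose toward a contradiction that the action is not free, say $g x_0 = x_0$ for some $g \neq 1$ and $x_0 \in X$. Since the free points are dense, I would pick a sequence $x_k \to x_0$ with trivial stabilizer. Evaluating the quasi-basis identity and comparing values along the orbit of $x_k$ against the limiting behavior at $x_0$ should force a contradiction with invertibility of the index element; concretely, the index element evaluated at $x_0$ is governed only by the $G_{x_0}$-orbit, which has size $|G|/|G_{x_0}| < |G|$, whereas near nearby free points it is a sum of $|G|$ terms, and one cannot interpolate continuously while keeping the element invertible and compatible with $E$. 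This is the step I expect to be the main obstacle: making the ``cannot interpolate'' intuition rigorous rather than hand-wavy. The cleanest route is probably to use the known fact (Theorem~\ref{T:IndFin}'s companion results, or Watatani's theory) that index-finite type forces the conditional expectation to have a uniform lower Pimsner--Popa-type bound $E(a^* a) \geq \lambda a^* a$ for $a \geq 0$, and then evaluate at a function bumped around $x_0$ whose $G$-translates through a free point $x_k$ are disjoint while two of its $G$-translates through $x_0$ coincide; this makes $E(a^* a)(x_0)$ too small relative to $a(x_0)^2$ as $k \to \infty$.

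For the converse, suppose the action is free. By Theorem~\ref{CT:Cantor}'s philosophy — or more directly, because a free action of a finite group on a compact space makes $X \to X/G$ a covering map — I would cover $X/G$ by finitely many open sets $V_1, \ldots, V_m$ over which the covering is trivial, choose a subordinate partition of unity, and use it to manufacture an explicit finite quasi-basis for $E$. Over each trivializing $V_j$, a continuous section $s_j \colon V_j \to X$ exists; pulling back a partition-of-unity function $h_j$ on $X/G$ (viewed in $C(X)^G = C(X/G)$) and pushing it onto the sheet $s_j(V_j)$ gives functions $w_j \in C(X)$ supported on one sheet with $\sum_j \sum_{g \in G} \af_g$ of (squares of) these covering $X$. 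A short direct check then shows $\{ \sqrt{|G|}\, w_j \}$, or a symmetrized version, is a quasi-basis, so $E$ has index-finite type with index element the constant $|G|$ (this is the expected value, since the covering has $|G|$ sheets everywhere).

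Finally, for the ``moreover'' clause I would invoke the standard identification of the Watatani basic construction $C^*\langle C(X), e_G \rangle$ for $E$ with the crossed product. Given a quasi-basis, the basic construction is generated by $C(X)$ and the Jones projection $e_G$ with $e_G a e_G = E(a) e_G$; I would construct an isomorphism to $C^*(G, X)$ by sending $e_G \mapsto \frac{1}{|G|}\sum_{g \in G} u_g$ (the averaging projection in the crossed product, where $u_g$ are the canonical unitaries implementing $\af$) and $C(X)$ to its canonical copy. One checks this is a well-defined $*$-homomorphism using the covering relation $e_G a e_G = E(a) e_G$ versus $\bigl(\frac{1}{|G|}\sum_g u_g\bigr) a \bigl(\frac{1}{|G|}\sum_h u_h\bigr) = E(a)\bigl(\frac{1}{|G|}\sum_g u_g\bigr)$, and surjectivity follows because the averaging projection together with $C(X)$ generates $C^*(G,X)$ (each $u_g$ is recovered from products $a \cdot (\tfrac1{|G|}\sum u_h)$ for suitable $a$, using freeness), and injectivity from comparing traces or from the universal property of the basic construction. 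Alternatively I would just cite Lemma~2.1.6 and the surrounding results of \cite{Wt} together with the fact that for finite-group actions the basic construction of the canonical expectation is well known to be the crossed product. I expect no real obstacle here beyond bookkeeping — the one subtlety is making sure the isomorphism is canonical enough that it does not depend on the choice of quasi-basis, which follows from the universal characterization of the C*-basic construction.
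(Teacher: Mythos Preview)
Your outline is essentially sound, but you should know that the paper does not prove this result at all: its entire proof is a citation to Propositions~2.8.1 and~2.8.2 of Watatani~\cite{Wt}. So your direct argument is genuinely different, and in fact more informative for a reader who does not have~\cite{Wt} at hand.

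A few comments on the content. For the forward direction, the route you flag as ``hand-wavy'' is actually the cleanest and needs no detour through a Pimsner--Popa inequality. Evaluating the quasi-basis identity $\sum_i u_i E(u_i^* f) = f$ at a point $x$ and separating the values $f(gx)$ over distinct orbit points gives, at any free point, $\sum_i |u_i(x)|^2 = \card(G)$, whereas at a point $x_0$ with nontrivial stabilizer $G_{x_0}$ the same computation forces $\sum_i |u_i(x_0)|^2 = \card(G)/\card(G_{x_0})$. Since free points are dense, continuity of $\sum_i |u_i|^2$ gives an immediate contradiction. This is exactly your ``index element discontinuity'' intuition, and it is fully rigorous once written out; the Pimsner--Popa alternative is unnecessary. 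Your converse via local trivializations of the covering $X \to X/G$ and a subordinate partition of unity is the standard construction and works as stated; the resulting index element is indeed the constant function $\card(G)$. For the ``moreover'' clause, your map sending the Jones projection to the averaging projection $\frac{1}{\card(G)}\sum_g u_g$ is the right idea; surjectivity is the only point requiring care, and freeness is precisely what lets you recover each $u_g$ from products of the form $a \cdot \big(\frac{1}{\card(G)}\sum_h u_h\big) \cdot b$ with $a,b \in C(X)$ suitably chosen on disjoint sheets.
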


\begin{proof}
See Propositions 2.8.1 and~2.8.2 of~\cite{Wt}.
\end{proof}

Some nontriviality condition on the action is necessary,
since one must rule out the trivial action
and the action of $G$ on its quotient $G / H$
by a subgroup~$H.$

Here is an example of preservation of structure
associated with freeness.

\begin{thm}\label{CT:Mf}
Let $G$ be a finite group.
If $G$ acts freely on a topological manifold~$M,$
then $M / G$ is a topological manifold.
If $M$ and the action are smooth,
then so is $M / G.$
\end{thm}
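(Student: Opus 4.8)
The plan is to use Theorem~\ref{CT:Bundle} to turn the statement into a local one. Since $G$ is finite, it is a compact Lie group, and a topological manifold in our sense is locally compact (being locally Euclidean and Hausdorff), so Theorem~\ref{CT:Bundle} applies: freeness of the action means exactly that the quotient map $p \colon M \to M / G$ is the projection of a principal $G$-bundle. As $G$ is discrete, this is the same as saying that $p$ is a covering map; in particular $p$ is a local \hme. (Alternatively, one checks directly that for each $x \in M$ there is an open set $U \ni x$ with $g U \cap U = \varnothing$ for $g \neq 1$, using freeness together with the Hausdorff property of $M,$ and then the restrictions of $p$ to the translates $g U,$ $g \in G,$ are homeomorphisms onto the open set $p(U).$)

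First I would deduce that $M / G$ is a topological manifold. It is locally Euclidean, of dimension $\dim M,$ because $p$ is a local \hme\ onto it and $M$ is locally Euclidean. The map $p$ is open, since for open $U \subset M$ one has $p^{-1}(p(U)) = \bigcup_{g \in G} g U,$ which is open; being also surjective, it carries whatever countability axiom is built into the word ``manifold'' from $M$ to $M / G$ (the image under $p$ of a countable base of $M$ is a countable base of $M / G$). It remains to check that $M / G$ is Hausdorff, which a quotient need not be in general. Here the relation $R = \{ (x, y) \in M \times M \colon p(x) = p(y) \} = \bigcup_{g \in G} \{ (x, g x) \colon x \in M \}$ is a finite union of graphs of continuous self-maps of the Hausdorff space $M,$ hence closed in $M \times M$; since $p$ is an open surjection with $R$ closed, $M / G$ is Hausdorff. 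Thus $M / G$ is a topological manifold and $p$ is a covering map.

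Now suppose $M$ is smooth and each $\af_g$ is a diffeomorphism. I would put a smooth atlas on $M / G$ by pushing charts of $M$ forward through $p.$ Given $x \in M,$ choose an open $U \ni x$ on which $p$ restricts to a \hme\ onto an open subset of $M / G$ and which is the domain of a smooth chart $\ph \colon U \to \R^n$ of $M$; then $\bigl( p(U), \, \ph \circ (p|_U)^{-1} \bigr)$ is a chart of $M / G.$ The only point to verify is that two such charts, coming from $U$ and $U',$ have a smooth change of coordinates, i.e.\ that $(p|_{U'})^{-1} \circ p|_U$ is smooth where defined. For $x$ in its domain there is a unique $g \in G$ with $g x \in U',$ and $g$ depends locally constantly on $x$: the sets $\{ x \in U \colon g x \in U' \},$ $g \in G,$ are open and, because $p|_{U'}$ is injective and the action is free, pairwise disjoint. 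On each of them the map in question equals $x \mapsto \af_g(x),$ which is smooth by hypothesis, so the transition maps are smooth. Hence $M / G$ is a smooth manifold and $p$ is a local diffeomorphism.

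The whole content sits in the first step: freeness is precisely what makes $p$ a local \hme\ (indeed a covering), and without it $M / G$ can fail to be a manifold. Everything afterward is routine descent of structure; the only places that need a little care are the Hausdorffness of $M / G$ and, in the smooth case, the observation that the group element matching $U$ to $U'$ is locally constant on overlaps --- again a consequence of freeness --- so that the transition functions are restrictions of the smooth maps $\af_g.$
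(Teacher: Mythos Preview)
Your argument is correct and is the standard one. The paper, however, does not actually prove Theorem~\ref{CT:Mf}; it is stated as a known fact about group actions on manifolds, followed only by the remark that the result has more force in the smooth case (with the rotation action on $\R^2$ as illustration). So there is nothing in the paper to compare your proof against beyond noting that the author evidently regards the result as classical.

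A couple of minor comments on your write-up. First, your invocation of Theorem~\ref{CT:Bundle} is fine, but as you yourself observe parenthetically, for finite $G$ one can bypass it entirely: the elementary argument that each $x$ has a neighborhood $U$ with $gU \cap U = \varnothing$ for $g \neq 1$ already gives the covering property, and this avoids relying on the (unproved in the paper) bundle theorem. Second, in the smooth case your claim that the sets $\{x \in U : gx \in U'\}$ are pairwise disjoint is correct, but the reason is worth stating cleanly: if $gx, hx \in U'$ then $p(gx) = p(hx)$, injectivity of $p|_{U'}$ gives $gx = hx$, and freeness gives $g = h$. You have this, but it is the one genuinely delicate point in the smooth descent and deserves to be foregrounded rather than compressed.
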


The theorem has more force in the smooth case.
For example, if $G$ is a finite cyclic group,
acting on $\R^2$ by rotation,
then $\R^2 / G$ is a topological manifold,
but is not smooth in a neighborhood of the orbit of~$0.$

The situation for actions on noncommutative \ca s
is much more complicated.
There are at least six rough categories of conditions which,
when properly defined,
at least approximately correspond
to freeness in the commutative case.
We list them in descending order of strength.
\begin{itemize}
\item
Free action on the primitive ideal space.
\item
The Rokhlin property
and the closely related property of K-freeness.
\item
The tracial Rokhlin property.
\item
Outerness.
\item
Hereditary saturation (full strong Connes spectrum),
and the closely related property of full Connes spectrum.
\item
Saturation.
\end{itemize}
At least outside the type~I case,
free action on the primitive ideal space
seems to be too strong a condition,
as we hope to persuade you in this survey.
One section below is devoted to each of the others,
except that we treat the last two together.
For each condition,
we give definitions,
say what it implies in the standard examples discussed below,
describe some applications,
describe how it is related to previously discussed conditions,
and say something about permanence properties.
We also state some open problems.

We will discuss a number of examples in this survey,
but we will use two kinds of examples systematically.
One kind is arbitrary actions on separable unital type~I \ca s.
For them, most of our freeness conditions turn out to
be equivalent to freeness of the induced action on the
primitive ideal space $\Prim (A)$ of the algebra $A.$
The other is product type actions on UHF~algebras.
We recall these, and give convenient conventions,
in the following definition.
For simplicity,
we stick to actions of $\Zqt.$

\begin{dfn}\label{D:PType}
For $n \in \N$ let $d (n)$
and $k (n)$ be integers
with $d (n) \geq 2$ and $0 \leq k (n) \leq \frac{1}{2} d (n).$
Choose \pj s $p_n, \, q_n \in M_{d (n)}$
such that $p_n + q_n = 1$ and $\rank (q_n) = k (n).$
The associated product type action
on the UHF~algebra
$A = \bigotimes_{n = 1}^{\infty} M_{d (n)}$
is the action
$\af \colon \Zqt \to \Aut (A)$
generated by the infinite tensor product automorphism
of order~$2$ given by
$\af = \bigotimes_{n = 1}^{\infty} \Ad ( p_n - q_n ).$
\end{dfn}

The conjugacy class of the action does not depend on the
choice of the \pj s $p_n$ and $q_n.$
We need only consider $0 \leq k (n) \leq \frac{1}{2} d (n),$
since replacing any particular $k (n)$ by $d (n) - k (n)$
gives a conjugate action.

To give an idea of what the various conditions mean,
let the notation be as in Definition~\ref{D:PType},
and consider the following specific cases:
\begin{itemize}
\item
If $d (n) = 2$ and $k (n) = 1$ for all~$n,$
then $\af$ has the Rokhlin property and is K-theoretically free.
See Examples \ref{E:PTypeRk} and~\ref{E:PTypeK}.
\item
If $d (n) = 3$ and $k (n) = 1$ for all~$n,$
then $\af$ has the tracial Rokhlin property,
but does not have the Rokhlin property and is not K-theoretically free.
See Examples \ref{E:PTypeK} and~\ref{E:PTypeTRP}.
\item
If $d (n) = 2^n$ and $k (n) = 1$ for all~$n,$
then $\af$ is pointwise outer
but does not have the tracial Rokhlin property.
See Examples \ref{E:PTypeTRP} and~\ref{E:PTypeOut}.
\item
There are no actions of this specific type which are
hereditarily saturated but not pointwise outer.
However, the action of $\Zqt^2$ on $M_2$
generated by
$\Ad \left( \begin{smallmatrix} 1 & 0  \\
                                0 & -1 \end{smallmatrix} \right)$
and
$\Ad \left( \begin{smallmatrix} 0 & 1 \\
                                1 & 0 \end{smallmatrix} \right)$
is hereditarily saturated but not pointwise outer.
See Example~\ref{E:4Gp}.
\item
If $d (n) = 2$ for all~$n,$
and $k$ is given by $k (n) = 0$ for $n \geq 2$ and $k (1) = 1,$
then $\af$ is saturated but not hereditarily saturated.
See Examples \ref{E:PTypeSat} and~\ref{E:PTypeHSat}.
\item
If $d (n) = 2$ and $k (n) = 0$ for all~$n,$
then $\af$ is not saturated.
See Example~\ref{E:PTypeSat}.
\end{itemize}

\section{The Rokhlin property and
   K-theoretic freeness}\label{Sec:Rokhlin}

\indent
We treat the Rokhlin property and K-theoretic freeness together
because, in the situations to which they apply well,
they seem roughly comparable in strength.
The usefulness of both is limited to special classes of \ca s.
The Rokhlin property is not useful if there are too few \pj s,
since no action of any nontrivial group on a \uca\  %
with no nontrivial \pj s can have the \rp.
Thus (Example~\ref{E:FreeNotRkh} below),
even if the finite group~$G$ acts freely on~$X,$
the action of $G$ on $C (X)$ need not have the \rp.

The definition of K-theoretic freeness represents
an attempt to turn Theorem~\ref{CT:KThy} into a definition.
The condition must be strengthened;
see the discussion before Definition~\ref{D:KFree} below.
The usefulness of K-theoretic freeness depends on the
presence of nontrivial K-theory:
the trivial action of any finite group on any \ca\  of the
form ${\mathcal{O}}_2 \otimes A$ satisfies the strongest
possible form of K-theoretic freeness.
It is, of course, true that an action on a commutative \uca\  %
is K-theoretically free \ifo\  the action on the corresponding
space is free.
For the product type action of Definition~\ref{D:PType},
K-theoretic freeness is equivalent to the Rokhlin property,
and this seems likely to be true for general
product type actions on UHF~algebras,
and perhaps more generally.
(Example~\ref{E:RPn} limits how far this idea can be taken.)

No known version of noncommutative freeness agrees
both with K-theoretic freeness in the presence of sufficient K-theory
and with the Rokhlin property on~${\mathcal{O}}_2.$
See Problem~\ref{Pr:Free}.
However, for some applications one really needs the Rokhlin property.
See Example~\ref{E:RPn2}.

The Rokhlin property is as in Definition~3.1 of~\cite{Iz1}
(see below);
we first give the equivalent form in
Definition~1.1 of~\cite{PhtRp1}.
The property is, however, much older.
Early uses in \ca s (under a different name)
can be found in \cite{FM}, \cite{HJ1}, and~\cite{HJ2}.
The version for von Neumann algebras appeared even earlier,
for example in~\cite{Jn}.
It is a noncommutative generalization
of the statement of the Rokhlin Lemma in ergodic theory
(for the case $G = \Z$).
The \rp\  can also be considered to be modelled
on Theorem~\ref{CT:Cantor}.
Note, though, that Condition~(\ref{SRPDfn:2}) of the definition
is purely noncommutative in character,
and is essential for the applications of the \rp.
As we will see in Section~\ref{Sec:TRP},
especially Example~\ref{E:TRP} and the following discussion,
the \rp\  is quite rare.

\begin{dfn}\label{SRPDfn}
Let $A$ be a \uca,
and let $\af \colon G \to \Aut (A)$
be an action of a finite group $G$ on $A.$
We say that $\af$ has the
{\emph{Rokhlin property}} if for every finite set
$S \subset A$ and every $\ep > 0,$
there are \mops\  $e_g \in A$ for $g \in G$ such that:
\begin{enumerate}
\item\label{SRPDfn:1}
$\| \af_g (e_h) - e_{g h} \| < \ep$ for all $g, h \in G.$
\item\label{SRPDfn:2}
$\| e_g a - a e_g \| < \ep$ for all $g \in G$ and all $a \in S.$
\item\label{SRPDfn:3}
$\sum_{g \in G} e_g = 1.$
\end{enumerate}
We call the $(e_g)_{g \in G}$ a
{\emph{family of Rokhlin projections}} for $\af,$ $S,$ and $\ep.$
\end{dfn}

\begin{rmk}\label{R:IzumiRP}
The Rokhlin property can be neatly formulated in terms
of central sequence algebras.
Consider the \ca\  of all bounded sequences in $A$
(bounded functions from $\N$ to $A$).
It contains an ideal consisting of all
sequences vanishing at infinity.
Let $A^{\infty}$ be the quotient.
Let $A_{\infty}$ be the relative commutant in $A^{\infty}$
of the embedded copy of $A$ obtained as the image of the set
of constant sequences.
The action $\af \colon G \to \Aut (A)$
induces an action $\af_{\infty} \colon G \to \Aut (A_{\infty}).$
(There is no continuity issue since $G$ is discrete.)
Definition~3.1 of~\cite{Iz1} asks for
\mops\  $e_g \in A_{\infty}$ for $g \in G$ such that
$\sum_{g \in G} e_g = 1$ and such that
$(\af_{\infty})_g (e_h) = e_{g h}$ for all $g, h \in G.$
\end{rmk}

We make a few comments about what happens for more general groups.
For actions of~$\Z,$
one does not consider families of \pj s indexed by~$\Z$
(for which the sum in Definition~\ref{SRPDfn}(\ref{SRPDfn:2})
would not make sense),
but rather families indexed by arbitrarily long
finite intervals in~$\Z.$
(The important points about intervals
are that they are F{\o}lner sets in the group
and that they can tile the group.)
Moreover, to avoid K-theoretic obstructions,
one must in general allow several orthogonal such families
indexed by intervals of different lengths.
See the survey article~\cite{Iz0} and references there.
For the application of the \rp\  to classification of actions of~$\Z,$
see the survey article~\cite{Nk3}.
For more general discrete groups,
one encounters further difficulties with the choice
of subsets of the group.
The group must certainly be amenable,
and the results of~\cite{OW}
suggest that one may only be able to require that the sum
in Definition~\ref{SRPDfn}(\ref{SRPDfn:3}) be close to~$1,$
necessarily in a sense weaker than the norm topology.
The resulting notion looks more like the tracial \rp\  %
(Definition~\ref{D:TRP} below).
In addition, if $G$ is not discrete, one must abandon \pj s.
See~\cite{OW} for actions of locally compact groups on
measure spaces,
see~\cite{Oc} (especially the theorem in Section~6.1)
for actions of countable amenable groups on von Neumann algebras,
see~\cite{Nk0} for actions of $\Z^d$ on \ca s,
see~\cite{Ks2} for actions of~$\R$ on \ca s,
and see~\cite{HW} for actions of compact groups on \ca s.
Much of this, including more on the von Neumann algebra
versions (which came first), is discussed in~\cite{Iz0}.

\begin{exa}\label{E:PTypeRk}
Let $\af \colon \Zqt \to \Aut (A)$ be as in Definition~\ref{D:PType}.
Then $\af$ has the \rp\  \ifo\  $k (n) = \frac{1}{2} d (n)$
for infinitely many $n \in \N.$
This is part of Proposition~2.4 of~\cite{PhtRp4},
but was known long before;
see Lemma~1.6.1 of~\cite{FM}.
\end{exa}

By combining tensor factors,
we can write any such action as a product type action so that
$k (n) = \frac{1}{2} d (n)$ for all $n \in \N.$

It is trivial that if $\af \colon G \to \Aut (C (X))$ has the \rp,
then the action of $G$ on $X$ is free.
As we will see in Corollary~\ref{T:RPImpFree} below,
the \rp\  for an action of a finite group on a unital
type~I \ca\  $A$ implies freeness of the action on $\Prim (A).$
(One can also give a direct proof.)
For totally disconnected~$X,$
the converse is Theorem~\ref{CT:Cantor}.
In general, however, the converse is false.
This is our first encounter with one of the main defects of the \rp,
namely that it is appropriate only for \ca s
with a sufficient supply of \pj s.

\begin{exa}\label{E:FreeNotRkh}
Let $X$ be the circle $S^1$ and let $G = \Zqt,$
with nontrivial element~$g.$
Then $g \zt = - \zt$ generates a free action of $G$ on~$X.$
The corresponding action $\af$ of $G$ on $C (X)$
does not have the \rp,
because $C (X)$ has no nontrivial \pj s.
Specifically, if $\ep < 1$ it is not possible to find
\pj s $e_1, e_g \in C (S^1)$ such that $e_1 + e_g = 1$
and $\| \af_g (e_1) - e_g \| < \ep.$
\end{exa}

Any free action on a connected compact manifold gives
the same outcome.
Other easy examples are
the action of $\Zq{n}$ on the circle $S^1$ by rotation,
and the action of $\Zqt$ on the $m$-sphere $S^m$ via $x \mapsto - x.$

However, there seem to be more subtle issues with the
\rp\  than merely lack of sufficiently many \pj s.

\begin{exa}\label{E:RPn}
In Example~4.1 of~\cite{PhtRp4},
there is an action of $\Zqt$
on a \uca\   $A = \dirlim A_n,$
obtained as the direct limit of actions on the $A_n,$
such that $A_n \cong C (S^{2 k}, \, M_{s (n)})$
for suitable integers $s (0) < s (1) < \cdots$
and a fixed integer $k \geq 1,$
and such that the action on $A_n$ is the tensor product
of the action on $C (S^{2 k})$ induced by $x \mapsto - x$
with an inner action on $M_{s (n)}.$
These actions surely deserve to be considered free.
Therefore the direct limit action
should also deserve to be considered free.
The direct limit algebra $A$ is simple and~AF,
and in particular is well supplied with \pj s
by any reasonable standard.
However, the direct limit action does not have the \rp.
In fact, its crossed product is not~AF,
because the $K_0$-group of the crossed product has torsion
isomorphic to $\Zq{2^k}.$
See Proposition~4.2 of~\cite{PhtRp4} for details.
So the \rp\  would contradict
Theorem~\ref{T:RkStruct}(\ref{CRP_DL:AF}) below.
\end{exa}

We address this issue again in Problem~\ref{Pr:Free} below.

Crossed products by actions with the \rp\  %
preserve many properties of \ca s.
These can be thought of as being related to Theorem~\ref{CT:Mf}.

\begin{thm}\label{T:RkStruct}
Crossed products by actions of finite groups with the Rokhlin property
preserve the following classes of \ca s.
\begin{enumerate}
\item\label{CRP:Simple}
Simple \uca s.
(See Proposition~\ref{T:RPImpSOut} and Theorem~\ref{T:Ks} below.)
\item\label{CRP:DLim}
Various classes of unital but not necessarily simple countable
direct limit \ca s using semiprojective building blocks,
and in which the maps of the direct system need not be injective:
\begin{enumerate}
\item\label{CRP_DL:AF}
AF~algebras.
(See Theorem~2.2 of~\cite{PhtRp1}.)
\item\label{CRP_DL:AI}
AI~algebras.
(See Corollary~3.6(1) of~\cite{OP2}.)
\item\label{CRP_DL:AT}
AT~algebras.
(See Corollary~3.6(2) of~\cite{OP2}.)
\item\label{CRP_DL:NCCW}
Unital direct limits of one dimensional noncommutative CW~complexes.
(See Corollary~3.6(4) of~\cite{OP2}.)
\item\label{CRP_DL:Tp}
Unital direct limits of Toeplitz algebras,
a special case
of the sort studied in~\cite{LS}
except not necessarily of real rank zero.
(See Example~2.10 and Theorem~3.5 of~\cite{OP2}.)
\item\label{CRP_DL:Other}
Various other classes;
see Section~2 and Theorem~3.5 of~\cite{OP2} for details.
\end{enumerate}
\item\label{CRP:SAH}
Simple unital AH~algebras with slow dimension growth and real rank zero.
(See Theorem~3.10 of~\cite{OP2}.)
\item\label{CRP:Abs}
$D$-absorbing separable \uca s for a strongly self-absorbing
\ca~$D.$
(See Theorem~1.1(1) and Corollary~3.4(i) of~\cite{HW}. 
See~\cite{HW} for the definition of a strongly self-absorbing \ca.)
\item\label{CRP:RR0}
Unital \ca s with real rank zero.
(See Proposition~4.1(1) of~\cite{OP2}.)
\item\label{CRP:tsr1}
Unital \ca s with stable rank one.
(See Proposition~4.1(2) of~\cite{OP2}.)
\item\label{CRP:UCT}
Separable nuclear \uca s whose quotients all satisfy the
Universal Coefficient Theorem.
(See Proposition~3.7 of~\cite{OP2}.)
\item\label{CRP:Kbg}
Unital Kirchberg algebras
(simple separable nuclear purely infinite \ca s)
which satisfy the Universal Coefficient Theorem.
(See Corollary~3.11 of~\cite{OP2}.)
\item\label{CRP:ADiv}
Separable unital approximately divisible \ca s.
(See Corollary~3.4(2) of \cite{HW},
which also covers actions of compact groups;
also see Proposition~4.5 of~\cite{OP2}.)
\item\label{CRP:PjProp}
Unital \ca s with the ideal property and \uca s with
the projection property.
(See \cite{PP};
also see~\cite{PP} for the definitions of these properties).
\item\label{CRP:KThy}
Simple \uca s whose K-theory:
\begin{enumerate}
\item\label{CRP:KThy:1}
Is torsion free.
\item\label{CRP:KThy:2}
Is a torsion group.
\item\label{CRP:KThy:3}
Is zero.
\end{enumerate}
(See the discussion below.)
\end{enumerate}
\end{thm}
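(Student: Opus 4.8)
The proof comes in two pieces. Parts~(\ref{CRP:Simple})--(\ref{CRP:PjProp}) are each obtained by quoting the reference attached to them in the statement; what a proof adds is the observation that all of them rest on one structural consequence of the \rp, namely the reformulation in Remark~\ref{R:IzumiRP}. Working in the central sequence algebra $A_\infty$, the \rp\ produces \mops\ $(e_g)_{g \in G}$ in $A_\infty$ with $\sum_{g \in G} e_g = 1$ and $(\af_\infty)_g(e_h) = e_{gh}$; together with the canonical unitaries $u_g$ of the crossed product these give a full system of $|G| \times |G|$ matrix units inside $A_\infty \rtimes_{\af_\infty} G$, so that this algebra is a matrix algebra over the corner cut by $e_1$, and a standard approximate-intertwining argument transports this to the statement that $C^*(G, A, \af)$ is, up to inductive limits, built from matrix amplifications of approximately central corners of $A$. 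Each permanence assertion in (\ref{CRP:Simple})--(\ref{CRP:PjProp}) then reduces to closure of the relevant class under passage to hereditary subalgebras, under tensoring with $M_{|G|}$, and under the pertinent kind of inductive limit with semiprojective building blocks; these are exactly the facts organized in \cite{PhtRp1}, \cite{OP2}, \cite{HW}, and \cite{PP}, with Proposition~\ref{T:RPImpSOut} and Theorem~\ref{T:Ks} furnishing the simplicity statement~(\ref{CRP:Simple}).

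It remains to supply the ``discussion below'' promised for Part~(\ref{CRP:KThy}), where $A$ is moreover simple. Here one must locate $K_*(C^*(G, A, \af))$ relative to $K_*(A)$. The input is the K-theoretic analysis of actions with the \rp\ carried out by Izumi in~\cite{Iz1} (compare the computations underlying~\cite{OP2}): since $A$ is simple and unital, a nonzero corner $e A e$ by a \pj\ $e \in A$ is full, so $K_*(eAe) \cong K_*(A)$, and combining this with the structural picture above and continuity of $K_*$ one obtains that $K_*(C^*(G, A, \af))$ is, up to passing to a subgroup, an inductive limit of copies of $K_*(A)$ with connecting homomorphisms of no special form. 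Establishing this cleanly is the one genuinely substantive point; I would invoke Izumi's results for it rather than reprove them.

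The three conclusions are then immediate. The class of torsion-free abelian groups is closed under subgroups and under inductive limits---for the latter, if $x$ lies in an inductive limit of torsion-free groups with $n x = 0$, represent $x$ at some stage; its image at a later stage has order dividing $n$, hence vanishes, so $x = 0$---and this gives~(\ref{CRP:KThy:1}). The same two closure properties hold for torsion abelian groups and for the zero group, giving~(\ref{CRP:KThy:2}) and~(\ref{CRP:KThy:3}). I expect the main obstacle to lie not in any of these deductions but in the amount of external machinery invoked: each of Parts~(\ref{CRP_DL:AF})--(\ref{CRP:PjProp}) encapsulates a nontrivial inductive-limit decomposition of the crossed product, and the harmless-looking Part~(\ref{CRP:KThy}) depends on the structure theory of Rokhlin crossed products. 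A truly self-contained argument would have to reproduce the approximate-intertwining passage from the central-sequence identification $A_\infty \rtimes_{\af_\infty} G \cong M_{|G|}\big(e_1 (A_\infty \rtimes_{\af_\infty} G) e_1\big)$ to a statement about $C^*(G, A, \af)$ itself; everything after that point is routine.
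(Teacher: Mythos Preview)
Your treatment of Parts~(\ref{CRP:Simple})--(\ref{CRP:PjProp}) is fine: each is a citation, and your sketch of the common mechanism (the central-sequence matrix-unit picture and the resulting local approximation of the crossed product by matrix amplifications of corners of~$A$) is the right heuristic behind the references.

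For Part~(\ref{CRP:KThy}) your argument is vaguer than it needs to be, and the phrase ``up to passing to a subgroup, an inductive limit of copies of $K_*(A)$'' is not a precise statement. The crossed product is not literally an inductive limit of algebras $M_{|G|}(e A e)$ unless $A$ itself carries an inductive-limit structure; what the Rokhlin property gives is local approximation, not a direct system. So the inductive-limit closure properties you invoke for abelian groups do not apply without further work.

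The paper's route is shorter and avoids this. It uses two ingredients: first, Izumi's Theorem~3.13 of~\cite{Iz1} (stated as Theorem~\ref{T:RPKthInj}), which says that for a Rokhlin action of a finite group on a simple unital \ca~$A$ the inclusion $A^G \hookrightarrow A$ induces an \emph{injection} $K_*(A^G) \hookrightarrow K_*(A)$; second, the Rokhlin property implies hereditary saturation (Proposition~\ref{P:RPImpHS}), hence saturation, so $A^G$ and $C^*(G,A,\af)$ are Morita equivalent and $K_*(C^*(G,A,\af)) \cong K_*(A^G)$. Combining these, $K_*(C^*(G,A,\af))$ is simply isomorphic to a subgroup of $K_*(A)$. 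The three conclusions (\ref{CRP:KThy:1})--(\ref{CRP:KThy:3}) are then immediate from closure of torsion-free, torsion, and zero abelian groups under passage to subgroups---no inductive limits needed. You gestured toward Izumi's results but did not isolate the injectivity statement or the Morita equivalence; those are the two clean steps.
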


Further classes will appear in~\cite{PP}.

Many of the parts of Theorem~\ref{T:RkStruct} are special
for actions of finite (or compact) groups.
For example,
Parts (\ref{CRP_DL:AF}) and~(\ref{CRP_DL:AI}) fail for $G = \Z,$
because the \cp\  will have nontrivial $K_1$-group.
We refer to Section~2.2 of~\cite{Iz0} and the references there
for many positive results for actions of $\Z$ with the \rp.

The main ingredient for Theorem~\ref{T:RkStruct}(\ref{CRP:KThy})
is the following result of Izumi.
Using it, one can derive many other statements similar to
the ones given.

\begin{thm}[Theorem~3.13 of~\cite{Iz1}]\label{T:RPKthInj}
Let $A$ be a simple \uca,
let $G$ be a finite group,
and let $\af \colon G \to \Aut (A)$ be an action with the \rp.
Then the inclusion $A^G \to A$
induces an injective map $K_* (A^G) \to K_* (A).$
\end{thm}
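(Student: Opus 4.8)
The plan is to manufacture from the Rokhlin property a unital \hm\ $\Ps \colon A \to (A^G)^{\infty}$ into the sequence algebra of $A^G$ (the construction of Remark~\ref{R:IzumiRP} applied to $A^G$) whose restriction to $A^G \S A$ is the canonical embedding $\kp \colon A^G \to (A^G)^{\infty}$ of $A^G$ as constant sequences, and then to invoke the elementary fact that $\kp$ is injective on $K$-theory. Granting both, and writing $\io \colon A^G \to A$ for the inclusion, we get $\Ps \circ \io = \kp,$ so $\Ps_* \circ \io_* = \kp_*$ is injective, and therefore $\io_*$ is injective, which is the assertion.

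To construct $\Ps,$ fix finite subsets $F_1 \S F_2 \S \cdots$ of $A$ with dense union, each invariant under $\af$ and under $a \mapsto a^*$ and with $F_n F_n \S F_{n+1},$ and fix $\ep_n \to 0.$ By Definition~\ref{SRPDfn}, for each $n$ choose \mops\ $e^{(n)}_g \in A$ $(g \in G)$ with $\sum_{g \in G} e^{(n)}_g = 1,$ with $\| \af_g ( e^{(n)}_h ) - e^{(n)}_{gh} \| < \ep_n$ for all $g, h \in G,$ and with $\| e^{(n)}_g a - a e^{(n)}_g \| < \ep_n$ for all $g \in G$ and $a \in F_n.$ Put $\te_n \colon A \to A,$ $\te_n (a) = \sum_{g \in G} e^{(n)}_g \af_g (a).$ Because $\sum_g e^{(n)}_g = 1$ we have $\| \te_n \| \le \card (G)$ and $\te_n (a) = a$ \emph{exactly} for $a \in A^G.$ Using that the $e^{(n)}_g$ are \emph{pairwise orthogonal} projections, together with the two estimates above, a direct computation shows that for $a, b \in F_n$ the quantities $\| \te_n (a) \te_n (b) - \te_n (ab) \|,$ $\| \te_n (a)^* - \te_n (a^*) \|,$ and $\| \af_h (\te_n (a)) - \te_n (a) \|$ $(h \in G)$ are all $O(\ep_n).$ Hence, with $E \colon A \to A^G$ the canonical conditional expectation $E (a) = \card (G)^{-1} \sum_{g \in G} \af_g (a),$ also $\| E (\te_n (a)) - \te_n (a) \| = O(\ep_n)$ for $a \in F_n;$ so the maps $\rh_n \colon A \to A^G$ given by $\rh_n (a) = \frac{1}{2} ( E (\te_n (a)) + E (\te_n (a^*))^* )$ form a uniformly bounded sequence of linear, $*$-preserving maps that fix $A^G$ pointwise and are asymptotically multiplicative (first on $\bigcup_m F_m,$ then on $A$ by density and uniform boundedness). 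Passing to the quotient, $\Ps (a) := [ ( \rh_n (a) )_n ]$ is a well-defined unital \hm\ $A \to (A^G)^{\infty},$ and $\Ps \circ \io = \kp$ by construction.

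It remains to see that for every \ca~$D$ the constant embedding $\kp_D \colon D \to D^{\infty}$ is injective on $K_*$ (we need the case $D = A^G$). Factor $\kp_D$ as the constant embedding $D \to \ell^{\infty} (D)$ followed by the quotient $q \colon \ell^{\infty} (D) \to D^{\infty} = \ell^{\infty} (D) / c_0 (D).$ Every coordinate evaluation $\ev_m \colon \ell^{\infty} (D) \to D$ is a left inverse of the constant embedding, so the latter is split injective on $K_*,$ and by exactness of $K$-theory for $0 \to c_0 (D) \to \ell^{\infty} (D) \to D^{\infty} \to 0$ the kernel of $q_*$ is the image of $K_* (c_0 (D)) \to K_* (\ell^{\infty} (D)).$ Since $c_0 (D) = \dirlim_N ( D \oplus \cdots \oplus D )$ ($N$ summands), continuity of $K_*$ identifies $K_* (c_0 (D))$ with $\bigoplus_{n \ge 1} K_* (D).$ So if $x \in K_* (D)$ is killed by $\kp_D,$ its image under the constant embedding in $K_* (\ell^{\infty} (D))$ comes from $K_* (c_0 (D)),$ hence is represented using only coordinates $\le N$ for some $N;$ applying $(\ev_{N+1})_*$ then returns $x$ on one side and $0$ on the other, so $x = 0.$ The same works for $K_0$ and $K_1.$

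The only step of real content is the construction of $\Ps$: this is where, and the only place where, the Rokhlin property is used, and the one delicate point is that the approximate multiplicativity, $*$-preservation, and $\af$-invariance of $\te_n$ hold \emph{simultaneously} with errors tending to $0$ --- exactly what the three conditions of Definition~\ref{SRPDfn}, together with the exact orthogonality of the $e^{(n)}_g,$ provide. Everything after that is routine. (Simplicity of $A$ is in fact never used; it appears in the hypotheses only to match the cited source.)
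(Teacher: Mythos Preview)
The survey paper does not give its own proof of this theorem: it is quoted verbatim from Izumi~\cite{Iz1}, so there is nothing in the paper to compare against.  Your argument is essentially Izumi's.  The heart of his proof is exactly the construction you carry out: from Rokhlin projections $(e_g)_{g\in G}$ one manufactures a unital \hm\ $A\to (A^G)^{\infty}$ restricting to the constant embedding on~$A^G$, via $a\mapsto \sum_g e_g\,\af_g(a)$, and then one reads off injectivity on $K_*$.  Your handling of the passage from ``approximately $G$-invariant'' to ``exactly in $A^G$'' by postcomposing with the expectation~$E$ is correct and is precisely the point one must address (since $(A^G)^{\infty}$ and $(A^{\infty})^G$ differ in general).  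Your observation that simplicity is never used is also correct.

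One small remark on presentation: your six-term-exact-sequence argument that the constant embedding $\kp\colon D\to D^{\infty}$ is $K_*$-injective is valid, but more machinery than needed.  The elementary version is: if $p,q$ are \pj s in matrices over~$D$ with $[p]=[q]$ in $K_0(D^{\infty})$, lift a witnessing partial isometry to a bounded sequence in matrices over~$D$; for large enough index this gives an approximate, hence genuine, \mvnc\ of (stabilizations of) $p$ and~$q$ in matrices over~$D$.  The $K_1$ case is analogous.  This avoids computing $K_*(c_0(D))$.
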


The other ingredient is that the \rp\  implies that
$K_* (C^* (G, A, \af)) \cong K_* (A^G).$
Proposition~\ref{P:RPImpHS} below shows that the \rp\  implies
hereditary saturation (Definition~\ref{D:HSat} below),
and in particular implies saturation (Definition~\ref{D:Sat} below).
Now combine Proposition~7.1.8 and Theorem~2.6.1 of~\cite{Ph1}.

For some of the classes in Theorem~\ref{T:RkStruct},
such as those
in Parts~(\ref{CRP:SAH}), (\ref{CRP:RR0}), (\ref{CRP:tsr1}),
(\ref{CRP:UCT}), and~(\ref{CRP:PjProp}),
it is expected that weaker conditions than the \rp\  should suffice.
This is certainly true for Part~(\ref{CRP:Simple}).
See Theorem~\ref{T:Ks} and Corollary~\ref{T:StrConnesAndSimp} below.

Theorem~\ref{T:RkStruct}(\ref{CRP:Abs}) holds for actions
of second countable compact groups with the \rp\   and,
provided the strongly selfabsorbing \ca\  is $K_1$-injective,
for actions
of $\Z$ and $\R$ with the \rp.
See Theorem~1.1(1) of~\cite{HW}.

Theorem~\ref{T:RkStruct}(\ref{CRP:KThy})
brings back the point that the Rokhlin property is slightly
too strong.
For example,
if $G$ is a finite group acting freely on a compact
space $X$ such that $K^* (X)$ is torsion free,
it does not follow that $K_* (C^* (G, X))$ is torsion free.
The action of $\Zqt$ on $S^2$ generated by $x \mapsto - x$
is a counterexample.
(The K-theory of the \cp\  is the same as for the
real projective space $\R P^2.$)
Example~\ref{E:RPn} shows that this phenomenon
can even occur for an action on a simple AF~algebra.

We also have:

\begin{thm}[Theorem~1.25 of~\cite{Sr}]\label{T:RokhIdeals}
Let $\af \colon G \to \Aut (A)$
be an action of a finite group on a \uca\  which has the \rp.
Then every ideal $J \subset C^* (G, A, \af)$ has the form
$C^* (G, I, \af_{(\cdot)} |_I)$
for some $G$-invariant ideal $I \subset A.$
\end{thm}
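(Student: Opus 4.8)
The plan is to prove the stronger statement that $J = C^* \big( G, \, A \cap J, \, \af_{(\cdot)} |_{A \cap J} \big)$ for \emph{every} ideal $J \S C^* (G, A, \af)$; this simultaneously gives the asserted form of $J$ and identifies the inverse of the standard map $I \mapsto C^* (G, I, \af_{(\cdot)} |_I)$ from $G$-invariant ideals of $A$ to ideals of the \cp. So fix an ideal $J \S C^* (G, A, \af)$ and set $I = A \cap J.$ Write $u_g$ for the canonical unitaries (which lie in $C^* (G, A, \af)$ since $A$ is unital), so that $\af_g (a) = u_g a u_g^*.$ First one checks that $I$ is $G$-invariant: for $a \in A \cap J$ we have $\af_g (a) \in u_g J u_g^* \cap A \S J \cap A.$ Since $I \S J$ and $J$ is an ideal, each $a u_g$ with $a \in I$ lies in $J,$ and hence $C^* (G, I, \af_{(\cdot)} |_I) = \bigoplus_{g \in G} I u_g \S J$ (for finite $G$ the \cp\ of the closed ideal $I$ is exactly this algebraic direct sum inside $C^* (G, A, \af)$). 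It remains to prove the reverse inclusion.

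Let $E \colon C^* (G, A, \af) \to A$ be the standard faithful conditional expectation, $E \big( \sum_{g \in G} a_g u_g \big) = a_1.$ For $x = \sum_{g \in G} a_g u_g$ one has $a_g = E (x u_g^{-1}),$ and $x \in C^* (G, I, \af_{(\cdot)} |_I)$ \ifo\ every $a_g \in I.$ Since $J$ is an ideal, $x \in J$ implies $x u_g^{-1} \in J$ for all $g.$ Therefore the whole inclusion $J \S C^* (G, I, \af_{(\cdot)} |_I)$ follows from the single claim that $E (J) \S J$: given that, $a_g = E (x u_g^{-1}) \in A \cap J = I$ for each $g,$ whenever $x \in J.$

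To prove $E (J) \S J,$ fix $x = \sum_{g \in G} a_g u_g \in J$ and $\dt > 0.$ Apply the \rp\ to the finite set $S = \{ a_g \colon g \in G \}$ and tolerance $\dt,$ obtaining \mops\ $(e_g)_{g \in G}$ in $A$ with $\sum_g e_g = 1,$ with $\| \af_h (e_g) - e_{h g} \| < \dt$ (condition~(\ref{SRPDfn:1})), and with $\| e_g a - a e_g \| < \dt$ for $a \in S$ (condition~(\ref{SRPDfn:2})). Put $y = \sum_{g \in G} e_g \, x \, e_g,$ which lies in $J$ because $e_g \in A$ and $J$ is an ideal. Using $u_h e_g = \af_h (e_g) u_h,$ then condition~(\ref{SRPDfn:1}) to replace $\af_h (e_g)$ by $e_{h g},$ then condition~(\ref{SRPDfn:2}) to slide each $a_h$ past $e_g,$ and finally the orthogonality $e_g e_{h g} = 0$ for $h \neq 1$ together with $e_1 \cdot 1 = e_g e_g = e_g$ and $\sum_g e_g = 1,$ a routine estimate gives $\| y - E (x) \| \leq C \dt,$ where $C$ depends only on $\card (G)$ and $\max_g \| a_g \|$ and not on $\dt.$ Hence $\dist (E (x), J) \leq C \dt$ for every $\dt > 0,$ and since $J$ is closed, $E (x) \in J.$

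The only real work is the estimate $\big\| \sum_{g \in G} e_g x e_g - E (x) \big\| \leq C \dt,$ and its point is that all three conditions in the definition of the \rp\ are genuinely used: condition~(\ref{SRPDfn:1}) lets us commute the $u_h$ past the Rokhlin projections (conjugation sends $e_g$ to within $\dt$ of $e_{h g}$), condition~(\ref{SRPDfn:2}) lets us move each $a_h$ past the $e_g$ so that the orthogonality relations annihilate every term with $h \neq 1,$ and condition~(\ref{SRPDfn:3}) is what makes the surviving $h = 1$ terms reassemble into $\sum_g a_1 e_g = E (x).$ I expect this bookkeeping — rather than any conceptual difficulty — to be the main obstacle.
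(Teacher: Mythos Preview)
The paper does not give its own proof of this theorem; it is quoted as Theorem~1.25 of~\cite{Sr} and used later (in the proof of Proposition~\ref{P:RPImpHS}) as a black box. So there is nothing in the paper to compare your argument against.

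That said, your argument is correct and is the standard direct route: the Rokhlin property forces the canonical conditional expectation $E \colon C^* (G, A, \af) \to A$ to preserve every closed two-sided ideal~$J,$ and once $E (J) \subset J$ one reads off that each Fourier coefficient $a_g = E (x u_g^{-1})$ of $x \in J$ lies in $A \cap J,$ giving $J = \bigoplus_{g} (A \cap J) u_g.$ The estimate you sketch is right: with $u_h e_g = \af_h (e_g) u_h,$ condition~(\ref{SRPDfn:1}) replaces $\af_h (e_g)$ by $e_{h g}$ up to~$\dt,$ condition~(\ref{SRPDfn:2}) commutes $a_h$ past $e_g$ up to~$\dt,$ orthogonality kills the $h \neq 1$ terms, and condition~(\ref{SRPDfn:3}) reassembles the survivors into $a_1 = E (x).$ One minor typo: your ``$e_1 \cdot 1 = e_g e_g = e_g$'' should presumably read ``$e_g e_{1 \cdot g} = e_g e_g = e_g$.'' This is essentially the specialization to finite~$G$ with honest Rokhlin projections of the more general Rokhlin* argument in~\cite{Sr}.
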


For an application of these structural results,
see~\cite{PV}.
This paper uses a crossed product construction
(following Connes' von Neumann algebra construction~\cite{Connes1})
to produce a simple separable exact \ca\  $A$
which is not isomorphic to its opposite algebra,
and which has a number of nice properties.
The action has the \rp,
and this fact is crucial in the computation of $K_* (A)$
and the verification of a number of the properties of~$A.$

The use of the \rp\  to obtain structural results
for \cp s by finite groups seems to be very recent
(although it has a longer history for actions of~$\Z$).
Indeed, Theorem~\ref{T:RkStruct}(\ref{CRP_DL:AF})
(crossed products
by finite group actions with the \rp\  %
preserve the class of AF~algebras)
could easily have been proved long ago.
It was actually proved only after
the analogous statement
for the tracial \rp\  and \ca s with tracial rank zero,
Theorem~\ref{T:RokhTAF}
(although both theorems appear in the same paper).
The main reason is that the tracial analog
seems to be much more useful for structure theory.
See Section~\ref{Sec:TRP}.

The main application so far of the \rp\  has been
to the structure and classification of group actions.
This project began in von Neumann algebras.
As just one example,
Jones proved (Corollary~5.3.7 of~\cite{Jn})
that {\emph{every}} pointwise outer action
of a finite group~$G$ on the hyperfinite factor $R$
of type~II$_1$ is conjugate to a particular model action
of~$G.$
That is, up to conjugacy,
there is only one pointwise outer action of $G$ on~$R.$
An essential step in the proof is showing that
pointwise outerness implies the von Neumann algebra
analog of the \rp.
Ocneanu~\cite{Oc} extended the result to actions of general
countable amenable groups.
Classification of actions
on \ca s with the \rp\  is the main thrust
of the papers~\cite{FM}, \cite{HJ1}, and~\cite{HJ2},
and the more recent papers \cite{Iz1} and~\cite{Iz2}.

We state four theorems from~\cite{Iz2}.
The first two require the following ``model action''.

\begin{dfn}\label{D:RegPType}
Let $G$ be a finite group, and let $n = \card (G).$
Let $u \colon G \to M_n$ be the image of
the regular representation of~$G$
under some isomorphism $L (l^2 (G)) \to M_n.$
Let $\ld \colon G \to \Aut (M_n)$
be $\ld_g (a) = u (g) a u (g)^*.$
Let $D_G$ be the UHF~algebra
$D_G = \bigotimes_{m = 1}^{\infty} M_{n},$
and let $\mu^G \colon G \to \Aut (D_G)$
be the product type action
$\mu_g^G = \bigotimes_{m = 1}^{\infty} \ld_g.$
\end{dfn}

For $G = \Zqt,$
the action $\mu^G$ is obtained as in Definition~\ref{D:PType}
by taking $d (n) = 2$ and $k (n) = 1$ for all~$n.$

\begin{thm}[Theorem~3.4 of~\cite{Iz2}]\label{T:IzPIStab}
Let $A$ be a unital Kirchberg algebra
(simple, separable, purely infinite, and nuclear)
which satisfies the Universal Coefficient Theorem.
Let $G$ be a finite group and let $\af \colon G \to \Aut (A)$
have the Rokhlin property.
Suppose $(\af_g)_*$ is the identity on $K_* (A)$ for all $g \in G.$
Then $\af$ is conjugate to the action $g \mapsto \id_A \otimes \mu_g^G$
on $A \otimes D_G.$
\end{thm}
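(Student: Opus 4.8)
The plan is to split the proof into an \emph{absorption} step and a \emph{uniqueness} step, the point of the former being that the \rp\ is strong enough to force $A$ to already look like $A \otimes D_G$ and $\af$ to look like $\af \otimes \mu^G$. Put $B = A \otimes D_G$. Since $(D_G, \mu^G)$ absorbs itself equivariantly — the isomorphism $D_G \otimes D_G \cong D_G$ built from Definition~\ref{D:RegPType} carries $\mu^G \otimes \mu^G$ to $\mu^G$ — both $\af \otimes \mu^G$ and $\id_A \otimes \mu^G$ are actions on $B$ that absorb $\mu^G$; both have the \rp, because the \rp\ of an action passes to its tensor product with any other action (apply this to $\mu^G$ and to $\af$); $B$ is again a unital Kirchberg \ca\ satisfying the UCT; and both act trivially on $K_* (B)$. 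For the last point, each $\ld_g$ of Definition~\ref{D:RegPType} is inner and hence trivial on $K_0 (M_{\card (G)})$, so $\mu^G$ is trivial on $K_0 (D_G)$ while $K_1 (D_G) = 0$; combining this with the hypothesis $(\af_g)_* = \id$ and the Künneth formula (available by the UCT), both $(\af \otimes \mu^G)_*$ and $(\id_A \otimes \mu^G)_*$ are the identity on $K_* (B) \cong K_* (A) \otimes K_* (D_G)$. It therefore suffices to prove (i) that $(A, \af)$ is conjugate to $(B, \af \otimes \mu^G)$, and (ii) that any two actions of $G$ on a unital UCT Kirchberg \ca\ which have the \rp, act trivially on $K_*$, and absorb $\mu^G$ are conjugate; applying (ii) to $\af \otimes \mu^G$ and $\id_A \otimes \mu^G$ on $B$ and composing with (i) gives $(A, \af) \cong (B, \af \otimes \mu^G) \cong (B, \id_A \otimes \mu^G)$, as desired.

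For (i), the \rp\ provides, for each finite set and tolerance, Rokhlin \pj s, and these are exactly a unital $\ast$-\hm\ $C (G) \to A$ — from $\C^{\card (G)}$ onto the orthogonal \pj s $e_g$ — which is approximately equivariant for the translation action and has approximately central range; since $C (G)$ is \fd, hence semiprojective, a reindexing argument turns this into a genuine unital equivariant $\ast$-\hm\ $C (G) \to A_{\infty}$, the form of the \rp\ recorded in Remark~\ref{R:IzumiRP}. The real work is to promote this to a unital equivariant $\ast$-\hm\ $D_G \to A_{\infty}$: writing the building block $M_{\card (G)}$ with its action $\ld$ as the crossed product $C (G) \rtimes_{\mathrm{transl}} G$ with its canonical action, one adjoins to the central copy of $C (G)$ unitaries of $A_{\infty}$ implementing translation and transforming correctly under $\af_{\infty}$; the existence of such unitaries (or of a cocycle version, to be cleaned up later) is a $KK$- and $K$-theoretic matter, and the obstruction vanishes because the relevant $\Z [G]$-module structure is trivial — here pure infiniteness makes Kirchberg--Phillips classification available inside $A_{\infty}$ and the UCT makes the data computable by $K$-theory. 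Iterating over the tensor factors of $D_G$ and reindexing yields the required $\ast$-\hm, after which a two-sided approximate intertwining between $A$ and $A \otimes D_G$ — in the style of McDuff and Elliott, using this central copy of $D_G$ and the obvious central copy of $D_G$ in $B_{\infty}$ — produces an isomorphism $A \to B$ carrying $\af$ to $\af \otimes \mu^G$.

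For (ii), let $\bt$ and $\gm$ be two such actions on $B$; the strategy is an Evans--Kishimoto one-sided intertwining. Using the equivariant classification of unital $\ast$-\hm s into Kirchberg algebras — two such \hm s inducing the same $KK$-class are asymptotically unitarily equivalent, and for \rp\ actions the connecting path of unitaries can be chosen approximately central and approximately equivariant — together with the absorbed copies of $\mu^G$ to supply the room needed to match up finite pieces (as in the proof of Theorem~\ref{CT:Cantor}), one constructs inductively unitaries $u_n \in B$ so that $\Ad (u_n \cdots u_1)$ converges to an \am\ $\te$ of $B$ with $\te \circ \bt_g \circ \te^{-1} = \Ad (w_g) \circ \gm_g$ for some $\gm$-cocycle $(w_g)_{g \in G}$; that is, $\bt$ and $\gm$ are cocycle conjugate. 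One then removes the cocycle: for an action with the \rp\ every cocycle is an approximate coboundary (the vanishing of the relevant one-cohomology for \rp\ actions, established in~\cite{Iz1}), so a final inner perturbation yields a genuine conjugacy $\bt \sim \gm$.

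The main obstacle, common to (i) and (ii), is the equivariant stability estimate: one must manufacture unitaries that are simultaneously close to central, close to equivariant, and implement a prescribed $KK$-trivial homotopy, with bounds uniform enough to drive the intertwining. All three hypotheses are indispensable here — pure infiniteness, so that there are enough unitaries and so that Kirchberg--Phillips classification governs $\ast$-\hm s, including into central sequence algebras; the UCT, so that the $KK$-data reduces to $K$-theory with its $\Z [G]$-module structure, which is trivial by the hypothesis on $(\af_g)_*$; and the \rp, which makes the central-sequence machinery usable and, via Theorem~\ref{T:RPKthInj}, keeps the $K$-theory of the fixed point algebra under control. Weakening any one of them — say to a merely pointwise outer action, to a non-nuclear algebra, or to an algebra with too little $K$-theory — removes a tool the argument cannot do without.
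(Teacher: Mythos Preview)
The paper does not prove this theorem at all: it is stated as Theorem~3.4 of~\cite{Iz2} and cited without argument, as one of several classification results of Izumi quoted in a survey. So there is no ``paper's own proof'' to compare your proposal against.

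That said, your outline is a reasonable high-level sketch of Izumi's actual strategy in~\cite{Iz1} and~\cite{Iz2}: the absorption step $(A,\af)\cong(A\otimes D_G,\af\otimes\mu^G)$ via a unital equivariant embedding of $D_G$ into the central sequence algebra, followed by a uniqueness/intertwining argument for Rokhlin actions on Kirchberg algebras trivial on $K$-theory, with cocycle conjugacy upgraded to conjugacy via approximate $1$-cohomology vanishing. Be aware, though, that what you have written is a roadmap rather than a proof --- phrases like ``the obstruction vanishes because the relevant $\Z[G]$-module structure is trivial'' and ``the existence of such unitaries is a $KK$- and $K$-theoretic matter'' stand in for substantial technical work (the model action theory and the cohomological computations of~\cite{Iz1}, Sections~3--4, and~\cite{Iz2}, Sections~2--3), and a reader would need to be pointed to those results explicitly.
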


\begin{thm}[Theorem~3.5 of~\cite{Iz2}]\label{T:IzTR0Stab}
Theorem~\ref{T:IzPIStab} remains true if instead
$A$ is a simple separable \uca\  with tracial rank zero
which satisfies the Universal Coefficient Theorem.
\end{thm}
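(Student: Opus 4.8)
The plan is to run the proof of Theorem~\ref{T:IzPIStab} essentially unchanged, substituting Lin's classification theorem for simple separable unital nuclear \ca s with tracial rank zero satisfying the Universal Coefficient Theorem for the Kirchberg--Phillips classification of Kirchberg algebras. As in the purely infinite case, the argument splits into a reduction, using only the \rp, which moves everything inside a $D_G$-absorbing algebra, followed by a uniqueness statement proved by an approximate intertwining of Evans--Kishimoto type, each step of which is fed by the classification theorem in the form of existence and approximate-uniqueness results for homomorphisms.

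For the reduction I would first use that $\mu^G$ (Definition~\ref{D:RegPType}) is literally the infinite tensor power of the model Rokhlin datum attached to the regular representation, so that $(D_G, \mu^G)$ behaves, for these purposes, as an equivariantly strongly self-absorbing action: by the same central-sequence (McDuff-type) argument underlying Theorem~\ref{T:IzPIStab}, an action with the \rp\ absorbs $\mu^G,$ that is, $\af$ is conjugate to $\af \otimes \mu^G$ on $A \otimes D_G.$ In particular $A \cong A \otimes D_G,$ and both $\af$ and $\id_A \otimes \mu^G$ are actions of $G$ with the \rp\ on this ($D_G$-absorbing, tracial rank zero, UCT) \ca. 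It then suffices to show that any two actions of $G$ on a $D_G$-absorbing simple separable unital \ca\ $B$ with tracial rank zero and the UCT which have the \rp\ and induce the identity on $K_*(B)$ are conjugate.

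Next I would record the structural facts that make the invariants agree. By the tracial-rank-zero analogue of Theorem~\ref{T:RkStruct} (see Theorem~\ref{T:RokhTAF} and the surrounding discussion, together with Theorem~\ref{T:RPKthInj} for the $K$-theory of the fixed point algebra), for any such action $\gm$ the fixed point algebra $B^{\gm}$ and the \cp\ $C^* (G, B, \gm)$ are again simple separable unital nuclear \ca s with tracial rank zero and the UCT, and, because $\gm$ is trivial on $K_*(B),$ their Elliott invariants are computed from that of $B$ independently of $\gm.$ One must also treat the traces, which is the one ingredient genuinely absent from Theorem~\ref{T:IzPIStab}: since $B$ has tracial rank zero the image of $K_0(B)$ is dense in the affine functions on the trace simplex, so triviality of $\gm$ on $K_*(B)$ forces $\gm$ to fix every trace as well. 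Lin's classification then produces abstract isomorphisms $B^{\gm^{(0)}} \cong B^{\gm^{(1)}}$ and $C^* (G, B, \gm^{(0)}) \cong C^* (G, B, \gm^{(1)}).$

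The hard part will be to promote these to a $G$-equivariant isomorphism of $(B, \gm^{(0)})$ with $(B, \gm^{(1)}).$ I would build, along an exhausting sequence of finite subsets of $B$ and tolerances $\ep_n \to 0,$ unitaries $w_n \in B$ so that $\Ad(w_n) \circ \gm^{(0)}_g$ converges to $\gm^{(1)}_g$ for every $g,$ arranging the corrections so that the limit is first a cocycle conjugacy and then, using triviality of both actions on $K_*(B)$ to kill the obstruction to untwisting the cocycle, an honest conjugacy. Each inductive step uses a Rokhlin tower to localize the discrepancy to a matrix-like corner and then applies the existence and approximate-unitary-uniqueness statements of Lin's classification --- now carrying the tracial bookkeeping that replaces the $KK$-theoretic computations of Theorem~\ref{T:IzPIStab} --- together with a semiprojectivity/stability argument to assemble the local unitaries into $w_n.$ The delicate points are precisely this tracial control --- one must follow the effect of the approximating maps on the trace simplex and on its pairing with $K_0$ closely enough to apply Lin's uniqueness theorem --- and checking that the group-law relations among the $w_n$ persist in the limit, so that what emerges is a conjugacy rather than merely a cocycle conjugacy.
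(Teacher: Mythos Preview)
The paper does not prove this theorem at all: it is a survey, and the statement is simply quoted as Theorem~3.5 of~\cite{Iz2}, with no argument given. So there is no proof in the paper to compare your proposal against. Your sketch is a plausible outline of how Izumi's argument goes---replacing the Kirchberg--Phillips classification by Lin's tracial-rank-zero classification inside the same Evans--Kishimoto intertwining scheme, and adding tracial bookkeeping---but to assess its correctness you would need to consult~\cite{Iz2} itself rather than this paper.
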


\begin{thm}[Theorem~4.2 of~\cite{Iz2}]\label{T:IzPIConj}
Let $A$ be a unital Kirchberg algebra
which satisfies the Universal Coefficient Theorem.
Let $G$ be a finite group.
Let $\af, \bt \colon G \to \Aut (A)$ have the Rokhlin property.
Suppose $(\af_g)_* = (\bt_g)_*$ on $K_* (A)$ for all $g \in G.$
Then there exists $\te \in \Aut (A)$ such that
$\bt_g = \te \circ \af_g \circ \te^{-1}$ for all $g \in G,$
and such that $\te_*$ is the identity on $K_* (A).$
\end{thm}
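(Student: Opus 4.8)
The plan is to establish the theorem in two stages: first, that $\af$ and $\bt$ are cocycle conjugate by an automorphism trivial on $K_*(A)$; and second, that the cocycle can be removed using the Rokhlin property of $\bt.$ For stage one I would produce $\sm \in \Aut(A)$ with $\sm_* = \id$ on $K_*(A),$ together with a $\bt$-cocycle $(w_g)_{g \in G}$ in the unitary group of $A$ (so $w_{g h} = w_g \bt_g(w_h)$), such that $\sm \circ \af_g \circ \sm^{-1} = \Ad(w_g) \circ \bt_g$ for all $g \in G.$ This is an equivariant refinement of the Kirchberg--Phillips classification; I would obtain it by an Elliott-type two-sided approximate intertwining, starting from $\sm = \id$ and at each step using Kirchberg--Phillips existence and uniqueness to supply the next approximately equivariant correction with the prescribed effect on $K$-theory. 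Because every correction is inner and point-norm limits preserve $K$-classes of \pj s and unitaries, the resulting $\sm$ automatically has $\sm_* = \id.$

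For stage two, suppose $\sm$ and $(w_g)$ are given. In the sequence algebra of Remark~\ref{R:IzumiRP}, where $\bt$ has honest Rokhlin \pj s $(e_g)_{g \in G}$ (with $\sum_g e_g = 1$ and $(\bt_{\I})_h(e_g) = e_{h g}$), set $v = \sum_{g \in G} e_g w_g.$ A direct computation using the cocycle identity shows that $v$ is unitary and that $v \, (\bt_{\I})_g(v^*) = w_g$ for all $g,$ so that $\Ad(w_g) \circ \bt_g = \Ad(v) \circ \bt_g \circ \Ad(v)^{-1}$ at the level of the sequence algebra. Lifting $v$ back to a unitary in $A$ makes this hold approximately on any prescribed finite set, which lets one trivialize the cocycle (a cocycle sufficiently close to the trivial one is a coboundary, since $G$ is finite, by averaging over $G$). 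Composing, $\te = \Ad(v)^{-1} \circ \sm$ satisfies $\te \circ \af_g \circ \te^{-1} = \bt_g$ for all $g,$ and $\te_* = \id$ because both $\Ad(v)^{-1}$ and $\sm$ are trivial on $K_*(A).$ To turn the approximate statements into the exact one, stages one and two are interleaved into a single convergent iteration --- fix an increasing sequence of finite subsets of $A$ with dense union and a summable sequence of tolerances, and let $\te$ be the limit --- which is routine intertwining bookkeeping once both stages are in hand. As a consistency check, note that when $(\af_g)_* = (\bt_g)_* = \id$ and $A$ is $D_G$-stable, Theorem~\ref{T:IzPIStab} already exhibits $\af$ and $\bt$ as $\id_A \otimes \mu^G$; the present scheme is what removes the $D_G$-stability assumption and allows nontrivial $K$-theory.

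The \textbf{main obstacle} is stage one, the equivariant classification up to cocycle conjugacy. The difficulty is that the hypothesis compares $\af$ and $\bt$ only on $K_*(A)$ and not on the full $KK$-theory, so the ordinary Kirchberg--Phillips uniqueness theorem for automorphisms does not apply directly; this is where the Rokhlin property must be exploited beyond its role in stage two. The mechanism is that a Rokhlin action of $G$ on a Kirchberg algebra absorbs the model action $\mu^G$ of Definition~\ref{D:RegPType} in a strong sense, and the rigidity of $\mu^G$ --- whose structure is already visible at the level of $K_*$ --- is what promotes the $K_*$-comparison to the finer comparison needed to run the intertwining. Making this precise, while controlling all of the estimates uniformly over the finitely many group elements, is the technical core of the proof.
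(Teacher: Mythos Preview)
The survey paper does not contain a proof of this statement; it merely cites it as Theorem~4.2 of~\cite{Iz2}. So there is no ``paper's own proof'' to compare against here, only the reference to Izumi's original argument.

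That said, your outline is broadly the shape of Izumi's proof. Stage two is exactly right: the formula $v = \sum_{g \in G} e_g w_g$ in $A_{\infty}$ and the verification that $v (\bt_{\infty})_g (v^*) = w_g$ is Izumi's cohomology-vanishing mechanism (Theorem~2.5 and Lemma~3.1 of~\cite{Iz2}), and it is the heart of why Rokhlin actions are so rigid. Your identification of stage one as the main obstacle is also correct, and your diagnosis of why it is delicate---that the hypothesis is only on $K_*$, not on $KK$---is on target. In Izumi's actual argument the resolution is somewhat different from what you sketch: rather than promoting the $K_*$-comparison to a $KK$-comparison via $\mu^G$-absorption and then running a single intertwining, Izumi works directly with an approximate-cocycle-conjugacy statement (his Theorem~4.1) proved by a back-and-forth intertwining in which the Rokhlin property and Kirchberg--Phillips uniqueness are invoked alternately at each step; the $K_*$-hypothesis enters because the automorphisms one needs to approximately undo are asymptotically inner, hence $KK$-trivial, so Kirchberg--Phillips applies. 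Your suggestion to use $\mu^G$-absorption as the bridge is plausible in spirit but is not how Izumi organizes it; the absorption result (his Theorem~3.4, stated here as Theorem~\ref{T:IzPIStab}) is proved separately and for the special case $(\af_g)_* = \id$, not used as a lemma toward Theorem~4.2.
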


\begin{thm}[Theorem~4.3 of~\cite{Iz2}]\label{T:IzTR0Conj}
Theorem~\ref{T:IzPIConj} remains true if instead
$A$ is a simple separable \uca\  with tracial rank zero
which satisfies the Universal Coefficient Theorem.
\end{thm}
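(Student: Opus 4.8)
The plan is to follow the proof of Theorem~\ref{T:IzPIConj} (Theorem 4.2 of~\cite{Iz2}) step by step, replacing the Kirchberg--Phillips classification machinery for purely infinite algebras by Lin's classification theory for simple separable unital nuclear \ca s of tracial rank zero satisfying the Universal Coefficient Theorem. Concretely, wherever the purely infinite argument uses the Kirchberg--Phillips uniqueness theorem (approximate unitary equivalence of unital morphisms into a Kirchberg algebra is detected by the $KK$-class) or a Kirchberg--Phillips existence statement, I would substitute Lin's uniqueness theorem (two unital monomorphisms of a tracial-rank-zero algebra that agree on $K_*,$ on the tracial state space, and on the associated rotation data are approximately unitarily equivalent) together with Lin's existence and range-of-invariant results. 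Using the \rp\ one also checks that $C^* (G, A, \af)$ and $C^* (G, A, \bt)$ have tracial rank zero (cf.~\cite{Iz2}), so that Lin's classification applies to the crossed products as well.

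The engine of the argument is an equivariant two-sided intertwining of Evans--Kishimoto type. First I would fix an increasing sequence of finite subsets $F_1 \subset F_2 \subset \cdots$ of $A$ with dense union and a summable sequence $\ep_n > 0,$ and then construct automorphisms $\ph_n \in \Aut (A),$ converging with their inverses in the point-norm topology to the sought automorphism $\theta,$ so that at stage $n$ the automorphism $\ph_n$ conjugates $\af$ to $\bt$ on $F_n$ to within $\ep_n.$ The initial $\ph_0$ would come from Lin's existence results, chosen with $(\ph_0)_*$ the identity on $K_*(A)$---possible precisely because $(\af_g)_* = (\bt_g)_*$; since every subsequent correction is by an inner automorphism, $(\ph_n)_*$ remains the identity, and therefore so does $\theta_*.$ The passage from $\ph_n$ to $\ph_{n+1}$ would combine the \rp\ of the two actions---used to match a family of Rokhlin projections for $\bt$ with one for $\af$ via $\ph_n$---with Lin's uniqueness theorem, producing a unitary correction that improves the intertwining on $F_{n+1}$ while respecting the Rokhlin towers. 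Alternating the roles of $\af$ and $\bt$ at successive stages controls $\ph_n^{-1}$ as well, and summing the estimates gives a limit automorphism $\theta$ with $\theta \circ \af_g = \bt_g \circ \theta$ for all $g \in G.$

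The hard part will be forcing the limit to be \emph{exactly} $G$-equivariant. A unitary correction chosen only to improve the intertwining on a finite set need not be compatible with the two group actions, so composing it in naively would destroy the approximate equivariance already arranged; the \rp\ is what repairs this, since a correction that approximately commutes with a family of Rokhlin projections for $\bt$ can be spread over the corresponding tower and averaged into one that is simultaneously an effective correction and approximately $\bt$-equivariant. Arranging that effectiveness of the correction, approximate equivariance, and triviality on $K_*$ all hold at each stage, with errors summable over the induction, is the technical heart of the construction. Compared with the purely infinite case of Theorem~\ref{T:IzPIConj}, the one genuinely new bookkeeping item is the tracial data: Lin's uniqueness theorem also demands matching the tracial state spaces and the rotation maps, so this extra invariant must be carried through every step. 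I do not expect a new conceptual obstacle beyond those already handled in~\cite{Iz2}; see Theorem 4.3 there for the complete proof.
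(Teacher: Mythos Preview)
The paper gives no proof of this theorem at all: it is a survey article, and Theorem~\ref{T:IzTR0Conj} is simply quoted from Izumi's paper~\cite{Iz2} with the attribution ``Theorem~4.3 of~\cite{Iz2}'' and no further argument. So there is nothing in the paper to compare your proposal against.

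Your sketch is a reasonable high-level account of how one would expect Izumi's argument to run---replacing Kirchberg--Phillips classification by Lin's classification for tracial rank zero, and carrying out an Evans--Kishimoto style approximate intertwining using the Rokhlin property to control equivariance---and indeed you already cite~\cite{Iz2} for the complete details. But as a self-contained proof it is only an outline: the actual work (the precise form of the inductive step, the bookkeeping of tracial data and rotation maps, the averaging over Rokhlin towers) is deferred to the reference, which is exactly what the paper itself does. If the intent was to supply what the paper omits, you would need to fill in those steps; if the intent was to match the paper, a one-line citation would have sufficed.
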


See~\cite{Nk3} for results on classification of actions
of~$\Z$ with the \rp.

We should point out that some work has been done on
classification of finite and compact group actions
without the \rp,
assuming instead that the action is compatible
with a direct limit realization of the algebra,
and also that the actions on the algebras in the direct system
have special forms.
See, for example, \cite{HR1}, \cite{HR2}, \cite{Kso}, and~\cite{Su}.

The \rp\  enjoys the following permanence properties.
Proposition~\ref{T:RPerm}(\ref{T:RPerm:Ideal})
is the analog of the fact that the restriction of a free action to
a closed subset of the space is still free,
(\ref{T:RPerm:Sbgp})~is the analog of the fact that the restriction
of a free action to
a subgroup is still free,
(\ref{T:RPerm:Tens})~is the analog of the fact that
a diagonal action on a product is free if one of the factors is free,
and (\ref{T:RPerm:DLim})~is the analog of the fact that
an equivariant inverse limit of free actions is free.

\begin{prp}\label{T:RPerm}
Let $A$ be a \uca, let $G$ be a finite group,
and let $\af \colon G \to \Aut (A)$ have the \rp.
Then:
\begin{enumerate}
\item\label{T:RPerm:Ideal}
If $I \subset A$ is a $\af$-invariant ideal,
then the induced action ${\overline{\af}}$ of $G$ on $A / I$
has the \rp.
\item\label{T:RPerm:Sbgp}
If $H \subset G$ is a subgroup,
then $\af |_H$ has the \rp.
\item\label{T:RPerm:Tens}
If $\bt \colon G \to \Aut (B)$ is any action of $G$ on a \uca,
then the tensor product action
$g \mapsto \af_g \otimes \bt_g$ of $G$ on $\Aut (A \otimes B),$
for any C*~tensor product on which it is defined, has the \rp.
\setcounter{TmpEnumi}{\value{enumi}}
\end{enumerate}
In addition:
\begin{enumerate}
\setcounter{enumi}{\value{TmpEnumi}}
\item\label{T:RPerm:DLim}
If $A = \dirlim A_n$ is a direct limit of \ca s with unital maps,
and $\af \colon G \to \Aut (A)$ is an action obtained
as the direct limit of actions $\af^{(n)} \colon G \to \Aut (A_n),$
such that $\af^{(n)}$ has the \rp\  for all $n,$
then $\af$ has the \rp.
\end{enumerate}
\end{prp}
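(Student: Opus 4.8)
The plan is to verify the three conditions of Definition~\ref{SRPDfn} directly in each case, manufacturing the required Rokhlin \pj s out of those provided by the hypothesis that $\af$ has the \rp. All four parts are ``soft'': the only nonroutine points are some $\ep$-bookkeeping, in particular the multiplicative constants $\card (G)$ and $[G : H]$ that appear when one sums estimates over a coset, and, in Part~(\ref{T:RPerm:Tens}), the reduction to elementary tensors.

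For Part~(\ref{T:RPerm:Ideal}), let $\pi \colon A \to A / I$ be the quotient map. Given a finite set $T \subset A / I$ and $\ep > 0$, choose a finite set $S \subset A$ with $\pi (S) = T$, and let $(e_g)_{g \in G}$ be a family of Rokhlin \pj s for $\af$, $S$, and $\ep$. Since $\pi$ is a contractive unital \hm\ with $\pi \circ \af_g = {\overline{\af}}_g \circ \pi$, the \pj s $\pi (e_g)$ are mutually orthogonal, sum to $1$, and satisfy Conditions~(\ref{SRPDfn:1}) and~(\ref{SRPDfn:2}) for ${\overline{\af}}$, $T$, and $\ep$. For Part~(\ref{T:RPerm:Sbgp}), fix a set $R \subset G$ of representatives for the right cosets $H \backslash G$, so that every $g \in G$ is uniquely of the form $h r$ with $h \in H$ and $r \in R$. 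Given a finite set $S \subset A$ and $\ep > 0$, apply the \rp\ of $\af$ to $S$ and $\ep / \card (G)$ to obtain $(e_g)_{g \in G}$, and set $f_h = \sum_{r \in R} e_{h r}$ for $h \in H$. The $f_h$ are then mutually orthogonal \pj s with $\sum_{h \in H} f_h = \sum_{g \in G} e_g = 1$, and summing the estimates for the $e_g$ over the $[G : H] \leq \card (G)$ indices $h r$ gives $\| \af_{h'} (f_h) - f_{h' h} \| < \ep$ and $\| f_h a - a f_h \| < \ep$ for $a \in S$.

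For Part~(\ref{T:RPerm:Tens}), write $\gm_g = \af_g \otimes \bt_g$, defined on $A \otimes B$ for some fixed C*~tensor norm. Given a finite set $S \subset A \otimes B$ and $\ep > 0$, approximate each $s \in S$ within $\ep / 3$ by a finite sum of elementary tensors; let $S_A \subset A$ and $S_B \subset B$ be the finite sets of left and right factors so obtained, let $N$ bound the number of summands used, and put $M = 1 + \max_{b \in S_B} \| b \|$. Apply the \rp\ of $\af$ to $S_A$ and a tolerance $\delta$ with $\delta < \ep / 3$ and $N M \delta < \ep / 3$, obtaining $(e_g)_{g \in G}$ in $A$, and set $f_g = e_g \otimes 1_B$. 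Since $a \mapsto a \otimes 1_B$ is an injective, hence isometric, \hm\ into $A \otimes B$, the $f_g$ are mutually orthogonal \pj s summing to $1$ and $\| \gm_g (f_h) - f_{g h} \| = \| \af_g (e_h) - e_{g h} \| < \ep$; and for an elementary tensor $a \otimes b$ one has $f_g (a \otimes b) - (a \otimes b) f_g = (e_g a - a e_g) \otimes b$, so a triangle inequality through the approximating sums gives $\| f_g s - s f_g \| < \ep$ for $s \in S$. Note this uses nothing about $\bt$ beyond its being an action.

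For Part~(\ref{T:RPerm:DLim}), let $\iota_n \colon A_n \to A$ be the canonical unital (norm-decreasing) maps, so that $\bigcup_n \iota_n (A_n)$ is dense in $A$ and $\af_g \circ \iota_n = \iota_n \circ \af^{(n)}_g$. Given $S \subset A$ finite and $\ep > 0$, pick $n$ and a finite set $S' \subset A_n$ so that each $a \in S$ has a companion $a' \in S'$ with $\| \iota_n (a') - a \| < \ep / 3$; apply the \rp\ of $\af^{(n)}$ to $S'$ and $\ep / 3$ to get $(e_g)_{g \in G}$ in $A_n$, and set $f_g = \iota_n (e_g)$. As $\iota_n$ is a unital \hm, these are mutually orthogonal \pj s summing to $1$; the equivariance estimate follows from $\af_g (f_h) = \iota_n (\af^{(n)}_g (e_h))$ and contractivity of $\iota_n$, and the commutation estimate from a three-term triangle inequality inserting $\iota_n (a')$. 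The only place where one must actually think is Part~(\ref{T:RPerm:Tens}): one has to isolate the finitely many norms $\| b \|$ and the number of terms so that the commutation estimates survive the passage from elementary tensors to arbitrary elements of $S$, uniformly in the ambient C*~tensor norm; everything else is a direct unwinding of Definition~\ref{SRPDfn}.
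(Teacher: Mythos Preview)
Your proof is correct and follows essentially the same approach as the paper: in each part you manufacture the needed Rokhlin projections exactly as the paper does (push through $\pi$ in~(\ref{T:RPerm:Ideal}), sum over coset representatives in~(\ref{T:RPerm:Sbgp}), take $e_g \otimes 1_B$ in~(\ref{T:RPerm:Tens}), push forward from $A_n$ in~(\ref{T:RPerm:DLim})). The only cosmetic differences are bookkeeping: the paper uses the tighter constant $\ep / [G:H]$ in~(\ref{T:RPerm:Sbgp}) where you use $\ep / \card(G)$; in~(\ref{T:RPerm:Tens}) and~(\ref{T:RPerm:DLim}) the paper reduces via a generating-set observation (checking Definition~\ref{SRPDfn} on a generating subset suffices) rather than approximating arbitrary elements by elementary tensors or images from $A_n$, and in~(\ref{T:RPerm:DLim}) it first invokes~(\ref{T:RPerm:Ideal}) to make the maps injective --- your direct approximation argument avoids this reduction and is arguably slightly cleaner.
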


\begin{proof}
For~(\ref{T:RPerm:Ideal}),
let $\pi \colon A \to A / I$ be the quotient map.
% and let ${\overline{\af}}$ be the induced action on $A / I.$
Let $S \subset A / I$ be finite and let $\ep > 0.$
Choose a finite set $T \subset A$ such that $\pi (T)$ contains $S.$
Let $(e_g)_{g \in G}$ be a
family of Rokhlin projections for $\af,$ $T,$ and~$\ep.$
Then $( \pi (e_g))_{g \in G}$ is a
family of Rokhlin projections for $\af,$ $S,$ and~$\ep.$

For~(\ref{T:RPerm:Sbgp}),
set $n = \card (G/H).$
Let $S \subset A$ be finite and let $\ep > 0.$
Choose a system $C$ of right coset representatives of $H$ in $G.$
Let $(f_g)_{g \in G}$ be a
family of Rokhlin projections for $\af,$ $S,$ and $\ep / n.$
Then use the \pj s
$e_h = \sum_{c \in C} f_{h c}$ for $h \in H.$

For~(\ref{T:RPerm:Tens}),
one first checks that in Definition~\ref{SRPDfn}
it suffices to fix a subset $R \subset A$ which generates $A$ as a \ca,
and verify the condition of the definition only for finite subsets
$S \subset R.$
In this case, we take
\[
R = \{ a \otimes b \colon {\mbox{$a \in A$ and  $b \in B$}} \}.
\]
We can further restrict to finite subsets of the form
\[
Q = \{ a \otimes b \colon {\mbox{$a \in S$ and  $b \in T$}} \},
\]
for finite sets $S \subset A$ and $T \subset B.$
Set $M = \sup_{b \in T} \| b \|.$
Let $(e_g)_{g \in G}$ be a
family of Rokhlin projections for $\af,$ $S,$ and $\ep / M.$
Then $( e_g \otimes 1)_{g \in G}$ is a
family of Rokhlin projections for $\af \otimes \bt,$ $Q,$ and $\ep.$

To prove~(\ref{T:RPerm:DLim}),
first use~(\ref{T:RPerm:Ideal}) to replace each $A_n$ with its
quotient by the kernel of the map $A_n \to A.$
Thus we may assume that $A = {\overline{\bigcup_{n = 0}^{\infty} A_n}}.$
As in the previous part, we restrict to a generating set,
here $R = \bigcup_{n = 0}^{\infty} A_n.$
So let $S \subset R$ be finite and let $\ep > 0.$
Choose $n$ such that $S \subset A_n.$
Then a family of Rokhlin projections for $\af^{(n)},$ $S,$ and~$\ep$
is a family of Rokhlin projections for $\af,$ $S,$ and~$\ep.$
\end{proof}

We can't reasonably talk about the Rokhlin property passing to
invariant ideals
without a definition of the \rp\  for non\uca s.
We know of no definition in the literature, but it
seems reasonable to simply take the \pj s to be in the
multiplier algebra.
An action of $G$ on a \ca~$A$ always extends to
an action $g \mapsto M (\af)_g$ on the multiplier algebra $M (A).$
In general, one only gets continuity
of $g \mapsto M (\af)_g (a)$ in the strict topology,
but this is irrelevant for finite groups.

\begin{dfn}\label{D:MRP}
Let $A$ be a not necessarily \uca,
and let $\af \colon G \to \Aut (A)$
be an action of a finite group $G$ on $A.$
We say that $\af$ has the
{\emph{multiplier Rokhlin property}} if for every finite set
$S \subset A$ and every $\ep > 0,$
there are \mops\  $e_g \in M (A)$ for $g \in G$ such that:
\begin{enumerate}
\item\label{D:MRP:1}
$\| M (\af)_g (e_h) - e_{g h} \| < \ep$ for all $g, h \in G.$
\item\label{D:MRP:2}
$\| e_g a - a e_g \| < \ep$ for all $g \in G$ and all $a \in S.$
\item\label{D:MRP:3}
$\sum_{g \in G} e_g = 1.$
\end{enumerate}
\end{dfn}

\begin{prp}\label{P:RokhIdeal}
Let $A$ be a \uca,
and let $\af \colon G \to \Aut (A)$
be an action of a finite group $G$ on $A.$
Let $J \subset A$ be a $G$-invariant ideal.
Suppose $\af$ has the \rp.
Then $\af_{(\cdot)} |_J$ has the multiplier \rp.
\end{prp}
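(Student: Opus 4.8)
The plan is to start from a family of Rokhlin projections for $\af$ (which makes sense because $A$ is unital) associated to a finite set $S \subset J$ and $\ep > 0$, and then show that these same projections, which live in $A = M(A)$, already work as multiplier Rokhlin projections for $\af_{(\cdot)}|_J$. The key observation is that $J$, being an ideal in the unital algebra $A$, has multiplier algebra $M(J)$ into which $A$ maps: indeed $A$ acts on $J$ by multiplication, giving a unital homomorphism $A \to M(J)$, and this homomorphism is $G$-equivariant when $M(J)$ carries the extended action $M(\af_{(\cdot)}|_J)$. So any projection $e_g \in A$ gives a projection in $M(J)$, the relations $\sum_{g} e_g = 1$ and $\|M(\af)_g(e_h) - e_{gh}\| < \ep$ transfer verbatim, mutual orthogonality transfers, and the only point needing care is the commutation condition $\|e_g a - a e_g\| < \ep$ for $a \in S \subset J$.

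For the commutation condition I would argue as follows. Definition~\ref{D:MRP}(\ref{D:MRP:2}) asks for $\|e_g a - a e_g\| < \ep$ only for $a$ in the given finite subset $S$ of $J$. But the Rokhlin projections for $\af$ can be chosen to \emph{almost} commute with any prescribed finite subset of $A$; in particular, taking the finite set in Definition~\ref{SRPDfn} to be $S$ itself (viewed inside $A$), we get $\|e_g a - a e_g\| < \ep$ directly for all $a \in S$. There is no need to enlarge $S$ or to approximate elements of $J$ by elements of $A$, because $S$ already lies in $A$. So the family $(e_g)_{g \in G}$ obtained as Rokhlin projections for $\af$, $S$, and $\ep$ serves without modification.

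Thus the proof is essentially a transport-of-structure argument: the map $A \to M(J)$ is unital, $G$-equivariant, injective (since $J$ is an essential ideal when... actually injectivity is not even needed), and carries Rokhlin projections for $\af$ to multiplier Rokhlin projections for $\af_{(\cdot)}|_J$. I would write: given finite $S \subset J$ and $\ep > 0$, regard $S \subset A$, choose a family $(e_g)_{g \in G}$ of Rokhlin projections for $\af$, $S$, and $\ep$ as in Definition~\ref{SRPDfn}, and observe that, identifying $A$ with its image in $M(J)$ via the canonical unital equivariant homomorphism coming from the $A$-bimodule structure on $J$, the $(e_g)_{g \in G}$ are mutually orthogonal projections in $M(J)$ satisfying (\ref{D:MRP:1})--(\ref{D:MRP:3}) of Definition~\ref{D:MRP}.

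**The main subtlety**, and the only place I'd slow down, is verifying that the extension $M(\af_{(\cdot)}|_J)$ of the restricted action agrees, under the inclusion $A \hookrightarrow M(J)$, with the original $\af$; equivalently, that the canonical homomorphism $A \to M(J)$ intertwines $\af$ and $M(\af_{(\cdot)}|_J)$. This is a formal check: for $a \in A$ and $x \in J$ one has $\af_g(a)\af_g(x) = \af_g(ax)$ by the homomorphism property of $\af_g$, and $\af_g(x) \in J$, so the image of $\af_g(a)$ in $M(J)$ acts as $M(\af_{(\cdot)}|_J)_g$ applied to the image of $a$. Granting this, condition (\ref{D:MRP:1}) is immediate from Definition~\ref{SRPDfn}(\ref{SRPDfn:1}). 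I expect this to be the only genuinely necessary remark; everything else is bookkeeping, and the proof in the paper is likely just a couple of sentences.
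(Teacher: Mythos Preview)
Your proposal is correct and matches the paper's approach exactly: the paper's proof is indeed just a couple of sentences, invoking the standard homomorphism $\ph \colon A \to M(J)$ arising from $J$ being an ideal in $A$, and observing that $(\ph(e_g))_{g \in G}$ satisfies the conditions of Definition~\ref{D:MRP}. Your additional remarks on the equivariance of $\ph$ are more careful than the paper, which leaves that verification implicit.
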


\begin{proof}
Let $F \subset J$ be a finite set, and let $\ep > 0.$
Choose a family $(e_g)_{g \in G}$
of Rokhlin projections for $\af,$ $S,$ and~$\ep.$
Let $\ph \colon A \to M (J)$ be the standard \hm\  obtained from the
fact that $J$ is an ideal in~$A.$
Then $(\ph (e_g) )_{g \in G}$ satisfies
the condition of Definition~\ref{D:MRP}
for the given choices of $F$ and~$\ep.$
\end{proof}

Unfortunately,
and this is related to the issues with the \rp\  already discussed,
there is no theorem for extensions.

\begin{exa}\label{E:Rokh2of3}
Let $G = \Zqt.$
Let $\af \colon G \to \Aut (C (S^1))$
be as in Example~\ref{E:FreeNotRkh}.
(Recall that this action comes from the action on $S^1$
generated by $\zt \mapsto - \zt.$)
Let $J = C_0 (S^1 \setminus \{ \pm 1 \} ),$
which is an ideal in $C (S^1).$
It is easily checked that $\af_{(\cdot)} |_J$ has the multiplier \rp\  %
and that the induced action on $C (S^1) / J \cong \C^2$ has the \rp.
As in Example~\ref{E:FreeNotRkh}, however,
$\af$ itself does not have the \rp.
\end{exa}

We now turn to K-theoretic freeness.
As Theorem~\ref{CT:KThy} shows,
K-theory gives a neat condition for an action of a compact Lie
group
on a compact space to be free.
The book~\cite{Ph1} mostly describes the effort to turn
this result into a definition of freeness of actions on \ca s.
Also see the survey~\cite{Ph0}.
This idea is useless if there is no K-theory.
For example,
the trivial action
on any \ca\  of the form ${\mathcal{O}}_2 \otimes A$
is K-theoretically free.
Moreover, unlike for actions on $C (X),$
for \ca s the condition of Theorem~\ref{CT:KThy}
passes neither to quotients by invariant ideals
nor to subgroups,
so it is necessary to build these features
into the noncommutative definition.
(Example~4.1.7 of~\cite{Ph1} implies that the property
does not pass to quotients by invariant ideals,
although it is stated in terms of the ideal rather than the quotient.
Example~\ref{E:939} below gives an action of $\Zq{4}$
on a UHF~algebra
which satisfies the condition of Theorem~\ref{CT:KThy},
but such that the restriction of the action to
the subgroup $\Zqt \S \Zq{4}$ does not;
the restriction is in fact inner.)
Subject to these caveats,
the definition permits some interesting results,
such as for \ca s of type~I
and for product type actions on UHF~algebras.
It covers $\zt \mapsto - \zt$ on $S^1,$
unlike the Rokhlin property.

Equivariant K-theory
for an action of a compact group $G$ on a \ca\  $A,$
denoted $K_*^G (A),$ is as in Chapter~2 of~\cite{Ph1}.
Like the equivariant K-theory of a space,
as discussed before Theorem~\ref{CT:KThy},
it is a module over the representation ring $R (G),$
and for a prime ideal $P \S R (G),$
the localization $K_*^G (A)_P$ is defined.
The following is from Definitions 4.1.1, 4.2.1, and~4.2.4,
and the discussion after Definition~4.2.1, in~\cite{Ph1}.
The term ``locally discrete K-theory'' refers to the
$I (G)$-adic topology on $K_*^G (A)$;
see discussion after Definition~4.1.1 in~\cite{Ph1}.
% In the noncommutative case,
% the obvious analog of the condition of Theorem~\ref{CT:KThy}
% unfortunately passes neither to quotients by invariant ideals
% nor to subgroups,
% so it is necessary to build these properties into the definition.

\begin{dfn}\label{D:KFree}
Let $A$ be a \ca,
and let $\af \colon G \to \Aut (A)$
be an action of a finite group $G$ on~$A.$
We say that $\af$ has
{\emph{locally discrete K-theory}}
if for every prime ideal $P$
in the representation ring $R (G)$ which does not contain $I (G),$
the localization $K_*^G (A)_P$ is zero.
We say that $\af$ is {\emph{K-free}}
if for every invariant ideal $I \subset A,$
the induced action on $A / I$ has locally discrete K-theory.
We say that $\af$ is {\emph{totally K-free}}
if for every subgroup $H \subset G,$
the restricted action $\af |_H$ is K-free.
\end{dfn}

Other related conditions,
including ones involving equivariant KK-theory,
and many more results and examples than can be discussed here,
can be found in~\cite{Ph1}.
Also see the survey~\cite{Ph0}.

The definition generalizes reasonably to actions of compact Lie groups,
but not to actions of noncompact groups.

The definition behaves well for type~I \ca s:

\begin{thm}[Theorems 8.1.4 and~8.2.5 of~\cite{Ph1}]\label{T:TKF-Type1}
Let $A$ be a separable type~I \ca,
and let $\af \colon G \to \Aut (A)$
be an action of a compact Lie group $G$ on~$A.$
Then $\af$ is totally K-free \ifo\  %
the induced action of $G$ on $\Prim (A)$ is free.
\end{thm}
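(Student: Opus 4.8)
The plan is to reduce the statement, in both directions, to the Atiyah--Segal localization theorem (Theorem~\ref{CT:KThy}) by combining the structure theory of separable type~I \ca s with the fact that the definition of total K-freeness is engineered to pass to invariant ideals, invariant quotients, and subgroups. First I would dispose of the formal reductions. If the action of $G$ on $\Prim(A)$ is free, then it restricts to a free action of every closed subgroup $H$ on $\Prim(A)$, and $\Prim(A/I)$ is canonically homeomorphic to a closed $H$-invariant subset of $\Prim(A)$ for every $H$-invariant ideal $I$; so the implication ``free $\Rightarrow$ totally K-free'' reduces to the single assertion that a free action of a compact Lie group $G$ on $\Prim(A)$ forces $K_*^G(A)$ to be locally discrete, i.e.\ $K_*^G(A)_P = 0$ for every prime $P \subset R(G)$ with $I(G) \not\subset P$. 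Conversely, if the action on $\Prim(A)$ is not free, fix $P_0 \in \Prim(A)$ with nontrivial stabilizer $H = \{ g \in G \colon g P_0 = P_0 \}$, a nontrivial closed, hence compact Lie, subgroup; it then suffices to produce a single $H$-invariant ideal $J \subset A$ such that the induced action of $H$ on $A/J$ fails to have locally discrete K-theory, since this already shows that $\af|_H$ is not K-free and hence that $\af$ is not totally K-free.

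For the structural input I would use that a separable type~I \ca\ admits a $G$-invariant composition series $(I_\lambda)_{0 \le \lambda \le \mu}$, indexed by a countable ordinal, whose subquotients $I_{\lambda+1}/I_\lambda$ are continuous-trace algebras; since $G$ is compact, one can average to make the canonical series $G$-invariant. The spectrum of $I_{\lambda+1}/I_\lambda$ is a locally compact Hausdorff $G$-space $X_\lambda$ homeomorphic to a locally closed $G$-invariant subset of $\hat A \cong \Prim(A)$ (here separability guarantees that $\hat A \to \Prim(A)$ is a homeomorphism and that the subquotient spectra and orbits behave well). For the forward implication, equivariant K-theory carries six-term exact sequences for invariant ideals and Milnor $\lim$/$\lim^1$ sequences for increasing unions of invariant ideals; since localization at $P$ is exact, an induction up the series $(I_\lambda)$ reduces the claim to the continuous-trace case, provided one controls the localized $\lim^1$ terms at limit ordinals, where one uses finite generation of the equivariant K-theory of the compact pieces so that localization commutes with the relevant limits. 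For a continuous-trace algebra $B$ with spectrum the free locally compact $G$-space $X$, I would pass to a $G$-invariant open cover on which the equivariant Dixmier--Douady obstruction vanishes, so that $B$ is locally equivariantly $C_0(\cdot, \mathcal{K})$, obtain by equivariant stabilization a natural isomorphism $K_*^G(B) \cong K^*_G(X)$, and then apply Theorem~\ref{CT:KThy} (extended from compact to locally compact $X$ via the exact sequence of the one-point compactification) to conclude $K^*_G(X)_P = 0$.

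For the reverse implication I would localize the problem near the fixed point $P_0$. Using type~I structure, $A$ has an $H$-invariant subquotient $B = J_1/J_2$ which is an elementary (or continuous-trace) algebra whose spectrum $W$ is a Hausdorff relatively open neighbourhood of $P_0$; shrinking $W$ to $\bigcap_{h \in H} h W$ (possible because $H$ is compact) we may take $W$ to be $H$-invariant, and after a further shrinking we may assume the restriction of $B$ to $W$ is equivariantly $C_0(W, \mathcal{K})$ for an $H$-action on $\mathcal{K}$ whose fibre over $P_0$ is implemented by a possibly projective representation of $H$ with cocycle $\omega$. Restriction to the closed $H$-fixed point $\{P_0\} \subset W$ gives a map $K_*^H(B) \to K_*^H(\mathcal{K}) \cong R_\omega(H)$ to the $\omega$-twisted representation ring, which I would show is surjective by arranging $P_0$ to be an open point of $\hat B$, so that the inclusion of $\{P_0\}$ also induces a splitting at the level of equivariant K-theory. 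It then remains to check that $R_\omega(H)_P \ne 0$ for some prime $P \subset R(H)$ with $I(H) \not\subset P$; this is the analogue of the Atiyah--Segal obstruction for the one-point $H$-space, and can be reduced to it by replacing $H$ with a central extension $\widetilde H$ that untwists $\omega$, noting that this does not disturb the relevant localization. Granting this, $K_*^H(B)_P \ne 0$, so $\af|_H$ does not have locally discrete K-theory, completing the contrapositive.

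The main obstacle, in my view, is precisely the last computation: certifying that a nontrivial stabilizer genuinely obstructs locally discrete K-theory once the trivial one-point local model has been replaced by a $\mathcal{K}$-bundle carrying a projective representation, together with the bookkeeping needed to run the composition-series induction through limit ordinals while keeping control of the localized $\lim^1$ terms. In other words, essentially everything comes down to making the equivariant Atiyah--Segal localization theorem do its work after being filtered through the type~I decomposition and the bundle twists; once the reductions above are in place, the remaining steps are the substantive content of Theorems~8.1.4 and~8.2.5 of~\cite{Ph1}.
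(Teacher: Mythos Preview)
The paper does not prove this theorem; it simply quotes it from \cite{Ph1} (Theorems~8.1.4 and~8.2.5 there), so there is no in-paper argument to compare your proposal against. Your outline is a plausible reconstruction of the strategy one expects in \cite{Ph1}: reduce via a $G$-invariant composition series to continuous-trace subquotients and then invoke Atiyah--Segal localization. Indeed, the paper uses exactly such a composition series elsewhere (the proof of Theorem~\ref{T:StOuterType1} cites Corollary~8.1.2 of \cite{Ph1} for a series with Hausdorff-spectrum subquotients), so your structural input is on target, and you correctly flag at the end that the real content lives in \cite{Ph1}.

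Two places in your sketch are where the actual work lies and deserve caution. For the forward direction, the step from a continuous-trace algebra $B$ with free $G$-spectrum $X$ to an identification $K_*^G(B) \cong K_G^*(X)$ is not automatic: the equivariant Dixmier--Douady obstruction need not vanish globally, so one really does have to patch locally and control the Mayer--Vietoris and $\varprojlim{}^1$ bookkeeping you allude to. For the reverse direction, the maneuver of ``arranging $P_0$ to be an open point of $\hat B$'' so as to split off $R_\omega(H)$ as a summand is not generally available; one typically has to work with the whole Hausdorff neighbourhood and argue that the restriction map to the fibre over $P_0$ is nonzero after localization, rather than split. Your identification of the twisted representation ring as the obstruction, and the idea of untwisting $\omega$ via a central extension of $H$, is the right mechanism.
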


In fact, one direction holds for any \ca;
Example~\ref{E:FreeNotRkh} shows that the corresponding
statement for the \rp\  fails.

\begin{thm}[Theorem~4.3.8 of~\cite{Ph1}]\label{T:TKF-Prim}
Let $A$ be a \ca,
and let $\af \colon G \to \Aut (A)$
be an action of a finite group $G$ on~$A.$
Suppose that the induced action of $G$ on $\Prim (A)$ is free.
Then $\af$ is totally K-free.
\end{thm}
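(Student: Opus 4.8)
The plan is to reduce the statement to the case of a single primitive quotient and then use the structure of a $C^*$-algebra with free primitive-ideal-space action of a finite group. By Definition~\ref{D:KFree}, to prove that $\af$ is totally K-free I must show that for every subgroup $H \subset G$ and every $H$-invariant ideal $I$, the induced action of $H$ on $A/I$ has locally discrete K-theory, i.e.\ $K_*^H(A/I)_P = 0$ for every prime $P \subset R(H)$ not containing $I(H)$. The first observation is that the restriction $\af|_H$ still induces a free action of $H$ on $\Prim(A)$, since a free action restricted to a subgroup is free, and $\Prim(A)$ carries a jointly continuous $G$-action; moreover $\Prim(A/I)$ is the closed invariant subset of $\Prim(A)$ corresponding to $I$, so the induced action of $H$ on $\Prim(A/I)$ is again free. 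Hence it suffices to prove the single implication: if $\af\colon G \to \Aut(A)$ is such that the induced $G$-action on $\Prim(A)$ is free, then $\af$ has locally discrete K-theory.

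To prove that single implication I would localize at a prime $P \subset R(G)$ with $I(G) \not\subset P$, and argue that $K_*^G(A)_P = 0$. Localization is exact, so it commutes with the long exact sequences in equivariant K-theory coming from invariant ideals and with equivariant direct limits; this is the standard toolkit from~\cite{Ph1}, Chapter~2. The first reduction is to the case where $A$ is the $\sigma$-ideal generated by a single orbit type, or more precisely to reduce to the ``local'' situation near an orbit $Gx$ in $\Prim(A)$. A prime $P$ of $R(G)$ not containing $I(G)$ is the pullback of a prime of $R(C)$ for some nontrivial cyclic subgroup $C \subset G$ (this is the Atiyah--Segal support description of $\Spec R(G)$); by the localization theorem in equivariant K-theory the module $K_*^G(A)_P$ depends only on the part of $A$ supported over the fixed-point set of $C$ in $\Prim(A)$. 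But freeness of the $G$-action means that $C$ — being nontrivial — has \emph{empty} fixed-point set in $\Prim(A)$. Consequently the relevant localized equivariant K-theory vanishes, which is exactly $K_*^G(A)_P = 0$.

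The main obstacle is making the ``localization theorem / support argument'' rigorous in the noncommutative setting, where $\Prim(A)$ need not be Hausdorff and one cannot simply decompose $A$ as sections of a bundle. This is precisely the content of the machinery in~\cite{Ph1}: one builds, from the free $G$-space $\Prim(A)$, a transfinite composition series of $G$-invariant ideals whose subquotients have primitive ideal spaces that are Hausdorff $G$-spaces with a single orbit type, and on each such subquotient one can identify the equivariant K-theory via an induction (Green--Julg / imprimitivity) isomorphism with the K-theory of a fixed-point algebra, which is an $R(G)$-module annihilated by a power of $I(G)$ and hence killed by localization at $P$. Assembling these via the (localized, hence still exact) long exact sequences and continuity under direct limits gives $K_*^G(A)_P = 0$ for the whole algebra $A$. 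I would therefore structure the proof as: (1) reduce to locally discrete K-theory for a single algebra with free $\Prim$-action, handling the subgroup and quotient quantifiers as above; (2) invoke the composition-series and induction results of~\cite{Ph1}; (3) conclude by exactness of localization. The delicate step — and the one the cited book exists to handle — is (2), the passage from freeness on $\Prim(A)$ to a manageable filtration of $A$ with computable equivariant K-theory; everything else is formal.
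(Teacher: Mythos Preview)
The paper does not give its own proof of this theorem; it merely cites Theorem~4.3.8 of~\cite{Ph1}. So there is nothing in the present paper to compare against beyond that citation.

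Your sketch is a reasonable high-level outline of how such a proof goes, and your first reduction (to showing locally discrete K-theory for a single algebra with free $\Prim$-action, after absorbing the subgroup and quotient quantifiers) is exactly right and matches Lemma~4.1.4 of~\cite{Ph1}, which the paper cites elsewhere. The use of Segal's description of the prime spectrum of $R(G)$ in terms of cyclic supports is also correct and is indeed the engine behind the commutative localization theorem.

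Two cautions, though. First, you invoke ``the localization theorem in equivariant K-theory'' as if it were available for general $C^*$-algebras, asserting that $K_*^G(A)_P$ depends only on the part of $A$ supported over the $C$-fixed set in $\Prim(A)$. For compact $G$-spaces this is the Atiyah--Segal localization theorem, but there is no off-the-shelf version for arbitrary $C^*$-algebras with possibly non-Hausdorff primitive ideal space; this is precisely what has to be \emph{proved}, not cited. Second, the composition series you propose --- invariant ideals whose subquotients have Hausdorff primitive ideal space and a single orbit type --- is what Corollary~8.1.2 of~\cite{Ph1} provides \emph{for type~I algebras}, and the paper uses it only in that context (see the proof of Theorem~\ref{T:StOuterType1}). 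The statement here is for arbitrary $C^*$-algebras, so you cannot rely on Hausdorff subquotients. The argument in~\cite{Ph1} for Theorem~4.3.8 has to work around this, and your sketch as written does not make clear how. Your step~(2), which you identify as delicate, is therefore not just delicate but genuinely incomplete as stated: you are borrowing type~I machinery for a theorem that is explicitly not restricted to type~I.
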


For our standard product type action,
total K-freeness is equivalent to the \rp:

\begin{exa}\label{E:PTypeK}
Let $\af \colon \Zqt \to \Aut (A)$ be as in Definition~\ref{D:PType}.
Then \tfae:
\begin{enumerate}
\item\label{E:PTypeK:TKt}
$\af$ is totally K-free.
\item\label{E:PTypeK:LD}
$\af$ has locally discrete K-theory.
\item\label{E:PTypeK:IG}
$I (\Zqt) K_0^{\Zqt} (A) = 0.$
\item\label{E:PTypeK:Half}
$k (n) = \frac{1}{2} d (n)$ for infinitely many $n \in \N.$
\item\label{E:PTypeK:RP}
$\af$ has the \rp.
\end{enumerate}
This follows from Theorems~9.2.4 and~9.2.6 of~\cite{Ph1}
and Proposition~2.4 of~\cite{PhtRp4}.
(These results also give a number of other equivalent conditions.)
\end{exa}

In this example,
the equivalence of Condition~(\ref{E:PTypeK:LD})
with the rest is slightly misleading.
The action of $\Zqt \times \Zqt$ on $M_2$ in Example~\ref{E:4Gp}
below is K-free but not totally K-free
(see Example~4.2.3 of~\cite{Ph1}),
and certainly does not have the \rp.
This kind of thing can happen even for cyclic groups:

\begin{exa}\label{E:939}
Example 9.3.9 of~\cite{Ph1} gives an action of $\Zq{4}$
on the $2^{\infty}$ UHF~algebra
(see Remark 9.3.10 of~\cite{Ph1} for the identification of the algebra)
which is a direct limit of actions on \fd\  \ca s,
is K-free, but is not totally K-free.
It is observed there that the automorphism corresponding
to the order two element of $\Zq{4}$ is inner.
Therefore Proposition~\ref{T:RPImpSOut} below implies that
this action does not have the \rp.

The crossed product is simple,
in fact, isomorphic to the $2^{\infty}$ UHF~algebra,
by Remark 9.3.10 of~\cite{Ph1}.
\end{exa}

The \rp\  implies total K-freeness in complete generality;
in fact, it implies that $I (G) K_*^G (A) = 0.$

\begin{prp}\label{T:RPImpDKth}
Let $A$ be a \uca, let $G$ be a finite group,
and let $\af \colon G \to \Aut (A)$ have the \rp.
Then $I (G) K_*^G (A) = 0.$
\end{prp}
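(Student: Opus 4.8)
The plan is to reduce the assertion to a single representation and then manufacture an equivariant unitary equivalence out of the Rokhlin projections. First I would note that it suffices to treat $K_0^G (A)$, the case of $K_1^G (A)$ following by suspension, and that $I (G)$ is generated as an abelian group by the elements $[\sm] - \dim (\sm) \cdot 1$ with $\sm$ running over the finite dimensional unitary representations of $G$; so the goal becomes to show that $[\sm] \cdot x = \dim (\sm) \cdot x$ for each such $\sm$ (acting on a space $V$ of dimension $d$) and each $x \in K_0^G (A)$. Since $A$ is unital, I would present $x$ by an equivariant finitely generated projective module, i.e.\ a \pj\ $p \in M_n (A)$ together with unitaries $v_g \in U (M_n (A))$ with $v_g (\af_g \otimes \id) (v_h) = v_{g h}$ and $(\af_g \otimes \id) (p) = v_g p v_g^*$ (see Chapter~2 of~\cite{Ph1}). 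Then $[\sm] \cdot x$ and $d \cdot x$ are represented by equivariant modules with the \emph{same} underlying \pj\ $\tilde p = p \otimes 1_V \in M_n (A) \otimes \mathrm{End} (V)$ but carrying the cocycles $g \mapsto v_g \otimes \sm_g$ and $g \mapsto v_g \otimes 1_V$ respectively; so it is enough to find a unitary $w \in U ( M_n (A) \otimes \mathrm{End} (V) )$ commuting with $\tilde p$ and satisfying $w ( v_g \otimes \sm_g ) = ( v_g \otimes 1_V ) (\af_g \otimes \id) (w)$ for all $g$, since conjugation by such a $w$ carries $( \tilde p, \, v \otimes \sm )$ to $( \tilde p, \, v \otimes 1_V )$.

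To build $w$, I would apply Definition~\ref{SRPDfn} to the finite set $S \S A$ of all matrix entries of $p$ and of the $v_g$, with $\ep$ small, obtaining a family $(e_g)_{g \in G}$ of Rokhlin projections for $\af$, $S$, and $\ep$, and then set $s = \sum_{g \in G} e_g \otimes \sm_{g^{-1}} \in A \otimes \mathrm{End} (V)$ and $w = 1_n \otimes s$. Here $s$ is a unitary because the $e_g$ are \mops\ with $\sum_g e_g = 1$ and the $\sm_g$ are unitary; using $\| \af_g (e_h) - e_{g h} \| < \ep$ one computes that $(\af_g \otimes \id) (w)$ is within $\card (G) \ep$ of $w ( 1_n \otimes 1_A \otimes \sm_g )$. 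The indexing of $s$ by $g^{-1}$ is chosen precisely so that $\sm_g$ (and not $\sm_g^*$) occurs here --- this is the one point in the construction requiring genuine care, since the $\sm_g$ need not commute among themselves, and getting it wrong leaves a $\sm_g^*$ that no longer assembles into a representation. Because the $e_g$ almost commute with the entries of $p$ and of the $v_g$, the unitary $w$ almost commutes with $\tilde p$ and almost satisfies the required intertwining relation, with all errors bounded by a constant depending only on $n$, $d$, and $\card (G)$ times $\ep$. Equivalently one could work in the central sequence algebra $A_\infty$ of Remark~\ref{R:IzumiRP}, where the $e_g$ may be taken to satisfy $\sum_g e_g = 1$ and $(\af_\infty)_g (e_h) = e_{g h}$ exactly; then $s \in A_\infty \otimes \mathrm{End} (V)$ commutes with $\tilde p$ on the nose and the identities above become exact in $M_n (A^\infty) \otimes \mathrm{End} (V)$.

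Finally I would promote the approximate $w$ to an exact solution. Replacing $w$ by the average $y = \card (G)^{-1} \sum_{g \in G} ( v_g \otimes 1_A \otimes 1_V ) (\af_g \otimes \id) (w) ( v_g \otimes 1_A \otimes \sm_g )^*$ gives an element within $O (\ep)$ of $w$ --- hence invertible and close to a unitary --- which, by the cocycle identities for $v \otimes \sm$ and $v \otimes 1_V$, satisfies the intertwining relation exactly; moreover $y^* y$, and hence $(y^* y)^{1/2}$, is fixed by the twisted action $a \mapsto ( v_g \otimes 1_A \otimes \sm_g ) (\af_g \otimes \id) (a) ( v_g \otimes 1_A \otimes \sm_g )^*$, so the unitary part $w_1$ of $y$ is an exact intertwiner that is still close to $w$ and hence still almost commutes with $\tilde p$. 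Conjugation by $w_1$ then carries $( \tilde p, \, v \otimes \sm )$ exactly to $( w_1 \tilde p w_1^*, \, v \otimes 1_V )$, and since $w_1 \tilde p w_1^*$ is norm close to $\tilde p$ with both projections carrying the same cocycle $v \otimes 1_V$, a standard equivariant homotopy of \pj s (the cocycle being unchanged) finishes the identification of classes in $K_0^G (A)$. This yields $[\sm] \cdot x = d \cdot x$, as needed. I expect the main obstacle to be precisely this promotion step --- arranging the averaging so that the intertwiner becomes exact and checking that the polar-decomposition corrections can be performed equivariantly --- together with, if one prefers the $A_\infty$ route, the descent of the resulting identity from $K_0^G (A^\infty)$ back to $K_0^G (A)$, which reintroduces the same perturbation estimates one hoped to avoid.
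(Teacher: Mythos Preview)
Your approach is essentially the paper's: reduce to $K_0$ and to a single generator $[\sm] - \dim(\sm)$ of $I(G)$, build an approximate intertwiner from the Rokhlin projections via $\sum_g e_g \otimes \sm_{g^{-1}}$ (the paper's $w_0 = \sum_h u_h \otimes e_h$ is the same object up to tensor ordering), average over $G$ to obtain an exact intertwiner, and correct with the polar part. The paper makes one further simplification you skip: since the Rokhlin property passes to $L(W) \otimes A$ by Proposition~\ref{T:RPerm}(\ref{T:RPerm:Tens}), one can absorb the matrix amplification and the cocycle into $A$ and reduce to a genuinely $G$-invariant projection $p \in A$, eliminating your $v_g$'s from the estimates entirely; the paper then phrases the endgame as constructing a $G$-invariant partial isometry between two projections in $L(V_0 \oplus V) \otimes A$, rather than as a unitary intertwining two cocycles, but this is cosmetic.

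One small point to repair: your reduction of $K_1^G$ to $K_0^G$ ``by suspension'' is not quite right as stated, since $SA = C_0(\R) \otimes A$ is nonunital and Definition~\ref{SRPDfn} requires a unit. The paper handles this by tensoring with $C(S^1)$ carrying the trivial $G$-action, using $K_0^G(C(S^1) \otimes A) \cong K_0^G(A) \oplus K_1^G(A)$, and invoking Proposition~\ref{T:RPerm}(\ref{T:RPerm:Tens}) again to retain the Rokhlin property.
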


\begin{proof}
We start with some reductions.
First, replacing $A$ by $C (S^1) \otimes A$
with $G$ acting trivially on $S^1,$
and using Proposition~\ref{T:RPerm}(\ref{T:RPerm:Tens})
and
$K_0^G (C (S^1) \otimes A) \cong K_0^G (A) \oplus K_1^G (A),$
we see that it suffices to show that $I (G) K_0^G (A) = 0.$
Next, by Corollary~2.4.5 of~\cite{Ph1},
we need only consider elements of $K_0^G (A)$ which
are represented by invariant \pj s in $L (W) \otimes A$
for some \fd\  unitary representation space $W$ of~$G.$
Again using Proposition~\ref{T:RPerm}(\ref{T:RPerm:Tens}),
we may replace $A$ by $L (W) \otimes A,$
and thus consider the class $[p]$ of an invariant \pj\  in $A.$
Next, we need only consider elements of $I (G)$ which span $I (G),$
so it suffices to prove that
if $V$ is a \fd\  unitary representation space of $G,$
with representation $g \mapsto u_g,$
and if $V_0$ is the same space with the trivial action of $G,$
then $([V] - [V_0]) [p] = 0$ in $K_0 (G).$
Now let $\ph, \ph_0 \colon A \to L (V_0 \oplus V) \otimes A$ be the maps
defined by $\ph_0 (a) = (1_{V_0} \oplus 0) \otimes a$
and $\ph (a) = (0 \oplus 1_V) \otimes a.$
By Remark~2.4.6 of~\cite{Ph1},
it suffices to find
a $G$-invariant element $y \in L (V_0 \oplus V) \otimes A$
such that $y^* y = \ph_0 (p)$ and $y y^* = \ph (p).$
Note that the action of $G$ on $L (V_0 \oplus V) \otimes A$
is given by
$g \mapsto {\overline{\af}}_g = \Ad (1 \oplus u_g) \otimes \af_g.$

Set $n = \card (G)$ and $\ep = 1 / (n^2 + 2 n).$
Let $(e_g)_{g \in G}$ be a
family of Rokhlin projections for $\af,$ $\{ p \},$ and $\ep.$
Set $w_0 = \sum_{h \in G} u_h \otimes e_h,$
which is a unitary in $L (V) \otimes A.$
Since $V_0 = V$ as vector spaces, we can identify
$L (V_0 \oplus V) \otimes A$ with $M_2 (L (V) \otimes A),$
and define $w \in L (V_0 \oplus V) \otimes A$ by
\[
w = \left( \begin{array}{cc}
  0     &  w_0^*        \\
  w_0   &  0
\end{array} \right).
\]

In matrix form,
we have
\[
w \ph_0 (p) w^*
  = \left( \begin{array}{cc}
  0     &  w_0^*        \\
  w_0   &  0
\end{array} \right)
\left( \begin{array}{cc}
  1_V \otimes p     & 0        \\
  0                 &  0
\end{array} \right)
\left( \begin{array}{cc}
  0     &  w_0^*        \\
  w_0   &  0
\end{array} \right)
= \left( \begin{array}{cc}
  0     &  0       \\
  0     &  w_0 (1_V \otimes p) w_0^*
\end{array} \right).
\]
Using orthogonality of the \pj s $e_g,$ we have
\begin{align*}
\big\| w_0 (1_V \otimes p) w_0^* - 1_V \otimes p \big\|
& = \big\| w_0 (1_V \otimes p) w_0^* - w_0 w_0^* (1_V \otimes p) \big\|
             \\
& \leq \sum_{g, h \in G}
   \big\| u_{g h^{-1}} \otimes e_g p e_h
                     - u_{g h^{-1}} \otimes e_g e_h p \big\|
              \\
& \leq n \sum_{h \in G} \| p e_h - e_h p \|
  < n^2 \ep.
\end{align*}
Thus,
$\| w \ph_0 (p) w^* - \ph (p) \| < n^2 \ep.$
With
$x_g = \sum_{h \in G} u_{g h} \otimes \af_g (e_h),$
a calculation shows that
\[
{\overline{\af}}_g (w)
 = \left( \begin{array}{cc}
  0     &  x_g^*        \\
  x_g   &  0
\end{array} \right).
\]
Therefore
\[
\| {\overline{\af}}_g (w) - w \|
  = \| x_g - w_0 \|
  \leq \sum_{h \in G} \| \af_g (e_h) - e_{g h} \|
  < n \ep.
\]
It follows that for $g, h \in G,$ we have
\begin{align*}
&
\big\| \ph (p) {\overline{\af}}_g (w)
           \ph_0 (p) {\overline{\af}}_h (w)^* \ph (p)
      - \ph (p) \big\|   \\
& \hspace*{3em} {\mbox{}}
  \leq \| {\overline{\af}}_g (w) - w \|
         + \| w \ph_0 (p) w^* - \ph (p) \|
         + \| {\overline{\af}}_h (w) - w \|
  < (n^2 + 2 n) \ep
  = 1.
\end{align*}
Similarly,
\[
\big\| \ph_0 (p) {\overline{\af}}_g (w)^*
           \ph (p) {\overline{\af}}_h (w) \ph_0 (p)
      - \ph_0 (p) \big\|
  < 1.
\]

Now set
\[
b = \frac{1}{n} \sum_{g \in G} \ph (p) {\overline{\af}}_g (w) \ph_0 (p),
\]
which is $G$-invariant.
Using the estimates in the previous paragraph,
we get
\[
\| b^* b - \ph_0 (p) \| < 1
\andeqn
\| b b^* - \ph (p) \| < 1.
\]
Therefore, with functional calculus evaluated in
$\ph_0 (p) \big[ L (V_0 \oplus V) \otimes A \big] \ph_0 (p),$
the $G$-invariant element
$y = b (b^* b)^{-1/2}$ satisfies
$y^* y = \ph_0 (p)$ and $y y^* = \ph (p).$
\end{proof}

\begin{cor}\label{C:RkhImpTKF}
Let $A$ be a \uca, let $G$ be a finite group,
and let $\af \colon G \to \Aut (A)$ have the \rp.
Then $\af$ is totally K-free.
\end{cor}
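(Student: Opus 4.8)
The plan is to derive Corollary~\ref{C:RkhImpTKF} from Proposition~\ref{T:RPImpDKth} by a sequence of simple reductions, using that the Rokhlin property is inherited by subgroups and by quotients by invariant ideals (Proposition~\ref{T:RPerm}(\ref{T:RPerm:Ideal}) and~(\ref{T:RPerm:Sbgp})). The point is that total K-freeness of $\af$ means: for every subgroup $H \subset G$ and every $\af|_H$-invariant ideal $I \subset A,$ the induced action of $H$ on $A / I$ has locally discrete K-theory, i.e.\ $K_*^H (A / I)_P = 0$ for every prime ideal $P \subset R (H)$ not containing $I (H).$

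So first I would fix an arbitrary subgroup $H \subset G.$ By Proposition~\ref{T:RPerm}(\ref{T:RPerm:Sbgp}), $\af|_H$ has the Rokhlin property. Next, fix an $\af|_H$-invariant ideal $I \subset A$; by Proposition~\ref{T:RPerm}(\ref{T:RPerm:Ideal}), the induced action ${\overline{\af|_H}}$ of $H$ on $A / I$ has the Rokhlin property (note $A / I$ is again unital). Applying Proposition~\ref{T:RPImpDKth} to this action gives $I (H) K_*^H (A / I) = 0,$ that is, the $R (H)$-module $K_*^H (A / I)$ is annihilated by the augmentation ideal $I (H).$

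Now the remaining step is purely algebraic: if $M$ is an $R (H)$-module with $I (H) M = 0,$ and $P \subset R (H)$ is a prime ideal with $I (H) \not\subset P,$ then $M_P = 0.$ Indeed, pick $s \in I (H) \setminus P$; then $s$ acts as $0$ on $M,$ so every element of $M$ is killed by an element of $R (H) \setminus P,$ hence maps to $0$ in the localization $M_P.$ This shows $K_*^H (A / I)$ has locally discrete K-theory for every such $H$ and $I,$ which is precisely total K-freeness of $\af.$ I expect no real obstacle here: the work has all been done in Proposition~\ref{T:RPImpDKth}, and the corollary is just unwinding the definition of total K-freeness and invoking the two permanence properties plus the one-line localization argument. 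If I wanted to be even terser, I could simply say the statement is immediate from Proposition~\ref{T:RPImpDKth} together with Proposition~\ref{T:RPerm}(\ref{T:RPerm:Ideal}) and~(\ref{T:RPerm:Sbgp}), since $I(H)M = 0$ forces $M_P = 0$ whenever $I(H) \not\subset P.$
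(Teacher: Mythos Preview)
Your proposal is correct and follows essentially the same route as the paper's proof: reduce to an arbitrary subgroup $H$ and $H$-invariant ideal $I,$ use Proposition~\ref{T:RPerm}(\ref{T:RPerm:Ideal}) and~(\ref{T:RPerm:Sbgp}) to see the induced action of $H$ on $A/I$ still has the Rokhlin property, apply Proposition~\ref{T:RPImpDKth} to get $I(H) K_*^H(A/I) = 0,$ and conclude locally discrete K-theory. The only cosmetic difference is that the paper cites Lemma~4.1.4 and Proposition~4.1.3 of~\cite{Ph1} for the unwinding of the definition and the localization step, whereas you write out the one-line localization argument directly.
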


\begin{proof}
By Lemma~4.1.4 of~\cite{Ph1},
it suffices to prove that for every subgroup $H \subset G$
and every $H$-invariant ideal $I \subset A,$
the action $\bt$ of $H$ on $A / I$ induced by $\af |_H$
has locally discrete K-theory.
Now $\bt$ has the \rp\  %
by parts (\ref{T:RPerm:Ideal}) and~(\ref{T:RPerm:Sbgp})
of Proposition~\ref{T:RPerm}.
So $I (H) K_*^H (A / I) = 0$ by Proposition~\ref{T:RPImpDKth}.
The conclusion now follows from Proposition~4.1.3 of~\cite{Ph1}.
\end{proof}

We return to the relationship between (total) K-freeness and the
\rp\  after discussing permanence properties.

\begin{prp}\label{T:TKFPerm}
Let $A$ be a \uca, let $G$ be a finite group,
and let $\af \colon G \to \Aut (A)$ be an action of $G$ on~$A.$
\begin{enumerate}
\item\label{T:TKFPerm:IffExt}
If $I \subset A$ is a $\af$-invariant ideal,
then $\af$ is totally K-free \ifo\  %
$\af_{( \cdot )} |_I$
and the induced action of $G$ on $A / I$
are both totally K-free.
\item\label{T:TKFPerm:Sbgp}
If $\af$ is totally K-free,
and if $H \subset G$ is a subgroup,
then $\af |_H$ is totally K-free.
\item\label{T:TKFPerm:DLim}
If $A = \dirlim A_n$ is a direct limit of \ca s,
and $\af \colon G \to \Aut (A)$ is an action obtained
as the direct limit of actions $\af^{(n)} \colon G \to \Aut (A_n),$
such that $\af^{(n)}$ is totally K-free for all~$n,$
then $\af$ is totally K-free.
\end{enumerate}
\end{prp}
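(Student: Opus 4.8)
The plan is to deduce all three parts from Definition~\ref{D:KFree}, used in the following repackaged form, obtained simply by unwinding what it means for each $\af|_H$ to be K-free (see also Lemma~4.1.4 of~\cite{Ph1}): an action $\af \colon G \to \Aut(A)$ is totally K-free \ifo\ for every subgroup $H \subset G$ and every $H$-invariant ideal $J \subset A$, the induced action of $H$ on $A/J$ has locally discrete K-theory. Beyond this I will use only three standard facts about equivariant K-theory, all from Chapter~2 of~\cite{Ph1} (stated for $G$, but used also with any subgroup $H$ in place of $G$): the six-term exact sequence attached to an invariant ideal is an exact sequence of $R(G)$-modules; $K_*^G$ is continuous, i.e.\ commutes with direct limits, as a functor to $R(G)$-modules; and localization at a prime of $R(G)$ is an exact functor commuting with direct limits. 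Granting the repackaged form, Part~(\ref{T:TKFPerm:Sbgp}) is immediate, since every subgroup of $H$ is a subgroup of~$G$.

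For Part~(\ref{T:TKFPerm:IffExt}) write $\gm$ for the induced $G$-action on $A/I$, and note that $I$, being $G$-invariant, is $H$-invariant for every subgroup~$H$. For the forward implication: every $H$-invariant ideal of $A/I$ has the form $J/I$ for an $H$-invariant ideal $I \subset J \subset A$, and $(A/I)/(J/I) \cong A/J$, so total K-freeness of $\af$ gives that of $\gm$ at once; while an $H$-invariant ideal $I' \subset I$ is again an $H$-invariant ideal of $A$ (an ideal of an ideal of a \ca\ is an ideal of the \ca), so on localizing the six-term sequence of $0 \to I/I' \to A/I' \to A/I \to 0$ at a prime $P \subset R(H)$ with $I(H) \not\subset P$, the four terms built from $A/I'$ and $A/I$ vanish by total K-freeness of $\af$, forcing $K_*^H(I/I')_P = 0$; hence $\af_{(\cdot)}|_I$ is totally K-free. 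For the reverse implication, fix a subgroup $H$ and an $H$-invariant ideal $J \subset A$, and apply the six-term sequence to $0 \to (I+J)/J \to A/J \to A/(I+J) \to 0$: its first term is isomorphic to the quotient $I/(I \cap J)$ of $I$ by an $H$-invariant ideal, and its last term is a quotient of $A/I$ by an $H$-invariant ideal, so total K-freeness of $\af_{(\cdot)}|_I$ and of $\gm$ make the four terms built from $(I+J)/J$ and $A/(I+J)$ vanish after localizing at any such~$P$, whence $K_*^H(A/J)_P = 0$.

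For Part~(\ref{T:TKFPerm:DLim}), let $\psi_n \colon A_n \to A$ be the canonical maps, fix a subgroup $H$ and an $H$-invariant ideal $J \subset A$, and put $J_n = \psi_n^{-1}(J)$, an $H$-invariant ideal of~$A_n$. By the standard fact that a closed ideal pulls back compatibly through a direct limit --- which uses that an injective \hm\ of \ca s is isometric and that $\bigcup_n \psi_n(A_n)$ is dense in~$A$ --- the quotient $A/J$ is the direct limit of the $A_n/J_n$ along the induced equivariant maps, carrying the direct limit $H$-action. Total K-freeness of $\af^{(n)}$ gives that the $H$-action on each $A_n/J_n$ has locally discrete K-theory, so for a prime $P \subset R(H)$ with $I(H) \not\subset P$, continuity of equivariant K-theory and the commutation of localization with direct limits yield $K_*^H(A/J)_P \cong \dirlim_n \bigl( K_*^H(A_n/J_n)_P \bigr) = 0$.

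The point I expect to require the most care is not any single calculation but checking that the cited general facts genuinely apply in the form needed: that the equivariant six-term exact sequence is honestly a sequence of $R(G)$-modules, so that localization may be applied to it term by term, and --- in Part~(\ref{T:TKFPerm:DLim}) --- the identification $A/J \cong \dirlim_n A_n/\psi_n^{-1}(J)$, which is delicate precisely because the connecting maps of the direct system are not assumed injective. Once these are in place, all three parts are formal diagram chases with localized $R(G)$- and $R(H)$-modules.
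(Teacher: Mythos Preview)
Your argument is correct and follows essentially the same route as the paper.  The paper disposes of Parts~(\ref{T:TKFPerm:IffExt}) and~(\ref{T:TKFPerm:Sbgp}) in one line each by pointing to the definition and to Proposition~4.2.6 of~\cite{Ph1}, whereas you unpack Part~(\ref{T:TKFPerm:IffExt}) explicitly via the localized six-term sequence; your treatment is more self-contained but is exactly the content behind that citation.  For Part~(\ref{T:TKFPerm:DLim}) the paper likewise reduces to continuity of equivariant K-theory (Lemma~4.2.14 of~\cite{Ph1}), pulling back the ideal $J$ to $J_n = \ph_n^{-1}(J)$; the only cosmetic difference is that the paper phrases the verification in terms of the ideals $J_n$ themselves while you work with the quotients $A_n/J_n$, which matches the definition of K-freeness more directly.
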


\begin{proof}
Part~(\ref{T:TKFPerm:IffExt}) is built into the definition;
see Proposition~4.2.6 of~\cite{Ph1}.
Part~(\ref{T:TKFPerm:Sbgp}) is also built into the definition.

Part~(\ref{T:TKFPerm:DLim}) was overlooked in~\cite{Ph1}.
For locally discrete K-theory, it is Lemma 4.2.14 of~\cite{Ph1}.
Let $\ph_n \colon A_n \to A$ be the maps obtained from
the direct limit realization of~$A.$
Let $H \subset G$ be a subgroup,
and let $J \subset A$ be an $H$-invariant ideal.
Set $J_n = \ph_n^{-1} (J),$ which is an $H$-invariant ideal in~$A_n.$
Then $J = \dirlim J_n.$
For each $n,$ the restriction $\big( \af^{(n)} |_H \big) |_{J_n}$
has locally discrete K-theory.
Therefore so does $\big( \af |_H \big) |_{J}.$
\end{proof}

One can also say a limited amount about actions on tensor products.
See Section~6.6 of~\cite{Ph1}.

We now come back to the relationship with the \rp.

\begin{exa}\label{E:RPn2}
Let $\af \colon \Zqt \to A$ be the action of Example~\ref{E:RPn}.
The \ca~$A$ is a simple AF~algebra.
The construction of~$\af,$
together with Theorem~\ref{T:TKF-Type1} and
Proposition~\ref{T:TKFPerm}(\ref{T:TKFPerm:DLim}),
imply that $\af$ is totally K-free.
However,
we saw in Example~\ref{E:RPn} that $\af$ does not have the \rp\  %
and that $C^* (\Zqt, A, \af)$ is not~AF.
In particular,
the crossed product of a simple AF~algebra by a totally K-free action
need not be~AF.
In fact,
Parts~(\ref{CRP_DL:AF}), (\ref{CRP_DL:AI}), and~(\ref{CRP_DL:AT})
of Theorem~\ref{T:RkStruct} all fail
with total K-freeness in place of the \rp.
\end{exa}

It seems to us that the fault is again with the \rp.
The conclusion of Proposition~\ref{T:RPImpDKth}
stronger than ought to hold for a version of noncommutative freeness.
Indeed, 
there is a free action of a finite group $G$
on a compact metric space~$X$
such that $I (G) K_* (C (X)) \neq 0.$
For example, the actions of $G = \Zqt$ on $S^{2 n}$ and $S^{2 n + 1}$
generated by $x \mapsto - x$
give
\[
K^0_G (S^{2 n}) \cong K^0_G (S^{2 n + 1}) \cong R (G) / I (G)^{n + 1},
\]
as can be seen from Corollary~2.7.5 and the discussion after
Corollary~2.7.6 in~\cite{At}.
It then also follows
(using the proof of Proposition~4.2 of~\cite{PhtRp4})
that suitable choices for $A$ in Example~\ref{E:RPn}
give $I (G) K_0^G (A) \neq 0.$

\begin{pbm}\label{Pb:KFTImpRkh}
Suppose that $A$ is a unital AF~algebra,
$G$ is a finite group,
and $\af \colon G \to \Aut (A)$ is a totally K-free action
which is a direct limit of actions on finite dimensional \ca s.
Does it follow that $\af$ has the \rp?
\end{pbm}

One might want to assume that $A$ is simple,
or even that $\af$ is a product type action.
For product type actions of $\Zqt,$
this is contained in Example~\ref{E:PTypeK}.

It seems that the \rp\  and (total) K-freeness attempt to
detect, not quite successfully,
a strong version of freeness of actions of finite groups on \ca s,
something which, to borrow a suggestion from Claude Schochet,
might be called a ``noncommutative covering space".
The \rp\  is too strong,
even apart from the existence of \pj s,
as is shown by Example~\ref{E:RPn},
while conditions involving K-theory are too weak when there is no
K-theory.
The following problem thus seems interesting,
even though it is not clear what applications it might have.

\begin{pbm}\label{Pr:Free}
Find a well behaved version of freeness of finite group actions
on \uca s
which agrees with total K-freeness for actions on
AF~algebras and type~I \ca s,
and agrees with the \rp\  for actions on
the Cuntz algebra~${\mathcal{O}}_2.$
\end{pbm}

One would hope for the following:
\begin{enumerate}
\item\label{Pr:Free:Perm}
The condition should pass to invariant ideals,
to quotients by invariant ideals,
and to subgroups.
It should also be preserved under extensions.
\item\label{Pr:Free:Tensor}
The condition should be preserved when taking
tensor products with arbitrary actions
(with an arbitrary tensor norm such that the action
extends to the tensor product).
\item\label{Pr:Free:AH}
The condition should be equivalent to total K-freeness
for direct limit actions on AH~algebras.
\item\label{Pr:Free:KFree}
The condition should imply total K-freeness for general \uca s.
\item\label{Pr:Free:Rokh}
The Rokhlin property should imply the condition
for general \uca s.
\item\label{Pr:Free:Type1}
For type~I \ca s,
the condition should be equivalent to free action
on the primitive ideal space.
\item\label{Pr:Free:Outer}
The condition should imply strong pointwise outerness
(Definition~\ref{D:SPOut} below) for arbitrary \uca s.
\end{enumerate}

Example~\ref{E:RPn2} shows that the freeness condition we are
asking for should not imply the \rp\  for actions on UHF algebras.
It is not actually clear that the right condition should
imply the \rp\  for actions on~${\mathcal{O}}_2.$
As evidence that a difference between behavior
on UHF algebras and on~${\mathcal{O}}_2$
should be expected,
consider the tensor flip~$\ph_A,$
the action of $\Zq{2}$ on $A \otimes_{\mathrm{min}} A$
generated by $a \otimes b \mapsto b \otimes a.$
If $A$ is a UHF algebra,
then $\ph_A$ never has the \rp\  (\cite{OP3})
and is never K-free.
(These statements follow easily from Example~\ref{E:PTypeK}.)
However, $\ph_{{\mathcal{O}}_2}$ does have the \rp,
by Example~5.2 of~\cite{Iz1}.

\section{The tracial Rokhlin property and outerness
  in factor representations}\label{Sec:TRP}

\indent
The \trp\  is a weakening of the \rp,
and which is much more common.
Unfortunately,
for now we only know the right definition of the \trp\  for
rather restricted classes of \ca s.

In retrospect,
the \trp\  could be motivated as follows.
For simple \ca s,
one popular version of freeness of an action
$\af \colon G \to \Aut (A)$
is the requirement that $\af_g$ be outer
for all $g \in G \setminus \{ 1 \}.$
(This condition is called pointwise outerness in Definition~\ref{D:Out},
and it and its variants are the subject of Section~\ref{Sec:Outer}.)
Let $R$ be the hyperfinite factor of type II$_1.$
Then pointwise outer actions of finite groups
satisfy the von Neumann algebra analog of the \rp.
(Lemma 5.2.1 of~\cite{Jn} implies this statement.)
One might then ask that an action $\af \colon G \to \Aut (A)$
of a finite group $G$ on a simple separable in\fd\  \uca~$A,$
with a unique \tst~$\ta,$
have the property that,
in the weak closure of the Gelfand-Naimark-Segal representation
associated with~$\ta$ (which is isomorphic to~$R$),
the action becomes outer.
Under good conditions (see Theorem~\ref{T:OuterImpTRP} below),
this requirement is equivalent to the \trp.

The actual motivation for the \trp\  was,
however, rather different.
It was introduced for the purpose of proving
classification theorems for crossed products.
One should observe that the definition
below is, very roughly, related to the Rokhlin property
in the same way that Lin's notion of a tracially AF \ca\   %
is related to that of an AF~algebra.
(Tracially AF \ca s are as in Definition~2.1 of~\cite{Ln0}.
The condition is equivalent to tracial rank zero
as in Definition~2.1 of~\cite{Ln1};
the equivalence is Theorem~7.1(a) of~\cite{Ln1}.)

The usefulness of the \trp\  comes from the combination of
two factors:
it implies strong structural results for crossed products,
and it is common while the \rp\  is rare.
In particular, the \trp\  played a key role in the
solution of three open problems on the structure of \cp s.
See Theorems \ref{T:NCTIsAT}, \ref{T:FGSL}, and~\ref{T:HDFlip} below.
As we will see,
the actions involved do not have the \rp,
while the next weaker freeness condition,
pointwise outerness (Definition~\ref{D:Out}),
is not strong enough to make the arguments work.

\begin{dfn}\label{D:TRP}
Let $A$ be an infinite dimensional simple \uca,
and let $\af \colon G \to \Aut (A)$ be an
action of a finite group $G$ on~$A.$
We say that $\af$ has the
{\emph{tracial Rokhlin property}} if for every finite set
$F \subset A,$ every $\ep > 0,$ and every positive element $x \in A$
with $\| x \| = 1,$
there are nonzero \mops\  $e_g \in A$ for $g \in G$
such that:
\begin{enumerate}
\item\label{D:TRP:1}
$\| \af_g (e_h) - e_{g h} \| < \ep$ for all $g, h \in G.$
\item\label{D:TRP:2}
$\| e_g a - a e_g \| < \ep$ for all $g \in G$ and all $a \in F.$
\item\label{D:TRP:3}
With $e = \sum_{g \in G} e_g,$ the \pj\  $1 - e$ is \mvnt\  to a
\pj\  in the \hsa\  of $A$ generated by~$x.$
\item\label{D:TRP:4}
With $e$ as in~(\ref{D:TRP:3}), we have $\| e x e \| > 1 - \ep.$
\end{enumerate}
\end{dfn}

When $A$ is finite, the last condition is redundant.
(See Lemma~1.16 of~\cite{PhtRp1}.)
However, without it, the trivial action on ${\mathcal{O}}_2$
would have the \trp.
(It is, however, not clear that this condition is really the right extra
condition to impose.)
Without the requirement that the algebra be in\fd,
the trivial action on $\C$ would have the \trp\  %
(except for the condition~(\ref{D:TRP:4})),
for the rather silly reason that the \hsa\  in Condition~(\ref{D:TRP:3})
can't be ``small''.

\begin{pbm}\label{Pb:IzumiTRP}
Is there a reasonable formulation of the \trp\ %
in terms of central sequence algebras,
analogous to the formulation of the \rp\  %
given in Remark~\ref{R:IzumiRP}?
\end{pbm}

As for the \rp,
the \trp\  is only useful when the algebra has a sufficient
supply of \pj s.
The definition is only given for simple \ca s,
because we don't know the proper formulation
of Condition~(\ref{D:TRP:3})
without simplicity.
We discuss these issues further below.

A version of this definition for actions of $\Z$
was given in~\cite{OP1}.
The analog of Condition~(\ref{D:TRP:4}) was omitted,
and the algebra was required to be stably finite.
A slightly different version for~$\Z,$
called the tracial cyclic \rp,
appears in Definition~2.4 of~\cite{LO}.

Since we require algebras with actions with the \trp\  to
be simple, unital, and in\fd,
they can't be type~I.
Thus, one of our standard examples is irrelevant.
For product type actions of $\Zqt,$
we have:

\begin{exa}\label{E:PTypeTRP}
Let $\af \colon \Zqt \to \Aut (A)$ be as in Definition~\ref{D:PType}.
Then \tfae:
\begin{enumerate}
\item\label{E:PTypeTRP:TRP}
$\af$ has the \trp.
\item\label{E:PTypeTRP:GNS}
If $\af''$ is the action induced by $\af$
on the type~II$_1$ factor obtained
from the trace via the Gelfand-Naimark Segal construction,
then $\af''$ is outer.
\item\label{E:PTypeTRP:Prod}
For all~$N,$ we have
\[
\prod_{n = N}^{\infty} \frac{d (n) - 2 k (n)}{d (n)} = 0.
\]
\item\label{E:PTypeTRP:UniqTst}
$C^* (\Zqt, A, \af)$ has a unique \tst.
\end{enumerate}
See Proposition~2.5 of~\cite{PhtRp4},
where additional equivalent conditions are given.
\end{exa}

In particular, by comparison with Examples \ref{E:PTypeRk}
and~\ref{E:PTypeK},
the choices $d (n) = 3$ and $k (n) = 1$ for all~$n$
give an action of $\Zqt$ which has the \trp\  %
but does not have the \rp\  and is not totally K-free.

On the other hand, the \rp\  implies the \trp.
(This is trivially true for actions on all \ca s on which
the \trp\  is defined.)
Also, for actions as in Definition~\ref{D:PType},
locally discrete K-theory implies the \trp.
This last statement is misleading,
since Example~\ref{E:939} and Proposition~\ref{T:TRPImpPOut} below
show that a K-free action on a UHF~algebra need not have the \trp.

\begin{pbm}\label{P:KFImpTRP}
Let $A$ be a simple separable unital tracially AF \ca,
let $G$ be a finite group,
and let $\af \colon G \to \Aut (A)$ be totally K-free.
Does it follow that $\af$ has the \trp?
\end{pbm}

In particular,
what happens for actions on simple unital AF~algebras?

We now give the result promised in our initial discussion.

\begin{thm}[Theorem~5.5 of~\cite{ELPW}]\label{T:OuterImpTRP}
Let $A$ be a simple separable \uca\  with tracial rank zero,
and suppose that $A$ has a unique \tst~$\ta.$
Let $\pi_{\ta} \colon A \to L (H_{\ta})$
be the Gelfand-Naimark-Segal representation associated with~$\ta.$
Let $G$ be a finite group, and let $\af \colon G \to \Aut (A)$
be an action of $G$ on~$A.$
Then $\af$ has the \trp\  \ifo\  %
$\af_g''$ is an outer automorphism of
$\pi_{\ta} (A)''$ for every $g \in G \setminus \{ 1 \}.$
\end{thm}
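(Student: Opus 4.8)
The plan is to prove the two implications separately; they are of quite different character. Throughout write $R = \pi_{\ta}(A)''.$ Since $A$ is simple, $R$ is a factor; since $A$ has tracial rank zero it is nuclear and approximated in trace by finite dimensional subalgebras, so $R$ is hyperfinite; and since $A$ is infinite dimensional with a unique \tst, $R$ is the hyperfinite factor of type~II$_1.$ Each $\af_g$ extends to an automorphism $\af_g''$ of $R,$ and $\af''$ is an action of $G$ on $R$ preserving the trace induced by~$\ta.$

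\emph{The implication ``\trp\ $\Longrightarrow$ outerness''.} Here the idea is to pass to a crossed product. By Proposition~\ref{T:TRPImpPOut}, the \trp\ forces $\af_g$ to be outer on $A$ for every $g \in G \setminus \{ 1 \};$ fix such a $g$ and let $H = \langle g \rangle,$ a cyclic group of order~$m,$ so that $\af|_H$ is pointwise outer. Since $A$ is simple with a unique \tst~$\ta,$ every \tst\ on $C^* (H, A, \af|_H)$ restricts to a $\af|_H$-invariant \tst\ on $A,$ hence to~$\ta,$ and pointwise outerness of $\af|_H$ forces the values of such a \tst\ on the off-diagonal elements $a w_h$ ($h \neq 1$) to vanish; thus $C^* (H, A, \af|_H)$ has a unique \tst~$\widehat{\ta}.$ Consequently $\pi_{\widehat{\ta}} \big( C^* (H, A, \af|_H) \big)''$ has trivial centre (a nontrivial central projection would split $\widehat{\ta}$ into two distinct traces), i.e.\ it is a factor, and this von Neumann algebra is canonically the crossed product $R \rtimes_{\af''|_H} H.$ But if $\af_g'' = \Ad (u)$ with $u \in R$ unitary, then --- rescaling $u$ so that $u^m = 1$ --- the unitary $z = u^{-1} w_g$ (with $w_g$ the canonical implementing unitary) has order~$m,$ commutes with $R$ and with $w_g,$ hence is central, and is not a scalar; this contradicts factoriality. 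So $\af_g''$ is outer for all $g \neq 1.$

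\emph{The implication ``outerness $\Longrightarrow$ \trp''.} This is the harder direction. Since $\af''$ is a pointwise outer action of a finite group on the hyperfinite II$_1$ factor, Lemma~5.2.1 of~\cite{Jn} gives the von Neumann algebra Rokhlin property: there are \mops\ $f_g \in R$ with $\sum_{g} f_g = 1$ and $\af_g''(f_h) = f_{gh},$ and necessarily $\ta (f_g) = 1 / \card (G).$ One must transport these projections into $A.$ Given a finite $F \S A,$ an $\ep > 0,$ and a positive $x \in A$ with $\| x \| = 1,$ I would use tracial rank zero to produce a finite dimensional subalgebra $D \S A$ with unit $p$ such that $1 - p$ is \mvnt\ to a projection of small trace in $\overline{x A x},$ such that $\| p x p \| > 1 - \ep,$ such that $p$ almost commutes with $F$ and $F$ lies almost in $D + \C (1 - p),$ and --- crucially --- such that $D$ is approximately $\af$-invariant; then, inside a still larger subalgebra of the same kind, one selects honest \mops\ $e_g$ close in the $2$-norm of $\ta$ to suitable compressions of the $f_g$ and satisfying the translation relation. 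Conditions~(\ref{D:TRP:3}) and~(\ref{D:TRP:4}) of Definition~\ref{D:TRP} then follow from the control on $1 - p$ and on $\| p x p \|,$ via comparison of positive elements in the simple algebra~$A.$

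The step I expect to be the main obstacle is upgrading the $2$-norm estimates --- which are what weak density of $A$ in $R$ naturally supplies --- to the operator-norm estimates demanded by conditions~(\ref{D:TRP:1}) and~(\ref{D:TRP:2}) of Definition~\ref{D:TRP}; on a finite dimensional algebra these norms are comparable only with a constant depending on the matrix sizes, so $2$-norm-small perturbations do not suffice directly. My plan for this is to replace $\af|_D$ by a genuine action $\bt$ of $G$ on $D$ to which it is close --- approximate finite group actions on finite dimensional algebras being stable --- and then to exploit the rigidity of the configuration $(f_g)$: being orthogonal, of equal trace, and exactly permuted by $\af'',$ it corresponds in the finite dimensional model to a pattern of sums of matrix units along which the translation relation for $\bt$ holds \emph{exactly}, not merely in $2$-norm. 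Pulling this pattern back into $A$ and bounding the accumulated errors by $\ep$ should then close conditions~(\ref{D:TRP:1}) and~(\ref{D:TRP:2}).
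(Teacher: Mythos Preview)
The paper itself does not prove this theorem; it is a survey, and the result is simply quoted from~\cite{ELPW} with no argument given. So there is no ``paper's own proof'' to compare against, and I will comment on your proposal on its merits.

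Your forward direction contains a genuine error. You deduce pointwise outerness of $\af|_H$ on~$A$ from the \trp, and then assert that pointwise outerness alone forces every \tst\ on $C^*(H,A,\af|_H)$ to vanish on the off-diagonal pieces $a w_h$ for $h\neq 1$, hence that the crossed product has a unique trace. This implication is false, and the paper provides an explicit counterexample: take $d(n)=2^n$ and $k(n)=1$ in Definition~\ref{D:PType}. By Example~\ref{E:PTypeOut} this action is pointwise outer, yet by Example~\ref{E:PTypeTRP} the crossed product has more than one \tst\ (equivalently, $\af''$ is inner on~$R$). So the step ``pointwise outer $\Rightarrow$ unique trace on the crossed product'' breaks, and with it your factor argument. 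The fix is to use the \trp\ itself rather than its weaker consequence: by Proposition~\ref{P:TRPSubgp} the restriction $\af|_H$ still has the \trp, and then Proposition~5.7 of~\cite{ELPW} (stated in the paper just after Theorem~\ref{T:TRP_RR}) gives the bijection between traces on $C^*(H,A,\af|_H)$ and $H$-invariant traces on~$A$, so the crossed product has a unique trace and your factoriality argument goes through. Alternatively one can argue directly from the Rokhlin projections without passing through the crossed product.

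Your backward direction is the right strategy in outline --- use Jones' von Neumann Rokhlin lemma in~$R$, then pull the projections back into~$A$ via tracial rank zero --- and you correctly identify the main difficulty as upgrading $2$-norm control to norm control. But what you have written is a plan, not a proof: the phrase ``approximate finite group actions on finite dimensional algebras being stable'' hides a nontrivial perturbation lemma, and the bookkeeping needed to arrange simultaneously the approximate invariance of~$D$, the exact translation relation for~$\bt$, and the Cuntz subequivalence in~(\ref{D:TRP:3}) is substantial. You have located the obstacle but not surmounted it.
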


The corresponding statement for actions of~$\Z$ is also true
(Theorem~2.18 of~\cite{OP1b}).

\begin{pbm}\label{Pb:ManyTST}
Is there a related characterization of the \trp\  for
actions on simple separable \uca s with tracial rank zero
which have more than one \tst?
\end{pbm}

Crossed products by actions with the tracial Rokhlin property
cannot be expected to be as well behaved as those by actions with
the Rokhlin property.
Indeed,
Example~\ref{E:TRP}(\ref{E:TRP:Black}) below shows that they do
not preserve AF~algebras or AI~algebras, and
Example~\ref{E:TRP}(\ref{E:TRP:Torsion}) shows that they do not
preserve AT~algebras.
The tracial Rokhlin property does imply pointwise outerness
(see Proposition~\ref{T:TRPImpPOut} below),
so that the crossed product of a simple \uca\  by an
action of a finite group with the tracial Rokhlin property
is again simple (Corollary~1.6 of~\cite{PhtRp1}).
But the tracial Rokhlin property is much stronger than
pointwise outerness.
The following theorems give classes of \ca s
which are closed under formation
of crossed products by actions of finite groups with the tracial
Rokhlin property.
Example~\ref{E:Elliott} below shows that they do not hold for
pointwise outer actions.

\begin{thm}[Theorem~2.6 of~\cite{PhtRp1}]\label{T:RokhTAF}
Let $A$ be an in\fd\  simple separable \uca\  %
with tracial rank zero.
Let $\af \colon G \to \Aut (A)$
be an action of a finite group $G$ on $A$ which has the \trp.
Then $C^* (G, A, \af)$ has tracial rank zero.
\end{thm}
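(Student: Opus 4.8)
Let $A$ be an infinite dimensional simple separable unital C*-algebra with tracial rank zero. Let $\alpha \colon G \to \Aut(A)$ be an action of a finite group $G$ on $A$ which has the tracial Rokhlin property. Then $C^*(G, A, \alpha)$ has tracial rank zero.

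The plan is to realize $C^* (G, A, \af),$ locally and up to a tracially negligible error, as a matrix algebra over a corner of~$A,$ and then to carry the tracial rank zero property of that corner over to the crossed product. First I would record the easy structural facts. Since $\af$ has the \trp, it is pointwise outer (Proposition~\ref{T:TRPImpPOut}), so $B := C^* (G, A, \af)$ is simple (Corollary~1.6 of~\cite{PhtRp1}); it is separable and unital, it is infinite dimensional because $A$ sits unitally inside it, and it is stably finite because the regular representation embeds $B$ unitally into $M_{\card (G)} (A)$ and $A,$ having tracial rank zero, is stably finite. Moreover, for every nonzero projection $e \in A$ the corner $e A e$ is again a simple separable unital \ca\ with tracial rank zero. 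To verify tracial rank zero of~$B$ I fix a finite set $F \subset B,$ an $\ep > 0,$ and a positive $x \in B$ with $\| x \| = 1.$ As $B$ is generated by $A$ together with the canonical unitaries $u_g$ and $G$ is finite, I may assume $F = \{ a u_g : a \in F_0, \ g \in G \}$ for a finite $F_0 \subset A.$ Using simplicity of~$B$ (and the comparison available from $A \subset B \subset M_{\card (G)} (A)$, as in~\cite{PhtRp1}), I would also fix in advance a small nonzero positive element $b \in A,$ together with a nonzero positive $b' \in e_1 A e_1$ (for the projection $e_1$ produced below) with $b' \precsim b,$ chosen so that $(\card (G) + 1) [b] \leq [x]$ in the Cuntz semigroup of~$B.$

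Next I would build the approximating subalgebra. Applying the \trp\ of~$A$ to $F_0 \cup \bigcup_g \af_g (F_0) \cup \bigcup_g \af_{g^{-1}} (F_0),$ a small $\dt,$ and the positive element~$b,$ I get nonzero \mops\ $e_g \in A$ $(g \in G),$ with $e := \sum_g e_g,$ such that $\| \af_g (e_h) - e_{g h} \| < \dt,$ such that $e$ almost commutes with the chosen set, such that $1 - e$ is \mvnt\ to a \pj\ in $\overline{b A b},$ and such that $\| e b e \| > 1 - \dt.$ Then the elements $\widetilde{v}_{g, h} := e_g u_{g h^{-1}} e_h$ form an approximate system of $\card (G) \times \card (G)$ matrix units --- one checks $(\widetilde{v}_{g, h})^* = \widetilde{v}_{h, g},$ $\widetilde{v}_{g, h} \widetilde{v}_{k, l} = 0$ for $h \neq k,$ $\widetilde{v}_{g, h} \widetilde{v}_{h, l} \approx \widetilde{v}_{g, l},$ and $\sum_g \widetilde{v}_{g, g} = e$ --- so by the usual perturbation of approximate matrix units there are genuine matrix units $v_{g, h} \in B$ close to them, spanning a copy of $M_{\card (G)}$ whose unit $\bar{e} := \sum_g v_{g, g}$ is close to~$e.$ Now I apply the tracial rank zero property of $e_1 A e_1$ to the finite set $\{ e_1 \af_g (a) e_1 : a \in F_0, \ g \in G \},$ a small $\dt',$ and the positive element~$b',$ obtaining a \pj\ $q_1 \leq e_1$ (after a harmless perturbation, $q_1 \leq v_{1, 1}$) with $e_1 - q_1$ \mvnt\ to a \pj\ in $\overline{b' A b'},$ and a \fd\ subalgebra $D_1 \subset q_1 A q_1$ with $1_{D_1} = q_1$ that almost contains the compressions $q_1 \af_g (a) q_1.$ Finally I spread $D_1$ over all the corners: set $q := \sum_g v_{g, 1} q_1 v_{1, g}$ and let $D$ be the \fd\ subalgebra of~$B$ generated by $D_1$ and the matrix units $v_{g, h};$ then $D \cong M_{\card (G)} (D_1)$ and $1_D = q.$

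To finish I would check the defining conditions of tracial rank zero for~$B$ with~$D$ as approximating algebra. The \pj\ $q$ commutes \emph{exactly} with every $v_{g, h}$ and acts as $q_1$ on the corner $v_{1, 1} B v_{1, 1} \supseteq e_1 A e_1;$ since also $q \leq \bar{e}$ and $\bar{e}$ is close to~$e,$ which almost commutes with each $a u_g$ (because $u_g e u_g^* = \sum_h \af_g (e_h) \approx e$), the off-diagonal parts $(1 - \bar{e})(a u_g)$ are annihilated by~$q$ up to small error, and $q$ ends up almost commuting with~$F.$ A short computation gives $\bar{e} (a u_g) \bar{e} \approx \sum_h v_{h, 1} (e_1 \af_{h^{-1}} (a) e_1) v_{1, g^{-1} h},$ and compressing the inner factors by $q_1$ into $D_1$ shows that $q (a u_g) q$ lies within~$\ep$ of~$D.$ For the complement, $1 - q = (1 - \bar{e}) + (\bar{e} - q),$ where $\bar{e} - q$ is \mvnt\ to a sum of $\card (G)$ orthogonal copies of $v_{1, 1} - q_1 \sim e_1 - q_1,$ so $[1 - q] = [1 - \bar{e}] + \card (G) [e_1 - q_1] \leq [b] + \card (G) [b'] \leq (\card (G) + 1) [b] \leq [x];$ since $1 - q$ is a \pj\ with $1 - q \precsim x$ it is therefore \mvnt\ to a \pj\ in $\overline{x B x}.$ (If one uses the form of the definition that also requires $\| q x q \|$ close to~$1,$ this is handled either from stable finiteness of~$B$ or by additionally controlling the norm of $e b e$ with $b$ replaced by an element detecting the norm of~$x.$) Shrinking $\dt, \dt'$ and the perturbation tolerances at the very end makes all the estimates fit.

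The hard part will be the bookkeeping that forces the \emph{single} projection~$q$ to serve two competing purposes at once. It must almost commute with~$F$ and almost swallow the compressions of~$F$ into a \fd\ algebra: this is exactly why $q$ has to be taken in the symmetrized form $\sum_g v_{g, 1} q_1 v_{1, g}$ (so that it commutes with the matrix units and restricts to~$q_1$ on the distinguished corner), and why the \trp\ of~$A$ and the tracial rank zero of $e_1 A e_1$ must be invoked for finite sets containing all the relevant $G$-translates of~$F_0.$ At the same time $1 - q$ must be tracially negligible, which forces the small positive elements $b$ and $b' \precsim b$ fed into the two approximation steps to be chosen with the $\card (G)$-fold blow-up of the defect already built in; producing, inside the simple algebra~$B,$ a usable small positive element of~$A$ below~$x$ is itself a point that needs the comparison theory and some care.
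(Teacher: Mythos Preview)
The survey paper does not contain a proof of this statement; it merely quotes the result from~\cite{PhtRp1}. Your sketch is essentially the argument that appears there: use the tracial Rokhlin projections $e_g$ to build an approximate copy of $M_{\card(G)}$ inside the crossed product via $e_g u_{g h^{-1}} e_h,$ perturb to genuine matrix units, apply tracial rank zero of the corner $e_1 A e_1$ to get a finite dimensional $D_1,$ and spread it across the matrix units to obtain $D \cong M_{\card(G)}(D_1)$; the leftover $1 - q$ is controlled by combining the defect $1 - e$ from the \trp\ with $\card(G)$ copies of the defect $e_1 - q_1$ from the tracial rank zero step. Your identification of the delicate points---choosing the small positive elements with the $\card(G)$-fold blow-up already accounted for, and arranging comparison of projections in $B$ using $A \subset B \subset M_{\card(G)}(A)$---is accurate; these are exactly the places where~\cite{PhtRp1} does the real work.
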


It is shown in~\cite{LO} that if
$A$ is an in\fd\  simple separable \uca\  %
with tracial rank zero and $\af \colon \Z \to \Aut (A)$
is an action with the \trp\  and satisfying extra conditions
(which hold in many interesting examples),
then $C^* (\Z, A, \af)$ has tracial rank zero.

\begin{thm}[\cite{OP3}]\label{T:RokhTRn}
Let $A$ be an in\fd\  simple separable \uca\  %
with tracial rank at most~$n,$
in the sense of Definition~2.1 of~\cite{Ln1}.
Let $\af \colon G \to \Aut (A)$
be an action of a finite group $G$ on $A$ which has the \trp.
Then $C^* (G, A, \af)$ has tracial rank at most~$n.$
\end{thm}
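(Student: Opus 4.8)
The plan is to model the argument on the proof of Theorem~\ref{T:RokhTAF} (the tracial rank zero case), replacing the building blocks. Recall that a simple separable unital \ca~$A$ has tracial rank at most~$n$ if for every finite set $F \subset A$, every $\ep > 0$, and every nonzero positive element $x \in A$, there is a \ca~$D \subset A$ which is a direct sum of matrix algebras over $C([0,1])$ (equivalently, over other suitable one-dimensional base spaces, depending on the formulation one adopts) with unit a \pj~$p$, such that $\| p a - a p \| < \ep$ and $\dist(p a p, D) < \ep$ for all $a \in F$, and such that $1 - p$ is \mvnt\  to a \pj\  in the \hsa\  generated by~$x$ and satisfies $\|(1 - p) x (1 - p)\| > \|x\| - \ep$. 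First I would recall this definition precisely and fix notation, then reduce the theorem to a single ``one-step'' approximation statement for $C^* (G, A, \af)$ analogous to the one used for tracial rank zero.

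The key steps, in order, are as follows. Given a finite set $S \subset C^* (G, A, \af)$, $\ep > 0$, and a nonzero positive $z \in C^* (G, A, \af)$, I would first compress $S$ to a finite set $F \subset A$ by writing elements of $C^* (G, A, \af)$ in terms of the standard generators $a u_g$ and using that it suffices to approximate a generating set. Second, I would apply the \trp\  to produce Rokhlin \pj s $(e_g)_{g \in G}$ which almost commute with~$F$, almost intertwine the action, and whose defect $1 - \sum_g e_g$ is dominated (in the comparison sense of Conditions~(\ref{D:TRP:3}) and~(\ref{D:TRP:4})) by a \pj\  in the \hsa\  generated by an element related to~$z$. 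Third, I would use the almost-commuting relations to show that $e := e_1$ (the \pj\  at the identity of~$G$) ``captures'' $C^* (G, A, \af)$ up to small error: the map $a \mapsto e a e$ followed by the standard identification shows that $e C^* (G, A, \af) e$ is close to a corner of $A$ itself, with the crossed-product structure collapsing because $\sum_g e_g = e + \af\text{-translates}$ nearly exhausts the unit and the $u_g$ move $e_1$ to $e_g$. More precisely, the well-known fact (used already for the \rp\  in Theorem~\ref{T:RkStruct} and for Theorem~\ref{T:RokhTAF}) is that $e_1 C^* (G, A, \af) e_1$ is approximately isomorphic to $e_1 A e_1$. Fourth, inside the simple \ca~$e_1 A e_1$ — which again has tracial rank at most~$n$, since a hereditary subalgebra of such an algebra inherits the property — I would invoke the definition of $\tsr$ at most~$n$ to find a suitable subalgebra $D_0$ (a direct sum of matrix algebras over $C([0,1])$) approximately containing the compression of~$F$. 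Fifth, I would ``inflate'' $D_0$ across the group: set $D = \sum_{g \in G} u_g D_0 u_g^*$ (with the almost-orthogonal $e_g$ making this nearly a direct sum $M_{|G|} \otimes D_0$, which is still a direct sum of matrix algebras over $C([0,1])$, hence still an allowed building block for $\tsr \le n$), and check that $D$ approximately contains~$S$ with the required defect estimate, the defect being controlled by combining $1 - \sum_g e_g$ with the defect coming from the $\tsr \le n$ approximation inside $e_1 A e_1$.

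The main obstacle will be Step five — bookkeeping the error estimates so that the defect \pj\  for $C^* (G, A, \af)$, assembled from $1 - \sum_{g} e_g$ and the $G$-inflation of the defect \pj\  of the $\tsr \le n$ approximation inside $e_1 A e_1$, is genuinely \mvnt\  to a \pj\  in the \hsa\  generated by (a perturbation of)~$z$, rather than merely dominated in some weaker sense. One must verify that \mvnc\  of \pj s in $A$ pushes up to \mvnc\  of \pj s in $C^* (G, A, \af)$ in the relevant direction, and that the hereditary subalgebra condition survives the passage from~$z$ to its compression by~$e_1$; this uses simplicity of~$A$ (hence of $C^* (G, A, \af)$, by Proposition~\ref{T:TRPImpPOut} and Corollary~1.6 of~\cite{PhtRp1}) together with the standard fact that in a simple \ca\  any nonzero hereditary subalgebra ``reaches everywhere'' under comparison. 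The remaining steps are essentially identical in structure to the $n = 0$ case treated in Theorem~\ref{T:RokhTAF}; the only genuinely new point is that the class of building blocks (matrix algebras over $C([0,1])$) is closed under tensoring with $M_{|G|}$ and under passing to unital corners cut by \pj s that almost commute with the given subalgebra, which is why the argument goes through verbatim with ``tracial rank zero'' replaced by ``tracial rank at most~$n$.'' (Indeed this is why the result is attributed to~\cite{OP3} with essentially the same proof.)
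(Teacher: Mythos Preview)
The paper does not actually give a proof of this theorem; it is stated with attribution to~\cite{OP3} (a paper listed as ``in preparation'') and no argument is supplied in the present text. So there is no proof here to compare your proposal against.

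That said, your proposal has a genuine error at the level of definitions. You write that tracial rank at most~$n$ means approximation by subalgebras which are direct sums of matrix algebras over $C([0,1])$; that is the case $n = 1$, not the general case. In Lin's Definition~2.1 of~\cite{Ln1}, the building blocks for tracial rank at most~$n$ are unital hereditary subalgebras of $M_k (C(X))$ with $X$ a finite CW complex of dimension at most~$n$. Your Step~5 hinges on the class of building blocks being closed under tensoring with $M_{\card (G)}$; fortunately this is still true for the correct class (since $M_{\card (G)} \otimes p M_k (C(X)) p$ is again of this form with the same~$X$), so the architecture of your argument survives once you fix the definition.

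Beyond that, the outline you give --- produce Rokhlin projections, pass to the corner $e_1 A e_1$, invoke $\tsr \leq n$ there, and inflate the resulting subalgebra across the group using the $u_g$ --- is the expected template, matching the structure of the proof of Theorem~\ref{T:RokhTAF} in~\cite{PhtRp1}. Your identification of the main bookkeeping difficulty (controlling the defect projection in the sense of Murray--von Neumann subequivalence inside the crossed product, not just inside~$A$) is accurate; this is exactly where the work lies, and it is handled in~\cite{PhtRp1} for $n = 0$ by arguments that do not depend on the building blocks being finite dimensional, so they transport to general~$n$ essentially unchanged.
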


The following theorem combines several results from~\cite{Ar1}.
The analog for actions of $\Z$ is in~\cite{OP1}.

\begin{thm}[\cite{Ar1}]\label{T:TRP_RR}
Let $A$ be a stably finite in\fd\  simple separable \uca\  %
with real rank zero and such that the order on \pj s over~$A$
is determined by traces.
(See~\cite{Ar1} for the definition of this condition.)
Let $\af \colon G \to \Aut (A)$
be an action of a finite group $G$ on $A$ which has the \trp.
Then $C^* (G, A, \af)$ has real rank zero and
the order on \pj s over~$C^* (G, A, \af)$
is determined by traces.
If moreover $A$ has stable rank one, then so does $C^* (G, A, \af).$
\end{thm}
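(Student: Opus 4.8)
The plan is to transfer each of the three properties from $A$ to $B := C^*(G, A, \af)$ by using the \trp\ to present $B$, modulo a tracially small complement, as a matrix algebra over a corner of~$A$, and then invoking the standard ``tracial approximation'' machinery. First I would dispose of the routine points: since $\af$ has the \trp\ it is pointwise outer (Proposition~\ref{T:TRPImpPOut}), so $B$ is simple (Corollary~1.6 of~\cite{PhtRp1}); $B$ is separable and unital because $A$ is and $G$ is finite; and $B$ is stably finite because a tracial state on $A$ can be averaged over $G$ to a $G$-invariant one and then pulled back through the canonical conditional expectation $B \to A$ to a faithful tracial state on~$B$. Moreover $A$ has property~(SP) (every nonzero \hsa\ contains a \nzp) because it has real rank zero, and one checks from the \trp\ that $B$ inherits property~(SP); also $B$ is infinite dimensional.

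The technical heart is the following structural statement, to be proved directly from Definition~\ref{D:TRP}. \emph{For every finite set $F \subset B$, every $\ep > 0$, and every nonzero positive element $y \in A$, there are a \pj\ $p \in B$ and a unital C*-subalgebra $D \subset p B p$ with $D \cong M_{\card(G)}(e A e)$ for some \nzp\ $e \in A$, such that $\| p c - c p \| < \ep$ and $\dist(p c p, D) < \ep$ for all $c \in F$, and such that $1 - p$ is \mvnt\ to a \pj\ in $\overline{y B y}$.} To get this, choose Rokhlin \pj s $(e_g)_{g \in G}$ for $\af$ relative to a finite subset of $A$ large enough to control the ``$A$-coefficients'' that appear when the elements of $F$ are written, up to small error, as finite sums $\sum_{g \in G} a_g u_g$, a sufficiently small tolerance, and the element~$y$; put $p = \sum_{g \in G} e_g$ and $e = e_1$. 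The partial isometries $w_g = u_g e_1 \in B$ satisfy $w_g^* w_g = e_1$ and $w_g w_g^* = \af_g(e_1) \approx e_g$, so the $w_g w_h^*$ form approximate matrix units in $p B p$ with approximate sum~$p$; compressing each $a u_g$ by $e_1$ on both sides and using orthogonality of the $e_g$ shows that $e_1 B e_1$ is, on the relevant finite set, within $\ep$ of $e_1 A e_1$; and a routine perturbation replaces the approximate matrix units and the approximate identification by exact ones, producing~$D$. The comparison statement for $1 - p$ is exactly Definition~\ref{D:TRP}(\ref{D:TRP:3}) applied with the \hsa\ generated by~$y$.

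Granting this, I would argue as follows. Fix one of the three properties $\mathcal{P}$: real rank zero, or ``the order on \pj s is determined by traces'', or stable rank one. Then $\mathcal{P}$ passes from the simple \ca~$A$ to the corner $e A e$ and is preserved under forming $M_{\card(G)}(\cdot)$ (for the comparison property one uses the bijection between traces on $M_n(C)$ and traces on~$C$, together with the fact that a Cuntz comparison between \pj s of $M_k(A)$ that are dominated by a matrix over $e$ can be realized inside $M_k(e A e)$); hence every approximating algebra $D$ has property~$\mathcal{P}$. It remains to invoke the ``local'' principle, tracial approximation in the style of Lin, that a simple, unital, infinite dimensional \ca\ with property~(SP) which is, in the sense of the lemma, tracially approximated by \ca s all having $\mathcal{P}$ itself has~$\mathcal{P}$. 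Concretely, to approximate a self-adjoint element of $B$ by one of finite spectrum, to establish a comparison between two \pj s over $B$, or to approximate an element of $B$ by an invertible, one applies the lemma with $F$ containing the relevant data and with $y$ chosen so that $1 - p$ is tracially small, solves the corresponding problem for the ``$p$-part'' inside $D$ using $\mathcal{P}$ for~$D$, and absorbs the leftover error, which lives in the corner $(1 - p) B (1 - p)$ over the small \pj~$1 - p$, using simplicity, property~(SP), and the smallness of $1 - p$. Taking $\mathcal{P}$ to be each property in turn — the stable rank one conclusion being asserted only when $A$ has stable rank one — yields the theorem.

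I expect the structural lemma to be the main obstacle, above all making the identification $e_1 B e_1 \approx e_1 A e_1$ genuinely uniform on the prescribed finite set and then cleaning the configuration up into an honest subalgebra~$D$. One must choose the finite subset of $A$ in the Rokhlin condition so that it dominates not merely $F$ but enough of the coefficients $a_g$ occurring in the expansions $\sum_{g \in G} a_g u_g$ of elements of~$F$, and one must carry a genuine perturbation estimate through the replacement of approximate matrix units and approximately orthogonal \pj s by exact ones while keeping all constants controlled (alternatively, one works throughout with a weak-containment form of tracial approximation and never passes to an exact subalgebra). A secondary, more bookkeeping difficulty arises for the comparison property: one must track the relations among the trace spaces of $B$, of $D$, of $e A e$, and of $A$ — via the $G$-invariant traces, the corner correspondence, and the matrix correspondence — to be sure that ``determined by traces'' really transfers along the approximation.
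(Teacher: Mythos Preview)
The paper does not give its own proof of this theorem; it merely records the statement and attributes it to~\cite{Ar1} (``The following theorem combines several results from~\cite{Ar1}''), so there is no in-paper argument to compare against. Your outline is the standard tracial-approximation strategy for results of this kind---produce, from the Rokhlin projections, a large corner of $C^*(G,A,\af)$ that is approximately $M_{\card(G)}(e_1 A e_1)$, then transfer $\mathcal{P}$ across via a Lin-style local/tracial characterization---and this is almost certainly the route taken in~\cite{Ar1} as well (compare also the analogous argument for~$\Z$ in~\cite{OP1}). The structural lemma you state is essentially the content of the basic approximation results in~\cite{PhtRp1}; your identification $e_1 B e_1 \approx e_1 A e_1$ on finite sets, via $e_1 a u_g e_1 \approx e_1 a e_g \approx e_1 e_g a = 0$ for $g \neq 1$, is correct, and the perturbation to an exact matrix subalgebra is routine. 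I see no genuine gap; the places you flag as delicate (bookkeeping of constants, and tracking trace spaces through corners and matrix amplifications for the comparison property) are exactly where the work lies, but they are technical rather than conceptual obstacles.
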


\begin{exa}\label{E:Elliott}
Example~9 of~\cite{Ell3}
gives an example of a pointwise outer action~$\af$
(in the sense of Definition~\ref{D:Out} below) of $\Zqt$
on a simple unital AF~algebra~$A$
such that $C^* (\Zqt, A, \af)$ does not have real rank zero.
This example shows that Theorems~\ref{T:RokhTAF},
\ref{T:RokhTRn}, and~\ref{T:TRP_RR}
fail for general outer actions.
\end{exa}

\begin{exa}\label{E:RPn3}
The action $\af \colon \Zqt \to \Aut (A)$ of Example~\ref{E:RPn}
satisfies the \trp, by Proposition~4.2(3) of~\cite{PhtRp4}.
Since $A$ is a simple AF~algebra and $K_0 (C^* (\Zqt, A, \af))$
has torsion,
it follows that crossed products by actions of finite groups
with the \trp\  do not preserve any of the classes of
AF~algebras, AI~algebras, or AT~algebras.
\end{exa}

In addition, if $\af \colon G \to \Aut (A)$ has the tracial
Rokhlin property, and $A$ is simple, stably finite, and infinite
dimensional, then the restriction map from tracial states on
$C^* (G, A, \af)$ to $\af$-invariant tracial states on $A$ is
bijective.
(See Proposition~5.7 of~\cite{ELPW}.)
% or Theorem~3.7 of~\cite{}.)
This is also false for general outer actions.
(See Examples \ref{E:PTypeTRP} and~\ref{E:PTypeOut}.)

The big advantage of the tracial Rokhlin property is
that it is common (at least on simple \ca s with many \pj s),
while the Rokhlin property is rare.

\begin{exa}\label{E:TRP}
The following actions have the tracial Rokhlin property but not
the Rokhlin property.
The citations are for proofs that the actions in question have the \trp;
failure of the \rp\  is discussed afterwards.
\begin{enumerate}
\item\label{E:TRP:Gauge}
The action of $\Zq{n}$ on a simple higher dimensional
noncommutative torus which multiplies one of the standard
generators by $\exp (2 \pi i / n).$
(See Proposition~2.10 of~\cite{PhtRp2}.)
\item\label{E:TRP:TorFlip}
The flip action of $\Zqt$ on a simple higher dimensional
noncommutative torus.
(See Corollary~5.11 of~\cite{ELPW}.)
\item\label{E:TRP:RotAlg}
The standard actions of $\Zq{3},$ $\Zq{4},$ and $\Zq{6}$ on an
irrational rotation algebra.
(See Corollary~5.12 of~\cite{ELPW}.)
\item\label{E:TRP:UHF}
For an arbitrary UHF~algebra, many product type actions of $\Zqt.$
(See Example~\ref{E:PTypeTRP}.)
\item\label{E:TRP:Black}
Blackadar's example~\cite{Bl0} of an action of $\Zqt$ on
$\bigotimes_{n = 1}^{\infty} M_2$
such that the crossed product is not~AF.
(See Proposition~3.4 of~\cite{PhtRp4}.)
\item\label{E:TRP:Torsion}
The actions of Example~\ref{E:RPn}.
(See Proposition~4.2 of~\cite{PhtRp4}.)
\item\label{E:TRP:AFK1}
Actions $\af \colon \Zqt \to \Aut (A)$ similar to those
of Example~\ref{E:RPn},
as in Proposition~4.6 of~\cite{PhtRp4}.
Here, $A$ is a simple AF~algebra and
$K_1 (C^* (\Zqt, A, \af)) \neq 0.$
\item\label{E:TRP:TensFlip}
The tensor flip on $A \otimes A$ for many stably finite simple
approximately divisible \ca s $A.$
(See \cite{OP3}.)
\end{enumerate}
\end{exa}

As we will see,
in many of these cases, in particular, in (\ref{E:TRP:Gauge}),
(\ref{E:TRP:TorFlip}), (\ref{E:TRP:RotAlg}), (\ref{E:TRP:UHF}) for
odd UHF~algebras,~(\ref{E:TRP:Torsion}), and many cases
of~(\ref{E:TRP:TensFlip}), there does not exist {\emph{any}}
action of the group on the \ca\  which has the Rokhlin property.

There is one obvious obstruction to the \rp.
Let $A$ be a \uca.
Let $n \in \N.$
Suppose, for simplicity,
that the ordered group $K_0 (A)$
has no nontrivial automorphisms which fix~$[1],$
and that $[1]$ is not of the form $n \et$
for any $\et \in K_0 (A).$
Then no group $G$ with $\card (G) = n$
admits any action $\af \colon G \to \Aut (A)$
with the \rp.
Simply take $\ep < 1$ in Definition~\ref{SRPDfn}
to get $\af_g (e_1)$ \mvnt\  to $e_g$ for all $g \in G,$
and use triviality of $(\af_g)_*$ to get $[\af_g (e_1)] = [e_1]$
in $K_0 (A).$
So one would get $n [e_1] = [1].$

It is now immediate that no action of
any nontrivial finite group on any irrational rotation algebra
can have the \rp.
Similarly, for any odd $m \geq 3,$
no action of $\Zqt$ on the $m^{\infty}$ UHF~algebra can have the \rp.
For the same reason, no action of $\Zqt$
on any odd Cuntz algebra or on ${\mathcal{O}}_{\infty}$
has the \rp.

In fact, existence of an action of $G$ with the \rp\  implies much
stronger restrictions on the K-theory (Theorem~3.2 of~\cite{Iz2}),
namely vanishing cohomology as $\Z [G]$-modules
for $K_* (A)$ and certain subgroups.
The following result is a special case.

\begin{prp}[\cite{Iz2}]\label{P:NoRP}
Let $n \in \N,$
let $A$ be a \uca,
and let $\af \colon \Zq{n} \to \Aut (A)$ be an action with the \rp\  %
which is trivial on $K_* (A).$
Then $K_* (A)$ is uniquely $n$-divisible.
\end{prp}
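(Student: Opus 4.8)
The plan is to recognise the conclusion as a cohomology-vanishing statement and then appeal to the structural restriction that the Rokhlin property imposes on $K$-theory. Give $K_*(A) = K_0(A) \oplus K_1(A)$ the $\Z[\Zq{n}]$-module structure $g \mapsto (\af_g)_*$; by hypothesis this structure is trivial. Saying that $K_*(A)$ is uniquely $n$-divisible means exactly that multiplication by $n$ is a bijection on each of $K_0(A)$ and $K_1(A)$. For a cyclic group $\Zq{n}$ and any module $M$, the cohomology $H^k(\Zq{n}; M)$ is computed from the standard $2$-periodic free resolution, so it is $2$-periodic in $k$ for $k \geq 1$, the relevant operators being the norm $\sum_{j=0}^{n-1} t^j$ and $t - 1$, where $t$ generates $\Zq{n}$. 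On a \emph{trivial} module these act as multiplication by $n$ and by $0$, so $H^{2i}(\Zq{n}; M) \cong M / n M$ and $H^{2i+1}(\Zq{n}; M) \cong \{ m \in M \colon n m = 0 \}$ for $i \geq 1$, and likewise the Tate group $\widehat{H}^0$ is $M/nM$. Thus, taking $M = K_*(A)$, the proposition is \emph{equivalent} to the vanishing of $H^k(\Zq{n}; K_*(A))$ for all $k \geq 1$ (equivalently, of $\widehat{H}^{*}(\Zq{n}; K_*(A))$).

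That vanishing is the special case $G = \Zq{n}$ of Theorem~3.2 of~\cite{Iz2}, which asserts that a Rokhlin action of a finite group $G$ on a \uca~$A$ forces $\widehat{H}^{*}(H; K_*(A)) = 0$ for $G$ and for every subgroup $H \subset G$, the $\Z[G]$-structure on $K_*(A)$ being $g \mapsto (\af_g)_*$. So the proof is: quote Theorem~3.2 of~\cite{Iz2} for $H = G = \Zq{n}$, and carry out the elementary bookkeeping of the previous paragraph. If instead one wants to stay inside the framework of this survey, the starting point is Proposition~\ref{T:RPImpDKth}, which gives $I(\Zq{n}) K_*^{\Zq{n}}(A) = 0$; since $I(\Zq{n})$ is generated by $t - 1$, this says that $t$ acts as the identity on $K_*^{\Zq{n}}(A)$, i.e.\ that $K_*^{\Zq{n}}(A)$ is a module over $R(\Zq{n}) / I(\Zq{n}) \cong \Z$. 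One then has to compare $K_*^{\Zq{n}}(A)$ with $K_*(A)$ through the group cohomology of $\Zq{n}$ with coefficients in $K_*(A)$, using triviality of the $\Zq{n}$-action on $K_*(A)$, and deduce that the positive-degree cohomology dies. I expect this comparison to be the main obstacle: it requires setting up the relevant spectral (or long exact) sequence relating $K_*^{\Zq{n}}(A)$ to group cohomology, tracking edge maps and the degree-zero term, and turning ``$I(\Zq{n})$ acts as $0$ on the abutment'' into the vanishing in positive degrees. This is precisely why Izumi's argument, which works directly with Rokhlin projections rather than with a spectral sequence, is the cleanest available proof.

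As a concrete check that the Rokhlin property (not merely outerness or triviality on $K$-theory alone) is what does the work, note the following partial step, which one can do by hand. Let $q \in A$ be an $\af$-invariant projection. Choose a family of Rokhlin projections $(e_g)_{g \in \Zq{n}}$ for $\af$, $\{ q \}$, and a small $\ep$ as in Definition~\ref{SRPDfn}; correct the approximate equivariance $\af_t(e_g) \approx e_{t g}$ by a unitary $w$ close to $1$, so that $\mathrm{Ad}(w) \circ \af_t$ permutes the $e_g$ \emph{exactly} and cyclically; cut $q$ by the $e_g$. Because $q$ is invariant (so the cyclic shift does not move it) and the shift is trivial on $K$-theory, one gets $[q] = n [q_0]$ in $K_0(A)$ for a projection $q_0 \in e_0 A e_0$; amplifying to $M_k(A)$ (still Rokhlin and still trivial on $K$-theory, by Proposition~\ref{T:RPerm}(\ref{T:RPerm:Tens}), with $M_k(A)^{\af} = M_k(A^{\Zq{n}})$) gives $\iota_*\bigl(K_*(A^{\Zq{n}})\bigr) \subseteq n K_*(A)$, where $\iota \colon A^{\Zq{n}} \hookrightarrow A$ is the inclusion of Theorem~\ref{T:RPKthInj}. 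Upgrading this to surjectivity of multiplication by $n$ on all of $K_*(A)$, however, is equivalent to the full assertion, so it is not a shortcut; the cohomological input above is genuinely needed.
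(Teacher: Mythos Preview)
Your proposal is correct and takes essentially the same approach as the paper: the paper's entire proof is the sentence ``See the discussion after the proof of Theorem~3.2 of~\cite{Iz2},'' and the paragraph immediately preceding the proposition already flags it as a special case of that theorem. Your first two paragraphs make explicit exactly the bookkeeping that this reference leaves implicit, namely that for a trivial $\Zq{n}$-module the Tate cohomology groups are $M/nM$ and the $n$-torsion of $M$, so their vanishing is unique $n$-divisibility. The additional material you include (the alternative route via Proposition~\ref{T:RPImpDKth} and the hands-on check with Rokhlin projections) is commentary rather than proof; it is fine to keep as motivation, but the proof itself is complete once you invoke Theorem~3.2 of~\cite{Iz2} and unwind the cohomology as you did.
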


\begin{proof}
See the discussion after the proof of Theorem~3.2 of~\cite{Iz2}
\end{proof}

Proposition~\ref{P:NoRP}
rules out the actions in Parts (\ref{E:TRP:TorFlip})
and~(\ref{E:TRP:Torsion}) of Example~\ref{E:TRP}.
It also shows that if a UHF algebra admits an action
of $\Zqt$ with the \rp,
then it must tensorially absorb the $2^{\infty}$~UHF algebra.
Even on UHF algebras which satisfy this condition,
Example~\ref{E:PTypeRk} shows that
``most'' product type actions of $\Zqt$ do not have the \rp.

Theorem~\ref{T:RPKthInj} contains a K-theoretic restriction
of a different kind.

By contrast,
there is no apparent K-theoretic obstruction to the \rp\  %
for actions of~$\Z,$
and there is no action of $\Z$ which is known to have the \trp\  %
but known not to have the \rp.

The actions in parts (\ref{E:TRP:Gauge}), (\ref{E:TRP:RotAlg}),
and~(\ref{E:TRP:TorFlip}) of Example~\ref{E:TRP}
play a key role, via Theorem~\ref{T:RokhTAF}, in the proofs
of the following recent solutions to open problems on the
structure of certain crossed product \ca s.
In none of these proofs is outerness of the action sufficient
(Example~\ref{E:Elliott}
shows that crossed products by such actions do not
necessarily preserve tracial rank zero),
while on the other hand
the discussion above shows that none of the actions has the \rp.

\begin{thm}[Theorem~3.8 of~\cite{PhtRp2}]\label{T:NCTIsAT}
Let $\Th$ be a nondegenerate skew symmetric real $d \times d$ matrix,
with $d \geq 2.$
Then the noncommutative
$d$-torus $A_{\Th}$ is a simple AT~algebra with real rank zero.
\end{thm}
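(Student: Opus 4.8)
The plan is to reduce the statement to the single assertion that $A_{\Th}$ has tracial rank zero, and then to read off the structural conclusion from classification theory. Simplicity of $A_{\Th}$ for nondegenerate $\Th$ is the classical Slawny--Elliott criterion and I would take it as known, as I would the facts that a simple noncommutative $d$-torus is separable and nuclear, lies in the bootstrap class (hence satisfies the Universal Coefficient Theorem), has torsion free K-theory with $K_0 (A_{\Th}) \cong K_1 (A_{\Th}) \cong \Z^{2^{d-1}}$, and has the order on projections determined by traces. Granting this, Lin's classification theorem for simple separable unital nuclear \ca s with tracial rank zero satisfying the UCT exhibits $A_{\Th}$ as a simple AH~algebra of real rank zero with no dimension growth, and torsion freeness of $K_*$ lets one take the building blocks to be matrix algebras over the circle, so that $A_{\Th}$ is a simple AT~algebra; and tracial rank zero already forces real rank zero. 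Thus everything comes down to: $A_{\Th}$ has tracial rank zero.

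For this I would induct on $d$. The base case $d = 2$ is the irrational rotation algebra, which by the theorem of Elliott and Evans is a simple AT~algebra of real rank zero and hence has tracial rank zero. For the inductive step one naturally writes $A_{\Th} \cong C^* (\Z, A_{\Th'}, \sm)$, with $A_{\Th'}$ the noncommutative $(d-1)$-torus on the first $d - 1$ generators and $\sm (u_j) = \exp (2 \pi i \te_{jd}) u_j$; if $A_{\Th'}$ were simple with tracial rank zero and $\sm$ had the \trp\ together with the extra hypotheses of the $\Z$-version of Theorem~\ref{T:RokhTAF} from \cite{LO}, we would be done. The trouble is that $\Th'$ can be degenerate even when $\Th$ is not, and no permutation of coordinates need repair this, so this direct induction is not enough by itself.

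The way around this uses the \trp\ actions of finite cyclic groups of Example~\ref{E:TRP}(\ref{E:TRP:Gauge}). When enough of the parameters $\te_{jk}$ are rational, $A_{\Th}$ can instead be presented as $C^* (\Zq{n}, A_{\Sm}, \ld)$ with $\ld$ a gauge action of the type in Example~\ref{E:TRP}(\ref{E:TRP:Gauge}) acting on a simple noncommutative $d$-torus $A_{\Sm}$ whose defining matrix is, in a controlled sense, simpler and handled by an earlier case of the induction. Since $C^* (\Zq{n}, A_{\Sm}, \ld) = A_{\Th}$ is simple, $\ld$ has the \trp\ by Example~\ref{E:TRP}(\ref{E:TRP:Gauge}), so Theorem~\ref{T:RokhTAF} propagates tracial rank zero from $A_{\Sm}$ to $A_{\Th}$. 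To reach a general nondegenerate $\Th$, I would use the fact that the algebras $A_{\Th}$ form a continuous field over the space of skew symmetric matrices: approximate $\Th$ by matrices admitting such a crossed product presentation, and transfer tracial rank zero across the approximation.

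The hard part will be exactly this last transfer. Tracial rank zero is not visibly a closed or an open condition along the continuous field of noncommutative tori, so one cannot simply pass to a limit of algebras. Instead one must verify Definition~\ref{D:TRP} for $A_{\Th}$ by hand: given a finite subset of $A_{\Th}$, an $\ep > 0$, and a positive contraction, use the continuous field to import into $A_{\Th}$ an approximately central finite dimensional subalgebra and an approximate family of Rokhlin projections coming from a nearby fiber that does admit a good crossed product presentation, and then check conditions (1)--(4) of Definition~\ref{D:TRP}. Keeping the approximation quantitative enough to control the Murray--von Neumann subequivalence in condition~(\ref{D:TRP:3})---that is, comparison of projections inside $A_{\Th}$---is the genuinely delicate point of the whole argument.
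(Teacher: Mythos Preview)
Your overall strategy---reduce to showing tracial rank zero, then invoke Lin's classification together with torsion freeness of $K_*$ to get the AT conclusion---matches the paper's, and you have correctly identified the gauge actions of Example~\ref{E:TRP}(\ref{E:TRP:Gauge}) together with Theorem~\ref{T:RokhTAF} as the engine.  Where you diverge is in the endgame.  The paper does \emph{not} approximate a general nondegenerate $\Th$ by special ones through the continuous field; instead it uses the gauge actions to give an \emph{exact} reduction.  The key algebraic fact is that the crossed product of a noncommutative $d$-torus by a $\Zq{n}$ gauge action is again (a matrix algebra over) a noncommutative $d$-torus, with parameter matrix differing from the original by rational changes in one row and column, and Takai duality lets one run this in either direction.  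Iterating such moves, one passes from an arbitrary nondegenerate $\Th$ to some nondegenerate $\Sm$ for which the iterated $\Z$-crossed product presentation has every intermediate torus simple; that case is Kishimoto's theorem (Corollary~6.6 of~\cite{Ks4}).  Theorem~\ref{T:RokhTAF} then carries tracial rank zero back from $A_{\Sm}$ to $A_{\Th}$ through the chain of finite cyclic crossed products.

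So the ``hard part'' you flagged---transferring tracial rank zero across a continuous-field approximation and controlling the comparison in condition~(\ref{D:TRP:3})---is a genuine obstacle, but one the actual proof sidesteps completely.  Your instinct that ``when enough parameters are rational'' one gets a finite-group presentation is pointing at the right mechanism; what you are missing is that those rational shifts are always available, so no density argument is needed.
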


The relevance of actions of finite groups is that they
allow reduction of the general case
to the case in which $A_{\Th}$ can be written as an
iterated \cp\  by actions of~$\Z$
in such a way that all the intermediate crossed products are simple.
This case was solved by Kishimoto (Corollary~6.6 of~\cite{Ks4}).

The action and the subgroups which appear in the following
theorem are described,
for example, in the introduction to~\cite{ELPW}.

\begin{thm}[Theorem~0.1 of~\cite{ELPW}]\label{T:FGSL}
Let $\theta \in \R \setminus \Q.$
Let $A_{\te}$ be the irrational rotation algebra,
and let $\af \colon \SL_2 (\Z) \to \Aut (A_{\te})$
be the standard action of $\SL_2 (\Z)$ on $A_{\te}.$
Let $F$ be any of the standard finite subgroups
$\Zqt, \Zq{3}, \Zq{4}, \Zq{6} \subset \SL_2 (\Z).$
Then the crossed product $C^* (F, A_{\te}, \af |_F )$ is
an AF~algebra.
\end{thm}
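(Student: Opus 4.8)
The plan is to identify $B := C^* (F, A_{\te}, \af |_F)$ as a simple unital \ca\ with tracial rank zero, compute its K-theory, and then use the classification of simple separable unital nuclear \ca s with tracial rank zero satisfying the Universal Coefficient Theorem to conclude that $B$ is an AF~algebra.

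First I would check the hypotheses of Theorem~\ref{T:RokhTAF}. By Theorem~\ref{T:NCTIsAT} in the case $d = 2,$ the irrational rotation algebra $A_{\te}$ is a unital simple in\fd\ AT~algebra with real rank zero; in particular it is separable, nuclear, lies in the UCT class, and has tracial rank zero. For each of the four subgroups, $\af |_F$ has the \trp: for $F = \Zqt,$ which acts by the flip, this is the $d = 2$ case of Example~\ref{E:TRP}(\ref{E:TRP:TorFlip}), and for $F = \Zq{3}, \Zq{4}, \Zq{6}$ it is Example~\ref{E:TRP}(\ref{E:TRP:RotAlg}). Hence Theorem~\ref{T:RokhTAF} shows that $B$ has tracial rank zero. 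Moreover $B$ is unital; it is simple because the \trp\ implies pointwise outerness, so that the \cp\ of the simple \ca~$A_{\te}$ by the finite group~$F$ is simple (Corollary~1.6 of~\cite{PhtRp1}); it is separable and nuclear, being a \cp\ of a separable nuclear \ca\ by a finite group; and it satisfies the UCT, since the UCT class is closed under crossed products by finite cyclic groups.

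It then remains to upgrade ``tracial rank zero'' to ``AF.'' By Lin's classification theorem, a simple separable unital nuclear \ca\ with tracial rank zero satisfying the UCT is determined up to isomorphism by its Elliott invariant, and such an algebra is an AF~algebra precisely when its $K_1$-group vanishes and its scaled ordered $K_0$-group is a countable dimension group with order unit. Since $B$ has tracial rank zero, $(K_0 (B), K_0 (B)^+, [1_B])$ is automatically simple, weakly unperforated, and satisfies Riesz interpolation; the one thing that is not automatic is torsion-freeness of $K_0 (B).$ So the crux of the argument is a K-theory computation: I must show that $K_1 (B) = 0$ and that $K_0 (B)$ is torsion free --- indeed free abelian, of rank $6, 8, 9, 10$ for $F = \Zqt, \Zq{3}, \Zq{4}, \Zq{6}$ respectively.

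To do this I would pass to the equivariant picture. Writing $A_{\te}$ as a twisted group \ca\ of $\Z^2,$ one has $B = A_{\te} \rtimes F$ and, by the Green--Julg theorem, $K_* (B) \cong K_*^F (A_{\te}).$ The group $\Z^2 \rtimes F$ is a (crystallographic, hence amenable) group for which the Baum--Connes conjecture holds, so $K_*^F (A_{\te})$ can be computed by an equivariant Chern character argument, or directly as in~\cite{ELPW}. The hard part will be exactly the verification that $K_0 (B)$ has no torsion. This is a genuine phenomenon special to these four actions and not a formal consequence of freeness or of tracial rank zero: crossed products by finite groups routinely create torsion in $K_0$ --- the action of $\Zqt$ on $S^2$ by $x \mapsto -x,$ whose crossed product has the K-theory of~$\R P^2,$ is the standard example recalled earlier --- so the absence of torsion here must be extracted from the explicit integral computation. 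Once $K_1 (B) = 0$ and the torsion-freeness of $K_0 (B)$ have been established, $(K_0 (B), K_0 (B)^+, [1_B])$ is the scaled ordered $K_0$-group of a (unique) simple unital AF~algebra~$C,$ and the classification theorem yields $B \cong C.$ (In special cases, such as the flip action $F = \Zqt,$ one can alternatively build an explicit AF inductive-limit model for $B$ by hand.) Thus $B$ is an AF~algebra, which is the assertion of the theorem.
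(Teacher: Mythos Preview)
Your proposal is correct and follows essentially the approach indicated in the paper (which cites \cite{ELPW} for the full proof): establish the \trp\ for $\af|_F$ via Example~\ref{E:TRP}, apply Theorem~\ref{T:RokhTAF} to get tracial rank zero for the crossed product, and then invoke the K-theory computation from~\cite{ELPW} together with Lin's classification to conclude AF. Your identification of the torsion-freeness of $K_0(B)$ as the non-formal ingredient, and your awareness that this requires the explicit integral computation carried out in~\cite{ELPW}, is exactly right.
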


(The case $F = \Zqt$ was already known~\cite{BK}.)

\begin{thm}[Theorem~0.4 of~\cite{ELPW}]\label{T:HDFlip}
Let $A_{\Theta}$ be the noncommutative
$d$-torus corresponding to a nondegenerate
skew symmetric real $d \times d$ matrix~$\Theta.$
Let
$\alpha \colon \Zqt \to \Aut (A_{\Theta})$ denote the flip action.
Then
$C^* (\Zqt, A_{\Theta}, \alpha)$ and the
fixed point algebra $A_{\Theta}^{\Zqt}$ are AF~algebras.
\end{thm}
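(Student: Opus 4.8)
The plan is to show that the \cp\ $B = C^* (\Zqt, A_{\Theta}, \af)$ is a simple unital AF~algebra; the assertion about the fixed point algebra then follows because $A_{\Theta}^{\Zqt}$ is a corner of~$B.$ Concretely, writing $u \in B$ for the canonical order-two unitary implementing the flip, the element $e = \frac{1}{2}(1 + u)$ is a \pj\ with $e B e \cong A_{\Theta}^{\Zqt},$ so once $B$ is known to be AF, $A_{\Theta}^{\Zqt}$ is a \hsa\ (indeed a corner) of an AF~algebra, hence itself AF.

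To identify $B,$ I would first assemble the hypotheses of Theorem~\ref{T:RokhTAF}. Nondegeneracy of $\Theta$ makes $A_{\Theta}$ simple; it is also separable, unital, in\fd, and nuclear, and it lies in the UCT class (for instance, it is an iterated \cp\ of $C (S^1)$ by~$\Z$). By Theorem~\ref{T:NCTIsAT}, $A_{\Theta}$ is a simple unital AT~algebra with real rank zero, so it has tracial rank zero; and by Example~\ref{E:TRP}(\ref{E:TRP:TorFlip}), the flip $\af \colon \Zqt \to \Aut (A_{\Theta})$ has the \trp. Theorem~\ref{T:RokhTAF} then gives that $B$ has tracial rank zero. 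Moreover, $\af$ is pointwise outer by Proposition~\ref{T:TRPImpPOut}, so $B$ is simple (Corollary~1.6 of~\cite{PhtRp1}); and $B$ inherits separability, unitality, nuclearity, and membership in the UCT class from $A_{\Theta},$ each of these being preserved under crossed products by a finite group.

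The substantive remaining step is the K-theory computation: one must show $K_1 (B) = 0$ and that $K_0 (B)$ is a finitely generated torsion-free abelian group. For $d = 2$ this is essentially classical, and the crossed product was already known to be AF in that case (see~\cite{BK}); for general~$d$ it requires a genuine analysis of the $\Zqt$-equivariant K-theory of $A_{\Theta},$ with the vanishing of $K_1$ reflecting the fact that the flip acts by $-1$ on the ``odd'' part of $K_* (A_{\Theta}).$ Granting this, tracial rank zero forces $(K_0 (B), K_0 (B)^+, [1_B])$ to be a simple weakly unperforated Riesz group whose state space is, via the canonical pairing, the tracial state space of~$B,$ and torsion-freeness then makes it a simple dimension group. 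Hence the Elliott invariant of $B$ agrees with that of a simple unital AF~algebra~$D$ (by the range results for AF~algebras), and since $D$ likewise has tracial rank zero and satisfies the UCT, Lin's classification theorem for tracial rank zero C*-algebras gives $B \cong D.$ Therefore $B$ is AF, and so is its corner $A_{\Theta}^{\Zqt} \cong e B e.$

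I expect the K-theory computation for general~$d$ to be the main obstacle; everything else is an assembly of results stated above together with standard permanence properties of crossed products by finite groups.
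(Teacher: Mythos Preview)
Your outline is correct and matches the approach the paper indicates: the paper does not give its own proof of this theorem but cites~\cite{ELPW}, noting only that the \trp\ (Example~\ref{E:TRP}(\ref{E:TRP:TorFlip})) together with Theorem~\ref{T:RokhTAF} plays the key role, which is exactly your route---tracial rank zero for the crossed product, then the K-theory computation from~\cite{ELPW} and Lin's classification to upgrade to AF. Your identification of the K-theory step (vanishing $K_1$ and torsion-free $K_0$) as the substantive remaining obstacle is accurate; that is precisely the work carried out in~\cite{ELPW}.
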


The \trp\  has the following permanence property:

\begin{prp}[Lemma~5.6 of~\cite{ELPW}]\label{P:TRPSubgp}
Let $A$ be an infinite dimensional simple \uca,
and let $\af \colon G \to \Aut (A)$ be an
action of a finite group $G$ on $A$
which has the \trp.
Let $H$ be a subgroup of~$G.$
Then $\af |_H$ has the \trp.
\end{prp}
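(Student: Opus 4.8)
The plan is to follow the proof of Proposition~\ref{T:RPerm}(\ref{T:RPerm:Sbgp}), the corresponding statement for the \rp, and to observe that the two ``tracial'' conditions in Definition~\ref{D:TRP}, Conditions~(\ref{D:TRP:3}) and~(\ref{D:TRP:4}), carry over with no extra work. First I would set $n = \card (G / H)$ and fix a system $C$ of right coset representatives of $H$ in~$G,$ so that $(h, c) \mapsto h c$ is a bijection from $H \times C$ onto~$G$ and $\card (C) = n.$ Then, given a finite set $F \subset A,$ a number $\ep > 0,$ and a positive element $x \in A$ with $\| x \| = 1,$ I would apply the \trp\  for~$\af$ to $F,$ to $\ep / n$ in place of~$\ep,$ and to~$x,$ obtaining nonzero \mops\  $(f_g)_{g \in G}$ in~$A$ satisfying Conditions~(\ref{D:TRP:1})--(\ref{D:TRP:4}) of Definition~\ref{D:TRP}. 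For $h \in H$ I would then put $e_h = \sum_{c \in C} f_{h c}.$ Each $e_h$ is a \pj\  (a sum of \mops) and is nonzero, since it dominates $f_{h c}$ for any fixed $c \in C;$ and since the index sets $\{ h c \colon c \in C \},$ for $h \in H,$ are pairwise disjoint (bijectivity of $(h, c) \mapsto h c$), the $(e_h)_{h \in H}$ are \mops.

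For $h, h' \in H$ one has $e_{h h'} = \sum_{c \in C} f_{h h' c},$ so Condition~(\ref{D:TRP:1}) for~$\af$ gives
\[
\| (\af |_H)_h (e_{h'}) - e_{h h'} \|
  \leq \sum_{c \in C} \| \af_h (f_{h' c}) - f_{h h' c} \|
  < n \cdot \frac{\ep}{n}
  = \ep,
\]
which is Condition~(\ref{D:TRP:1}) for~$\af |_H,$ and Condition~(\ref{D:TRP:2}) for~$\af |_H$ follows in the same way from Condition~(\ref{D:TRP:2}) for~$\af.$ The decisive point is that, again by bijectivity of $(h, c) \mapsto h c,$ the \pj
\[
e = \sum_{h \in H} e_h = \sum_{g \in G} f_g
\]
is exactly the \pj\  produced when the \trp\  was applied to~$\af.$ Therefore $1 - e$ is \mvnt\  to a \pj\  in the \hsa\  of~$A$ generated by~$x,$ which is Condition~(\ref{D:TRP:3}) for~$\af |_H,$ and $\| e x e \| > 1 - \ep / n \geq 1 - \ep,$ which is Condition~(\ref{D:TRP:4}) for~$\af |_H.$ Hence $(e_h)_{h \in H}$ is the required family, and $\af |_H$ has the \trp.

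There is no genuine obstacle here: the only thing to watch is the coset bookkeeping, which is what forces the $e_h$ to be \mops\  with $\sum_{h \in H} e_h = \sum_{g \in G} f_g.$ Once that is in place, the conditions of Definition~\ref{D:TRP} for~$\af |_H$ reduce, with the same $x$ and a smaller tolerance, to those already supplied for~$\af,$ so the genuinely ``tracial'' content transfers verbatim.
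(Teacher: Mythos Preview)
Your proof is correct. The paper does not give its own proof of this proposition; it merely records the statement and cites Lemma~5.6 of~\cite{ELPW}. Your argument is the natural one: it adapts verbatim the coset-summation trick used in the paper's proof of Proposition~\ref{T:RPerm}(\ref{T:RPerm:Sbgp}) for the Rokhlin property, and then observes that the two tracial conditions~(\ref{D:TRP:3}) and~(\ref{D:TRP:4}) transfer for free because $\sum_{h \in H} e_h = \sum_{g \in G} f_g$. This is exactly the approach one expects in~\cite{ELPW}, and there is nothing to add.
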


Permanence properties involving ideals, quotients,
and extensions don't make sense,
since the \trp\  is (so far) defined only for actions on simple \ca s.
It seems plausible that a direct limit of actions with
the \trp\  again has the \trp,
but nobody has checked this.

\begin{pbm}\label{Pb:TRPTens}
Let $G$ be a finite group,
let $A$ and $B$ be infinite dimensional simple \uca s,
let $\af \colon G \to \Aut (A)$ be an action with the \trp,
and let $\bt \colon G \to \Aut (B)$ be an arbitrary action.
Does it follow that
$\af \otimes_{\mathrm{min}} \bt \colon
   G \to \Aut (A \otimes_{\mathrm{min}} B)$
has the \trp?
(We use the minimal tensor product to ensure simplicity.)
\end{pbm}

Lemma~3.9 of~\cite{PhtRp1} is the very
special case $B = M_n$ and $\bt$ is inner.
Proposition~4.3 of~\cite{LO} gives a related result for
actions of $\Z$ which have the tracial cyclic \rp,
Definition~2.4 of~\cite{LO}.
The assumptions are
that $A$ is simple, unital, and has tracial rank zero,
that $B$ is simple, unital, and has tracial rank at most one,
that $\af \in \Aut (A)$ has the tracial cyclic \rp,
and that $\bt \in \Aut (B)$ is arbitrary.
The conclusion is that
$\af \otimes_{\mathrm{min}} \bt \in \Aut (A \otimes_{\mathrm{min}} B)$
has the tracial cyclic \rp.
The same proof gives the following result,
pointed out to us by Hiroyuki Osaka:

\begin{prp}[Osaka]\label{P:TRPTensPr}
Let $G$ be a finite group,
let $A$ and $B$ be infinite dimensional simple \uca s,
let $\af \colon G \to \Aut (A)$ be an action with the \trp,
and let $\bt \colon G \to \Aut (B)$ be an arbitrary action.
Suppose $A$ has tracial rank zero
and $B$ has tracial rank at most one.
Then
$\af \otimes_{\mathrm{min}} \bt \colon
   G \to \Aut (A \otimes_{\mathrm{min}} B)$
has the \trp.
\end{prp}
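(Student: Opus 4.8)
The plan is to follow the proof of Proposition~4.3 of~\cite{LO}, replacing the tracial cyclic Rokhlin property of an action of~$\Z$ by the \trp\ of the action of the finite group~$G.$ Write $C = A \otimes_{\mathrm{min}} B$ and $\gm = \af \otimes_{\mathrm{min}} \bt.$ First I would record the structural facts about~$C$ that the argument uses: $C$ is simple, unital, and infinite dimensional, and from $\mathrm{TR} (A) = 0$ and $\mathrm{TR} (B) \leq 1$ one reads off that $C$ has real rank zero, stable rank one, and strict comparison (the order on projections, and more generally on positive elements, over~$C$ is determined by the tracial states). Indeed, tensoring a \fd\ subalgebra of~$A$ having large unit against an interval--algebra subalgebra of~$B$ having large unit gives an interval--algebra subalgebra of~$C$ with large unit, so $\mathrm{TR} (C) \leq 1$; moreover the image of $K_0 (C)$ in $\Aff (T (C))$ is dense, since it already is for~$A$ (which is simple, infinite dimensional, and of tracial rank zero) and the projections of~$A$ rescale the $K$-theory classes of~$B$ by a dense subset of~$\R$; hence $C$ has real rank zero (and in fact $\mathrm{TR} (C) = 0$). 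In particular every nonzero \hsa\ of~$C$ contains nonzero projections, which can be chosen of arbitrarily small tracial value.

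Next I would reduce the verification of Conditions (\ref{D:TRP:1})--(\ref{D:TRP:4}) of Definition~\ref{D:TRP}, exactly as in the proof of Proposition~\ref{T:RPerm}(\ref{T:RPerm:Tens}), to the case in which $F \S C$ consists of elementary tensors $a \otimes b$ with $a$ in a finite set $S \S A$ and $b$ in a finite set $T \S B$ with $\| b \| \leq 1.$ So fix such $S$ and~$T,$ an $\ep > 0,$ and a positive $x \in C$ with $\| x \| = 1.$ Using real rank zero of~$C,$ choose a nonzero \pj\ $q$ in the \hsa\ of~$C$ generated by $( x - 1 + \frac{\ep}{2} )_+$; a functional calculus estimate gives $\| q x q \| > 1 - \ep,$ and by shrinking~$q$ inside the (simple, infinite dimensional) corner containing it we may also assume $\sup_{\tau \in T (C)} \tau (q) < \frac{1}{2}.$ Let $\ld > 0$ be a common lower bound for $d_{\tau} (x)$ over $\tau \in T (C)$ (one exists by compactness of $T (C)$ and faithfulness of traces). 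Now apply the \trp\ of~$\af$ to~$S,$ to a tolerance $\ep' \ll \ep,$ and to a positive element $x_0 \in A$ chosen small enough --- using that~$A$ is simple, infinite dimensional, and has strict comparison --- that Condition~(\ref{D:TRP:3}) for~$\af$ forces $\sup_{\sm \in T (A)} \sm (1_A - f),$ where $f = \sum_{g \in G} f_g,$ to be smaller than both $\frac{1}{2}$ and~$\ld.$ This produces \mops\ $f_g \in A$ ($g \in G$) with $\| \af_g (f_h) - f_{g h} \| < \ep'$ and $\| f_g a - a f_g \| < \ep'$ for $a \in S,$ and with $1_A - f$ of very small tracial value.

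Put $e_g^{(0)} = f_g \otimes 1_B$ and $e^{(0)} = f \otimes 1_B.$ Conditions (\ref{D:TRP:1}) and (\ref{D:TRP:2}) hold for the $e_g^{(0)}$ (using $\| b \| \leq 1$), but (\ref{D:TRP:3}) and (\ref{D:TRP:4}) require modifying $e^{(0)}$ so that the sum of the new family dominates the projection~$q.$ Writing $\sm_{\tau} (a) = \tau (a \otimes 1_B)$ for $\tau \in T (C),$ we have $\tau (q) < \frac{1}{2} < 1 - \sm_{\tau} (1_A - f) = \tau (e^{(0)})$ for all~$\tau,$ so strict comparison and cancellation in~$C$ yield a unitary $u_0 \in C$ with $q \leq u_0 e^{(0)} u_0^*.$ Using real rank zero, strict comparison, and the smallness of $1_C - e^{(0)}$ and of~$q,$ a perturbation argument as in~\cite{LO} replaces~$u_0$ by a unitary $u \in C$ that is close to~$1,$ is approximately $\gm$-invariant, and still satisfies $q \leq u e^{(0)} u^*.$ Now set $e_g = u e_g^{(0)} u^*$ and $e = \sum_{g \in G} e_g.$ The $e_g$ are \mops; approximate $\gm$-invariance of~$u$ gives Condition~(\ref{D:TRP:1}) for~$\ep$; closeness of~$u$ to~$1$ gives Condition~(\ref{D:TRP:2}); the inclusion $q \leq e$ gives $\| e x e \| \geq \| q x q \| > 1 - \ep,$ which is Condition~(\ref{D:TRP:4}); and $\tau (1_C - e) = \sm_{\tau} (1_A - f) < \ld \leq d_{\tau} (x)$ for all~$\tau,$ so strict comparison of positive elements shows $1_C - e$ is \mvnt\ to a \pj\ in the \hsa\ of~$C$ generated by~$x,$ which is Condition~(\ref{D:TRP:3}).

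The step I expect to be the main obstacle is the one in the previous paragraph: arranging that $e = \sum_{g \in G} e_g$ simultaneously dominates the prescribed ``top of~$x$'' projection~$q$ (needed for Condition~(\ref{D:TRP:4})), has complement small enough to lie inside the \hsa\ generated by~$x$ (needed for Condition~(\ref{D:TRP:3})), and still satisfies the $\gm$-equivariance and approximate centrality estimates after the perturbation of the $e_g^{(0)}.$ This is exactly where real rank zero and strict comparison of~$C$ --- consequences of $\mathrm{TR} (A) = 0$ and $\mathrm{TR} (B) \leq 1$ --- do the real work, mirroring the role of tracial rank zero of~$A$ and tracial rank at most one of~$B$ in the proof of Proposition~4.3 of~\cite{LO}. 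The reduction to elementary tensors and the estimates for Conditions~(\ref{D:TRP:1}) and~(\ref{D:TRP:2}) are routine.
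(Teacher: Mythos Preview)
Your overall plan---follow Proposition~4.3 of~\cite{LO}, replacing the $\Z$-action with the finite group action, and verify Condition~(\ref{D:TRP:3}) of Definition~\ref{D:TRP} via tracial states using strict comparison in $C = A \otimes_{\mathrm{min}} B$---is exactly the approach the paper indicates. The paper's entire proof is the sentence ``The same proof gives the following result,'' together with the remark that the key point is that Condition~(\ref{D:TRP:3}) can be checked by looking at the values of tracial states on $1 - \sum_{g \in G} e_g.$ You have correctly identified this mechanism and correctly set it up: choose $x_0 \in A$ of small trace, apply the \trp\ of~$\af$ to force $\sup_{\sm \in T (A)} \sm (1_A - f)$ small, and then strict comparison in~$C$ (from $\mathrm{TR} (C) \leq 1$) handles Condition~(\ref{D:TRP:3}) for $e_g = f_g \otimes 1_B$ directly.

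Where you take an unnecessary detour is in your treatment of Condition~(\ref{D:TRP:4}). You introduce the projection~$q,$ the unitary~$u_0,$ and then a delicate perturbation to a nearly invariant, nearly central unitary~$u$ so that $q \leq u e^{(0)} u^*.$ You flag this as the main obstacle. But it is not an obstacle at all: the paper notes immediately after Definition~\ref{D:TRP} that when the algebra is finite, Condition~(\ref{D:TRP:4}) is redundant (Lemma~1.16 of~\cite{PhtRp1}). Since you have already argued that $\mathrm{TR} (C) \leq 1,$ the algebra~$C$ is stably finite, and Condition~(\ref{D:TRP:4}) comes for free. Thus the unperturbed projections $e_g = f_g \otimes 1_B$ already satisfy all four conditions, and the entire unitary-perturbation paragraph can be deleted. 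With that simplification your argument collapses to exactly what the paper sketches.
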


The key point is that the condition on $1 - \sum_{g \in G} e_g$
in Definition~\ref{D:TRP}(\ref{D:TRP:3})
can be verified by using the values of tracial states
on this element.
A general positive solution to Problem~\ref{Pb:TRPTens}
requires relating \hsa s in
$A \otimes_{\mathrm{min}} B$ to \hsa s in~$A,$
which might be difficult.

The \trp, as given in Definition~\ref{D:TRP},
suffers from three major defects:
the algebra must be unital,
it must have many \pj s,
and it must be simple.

Presumably the nonunital simple case can be handled by
something like Definition~\ref{D:MRP}.
However, the correct analog of Condition~(\ref{D:TRP:3})
of Definition~\ref{D:TRP} is not clear.

Archey~\cite{Ar2} has made progress toward handling
the simple unital case with few \pj s.
We refer to~\cite{Ar2} for unexplained terminology
in the following.

\begin{dfn}[\cite{Ar2}]\label{D:NoPjTRP}
Let $A$ be an infinite dimensional stably finite simple \uca,
and let $\af \colon G \to \Aut (A)$ be an
action of a finite group $G$ on $A.$
We say that $\af$ has the
{\emph{projection free tracial Rokhlin property}}
if for every finite set
$F \subset A,$ every $\ep > 0,$ and every positive element $x \in A$
with $\| x \| = 1,$
there are mutually orthogonal positive elements $a_g \in A$
for $g \in G$ with $\| a_g \| = 1$ for all $g \in G,$
such that:
\begin{enumerate}
\item\label{D:NoPjTRP:1}
$\| \af_g (a_h) - a_{g h} \| < \ep$ for all $g, h \in G.$
\item\label{D:NoPjTRP:2}
$\| a_g c - c a_g \| < \ep$ for all $g \in G$ and all $c \in F.$
\item\label{D:NoPjTRP:3}
With $a = \sum_{g \in G} a_g,$ we have $\ta (1 - a) < \ep$
for every \tst\  $\ta$ on~$A.$
\item\label{D:NoPjTRP:4}
With $a = \sum_{g \in G} a_g,$ the element $1 - a$
is Cuntz subequivalent to an element
of the \hsa\  of $A$ generated by~$x.$
\end{enumerate}
\end{dfn}

For example, let $Z$ be the Jiang-Su algebra~\cite{JS}.
Then $Z \otimes Z$ has no nontrivial \pj s.
Archey shows~\cite{Ar2} that the tensor flip on $Z \otimes Z$
generates an action of $\Zqt$ with the projection free \trp.
The other hypotheses of the following theorem are also satisfied.
Again, see~\cite{Ar2} for unexplained terminology.

\begin{thm}[\cite{Ar2}]\label{T:NoPj}
Let $A$ be an infinite dimensional stably finite simple \uca\  %
with stable rank one and with strict comparison of positive elements.
Further assume that every $2$-quasitrace on $A$ is a trace,
and that $A$ has only finitely many extreme \tst s.
Let $\af \colon G \to \Aut (A)$ be an
action of a finite group $G$ on $A$
which has the projection free \trp.
Then $C^* (G, A, \af)$ has stable rank one.
\end{thm}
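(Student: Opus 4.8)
The plan is to realize $D := C^* (G, A, \af)$ as an algebra containing a \emph{centrally large subalgebra} (in the sense of Archey and Phillips, refining Phillips's large subalgebras) of the special form ``a matrix algebra over a \hsa\ of $A$'', and then to invoke the permanence theorem which asserts that, for a simple stably finite \uca, stable rank one passes from a centrally large subalgebra to the whole algebra. First I would dispatch the preliminaries. The projection free \trp\ implies pointwise outerness by exactly the argument used for the ordinary \trp: if $\af_g = \Ad (v)$ with $v$ a unitary in $A$, then for any $h$ the element $a_h$ produced by Definition~\ref{D:NoPjTRP} almost commutes with $v$ and satisfies $\af_g (a_h) \approx a_{gh}$, forcing $\| a_h - a_{gh} \|$ to be small, which is impossible since $a_h$ and $a_{gh}$ are orthogonal norm-one positive elements. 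Hence $D$ is simple. Moreover $A$, being simple, unital, stably finite, and such that all its $2$-quasitraces are traces, carries a faithful \tst; averaging such a trace over $G$ and composing with the canonical faithful conditional expectation $E \colon D \to A$ gives a faithful \tst\ on $D$, so $D$ is stably finite. Thus the ambient algebra $D$ satisfies the standing hypotheses of the permanence theorem, and the comparison hypotheses on $A$ will be used only in the construction below, to certify that the subalgebra is centrally large.

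To build the subalgebra: given a finite set $\mathcal{F} \subset D$, an $\ep > 0$, and a nonzero positive $x \in A$ of norm one, apply Definition~\ref{D:NoPjTRP} (to a finite subset of $A$ large enough to witness approximate centrality of $\mathcal{F}$, with a small tolerance) to obtain mutually orthogonal norm-one positive $a_g \in A$ with $\af_g (a_h) \approx a_{gh}$, the $a_g$ almost commuting with the chosen set, and $a = \sum_{g} a_g$ satisfying $\ta (1 - a) < \ep$ for all \tst s and $1 - a$ Cuntz subequivalent to an element of the \hsa\ of $A$ generated by~$x$. A routine perturbation (cut the $a_g$ down by a continuous function supported near~$1$, then use approximate orthogonality and approximate equivariance to straighten the relations, absorbing the errors through functional calculus) replaces them by honest mutually orthogonal norm-one positive $b_g \in A$ with $\af_g (b_h) = b_{gh}$ exactly, still almost commuting with $\mathcal{F}$, and with $b = \sum_g b_g$ still tracially close to~$1$ and, after absorbing the error, Cuntz subequivalent to an element of the \hsa\ of $A$ generated by~$x$. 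Inside $D$, with $u_g$ the canonical unitaries, set $v_g = u_g b_1$; then $\af_g (b_1) = b_g$ and orthogonality of the $b_g$ give matrix-unit–type relations for the $v_g v_h^*$, and the $C^*$-subalgebra $B \subset D$ generated by $\overline{b_1 A b_1}$ together with the $v_g$ is isomorphic to $M_{\card (G)} \bigl( \overline{b_1 A b_1} \bigr)$, with unit $1_B = \sum_g b_g$. Since $\overline{b_1 A b_1}$ is a \hsa\ of a \ca\ with stable rank one it again has stable rank one, and $\tsr ( M_n (C) ) = 1$ whenever $\tsr (C) = 1$ by Rieffel's formula $\tsr ( M_n (C) ) = \lceil ( \tsr (C) - 1 ) / n \rceil + 1$; hence $\tsr (B) = 1$.

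It remains to see that $B$ is centrally large in $D$, and this is where the comparison hypotheses are used. The defect $1_D - 1_B = 1 - \sum_g b_g$ is controlled by parts~(\ref{D:NoPjTRP:3}) and~(\ref{D:NoPjTRP:4}) of Definition~\ref{D:NoPjTRP}: it is tracially small and Cuntz subequivalent to an element of the \hsa\ generated by~$x$, and — because $A$ is simple with only finitely many extreme \tst s — we are free to choose $x$ with arbitrarily small values on all \tst s. Since $A$ has strict comparison of positive elements and its $2$-quasitraces are traces, these two facts upgrade to: the defect is Cuntz subequivalent to an arbitrarily small positive element that can be absorbed into $\overline{b_1 A b_1}$, and moreover — because the $b_g$ were arranged to almost commute with the approximate-centrality witnesses — the analogous comparisons hold against elements of $B$ and of the relative commutant of $B$ in $D$. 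That is precisely the package of estimates defining a centrally large subalgebra, so $B \subset D$ qualifies, and feeding this into the permanence theorem yields $\tsr (D) = 1$. The main obstacle is exactly this last step: converting the single, purely tracial piece of control on the defect provided by conditions~(\ref{D:NoPjTRP:3}) and~(\ref{D:NoPjTRP:4}) into the full family of Cuntz-comparison estimates that central largeness demands — it is here that strict comparison, the coincidence of $2$-quasitraces with traces, and finiteness of the extreme \tst\ space are all genuinely needed, and the careful bookkeeping is carried out in~\cite{Ar2}.
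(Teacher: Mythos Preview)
The paper does not prove this theorem; it merely quotes it from Archey's then-unpublished work~\cite{Ar2}, so there is no proof in the paper to compare against.  That said, your approach via centrally large subalgebras is anachronistic: the large-subalgebra machinery of Phillips, and its refinement by Archey and Phillips, was developed several years after this 2009 survey and after Archey's original argument.  Her proof (growing out of her 2008 thesis) proceeds by direct adaptation of the Osaka--Phillips methods in~\cite{OP1} and~\cite{Ar1} to positive elements in place of projections, not by invoking a black-box permanence theorem.

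There is also a genuine gap in your construction.  You claim that the subalgebra $B$ generated by $\overline{b_1 A b_1}$ and the elements $v_g = u_g b_1$ is isomorphic to $M_{\card(G)}\bigl(\overline{b_1 A b_1}\bigr)$.  But $b_1$ is a positive element of norm one, not a projection: $v_g^* v_g = b_1^2$ and $v_g v_h^* = u_g b_1^2 u_h^*$, so the $v_g$ do not give honest matrix units, and the algebra they generate is not a matrix algebra over $\overline{b_1 A b_1}$ in any obvious way.  One can show that the hereditary subalgebra $\overline{b D b}$ is stably isomorphic (via Brown's theorem, since $D$ is simple) to $\overline{b_1 A b_1}$, and hence has stable rank one, but even granting this the object you need for the centrally-large framework is a \emph{unital} subalgebra of $D$, and $\overline{b D b}$ has unit $b$ only if $b$ is a projection --- which is exactly what the projection-free hypothesis denies you.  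Repairing this requires either working with a non-unital version of the large-subalgebra theory or arguing more directly with approximate units and Cuntz comparison, and that is essentially where Archey's actual work lies.
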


One possible next step is to ask whether there is an
analog of Theorem~\ref{T:OuterImpTRP} using the
projection free \trp,
say for actions on simple separable unital nuclear $Z$-stable \ca s
with a unique \tst.
For that matter,
one might try using outerness in the
Gelfand-Naimark-Segal representation associated with a \tst\  as
a hypothesis for theorems on preservation of structure
in crossed products.

Finding the right definition for nonsimple \ca s seems to
be the most difficult problem.
One guide is that a free action on a compact metric space
should presumably have the \trp.
There is work in progress for actions of $\Z$
on quite special nonsimple \ca s.

\section{Pointwise outerness}\label{Sec:Outer}

\indent
Pointwise outerness is easy to define and,
at least for discrete groups acting on simple \ca s,
has useful consequences.
This and related conditions have mostly been used to
prove simplicity of crossed products $C^* (G, A, \af)$
when $G$ is discrete
and $A$ has no nontrivial $G$-invariant ideals,
or, more generally, that every ideal in $C^* (G, A, \af)$
is the crossed product by an invariant ideal of~$A.$
There are also theorems on preservation of structure,
for example for pure infiniteness and Property~(SP).

Like K-theoretic freeness,
pointwise outerness does not pass to invariant ideals
or their quotients.
A useful condition for actions on nonsimple algebras
must therefore be stronger.
A number of strengthenings have been used.
Recently introduced conditions include topological freeness~\cite{AS},
essential freeness of the action on the space of unitary equivalence
classes of irreducible representations~\cite{Sr},
and the Rokhlin* property~\cite{Sr}.
In Definition~\ref{D:SPOut} below,
we give another possible strengthening:
requiring pointwise outerness for all actions of subgroups
on invariant subquotients of the algebra.
But we do not know how useful this condition is.

\begin{dfn}\label{D:Out}
An action $\af \colon G \to \Aut (A)$ is said to be
{\emph{pointwise outer}}
if, for $g \in G \setminus \{ 1 \},$
the \am\  $\af_g$ is outer, that is,
not of the form $a \mapsto \Ad (u) (a) = u a u^*$
for some unitary $u$ in the multiplier algebra $M (A)$ of $A.$
\end{dfn}

Such actions are often just called outer.

An inner action $\af \colon G \to \Aut (A)$
is one for which there exists
a \hm\  $g \mapsto u_g,$ from $G$ to the unitary group % $U (M (A))$
of $M (A),$
such that $\af_g = \Ad (u_g)$ for all $g \in G.$
(If $G$ is not discrete, one should impose
a suitable continuity condition.)
There exist (see Example~\ref{E:4Gp} below)
actions of finite groups which are pointwise inner
(so that each $\af_g$ has the form $\Ad (u_g)$)
but not inner
(it is not possible to choose $g \mapsto u_g$ to be a group \hm).

In the literature,
a single automorphism is often called {\emph{aperiodic}}
if it generates a pointwise outer action of~$\Z.$

\begin{exa}\label{E:PTypeOut}
Let $\af \colon \Zqt \to \Aut (A)$ be as in Definition~\ref{D:PType}.
Then \tfae:
\begin{enumerate}
\item\label{E:PTypeOut:Out}
$\af$ is pointwise outer.
\item\label{E:PTypeOut:Smp}
$C^* (\Zqt, A, \af)$ is simple.
\item\label{E:PTypeOut:NTr}
For infinitely many $n,$ we have $k (n) \neq 0.$
\end{enumerate}
See Proposition~2.6 of~\cite{PhtRp4},
where additional equivalent conditions are given.
\end{exa}

By comparison with Example~\ref{E:PTypeTRP},
the choices $d (n) = 2^n$ and $k (n) = 1$ for all~$n$
give a pointwise outer action of $\Zqt$ which does not have the \trp.
Moreover, $\af$ becomes inner on the double commutant
of the Gelfand-Naimark-Segal representation from
the unique \tst\  on~$A.$
However, for examples of this type,
it follows that the \trp\  implies pointwise outerness.
This is true in general.

\begin{prp}[Lemma~1.5 of~\cite{PhtRp1}]\label{T:TRPImpPOut}
Let $A$ be an infinite dimensional simple \uca,
and let $\af \colon G \to \Aut (A)$ be an
action of a finite group $G$ on~$A.$
If $\af$ has the \trp, then $\af$ is pointwise outer.
\end{prp}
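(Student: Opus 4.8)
The plan is to prove the contrapositive: if some $\af_g$ with $g \in G \setminus \{1\}$ is inner, say $\af_g = \Ad(u)$ for a unitary $u \in A$ (the algebra is unital, so $M(A) = A$), then $\af$ fails the \trp. The intuition is that an inner automorphism cannot be approximated by a permutation of a genuine Rokhlin tower: if $e_1$ and $e_g$ are nonzero orthogonal projections with $\af_g(e_1) \approx e_g$, then $u e_1 u^* \approx e_g$, so $u$ nearly conjugates a nonzero projection to an orthogonal one; but a unitary that exactly conjugates a projection $p$ to an orthogonal projection $q \perp p$ forces $p \leq 1 - q$, and iterating the relation along the cyclic group generated by $g$ yields $|\langle g \rangle|$ mutually orthogonal copies of $e_1$ of comparable ``size'', which is fine in itself — the real contradiction must come from the inner structure rather than mere orthogonality. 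So I would instead exploit that $\af_g = \Ad(u)$ means $u$ commutes (approximately, via the near-centrality condition~(\ref{D:TRP:2})) with the $e_h$ up to the twist built into $\af_g$, and derive that $u$ must approximately commute with each $e_h$ \emph{and} satisfy $u e_h u^* \approx e_{gh}$, which together pin down $u$ too rigidly.

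\textbf{Key steps.} First I would reduce to the case where $g$ generates a cyclic subgroup $H = \langle g \rangle \subset G$ of prime order $p$ (if not prime order, pass to a power of $g$ of prime order; it is still nontrivial and still inner). Restricting attention to the projections $e_h$ for $h \in H$ only, discard the rest into $1 - \sum_{h \in H} f_h$ where $f_h = \sum_{k \in \, hH^{c}} e_k$ suitably grouped — actually cleaner: sum the $e_k$ over cosets of $H$ to get an $H$-Rokhlin system, reducing to $G = \Zq{p}$. Second, fix a finite set $F \subset A$ containing $u$, fix $\ep$ small (say $\ep < 1/(4p)$), and fix the positive element $x$; apply the \trp\ to get nonzero \mops\ $e_0, e_1, \ldots, e_{p-1}$ with $\|\af_g(e_j) - e_{j+1}\| < \ep$ (indices mod $p$) and $\|u e_j - e_j u\| < \ep$. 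Third, combine: since $\af_g = \Ad(u)$ and $\|u e_j - e_j u\| < \ep$, we get $\|e_{j+1} - e_j\| = \|\af_g(e_j) + (e_{j+1} - \af_g(e_j)) - e_j\| \leq \|u e_j u^* - e_j\| + \ep \leq 2\ep + \ep$. Iterating around the cycle, $\|e_1 - e_0\| < 3p\ep$, but also $e_0 \perp e_1$, so $\|e_0\| = \|e_0(e_0 - e_1)\| \leq \|e_0 - e_1\| < 3p\ep < 1$. Since $e_0$ is a \emph{nonzero} projection, $\|e_0\| = 1$, contradiction.

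\textbf{Where the work concentrates.} The genuinely delicate point is the very first reduction, handling a non-prime-order or non-cyclic situation and making sure that grouping the Rokhlin projections over cosets of $H = \langle g \rangle$ still yields a system satisfying condition~(\ref{D:TRP:1}) \emph{for the subgroup action} with a comparable $\ep$ — one must check $\|\af_g(f_{h}) - f_{gh}\| < |G/H|\,\ep$ and that the $f_h$ remain nonzero (which they are, being sums of nonzero \mops). After that, the estimate $\|\af_g(e_j) - e_j\| \leq \|u e_j u^* - u u^* e_j\| + \|\text{comm}\| \leq 2\|u e_j - e_j u\|$ is the standard commutator trick, and the cyclic telescoping plus orthogonality is elementary. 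The only subtlety to state carefully is that we need $u \in F$ from the start, which is legitimate since the \trp\ quantifies over \emph{all} finite $F$. I expect no obstruction beyond bookkeeping; this is essentially Lemma~1.5 of~\cite{PhtRp1} and the argument is short once the prime-order cyclic reduction is in place.
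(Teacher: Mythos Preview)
Your argument is correct, and since the paper merely cites Lemma~1.5 of~\cite{PhtRp1} without reproducing a proof, there is nothing in the paper to compare it against directly. The core idea---put the implementing unitary $u$ into the finite set $F$, so that $\af_g(e_1) = u e_1 u^*$ is simultaneously $\ep$-close to $e_g$ (by Condition~(\ref{D:TRP:1})) and $\ep$-close to $e_1$ (by Condition~(\ref{D:TRP:2}) applied to $u$), forcing two nonzero orthogonal projections to be within $2\ep$ of each other---is exactly right and is the standard proof.

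Two small over-complications are worth trimming. First, the reduction to a prime-order cyclic subgroup is unnecessary: you never use that $g$ has prime order, and you do not even need to sum over cosets. Just work with the original Rokhlin family $(e_h)_{h\in G}$ and the two projections $e_1$ and $e_g$; the only facts you need are that they are nonzero, orthogonal, and satisfy $\|\af_g(e_1)-e_g\|<\ep$ and $\|u e_1 - e_1 u\|<\ep$. Second, the telescoping ``around the cycle'' is superfluous: from $\|u e_1 u^* - e_1\| = \|(u e_1 - e_1 u)u^*\| < \ep$ you get $\|e_1 - e_g\| < 2\ep$ immediately, and since $e_1 \perp e_g$ are both nonzero, $\|e_1 - e_g\| = 1$, so any $\ep < 1/2$ already gives the contradiction. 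Condition~(\ref{D:TRP:3}) plays no role here, which is why the same argument underlies the proof of Proposition~\ref{T:RPImpSOut} in the paper (where the extra work there comes from the ideal, not from outerness itself).
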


For type~I \ca s,
see Theorem~\ref{T:StOuterType1}
and the discussion before Definition~\ref{D:SPOut} below.

The main application so far of pointwise outerness has been
to proofs of simplicity of reduced crossed products.
This application is valid for general discrete groups.
The next theorem is due to Kishimoto.
The expression ${\widetilde{\Gm}} (\bt)$ is as in
Definition~\ref{D:StrongConnes} below,
and requiring that it be nontrivial is a
strong outerness condition.
The second result is a corollary of the first.

For the statement of this and several later results,
the following definition is convenient.
It generalizes a standard definition for actions on topological spaces.

\begin{dfn}\label{D:Min}
An action $\af \colon G \to \Aut (A)$
is {\emph{minimal}} if $A$ has no $\af$-invariant ideals
other than $\{ 0 \}$ and~$A.$
\end{dfn}

\begin{thm}[Theorem~3.1 of~\cite{Ks1}]\label{T:Ks1}
Let $\af \colon G \to \Aut (A)$ be a minimal action of a
discrete group $G$ on a \ca~$A.$
Suppose that ${\widetilde{\Gm}} (\af_g)$
(with $\af_g$ being regarded as an action of $\Z$)
is nontrivial for every $g \in G \setminus \{ 1 \}.$
Then the reduced \cp\  $C^*_{\mathrm{r}} (G, A, \af)$ is simple.
\end{thm}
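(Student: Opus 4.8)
The plan is to prove that every nonzero closed ideal $J$ of $C^*_{\mathrm{r}} (G, A, \af)$ contains $A$; minimality of $\af$ then forces $J$ to be everything. Let $E \colon C^*_{\mathrm{r}} (G, A, \af) \to A$ be the canonical conditional expectation, which is faithful precisely because we use the \emph{reduced} \cp\ (this is the only role of reducedness). It therefore suffices to show $J \cap A \neq \{ 0 \}$: indeed $J \cap A$ is then a nonzero ideal of $A,$ and it is $\af$-invariant since $u_g (J \cap A) u_g^* = \af_g (J \cap A) \subseteq J \cap A$ for every $g \in G,$ so $J \cap A = A$ by minimality; since $\overline{A \cdot C^*_{\mathrm{r}} (G, A, \af) \cdot A} = C^*_{\mathrm{r}} (G, A, \af),$ this gives $J = C^*_{\mathrm{r}} (G, A, \af).$

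To produce a nonzero element of $J \cap A,$ I would fix $0 \neq x \in J$ with $x \geq 0$; by faithfulness of $E$ we have $E (x) \neq 0,$ and we set $\rh = \| E (x) \| > 0.$ Given a small $\dt > 0,$ choose a finitely supported $y = E (y) + \sum_{g \in F \setminus \{ 1 \}} a_g u_g,$ with $F \subset G$ finite and $1 \in F,$ such that $\| x - y \| < \dt,$ $E (y) \geq 0,$ and $\| E (y) \| > \rh - \dt.$ Using functional calculus on $E (y),$ pass to a nonzero \hsa\ $B \subset A$ obtained by a spectral cut-off near the top of the spectrum of $E (y),$ so arranged that $\| b \, E (y) \, b \| \geq \rh - 2 \dt$ for every positive contraction $b \in B$ with $\| b \| = 1.$ The crux is the following excision lemma: since $\widetilde{\Gm} (\af_g)$ is nontrivial for each $g \in F \setminus \{ 1 \},$ there is a positive contraction $b \in B$ with $\| b \| = 1$ and $\| b \, a_g \, \af_g (b) \| < \ep$ for all $g \in F \setminus \{ 1 \}.$ This is the generalization to arbitrary (not necessarily simple) $A$ of Kishimoto's lemma that an outer automorphism of a simple \ca\ admits near-orthogonal positive contractions in any prescribed \hsa. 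I would prove it by induction over the elements of $F \setminus \{ 1 \},$ at each stage replacing $B$ by a smaller nonzero \hsa\ chosen, via the description of the strong Connes spectrum (Definition~\ref{D:StrongConnes}) in terms of spectral subspaces, so that the $g$-th coefficient becomes nearly orthogonal to its image under $\af_g,$ while retaining enough of $B$ to keep $\| b \, E (y) \, b \|$ bounded below.

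Granting the lemma, $b x b \in J$ and, writing $b y b = b \, E (y) \, b + \sum_{g \in F \setminus \{ 1 \}} b \, a_g \, \af_g (b) \, u_g,$ we get $\| b x b - b \, E (y) \, b \| < \mu$ with $\mu = \dt + ( \card (F) - 1 ) \ep,$ whereas $b \, E (y) \, b$ is a positive element of $A$ of norm at least $\rh - 2 \dt.$ Taking $\dt$ and $\ep$ small enough that $\mu < \rh - 2 \dt,$ a standard excision argument (R{\o}rdam's lemma: if $\| s - t \| < \mu$ with $s, t \geq 0,$ then $( t - \mu )_+ = d^* s d$ for some contraction $d$ in the \ca\ generated by $s$ and $t$), applied with $s = b x b \in J$ and $t = b \, E (y) \, b \in A,$ yields $( t - \mu )_+ \in J \cap A$ with $\| ( t - \mu )_+ \| \geq \rh - 2 \dt - \mu > 0.$ Hence $J \cap A \neq \{ 0 \},$ completing the argument.

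The main obstacle is the excision lemma of the second paragraph. In the simple case it is exactly Kishimoto's classical argument, but here $A$ may carry many invariant ideals and the only global hypothesis available is that each $\widetilde{\Gm} (\af_g)$ is nontrivial; transporting the argument to this setting, and in particular obtaining simultaneous near-orthogonality for all $g \in F \setminus \{ 1 \}$ while controlling the interplay between the inductive shrinking of the \hsa\ and the spectral cut-off that carries $E (y),$ is the delicate point. The remaining steps are routine norm estimates together with standard facts about the conditional expectation on a reduced \cp.
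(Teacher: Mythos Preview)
The survey does not supply a proof of this theorem; it simply quotes it as Theorem~3.1 of~\cite{Ks1}. So there is no ``paper's own proof'' to compare against here, and the relevant benchmark is Kishimoto's original argument.

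Your outline is exactly Kishimoto's strategy. The reduction to $J \cap A \neq \{0\}$ via the faithful conditional expectation, the approximation by a finitely supported element, the spectral cut-off of $E(y)$ to pin down a hereditary subalgebra on which $E(y)$ dominates a scalar, and above all the ``excision lemma'' producing a norm-one positive $b$ in that hereditary subalgebra with $\| b\, a_g\, \af_g(b)\|$ small for each $g \in F \setminus \{1\}$ --- this last is precisely Lemma~1.1 of~\cite{Ks1}, and you have correctly isolated it as the heart of the matter and correctly described how the hypothesis ${\widetilde{\Gm}}(\af_g) \neq \{1\}$ enters. The inductive shrinking over $g \in F \setminus \{1\}$ is also how Kishimoto handles the finite set (his Lemma~2.1). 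Your closing step, invoking R{\o}rdam's lemma to pass from $\|bxb - bE(y)b\| < \mu$ to a genuine nonzero element of $J \cap A$, is a clean modern substitute for the endgame; Kishimoto's 1981 paper predates that lemma and argues slightly differently, but your version is correct and arguably tidier.

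One small point of care: your sentence ``so arranged that $\| b\, E(y)\, b\| \geq \rh - 2\dt$ for every positive contraction $b \in B$ with $\|b\| = 1$'' is right in spirit but needs the observation that on the hereditary subalgebra cut out by a continuous bump near the top of the spectrum of $E(y)$ one has $b\,E(y)\,b \geq (\rh - 2\dt)\, b^2$, whence the norm bound. That is standard, and you clearly have it in mind; just make it explicit when you write this up.
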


\begin{thm}[Part of Theorem~3.1 of~\cite{Ks1}]\label{T:Ks}
Let $\af \colon G \to \Aut (A)$ be an action of a
discrete group $G$ on a simple \ca~$A.$
Suppose that $\af$ is pointwise outer.
Then $C^*_{\mathrm{r}} (G, A, \af)$ is simple.
\end{thm}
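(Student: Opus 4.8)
The plan is to deduce the statement from Theorem~\ref{T:Ks1}, so I would verify its two hypotheses in turn: that $\af$ is minimal, and that ${\widetilde{\Gm}}(\af_g)$ is nontrivial for every $g \in G \setminus \{ 1 \}$ (with $\af_g$ regarded as generating an action of~$\Z$). The first is immediate, since a simple \ca\ has no ideals other than $\{ 0 \}$ and~$A$, hence \emph{a fortiori} no nontrivial $\af$-invariant ideals.

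For the second point, fix $g \in G \setminus \{ 1 \}$ and let $N = \{ n \in \Z \colon \af_{g^n} {\mbox{ is inner}} \}$, which is a subgroup of~$\Z$ because products and inverses of inner automorphisms are inner. Since $\af_g$ is outer, $1 \notin N$, so $N = m \Z$ with $m = 0$ or $m \geq 2$. I would then invoke the standard description of the strong Connes spectrum of a single automorphism of a \emph{simple} \ca: ${\widetilde{\Gm}}(\af_g)$ is the annihilator of~$N$ in the dual of~$\Z$ (this belongs to the circle of ideas surrounding Definition~\ref{D:StrongConnes} below; for a single automorphism of a simple \ca\ it goes back to Kishimoto and is implicit in the derivation of this corollary in~\cite{Ks1}). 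Because $m \neq 1$, this annihilator is nontrivial --- it is the whole dual when $m = 0$, and the group of $m$-th roots of unity when $m \geq 2$ --- so ${\widetilde{\Gm}}(\af_g) \neq \{ 1 \}$. Theorem~\ref{T:Ks1} now yields simplicity of $C^*_{\mathrm{r}}(G, A, \af)$.

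I expect the only nonformal ingredient to be the identification of ${\widetilde{\Gm}}(\af_g)$ with the annihilator of~$N$ for outer automorphisms of simple \ca s; everything else is bookkeeping. An alternative that avoids Theorem~\ref{T:Ks1} would run Kishimoto's averaging argument directly for~$G$: given a nonzero ideal $J \subset C^*_{\mathrm{r}}(G, A, \af)$, one would use the canonical conditional expectation onto~$A$ together with a Kishimoto-type approximation lemma --- positive contractions in~$A$ that nearly commute with a prescribed element while driving the off-diagonal Fourier coefficients toward zero, available precisely because $\af$ is pointwise outer --- to show that $J \cap A$ is a nonzero $\af$-invariant ideal of~$A$; since $A$ is simple this forces $J \cap A = A$ and then $J = C^*_{\mathrm{r}}(G, A, \af)$. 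That route essentially reproves Theorem~\ref{T:Ks1} in the present case, with the approximation lemma as its hard part, so I would prefer the short deduction above.
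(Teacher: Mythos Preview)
Your approach is correct and matches the paper's: the paper simply records that Theorem~\ref{T:Ks} is ``a corollary of'' Theorem~\ref{T:Ks1} and cites~\cite{Ks1}, and you have spelled out exactly that derivation. The only comment worth making is that the precise identification ${\widetilde{\Gm}}(\af_g) = N^{\perp}$ is slightly more than you need and slightly more than what is stated explicitly in~\cite{Ks1}; what Kishimoto actually proves there (and what suffices) is the nontriviality of ${\widetilde{\Gm}}(\af_g)$ for outer $\af_g$ on simple~$A$, which in the aperiodic case does give the full dual~$\mathbb{T}$.
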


We note the following generalization,
which is the corollary after Theorem~1 in~\cite{AS}.
In this theorem and the next,
${\widehat{A}}$ is the space of unitary
equivalence classes of irreducible representations of the \ca~$A,$
with the hull-kernel topology.

\begin{thm}[\cite{AS}]\label{T:FullAS}
Let $\af \colon G \to \Aut (A)$ be a minimal action of a
discrete group $G$ on a \ca~$A.$
Suppose that $\af$ is topologically free,
that is, for every finite set $F \subset G \setminus \{ 1 \},$
the set
\[
\big\{ x \in {\widehat{A}}
 \colon {\mbox{$g x \neq x$ for all $g \in F$}} \big\}
\]
is dense in ${\widehat{A}}.$
Then $C^*_{\mathrm{r}} (G, A, \af)$ is simple.
\end{thm}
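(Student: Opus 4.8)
The plan is to prove that every nonzero closed ideal $J$ of $C^*_{\mathrm{r}}(G, A, \af)$ equals $C^*_{\mathrm{r}}(G, A, \af),$ by first producing a nonzero element of $A$ lying in $J.$ The reduction is standard: if $J \cap A \neq \{ 0 \},$ then $\af_g(J \cap A) = u_g (J \cap A) u_g^* \subseteq J \cap A$ for every $g \in G$ (since $u_g \in M(C^*_{\mathrm{r}}(G, A, \af))$ and $J$ is an ideal), so $J \cap A$ is a nonzero $\af$-invariant ideal of $A,$ hence equals $A$ by minimality (Definition~\ref{D:Min}); an approximate identity of $A$ is then an approximate identity of $C^*_{\mathrm{r}}(G, A, \af),$ forcing $J = C^*_{\mathrm{r}}(G, A, \af).$ So the whole problem reduces to: given $0 \neq x \in J,$ produce a nonzero element of $A \cap J.$

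For this I would use the canonical conditional expectation $E \colon C^*_{\mathrm{r}}(G, A, \af) \to A,$ which is faithful for the reduced crossed product by a discrete group. Since $x^* x \in J$ is nonzero and positive, $E(x^* x) \neq 0,$ and after rescaling we may assume $w := x^* x$ satisfies $w \geq 0,$ $w \in J,$ and $\| E(w) \| = 1.$ The crux is a \emph{compression lemma}: for every $\ep > 0$ there is a positive contraction $d \in A$ with $\| d w d - E(d w d) \| < \ep$ and $\| E(d w d) \| = \| d \, E(w) \, d \| > 1 - \ep.$ Granting this, assume for contradiction that $J \cap A = \{ 0 \},$ so the quotient map $q \colon C^*_{\mathrm{r}}(G, A, \af) \to C^*_{\mathrm{r}}(G, A, \af) / J$ is isometric on $A.$ Since $d w d \in J,$ we get $q(E(d w d)) = q(E(d w d) - d w d),$ hence
\[ 1 - \ep < \| E(d w d) \| = \| q(E(d w d)) \| = \| q(E(d w d) - d w d) \| \leq \| E(d w d) - d w d \| < \ep , \]
which is impossible once $\ep < \tfrac{1}{2}.$ Thus $J \cap A \neq \{ 0 \},$ and the theorem follows.

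It remains to sketch the compression lemma, which is where topological freeness is used. First approximate $w$ within $\ep / 4$ by a finite sum $y = \sum_{g \in F} a_g u_g$ with $F \subseteq G$ finite, symmetric, containing $1,$ and with $a_1 = E(w)$ (replacing the $u_1$-coefficient of a nearby element of $C_{\mathrm{c}}(G, A)$ by $E(w)$ changes the norm by at most $\| w - y \|$ since $E$ is contractive). Then $d w d - E(d w d)$ agrees up to $\ep / 2$ with $d y d - d a_1 d = \sum_{g \in F \setminus \{ 1 \}} d \, a_g \, \af_g(d) \, u_g,$ so it suffices to make $\| d \, a_g \, \af_g(d) \|$ small for each $g \in F \setminus \{ 1 \}$ and $\| d a_1 d \|$ close to $1.$ Because $\| a_1 \| = 1,$ the set $\{ \pi \in \widehat{A} \colon \| \pi(a_1) \| > 1 - \ep \}$ is nonempty and open, so topological freeness (applied to the finite set $F \setminus \{ 1 \}$) furnishes a point $\pi_0$ in it with $g \pi_0 \neq \pi_0$ for all $g \in F \setminus \{ 1 \}.$ One then constructs $d$ \emph{concentrated near $\pi_0$}: $\pi_0(d)$ should act essentially as the rank one projection onto a unit vector $\xi$ with $\langle \pi_0(a_1) \xi, \xi \rangle$ near $1,$ while $d$ should be nearly annihilated by each $\pi_0 \circ \af_g$ for $g \in F \setminus \{ 1 \};$ the inequivalences $\pi_0 \not\cong \pi_0 \circ \af_g,$ the symmetry of $F,$ and a Kadison transitivity (Glimm type) argument are then used to bound $\| d \, a_g \, \af_g(d) \|$ in every irreducible representation of $A.$

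The hardest step will be exactly this last lemma: arranging \emph{concentrated near $\pi_0$} quantitatively well enough that the off-diagonal products $d \, a_g \, \af_g(d)$ are small \emph{in operator norm} — that is, uniformly over all irreducible representations of $A,$ not merely over $\pi_0$ — while $\widehat{A}$ need not be Hausdorff. This is the technical heart of the Archbold–Spielberg analysis, and a complete proof would follow the argument behind Theorem~1 of~\cite{AS}; alternatively one could try to route the result through Kishimoto's Theorem~\ref{T:Ks1}, though the relation between topological freeness and nontriviality of ${\widetilde{\Gm}}(\af_g)$ is not transparent.
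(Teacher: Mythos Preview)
The paper does not prove this theorem: it is simply quoted from~\cite{AS} (``the corollary after Theorem~1 in~\cite{AS}''), with no argument given. So there is no proof in the paper to compare against.

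Your sketch is, in outline, exactly the Archbold--Spielberg argument: reduce to showing $J\cap A\neq\{0\}$ via minimality, use faithfulness of the canonical expectation $E$ on the reduced crossed product to get a positive $w\in J$ with $\|E(w)\|=1$, and then compress by a positive contraction $d\in A$ concentrated near a suitable $\pi_0\in{\widehat{A}}$ so that the off-diagonal terms $d\,a_g\,\af_g(d)$ are small while $\|d\,E(w)\,d\|$ stays near~$1$. The contradiction step using isometry of the quotient map on~$A$ is clean and correct. You have also honestly flagged the genuine difficulty: the construction of~$d$ so that $\|d\,a_g\,\af_g(d)\|$ is small in \emph{norm}, not merely in the single representation~$\pi_0$, with ${\widehat{A}}$ possibly non-Hausdorff. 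That is precisely where the work in~\cite{AS} lies (building on Kishimoto's lemma from~\cite{Ks1}), and your sketch stops short of carrying it out. As written, then, this is a correct roadmap rather than a complete proof; to finish you would need to either reproduce the Archbold--Spielberg/Kishimoto excision argument in full or cite it as a black box.
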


In the following result of Sierakowski,
minimality is not required,
and the conclusion is accordingly that all ideals in the \cp\  %
are \cp s of ideals in the original algebra.
The proof involves applying Theorem~\ref{T:FullAS}
to invariant quotients.
Exact actions are as in Definition~1.2 of~\cite{Sr}.
In particular, every action of an exact group is exact.
Example~\ref{E:WeakPtOuter} below shows that topological freeness
does not suffice in this theorem.

\begin{thm}[Theorem~1.16 of~\cite{Sr}]\label{T:Sr}
Let $\af \colon G \to \Aut (A)$ be an exact action of a
discrete group $G$ on a \ca~$A.$
Suppose that the action of $G$ on ${\widehat{A}}$ is essentially free,
that is,
for every $G$-invariant closed subset $X \S {\widehat{A}},$
the subset
\[
\big\{ x \in X \colon
 {\mbox{$g x \neq x$ for all $g \in G \setminus \{ 1 \}$}} \big\}
\]
is dense in~$X.$
Then every ideal $J \subset C^* (G, A, \af)$ has the form
$C^* (G, I, \af_{(\cdot)} |_I)$
for some $G$-invariant ideal $I \subset A.$
\end{thm}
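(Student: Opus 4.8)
The plan is to show that $I \mapsto C^* (G, I, \af_{(\cdot)} |_I)$ is a bijection from the set of $G$-invariant ideals of $A$ onto the set of all ideals of the crossed product, with inverse $J \mapsto J \cap A$; here $A$ is identified with its canonical copy in $C^* (G, A, \af),$ and $J \cap A$ is automatically $G$-invariant since the group unitaries $u_g$ lie in $M (C^* (G, A, \af)),$ normalize both $J$ and $A,$ and implement $\af_g$ on~$A.$ One direction is routine. For any $G$-invariant ideal $I \subset A$ one has $C^* (G, I, \af_{(\cdot)} |_I) \cap A = I,$ by applying the canonical conditional expectation $E \colon C^* (G, A, \af) \to A$ (which is faithful, the crossed product here being the reduced one). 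And if $J$ is an ideal and $I := J \cap A,$ then $I \subset J$ gives $C_{\mathrm{c}} (G, I) \subset J,$ hence $C^* (G, I, \af_{(\cdot)} |_I) \subset J.$ The content is the reverse inclusion.

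To obtain it, first quotient out: exactness of $\af$ makes
\[
0 \longrightarrow C^* (G, I, \af_{(\cdot)} |_I)
   \longrightarrow C^* (G, A, \af)
   \longrightarrow C^* (G, A / I, {\overline{\af}})
   \longrightarrow 0
\]
exact, with ${\overline{\af}}$ the induced action; write $q$ for the third map. Then $q (J)$ is an ideal of $C^* (G, A / I, {\overline{\af}})$ with $q (J) \cap (A / I) = \{ 0 \},$ because a preimage in $A$ of an element of this intersection lies in $J$ (as $\ker q \subset J$) and hence in $J \cap A = I,$ so it maps to~$0.$ Thus $J = C^* (G, I, \af_{(\cdot)} |_I)$ amounts to $q (J) = \{ 0 \},$ i.e. to the assertion that $(A / I, G, {\overline{\af}})$ has the \emph{intersection property}: every nonzero ideal of its crossed product has nonzero intersection with $A / I.$ Since ${\widehat{A / I}}$ is a $G$-invariant closed subset of ${\widehat{A}},$ every $G$-invariant closed subset of ${\widehat{A / I}}$ is one of ${\widehat{A}},$ so $(A / I, G, {\overline{\af}})$ is again essentially free. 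It therefore suffices to prove the intersection property for an arbitrary essentially free action of a discrete group $G$ on a \ca~$B.$

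For this I would use the faithful conditional expectation $E \colon C^* (G, B, \bt) \to B.$ Let $K$ be a nonzero ideal, and pick $x \in K$ with $x \geq 0$ and $x \neq 0$; then $E (x) \neq 0.$ The key point is that $E (x)$ belongs to the closed ideal of $C^* (G, B, \bt)$ generated by~$x$: granting this, $E (x) \in K \cap B,$ so $K \cap B \neq \{ 0 \}.$ To prove it, approximate $x$ in norm by an element $y = E (y) + \sum_{g \neq 1} b_g u_g$ of $C_{\mathrm{c}} (G, B),$ and use essential freeness of the action of $G$ on ${\widehat{B}}$ to produce, for each $\ep > 0,$ a positive contraction $d \in B$ with $\| d E (y) d \| > \| E (y) \| - \ep$ and $\| d b_g \bt_g (d) \| < \ep$ for every $g$ in the support of~$y.$ Since $d b_g u_g d = (d b_g \bt_g (d)) u_g,$ the element $d y d$ is within a small error of $E (x),$ while $d y d$ is close to $d x d,$ which lies in the ideal generated by~$x$; letting $\ep \to 0$ places $E (x)$ in that (closed) ideal. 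This is precisely the intersection property of Archbold and Spielberg, of which Theorem~\ref{T:FullAS} is the consequence when the action is minimal; and since essential freeness guarantees that every $G$-invariant subquotient of $A$ is topologically free, this argument applies in particular to $(A / I, G, {\overline{\af}}),$ which is what was needed.

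The main obstacle, I expect, is the averaging step just sketched: producing the positive contraction $d$ that simultaneously nearly preserves $\| E (y) \|$ and nearly annihilates each off-diagonal term $b_g u_g.$ This is the technical core of the Archbold--Spielberg theorem, and of Kishimoto's arguments behind Theorems~\ref{T:Ks1} and~\ref{T:Ks}; it is here that topological rather than merely pointwise freeness is needed, since one must find, near a point of ${\widehat{B}}$ where $E (x)$ is nearly of full norm, a region that is displaced from itself by each of the finitely many relevant~$g \neq 1.$ Two smaller points also require care: exactness of $\af$ is genuinely used to identify $C^* (G, A, \af) / C^* (G, I, \af_{(\cdot)} |_I)$ with $C^* (G, A / I, {\overline{\af}}),$ an identification that may fail for reduced crossed products when the action is not exact; and Example~\ref{E:WeakPtOuter} shows that essential freeness cannot be relaxed to topological freeness of $(A, G, \af)$ alone, precisely because the intersection property is needed for \emph{all} invariant quotients, not only for $A.$
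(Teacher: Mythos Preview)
Your proposal is correct and is exactly the approach the paper indicates: the paper does not give a proof but remarks that ``the proof involves applying Theorem~\ref{T:FullAS} to invariant quotients,'' and your argument does precisely this---use exactness to pass to $A/I$ for $I = J \cap A,$ observe that essential freeness descends to every invariant quotient, and then invoke the Archbold--Spielberg intersection property (of which Theorem~\ref{T:FullAS} is the minimal case) to conclude $q(J) = \{0\}.$ Your discussion of the role of exactness and of why topological freeness alone does not suffice (via Example~\ref{E:WeakPtOuter}) also matches the paper's commentary.
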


In Theorem~2.5 of~\cite{Sr},
the same conclusion is obtained using the Rokhlin* property
(Definition~2.1 of~\cite{Sr})
in place of essential freeness.
The Rokhlin* property is a weaker hypothesis,
by Theorem~2.10 of~\cite{Sr}.
For finite groups, the \rp\  implies the Rokhlin* property,
but the Rokhlin* property is much weaker than the \rp,
involving \pj s in the second dual of quotients of the algebra.

It is built into the definitions of both essential freeness
of the action on ${\widehat{A}}$
and the Rokhlin* property that these properties pass to quotients
by invariant ideals.

Pointwise outerness of an action on a simple \ca\  %
also implies that the crossed product preserves
pure infiniteness
and Property~(SP)
(every nonzero hereditary subalgebra contains a nonzero \pj).
The following two results
are special case of Corollaries 4.4 and~4.3 of~\cite{JO}.
(For the first, one also needs Theorem~\ref{T:Ks1}.)

\begin{thm}[\cite{JO}]\label{T:PICrPrd}
Let $\af \colon G \to \Aut (A)$ be a pointwise outer action of a
discrete group $G$ on a unital purely infinite simple \ca~$A.$
Then $C^* (G, A, \af)$ is purely infinite simple.
\end{thm}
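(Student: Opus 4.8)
The plan is to combine Kishimoto's simplicity theorem with an excision argument inside the crossed product. Write $B = C^* (G, A, \af)$ and identify $A$ with its canonical copy in $B$, so that $1_A$ is the unit of $B$. The argument below shows that the reduced crossed product $C^*_{\mathrm r} (G, A, \af)$ is simple and purely infinite; in the generality in which \cite{JO} works these coincide with $B$ (for instance when $G$ is amenable), which is how one recovers the statement as written. Accordingly I work inside the reduced crossed product, on which the canonical conditional expectation $E \colon B \to A$ is faithful and whose simplicity is supplied by Theorem~\ref{T:Ks} (itself a consequence of Theorem~\ref{T:Ks1}). Since $A$ is purely infinite and simple, $1_A \precsim a$ in $A$ for every nonzero $a \in A_+$, hence also in $B$. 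So it suffices to prove: \emph{for every nonzero $x \in B_+$ there is a nonzero $a \in A_+$ with $a \precsim x$ in $B$} (Cuntz subequivalence). Granting this, every nonzero $x \in B_+$ satisfies $1_B = 1_A \precsim a \precsim x$; together with simplicity and $B \neq \C$ (as $B \supset A$), the Kirchberg--R\o rdam characterisation of pure infiniteness for simple C*-algebras gives that $B$ is purely infinite.

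To produce such an $a$, I would first note that faithfulness of $E$ forces $b := E(x) \neq 0$, and $b \in A_+$. Fix $\ep > 0$, to be constrained below in terms of $\|b\|$ and the size of a finite set $F \subset G$ with $1 \in F$. Choose $x_0 = \sum_{g \in F} a_g u_g$ in the algebraic crossed product with $a_g \in A$, $a_1 = b$, and $\|x - x_0\| < \ep$. The crucial input is the excision estimate underlying the proof of Theorem~\ref{T:Ks1}: because each $\af_g$ with $g \neq 1$ is outer and $A$ is simple, there is $h \in A_+$ with $\|h\| = 1$ such that $\|h b h\| > \|b\| - \ep$ and $\|h a_g \af_g(h)\| < \ep$ for all $g \in F \setminus \{1\}$. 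Since $\|h a_g u_g h\| = \|h a_g \af_g(h)\|$ and $\|h\| = 1$, one gets $\|h x h - h b h\| < |F|\,\ep$. Both $hxh$ and $hbh$ are positive, $hbh$ lies in $A$, and $hxh \precsim x$; so if $\ep$ is small enough that $\|b\| - \ep > |F|\,\ep$, then R\o rdam's perturbation lemma yields $a := (hbh - |F|\,\ep)_+ \in A_+ \setminus \{0\}$ with $a \precsim hxh \precsim x$ in $B$. This $a$ is the element required above, and completes the argument modulo the cited ingredients.

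The main obstacle is the excision estimate: one needs a single positive contraction $h \in A$ that is simultaneously, for every $g \in F \setminus \{1\}$, almost orthogonal to $a_g \af_g(h)$ while nearly preserving $\|b\|$. For a single automorphism this is a classical outerness lemma; handling the whole finite set $F$ at once, and doing so without any amenability hypothesis on $G$, is exactly the technical heart of Kishimoto's Theorem~\ref{T:Ks1}, and it is where pointwise outerness together with simplicity of $A$ enters essentially. A secondary point to keep in mind is the full-versus-reduced discrepancy flagged above: for non-amenable $G$ the argument, like Theorem~\ref{T:Ks1}, genuinely concerns the reduced crossed product, so the assertion for $C^* (G, A, \af)$ is to be read in the setting of \cite{JO}.
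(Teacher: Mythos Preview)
The paper does not give its own proof of this theorem; it is stated as a special case of Corollary~4.4 of~\cite{JO}, with the remark that one also needs Theorem~\ref{T:Ks1} for simplicity. Your sketch is a correct reconstruction of precisely that argument: Kishimoto's excision lemma (the technical core of Theorem~\ref{T:Ks1}) is used to cut an arbitrary nonzero positive element of the reduced crossed product down to something close to a nonzero positive element of~$A$, and pure infiniteness of~$A$ then gives $1 \precsim x$. Your handling of the full-versus-reduced issue is also appropriate, since \cite{JO} (like Theorem~\ref{T:Ks1}) works in the reduced crossed product.

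One small technical point worth tightening: to guarantee $\|hbh\| > \|b\| - \ep$ you should choose $h$ not merely in $\overline{bAb}$ but in the hereditary subalgebra generated by $(b - (\|b\|-\ep))_+$, so that $b$ acts on $h$ from below by at least $\|b\|-\ep$; Kishimoto's lemma applies equally well in this smaller hereditary subalgebra. With that adjustment the inequality $\|hbh\| > |F|\,\ep$ needed for $(hbh - |F|\ep)_+ \neq 0$ follows from a suitable choice of~$\ep$, exactly as you indicate.
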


\begin{thm}[\cite{JO}]\label{T:SPCrPrd}
Let $\af \colon G \to \Aut (A)$ be a pointwise outer action of a
discrete group $G$ on a unital simple \ca~$A$ with Property~(SP).
Then $C^* (G, A, \af)$ has Property~(SP).
\end{thm}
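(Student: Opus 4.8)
The plan is to run Kishimoto's argument from the proof of Theorem~\ref{T:Ks}, but instead of concluding that an invariant ideal is everything, to end by manufacturing a nonzero \pj\ out of Property~(SP) of~$A.$ Since Property~(SP) passes automatically to \hsa s, it suffices to show: for every nonzero positive $b_0 \in C^* (G, A, \af)$ the \hsa\ $\overline{b_0 \, C^* (G, A, \af) \, b_0}$ contains a nonzero \pj. I would work with the reduced \cp, so that the canonical conditional expectation $E \colon C^* (G, A, \af) \to A$ is faithful; this is the case with real content, and in particular it is automatic when $G$ is finite --- the situation this paper concentrates on --- since then the full and reduced \cp s coincide.

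Normalize $\| b_0 \| = 1$ and set $d_0 = E (b_0),$ which is nonzero since $E$ is faithful. Fix small $\ep_1, \ep_2, \ep_3 > 0$ (to be chosen so that $2 \ep_2 + \ep_3 + \ep_1 < \| d_0 \| / 8$), and let $a = \ep_1^{-1} \bigl( d_0 - ( \| d_0 \| - \ep_1 ) \bigr)_+ \in A_+,$ a nonzero positive contraction whose range projection is the spectral projection of $d_0$ on $( \| d_0 \| - \ep_1, \| d_0 \| ].$ A one-line functional-calculus estimate then shows that every positive contraction $y$ in the \hsa\ of $A$ generated by $a$ satisfies $\| y ( \| d_0 \| - d_0 ) y \| \leq \ep_1,$ i.e.\ $y d_0 y$ is within $\ep_1$ of $\| d_0 \| y^2.$ Approximate $b_0$ to within $\ep_2$ by a finitely supported element $\sum_{g \in F} a_g u_g$ with $e \in F$ (so $\| a_e - d_0 \| = \| E ( \sum_g a_g u_g - b_0 ) \| < \ep_2$). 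Now invoke the technical lemma behind Kishimoto's proof of Theorem~\ref{T:Ks}: using pointwise outerness and simplicity of~$A$ it produces a positive contraction $x$ in the \hsa\ of $A$ generated by $a$ with $\| x a_g \af_g (x) \| < \ep_3 / \card (F)$ for every $g \in F \setminus \{ e \}.$ Since $x u_g y = x \af_g (y) u_g$, combining the three estimates gives
\[
\bigl\| x b_0 x - \| d_0 \| x^2 \bigr\| \;\leq\; \| b_0 - \ts{\sum_g} a_g u_g \| + \ts{\sum_{g \neq e}} \| x a_g \af_g (x) \| + \| a_e - d_0 \| + \ep_1 \;<\; \| d_0 \| / 8 ,
\]
so $x b_0 x$ is within $\| d_0 \| / 8$ of the nonzero positive element $\| d_0 \| x^2$ of~$A.$

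Now apply Property~(SP) of $A$ to the nonzero positive element $( x - \tfrac12 )_+$: its \hsa\ in $A$ contains a nonzero \pj~$q.$ Then $q$ is dominated by the spectral projection of $x$ on $( \tfrac12, 1 ],$ on which $x^2 > \tfrac14$ and hence $x b_0 x > \| d_0 \| / 8 > 0$; so $q$ commutes with $x b_0 x$ and is dominated by its spectral projection on $[ \| d_0 \| / 8, \I ).$ Write the polar decomposition $b_0^{1/2} x = w ( x b_0 x )^{1/2}$ in $C^* (G, A, \af)^{**}$; then $w^* w$ is the range projection of $x b_0 x,$ hence $w^* w \geq q,$ while $w w^*$ is the range projection of $b_0^{1/2} x^2 b_0^{1/2} \leq \| x \|^2 b_0,$ hence $w w^*$ is dominated by the range projection of $b_0.$ Therefore $P := w q w^*$ is a nonzero \pj\ dominated by the range projection of $b_0,$ so $P \in \overline{b_0 \, C^* (G, A, \af) \, b_0}.$ Finally $P$ really lies in $C^* (G, A, \af)$ and not merely in its bidual: because $x b_0 x$ is bounded below on the range of $q,$ one has $w q = b_0^{1/2} x \, h ( x b_0 x ) \, q$ for a bounded \cfn~$h$ with $h (t) = t^{-1/2}$ on $[ \| d_0 \| / 8, \I ),$ whence $P = ( w q )( w q )^* \in C^* (G, A, \af).$ This completes the plan.

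The hard part is the construction of~$x,$ i.e.\ the Kishimoto averaging step: one must annihilate every off-diagonal coefficient $a_g u_g$ $(g \neq e)$ while retaining the full diagonal mass $\| d_0 \| x^2,$ and do so with $x$ confined to the \hsa\ generated by~$a.$ This is exactly the content of (and the only genuine difficulty in) the proof of Theorem~\ref{T:Ks}, so in practice one simply quotes the lemma used there. The remaining delicate point is pure bookkeeping --- matching $\ep_1, \ep_2, \ep_3$ so that the final error is genuinely below $\| d_0 \| \| x \|^2$ on the relevant spectral region, which is what allows the passage from ``$x b_0 x$ is close to a nonzero positive element of $A$'' to ``$\overline{b_0 \, C^* (G, A, \af) \, b_0}$ contains an honest \pj.'' For the full \cp\ over a nonamenable group one loses faithfulness of $E,$ so the argument above must be supplemented; the extra input needed in that generality is supplied in~\cite{JO}.
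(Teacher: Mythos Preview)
The paper itself does not prove this theorem; it simply records it as a special case of Corollary~4.3 of~\cite{JO}. Your overall strategy --- use Kishimoto's outerness lemma to squeeze $x b_0 x$ down to within a small error of the nonzero positive element $\| d_0 \| x^2$ of~$A,$ then invoke Property~(SP) of~$A$ and transfer the resulting \pj\ into $\overline{b_0 \, C^* (G, A, \af) \, b_0}$ by a polar-decomposition argument --- is exactly the approach of~\cite{JO}, so in spirit you have reproduced the cited proof.

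There is, however, a real gap in your final paragraph. You assert that $q$ commutes with $x b_0 x$ and is dominated by its spectral projection on $[ \| d_0 \| / 8, \infty ).$ Neither follows. The \pj~$q$ lies in $\overline{(x - \tfrac12)_+ \, A \, (x - \tfrac12)_+},$ so $q \leq r$ where $r$ is the spectral projection of~$x$ on $(\tfrac12, 1],$ but $q$ need not commute with~$x,$ and $x b_0 x$ certainly need not commute with~$x$ or preserve the range of~$r.$ Consequently your formula $w q = b_0^{1/2} x \, h (x b_0 x) \, q$ breaks down: one has $h (x b_0 x) q = (x b_0 x)^{-1/2} q$ only when $q$ sits under the spectral projection of $x b_0 x$ on $[\| d_0 \|/8, \infty),$ which you have not established. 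What your estimates \emph{do} give is
\[
q (x b_0 x) q \;\geq\; q \bigl( \| d_0 \| x^2 - \tfrac{1}{8} \| d_0 \| \bigr) q \;\geq\; \tfrac{1}{8} \| d_0 \| \, q,
\]
so $c := q (x b_0 x) q$ is invertible in $q \, C^* (G, A, \af) \, q.$ The clean fix is then to set
\[
v \;=\; b_0^{1/2} x q \, c^{-1/2} \;\in\; C^* (G, A, \af),
\]
with no excursion to the bidual: one checks directly that $v^* v = c^{-1/2} c \, c^{-1/2} = q$ and that $v v^* = b_0^{1/2} \bigl( x q \, c^{-1} q x \bigr) b_0^{1/2}$ lies in $\overline{b_0 \, C^* (G, A, \af) \, b_0}.$ This is how the transfer step is actually executed in~\cite{JO}; with it in place your argument goes through.
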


In the general statement of Corollary~4.4 of~\cite{JO},
it is allowed that
\[
N = \{ g \in G \colon {\mbox{$\af_g$ is inner}} \}
\]
be finite instead of necessarily trivial.
The conclusion
is then that $C^* (G, A, \af)$ is purely infinite
but not necessarily simple.
In Corollary~4.3 of~\cite{JO},
dealing with Property~(SP),
if $G$ is finite then $\af$ can be arbitrary.

Definition~\ref{D:Out}
is suitable only for actions on simple \ca s,
since one can always take the direct sum of an outer action
on one \ca\  and the trivial action on another.
Although, to our knowledge, nothing has been done with it,
the most obvious way to rule out such actions seems to be the following
definition.
It is motivated by Theorem~\ref{CT:Outer} and adapted from~\cite{PP}.

\begin{dfn}\label{D:SPOut}
An action $\af \colon G \to \Aut (A)$ is said to be
{\emph{strongly pointwise outer}}
if, for every $g \in G \setminus \{ 1 \}$
and any two $\af_g$-invariant ideals $I \subset J \subset A$
with $I \neq J,$
the automorphism of $J / I$ induced by $\af_g$ is outer.
\end{dfn}

\begin{thm}\label{T:StOuterType1}
Let $\af \colon G \to \Aut (A)$
be an action of a compact metrizable group~$G$
on a type~I \ca~$A.$
Then $\af$ is strongly pointwise outer
\ifo\  %
$\af$ induces a free action on $\Prim (A).$
\end{thm}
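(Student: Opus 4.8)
The plan is to reduce the statement to the case of a transitive action and then analyze when an automorphism of the form $\Ad(u)$ can act on a matrix algebra without fixed points. Since $A$ is type~I and $G$ is compact metrizable, $\Prim(A)$ carries a continuous $G$-action, and the freeness of this action is equivalent (Theorem~\ref{CT:Outer}) to the requirement that for each $g \neq 1$ and each $g$-invariant ideal $I$, the automorphism $g$ induces on $C(X)$-type quotients is nontrivial. The subtlety is that ``nontrivial on the center of every subquotient'' is weaker than ``outer on every subquotient,'' and the point of the theorem is that for type~I algebras these coincide.

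For the forward direction (strongly pointwise outer $\Rightarrow$ free on $\Prim(A)$): if the action on $\Prim(A)$ is not free, pick $g \neq 1$ and $P \in \Prim(A)$ with $gP = P$. The orbit closure of $P$ and the structure of type~I algebras let me find $g$-invariant ideals $I \subset J$ such that $J/I$ is, say, a continuous-trace algebra over a space on which $g$ fixes a point, or more simply such that $g$ induces an \emph{inner} automorphism on some subquotient — because when $g$ fixes $P$ it stabilizes the corresponding irreducible representation (up to a choice of ideals isolating that representation), and an automorphism fixing a primitive ideal $P$ acts on $A/P \cong K(H)$, where every automorphism is inner. Carefully choosing $I \subset J$ with $J/I \cong A/P$ (or a quotient thereof) exhibits a subquotient on which $\af_g$ is inner, contradicting strong pointwise outerness.

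For the reverse direction (free on $\Prim(A)$ $\Rightarrow$ strongly pointwise outer): suppose $\af_g$ induces an inner automorphism $\Ad(v)$ on $J/I$ for some $g$-invariant $I \subsetneq J$ and some $g \neq 1$. Then $\af_g$ fixes every primitive ideal of $J/I$, hence fixes every point of $\Prim(J/I)$, which is a locally closed $g$-invariant subset of $\Prim(A)$ containing points fixed by $g$ — contradicting freeness. The key input here is that an inner automorphism of any \ca\ acts trivially on the primitive ideal space, which is elementary. One then needs that $\Prim(J/I)$ is nonempty and that its points, viewed inside $\Prim(A)$, really are fixed by the original action of $g$; this is where $g$-invariance of $I$ and $J$ is used.

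\medskip

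\textbf{The main obstacle} I expect is the forward direction: producing, from a single fixed primitive ideal $P$, a genuine pair of $g$-invariant ideals $I \subsetneq J$ realizing an inner (or at least non-outer) induced automorphism on $J/I$, in the generality of arbitrary type~I \ca s rather than just continuous-trace or CCR algebras. The orbit $Gx$ of $x = [P] \in \Prim(A)$ need not be closed, and the stabilizer subgroup $G_x$ may be a proper nontrivial closed subgroup of the compact group $G$; one must use the composition series of a type~I algebra to locate a subquotient supported near the orbit of $P$ on which the analysis of Theorem~\ref{CT:Ideals}-style (induced representation / Mackey machine) applies. The cleanest route is probably: pass to the closure $\overline{Gx}$, choose in the composition series a consecutive pair of ideals whose difference is continuous-trace over an open dense subset of $\overline{Gx}$ meeting the orbit, and then invoke that on such a subquotient an automorphism inducing the identity on the base space must be given by a continuous family of unitaries on the fibers — so $\af_g$ is inner on that subquotient. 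Getting the $g$-invariance of the chosen ideals to work out (rather than merely $G$-invariance of the whole composition series) is the delicate bookkeeping step; one may need to intersect/average over the finite orbit of the subquotient under the (possibly infinite but compact) group, using that $g$ generates a closed, hence compact abelian, subgroup.
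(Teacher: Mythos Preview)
Your reverse direction is correct and matches the paper exactly: an inner automorphism acts trivially on the primitive ideal space, so if $\af_g$ were inner on a nonzero subquotient $J/I$ it would fix every point of the nonempty locally closed set $\Prim(J/I) \subset \Prim(A)$, contradicting freeness.

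Your forward direction has the right endpoint but a genuine gap in the middle. The claim ``an automorphism fixing a primitive ideal $P$ acts on $A/P \cong K(H)$'' is false for general type~I algebras: a primitive ideal need not be maximal (think of $\{0\}$ in the Toeplitz algebra), so $A/P$ need not be simple. You seem to sense this, since your obstacle paragraph backs off to composition series and continuous-trace subquotients, but the route you sketch there---orbit closures, continuous-trace pieces, Mackey machine, averaging over the compact subgroup generated by $g$---is far more elaborate than necessary.

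The paper's argument is much cleaner and avoids all of that bookkeeping. It invokes a single structural fact (Corollary~8.1.2 of~\cite{Ph1}): a type~I \ca\ with a compact group action admits a $G$-invariant composition series $(I_\ld)_{\ld \leq \kp}$ whose successive factors $B = I_{\ld+1}/I_\ld$ have \emph{Hausdorff} primitive ideal space. Any fixed point for $g$ in $\Prim(A)$ lies in some $\Prim(B)$; because $\Prim(B)$ is Hausdorff, that primitive ideal $P$ of $B$ is automatically maximal, so $B/P$ is simple and hence isomorphic to $K(H)$. Since $I_\ld$, $I_{\ld+1}$ are $G$-invariant and $P$ is $g$-invariant, the pair $(I,J)$ you need is just the preimage of $P$ in $A$ together with $I_{\ld+1}$; both are $g$-invariant with no further work. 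The Hausdorff-factor composition series is the one idea you were missing---it simultaneously repairs the ``$A/P \cong K(H)$'' step and dissolves the $g$-invariance worries.
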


\begin{proof}
Suppose the action of $G$ on $\Prim (A)$ is not free.
We prove that $\af$ is not strongly pointwise outer.
Corollary~8.1.2 of~\cite{Ph1} provides a
$G$-invariant composition series $(I_{\ld})_{\ld \leq \kp},$
for some ordinal~$\kp,$
such that each composition factor $I_{\ld + 1} / I_{\ld}$
has Hausdorff primitive ideal space.
Choose $g \in G \setminus \{ 1 \},$
$\ld < \kp,$ and $P \in \Prim (I_{\ld + 1} / I_{\ld})$
such that $g P = P.$
Set $B = I_{\ld + 1} / I_{\ld}.$
Then $\af_g$ descends
to an automorphism of $B / P.$
Since $B$ has type~I and $B / P$ is simple,
there is a Hilbert space~$H$ such that $B / P \cong K (H).$
Because all automorphisms of $K (H)$ are inner,
we have contradicted strong pointwise outerness.

Now suppose the action on $\Prim (A)$ is free.
Let $g \in G \setminus \{ 1 \}$
and let $I \subset J \subset A$
be $\af_g$-invariant ideals 
with $I \neq J.$
Then $\Prim (J / I)$ is a nonempty $g$-invariant subset of $\Prim (A),$
and the automorphism of $J / I$ induced by $\af_g$ is therefore
nontrivial on $\Prim (J / I).$
Thus, this automorphism can't be inner.
\end{proof}

For any \ca~$A,$
free action of the group on $\Prim (A)$
clearly implies the conditions used in
Theorems \ref{T:FullAS} and~\ref{T:Sr}.
Hence, by Theorem~2.10 of~\cite{Sr},
this condition implies the Rokhlin* property.

The following example shows that
in Definition~\ref{D:SPOut} it is not enough to assume that
the action on every $\af_g$-invariant ideal is outer,
or that
the action on the quotient by every $\af_g$-invariant ideal is outer,
or even both.
The action fails to have the Rokhlin* property
and is not essentially free on ${\widehat{A}}.$
However, it is topologically free in the sense
used in Theorem~\ref{T:FullAS}.
Topological freeness is therefore not enough to obtain the
conclusion of Theorem~\ref{T:Sr}.

\begin{exa}\label{E:WeakPtOuter}
We construct an action $\af \colon G \to \Aut (A),$
with $G = \Zqt$ and
in which $A$ is a separable type~I \ca,
with the following properties:
\begin{enumerate}
\item\label{E:WeakPtOuter:OQt}
For every $g \in G \setminus \{ 1 \}$
and any $\af_g$-invariant ideal $I \subset A$
with $I \neq A,$
the automorphism of $A / I$ induced by $\af_g$ is outer.
\item\label{E:WeakPtOuter:OId}
For every $g \in G \setminus \{ 1 \}$
and any $\af_g$-invariant ideal $I \subset A$
with $I \neq \{ 0 \},$
the automorphism of $I$ induced by $\af_g$ is outer.
\item\label{E:WeakPtOuter:NotSPO}
$\af$ is not strongly pointwise outer.
\item\label{E:WeakPtOuter:NF}
The action of $G$ on $\Prim (A)$ is not free.
\item\label{E:WeakPtOuter:TopF}
$\af$ is topologically free in the sense used in Theorem~\ref{T:FullAS}.
\item\label{E:WeakPtOuter:NEss}
The action of $G$ on ${\widehat{A}}$ is not essentially free
in the sense used in Theorem~\ref{T:Sr}.
\item\label{E:WeakPtOuter:NRSt}
$\af$ does not have the Rokhlin* property.
\item\label{E:WeakPtOuter:ExI}
There is an ideal in $C^* (G, A, \af)$ not of the form
$C^* (G, I, \af_{(\cdot)} |_I)$
for any $G$-invariant ideal $I \subset A.$
\end{enumerate}

Let $K = K (H)$ be the algebra of compact operators on a separable
infinite dimensional Hilbert space~$H.$
Let $e_1, e_2 \in L (H)$ be two infinite rank \pj s
such that $e_1 + e_2 = 1,$
and let $u \in L (H)$ be a unitary such that $u e_1 u^* = e_2$
and $u^2 = 1.$
Set $B_0 = K + \C e_1 + \C e_2 \subset L (H),$
and set $B = (K \otimes B_0)^+,$
the unitization of $K \otimes B_0.$
Let $\bt_0 \colon \Zqt \to \Aut (B_0)$ be the action
generated by $\Ad (u).$
(Note that $u B_0 u^* = B_0$ even though $u \not\in B_0.$)
Then let $\bt \colon \Zqt \to \Aut (B)$ be the action
$(\id_K \otimes \bt_0)^+.$
Finally set $A = C^* (\Zqt, B, \bt),$
and let $\af \colon \Zqt \to \Aut (A)$ be the dual action
$\af = {\widehat{\bt}}.$

It follows from Takai duality (7.9.3 of~\cite{Pd1})
and the fact that crossed products preserve exact sequences
(Lemma~2.8.2 of~\cite{Ph1})
that the map $I \mapsto C^* (\Zqt, I, \bt |_I)$
defines a bijection
between the $\bt$-invariant ideals of~$B$
and the $\af$-invariant ideals of~$A.$
The only nontrivial $\bt$-invariant ideals of~$B$ are
\[
I = K \otimes K
\andeqn
J = K \otimes B_0.
\]
Accordingly,
the only nontrivial $\af$-invariant ideals of~$A$ are
\[
L = C^* \big( \Zqt, \, K \otimes K,
             \, \bt_{( \cdot )} |_{K \otimes K} \big)
\andeqn
M = C^* \big( \Zqt, \, K \otimes B_0,
             \, \bt_{( \cdot )} |_{K \otimes B_0} \big).
\]

The action on $K \otimes K$ is inner,
so $L \cong (K \otimes K) \oplus (K \otimes K),$
and $\af$ exchanges the two summands.
Since every nonzero invariant ideal in $A$ contains~$L,$
it follows that the action on every such ideal is outer.
This proves~(\ref{E:WeakPtOuter:OId}).

The action on $B / J \cong \C$ is trivial,
so $A / M \cong \C \oplus \C,$
and $\af$ exchanges the two summands.
Since every invariant ideal in $A,$ other than $A$ itself,
is contained in~$M,$
it follows that the action on the quotient by every such ideal is outer.
This proves~(\ref{E:WeakPtOuter:OQt}).

However, the induced action on $\Prim (A)$ is not free,
and the subquotient
$M / L \cong C^* \big( \Zqt, \, K \oplus K, \, {\overline{\bt}} \big)$
(where ${\overline{\bt}}$ exchanges the two summands)
is a nonzero invariant subquotient of $A$
isomorphic to $K \otimes M_2$
on which the action $\af$ is inner.
Thus, we have (\ref{E:WeakPtOuter:NotSPO}) and~(\ref{E:WeakPtOuter:NF}).

Since $B$ has ideals which are not $\bt$-invariant
(such as $K \otimes (K + \C e_1)$),
Takai duality implies that $C^* (\Zqt, A, \af)$
has ideals which are not ${\widehat{\af}}$-invariant.
Such ideals are not crossed products of invariant ideals in~$A.$
This proves~(\ref{E:WeakPtOuter:ExI}).

We prove~(\ref{E:WeakPtOuter:TopF}).
Since $A$ has type~I,
we have ${\widehat{A}} = \Prim (A).$
We calculate $\Prim (A).$
As we saw above,
we can write $L = L_1 \oplus L_2$
for ideals $L_1, L_2 \S L$ which are exchanged by~$\af.$
Both $L_1$ and $L_2$ are easily seen to be primitive.
The ideal $L$ is itself primitive,
since (as we saw above) $M / L \cong K \otimes M_2,$
and is a fixed point.
The isomorphism $A / M \cong \C \oplus \C$
shows that $M$ is not primitive, but gives two
more primitive ideals $P_1, P_2 \S A,$
such that $A / P_1 \cong A / P_2 \cong \C.$
These are exchanged by~$\af.$
Thus
\[
\Prim (A) = \{ L_1, L_2, L, P_1, P_2 \}.
\]
The closed sets are
\[
\varnothing,
\,\,\,\,\,\,
\{ P_1 \},
\,\,\,\,\,\,
\{ P_2 \},
\,\,\,\,\,\,
\{ P_1, P_2 \},
\,\,\,\,\,\,
\{ L, P_1, P_2 \},
\]
\[
\{ L_1, L, P_1, P_2 \},
\,\,\,\,\,\,
\{ L_2, L, P_1, P_2 \},
\,\,\,\,\,\,
{\mbox{and}}
\,\,\,\,\,\,
\Prim (A).
\]
For the nontrivial group element~$g,$
we thus have
\[
\big\{ x \in {\widehat{A}} \colon g x \neq x \big\}
 = \{ L_1, L_2, P_1, P_2 \},
\]
which is dense in $\Prim (A).$

The action on $\Prim (A)$ is not essentially free,
because $\{ L, P_1, P_2 \}$ is a closed set in which the
points not fixed by~$g$ are not dense.
This is~(\ref{E:WeakPtOuter:NEss}).
The statement~(\ref{E:WeakPtOuter:NRSt})
follows from~(\ref{E:WeakPtOuter:ExI}) and Theorem~2.5 of~\cite{Sr}.
\end{exa}

The following example shows that it is not enough to consider
only subquotients invariant under the entire group,
even when the algebra is commutative.
In this example, the action is not topologically free.

\begin{exa}\label{E:S3}
Let $G$ be a finite group, and let $H \subset G$ be a
nontrivial subgroup such that $\bigcap_{g \in G} g H g^{-1} = \{ 1 \}.$
For example, take $G$ to be the symmetric group $S_3$
and take $H$ to be one of its two element subgroups.
Let $G$ act on $X = G / H$ by translation,
and let $\af$ be the corresponding action on $A = C (G / H).$
In the example using an order two subgroup of~$S_3,$
the action of $S_3$ on $X$ is just the usual action of $S_3$
on a three element set by permutations.

The stabilizer of $g H \in X$ is $g H g^{-1}.$
Since $\bigcap_{g \in G} g H g^{-1} = \{ 1 \},$
every element of $G \setminus \{ 1 \}$ acts nontrivially on~$X,$
so that $\af$ is pointwise outer.
Since $\af$ is minimal,
$\af$ is pointwise outer on $J / I$ for every pair
of $G$-invariant ideals $I \subset J$ with $I \neq J.$
However, one easily checks that $\af$ is not strongly pointwise outer.
(This also follows from Theorem~\ref{T:StOuterType1},
or from Theorem~\ref{CT:Outer}.)

Corollary~2.10 of~\cite{Gr2} implies that
$C^* (G, \, G / H) \cong K (L^2 (G / H)) \otimes C^* (H).$
Even though the action is minimal,
this \cp\  is not simple.
\end{exa}

The analog of Theorem~\ref{T:Ks} would be a positive solution
to the following problem.

\begin{pbm}\label{P:I-24a}
Let $\af \colon G \to \Aut (A)$
be a strongly pointwise outer action of a countable discrete group.
Does it follow that
every ideal $J \subset C^*_{\mathrm{r}} (G, A, \af)$ has the form
$C^*_{\mathrm{r}} (G, I, \af_{(\cdot)} |_I)$
for some $G$-invariant ideal $I \subset A$?
\end{pbm}

Note that the desired conclusion fails in Examples~\ref{E:WeakPtOuter}
and~\ref{E:S3}.

As far as we can tell,
this problem is still open,
even when $G = \Z$ and $\af$ is minimal.
(If $G = \Z$ and one assumes there are no nontrivial
$\af_g$-invariant ideals for all $g \in G \setminus \{ 0 \},$
then the desired conclusion holds.
One substitutes Theorem~2.1 of~\cite{Ks5}
for Lemma~1.1 of~\cite{Ks1} in the reasoning leading to the
proof of Theorem~3.1 of~\cite{Ks1}.)

The converse is false.
Theorem~\ref{T:FullAS} covers some actions
on algebras of the form $C (X)$ by
(necessarily nonabelian) groups which are not free,
and the crossed product in Example~\ref{E:4Gp} below
is simple even though all the automorphisms are inner.

The extra hypotheses in Theorems~\ref{T:Ks1}, \ref{T:FullAS},
and~\ref{T:Sr},
as well as the Rokhlin* property of~\cite{Sr}
and the notion of proper outerness used by Elliott~\cite{El0},
can be thought of as ways of getting around the failure of
outerness to have good permanence properties
(as shown by Examples~\ref{E:WeakPtOuter} and~\ref{E:S3}).
The question is whether these difficulties are solved
by asking for strong pointwise outerness.

Like the \trp,
the \rp\  implies strong pointwise outerness:

\begin{prp}\label{T:RPImpSOut}
Let $A$ be a \uca, let $G$ be a finite group,
and let $\af \colon G \to \Aut (A)$ have the \rp.
Then $\af$ is strongly pointwise outer.
\end{prp}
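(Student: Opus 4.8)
The plan is to use the permanence properties of the \rp\ to pass to an essentially unital situation and then exploit the fact that Rokhlin projections for a unital algebra are almost fixed by any inner automorphism, whereas they are cyclically permuted by~$\af_g.$ Concretely, first I would fix $g \in G \setminus \{ 1 \}$ and $\af_g$-invariant ideals $I \subset J \subset A$ with $I \neq J,$ and set $H = \langle g \rangle$ and $m = \card (H) \geq 2.$ Since $I$ and $J$ are $\af_g$-invariant they are $H$-invariant, so $\af |_H$ has the \rp\ by Proposition~\ref{T:RPerm}(\ref{T:RPerm:Sbgp}), and hence the induced action $\bt$ of $H$ on the unital algebra $A / I$ has the \rp\ by Proposition~\ref{T:RPerm}(\ref{T:RPerm:Ideal}). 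Write $B = J / I,$ a nonzero $H$-invariant ideal of $A / I.$ Suppose, for a contradiction, that $\bt_g$ restricts on $B$ to $\Ad (w)$ for some unitary $w \in M (B),$ and let $\ph \colon A / I \to M (B)$ be the canonical unital \hm. For $c \in A / I$ and $y = \bt_g (y_0) \in B$ (so $y_0 = w^* y w$) one has $\ph (\bt_g (c)) y = \bt_g (c y_0) = w (c y_0) w^* = w \ph (c) w^* y;$ since an element of $M (B)$ is determined by its left action on $B,$ this gives the intertwining relation $\ph (\bt_g (c)) = w \ph (c) w^*$ for all $c \in A / I.$

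To extract the contradiction, fix $b \in B$ with $\| b \| = 1$ (so $w b \in B$ and $\| w b \| = 1$), choose $\ep$ with $0 < \ep < \frac{1}{3 \sqrt{m}},$ and apply the \rp\ of $\bt$ on $A / I$ to the finite set $S = \{ b, \, w b \}.$ This gives \mops\  $e_h \in A / I$ for $h \in H$ with $\sum_{h \in H} e_h = 1,$ with $\| \bt_h (e_k) - e_{h k} \| < \ep$ for $h, k \in H,$ and with $\| e_h a - a e_h \| < \ep$ for $a \in S.$ Put $\tilde{e}_h = \ph (e_h),$ so the $\tilde{e}_h$ are \mops\  in $M (B)$ summing to~$1.$ The intertwining relation and $\| \bt_g (e_h) - e_{g h} \| < \ep$ give $\| w \tilde{e}_h w^* - \tilde{e}_{g h} \| < \ep;$ since $g h \neq h$ the projections $\tilde{e}_h$ and $\tilde{e}_{g h}$ are orthogonal, so multiplying by $\tilde{e}_h$ on the left and by $w$ on the right yields $\| \tilde{e}_h w \tilde{e}_h \| < \ep$ for every $h.$ On the other hand, $(w b)^* (w b) = \sum_h (w b)^* e_h (w b)$ is a sum of $m$ positive terms, so $\max_h \| e_h (w b) \|^2 \geq m^{-1} \| w b \|^2,$ i.e.\ $\| e_{h_0} (w b) \| \geq m^{-1/2}$ for some $h_0;$ the commutation bounds for $b$ and $w b$ then give $\| ( \tilde{e}_{h_0} w \tilde{e}_{h_0} ) b \| \geq \| e_{h_0} (w b) \| - 2 \ep \geq m^{-1/2} - 2 \ep.$ Since also $\| ( \tilde{e}_{h_0} w \tilde{e}_{h_0} ) b \| \leq \| \tilde{e}_{h_0} w \tilde{e}_{h_0} \| < \ep,$ we obtain $m^{-1/2} - 2 \ep < \ep,$ contradicting the choice of~$\ep.$ Hence $\bt_g$ is outer on $B = J / I,$ and as $g,$ $I,$ $J$ were arbitrary, $\af$ is strongly pointwise outer.

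The routine parts are the two norm estimates above and the verification of the intertwining relation. The one genuinely delicate point — and the reason for testing against $b$ and $w b$ rather than against $w$ — is that the subquotient $J / I$ need not be unital, so $w$ lives only in $M (J / I)$ and cannot be inserted into the finite set $S \subset A / I$ of the \rp; unitarity of $w$ (through $\| w b \| = \| b \|$) is what turns almost-commutation with $S$ into the quantitative contradiction. In the unital case $I = 0,$ $J = A$ one may instead put $w$ directly into $S$ and argue more simply: $\af_g (e_h) = w e_h w^* \approx e_h$ while $\af_g (e_h) \approx e_{g h},$ so the orthogonal nonzero projections $e_{h_0}$ and $e_{g h_0}$ would be within $2 \ep$ of one another, which is impossible for $\ep < \frac{1}{2}.$
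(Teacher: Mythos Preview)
Your proof is correct and follows essentially the same strategy as the paper: reduce to the cyclic subgroup $\langle g\rangle$, pass to the quotient by~$I$, apply the Rokhlin property in the ambient unital algebra to a finite set built from an element of the ideal together with its translate by the implementing unitary, and derive a contradiction from the orthogonality $e_h e_{gh}=0$. The only technical difference is that the paper chooses a $G$-invariant element $a\in J$ and uses the set $\{a,\,au^*\}$, whereas you instead establish the intertwining relation $\ph(\bt_g(c))=w\ph(c)w^*$ on $M(B)$ and use $\{b,\,wb\}$; this leads to slightly cleaner constants ($\ep<\tfrac{1}{3\sqrt m}$ versus the paper's $\ep=\tfrac{1}{15n^2}$) but is otherwise the same argument.
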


\begin{proof}
Let $g \in G \setminus \{ 1 \}.$
Using Proposition~\ref{T:RPerm}(\ref{T:RPerm:Sbgp}),
we may assume that $G$ is generated by $g.$
Let $I \subset J \subset A$ be $\af_g$-invariant ideals with $I \neq J.$
Using Proposition~\ref{T:RPerm}(\ref{T:RPerm:Ideal}),
we may assume that $I = 0.$

Suppose $\af_g |_J$ is inner, and let $u \in M (J)$
be a unitary such that $\af_g (a) = u a u^*$ for all $a \in J.$
Choose a $G$-invariant element $a \in J$ such that
$\| a \| = 1.$
Set $n = \card (G)$ and $\ep = 1 / (15 n^2).$
Let $(e_g)_{g \in G}$ be a
family of Rokhlin projections in $A$
for $\af,$ $\{ a, a u^* \},$ and~$\ep.$
(Note that $a u^* \in J \subset A.$)
Write $a = \sum_{h \in G} e_h a.$
Since $\| \af_h (e_1) - e_h \| < \ep$ and $\af_h (a) = a$
for $h \in G,$
we get $\| \af_h (e_1 a) - e_h a \| < \ep$ for all $h \in G.$
Thus
\[
1 = \| a \|
  \leq \sum_{h \in G} \| e_h a \|
  < \sum_{h \in G} ( \| \af_h (e_1 a) \| + \ep )
  = n ( \| e_1 a \| + \ep ).
\]
So $\| e_1 a \| > \frac{1}{n} - \ep.$

Using at the first step $\af_g (e_1 a) = u e_1 a u^*$
(since $e_1 a \in J$) and also $u a u^* = a$ (since $\af_g (a) = a$),
we now get
\begin{align*}
\| e_g a - e_1 a \|
& \leq \| e_g a - \af_g (e_1 a) \|
          + \| u e_1 a u^* - u a u^* e_1 \| + \| a e_1 - e_1 a \|  \\
& \leq \| e_g - \af_g (e_1) \|
          + \| e_1 (a u^*) - ( a u^*) e_1 \| + \| a e_1 - e_1 a \|
  < 3 \ep.
\end{align*}
Therefore, using $\ep \leq \frac{1}{2 n},$
\begin{align*}
\frac{1}{4 n^2}
& \leq \big( \tfrac{1}{n} - \ep \big)^2
  < \| e_1 a \|^2
  = \| a^* e_1 e_1 a \|   \\
& \leq \| a^* e_1 e_g a \| + \| a^* e_1 \| \cdot \| e_g a - e_1 a \|
  < \| a^* e_1 e_g a \| + 3 \ep.
\end{align*}
Since $e_1 e_g = 0,$
it follows that
\[
\frac{1}{4 n^2} < 3 \ep = \frac{1}{5 n^2}.
\]
This contradiction shows that $\af_g |_J$ is in fact not inner.
\end{proof}

\begin{cor}\label{T:RPImpFree}
Let $A$ be a unital type~I \ca, let $G$ be a finite group,
and let $\af \colon G \to \Aut (A)$ have the \rp.
Then the induced action of $G$ on $\Prim (A)$ is free.
\end{cor}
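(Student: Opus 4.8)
The plan is to deduce this immediately from two results already in hand: Proposition~\ref{T:RPImpSOut}, which says that an action of a finite group with the Rokhlin property on a \uca\ is strongly pointwise outer, and Theorem~\ref{T:StOuterType1}, which identifies strong pointwise outerness with freeness of the induced action on $\Prim(A)$ for actions of compact metrizable groups on type~I \ca s.

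First I would note that a finite group is in particular compact and metrizable, so that Theorem~\ref{T:StOuterType1} applies to $\af$. Next, since $\af$ has the \rp, Proposition~\ref{T:RPImpSOut} shows that $\af$ is strongly pointwise outer. Feeding this into Theorem~\ref{T:StOuterType1} then gives exactly that the induced action of $G$ on $\Prim(A)$ is free, which is the assertion.

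There is essentially no obstacle at this stage: the real content has already been absorbed into Proposition~\ref{T:RPImpSOut} (where the Rokhlin projections are used to contradict innerness of $\af_g$ on an invariant subquotient) and into Theorem~\ref{T:StOuterType1} (where a $G$-invariant composition series with Hausdorff primitive ideal space quotients reduces matters to the fact that every automorphism of $K(H)$ is inner). If one instead wanted a self-contained direct proof, the plan would be to splice the Rokhlin-projection estimate of Proposition~\ref{T:RPImpSOut} into the ``not free $\Rightarrow$ not strongly pointwise outer'' half of the proof of Theorem~\ref{T:StOuterType1}: given $g \in G \setminus \{1\}$ fixing some $P \in \Prim(A)$, pass to a $G$-invariant composition factor $B = I_{\ld+1}/I_{\ld}$ with Hausdorff primitive ideal space in which the image $\bar P$ of $P$ is still $g$-fixed, observe that $B/\bar P \cong K(H)$ since $B$ has type~I and $B/\bar P$ is simple, and conclude that $\af_g$ descends to an inner automorphism on this subquotient, contradicting the Rokhlin estimates. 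The hard part there is merely bookkeeping with the composition series, all of which is already carried out in the proof of Theorem~\ref{T:StOuterType1}; the two-step argument above is shorter and loses nothing.
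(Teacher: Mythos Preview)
Your proof is correct and matches the paper's own argument exactly: the paper simply says ``Apply Theorem~\ref{T:StOuterType1} and Proposition~\ref{T:RPImpSOut},'' which is precisely the two-step deduction you give. Your additional sketch of a direct argument is accurate but unnecessary here.
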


\begin{proof}
Apply Theorem~\ref{T:StOuterType1}
and Proposition~\ref{T:RPImpSOut}.
\end{proof}

We give a short discussion of the permanence properties
of strong pointwise outerness.
That strong pointwise outerness passes to ideals
and quotients is built into the definition;
Example~\ref{E:WeakPtOuter} shows it was necessary to do so.
That strong pointwise outerness passes to actions of subgroups
is also built into the definition;
Example~\ref{E:S3} shows that this was also necessary.
We have not investigated whether the direct limit of
strongly pointwise outer actions is strongly pointwise outer.
We have also not investigated whether
a tensor product of a strongly pointwise outer action with
another action is again strongly pointwise outer,
although a very special (and easy to prove) case of this
appears in Lemma~\ref{L:ZpTens} below.

We have said little about purely infinite simple \ca s in this survey.
There is some evidence that some of our freeness conditions collapse
for such algebras, or at least for Kirchberg algebras
(separable nuclear unital purely infinite simple \ca s).
Theorem~1 of~\cite{Nk1} shows that for actions of~$\Z$
on unital Kirchberg algebras,
pointwise outerness implies the \rp.
The examples below show that
nothing this strong can be true for actions of finite groups,
but it is possible that,
say, pointwise outerness implies the tracial \rp.
See Problem~\ref{Pb:OutImpTRP} below.

We need several lemmas for the proofs of properties of
some of our examples.

\begin{lem}\label{L:Inn}
Let $A$ be a \uca\  with trivial center,
let $n \in \N,$ and
let $\af \colon \Zq{n} \to \Aut (A)$ be an action such that
each automorphism $\af_g,$ for $g \in \Zq{n},$
is inner.
Then $\af$ is an inner action,
that is, there is a \hm\  $g \mapsto z_g$ from $\Zq{n}$ to the
unitary group $U (A)$ of $A$
such that $\af_g = \Ad (z_g)$
for $g \in \Zq{n}.$
\end{lem}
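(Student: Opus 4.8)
The plan is to exploit that $\Zq{n}$ is cyclic, so the whole action is determined by a single automorphism, and that "trivial center" forces an implementing unitary to be unique up to a scalar. Fix a generator $g$ of $\Zq{n}$. Since $\af_g$ is inner, choose a unitary $u \in A$ with $\af_g = \Ad(u)$. The first observation is that $\Ad(u^n) = \af_g^{\,n} = \af_{g^n} = \af_1 = \id_A$, so $u^n$ commutes with every element of $A$; that is, $u^n$ lies in the center of $A$. Because $A$ has trivial center, $u^n = \ld \cdot 1$ for some $\ld \in \C$, and since $u^n$ is unitary, $|\ld| = 1$.

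Next I would correct $u$ by a scalar to kill its $n$-th power. Pick $\mu \in \C$ with $|\mu| = 1$ and $\mu^n = \ld$ (such a $\mu$ exists), and set $z = \mu^{-1} u$. Then $z$ is unitary, $\Ad(z) = \Ad(u) = \af_g$, and $z^n = \mu^{-n} u^n = \ld^{-1} \ld \cdot 1 = 1$. Now define $z_{g^k} = z^k$ for $k = 0, 1, \ldots, n-1$. Since $z^n = 1$, this is well defined and is a group \hm\ $\Zq{n} \to U(A)$, and $\Ad(z_{g^k}) = \Ad(z)^k = \af_g^{\,k} = \af_{g^k}$ for all $k$; hence $\af$ is inner.

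There is no real obstacle here; the only point requiring care is the uniqueness-up-to-scalar of the implementing unitary, which is precisely where triviality of the center is used. From a more structural viewpoint, a pointwise inner action $\af$ of a group $G$ on a \uca\ with trivial center produces a $\mathbb{T}$-valued $2$-cocycle $\om$ on $G$ via $u_g u_h = \om(g,h) u_{gh}$, and $\af$ is inner exactly when $[\om]$ vanishes in $H^2(G, \mathbb{T})$; for $G = \Zq{n}$ this group is trivial because $\mathbb{T}$ is $n$-divisible, and the scalar correction above is just the explicit trivialization in the cyclic case.
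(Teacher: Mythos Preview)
Your proof is correct and follows essentially the same route as the paper: pick a unitary implementing the generator, use triviality of the center to see its $n$-th power is a scalar, and rescale by an $n$-th root to obtain a unitary of order~$n$. Your version is slightly more careful in choosing $|\mu|=1$ so that $z$ is manifestly unitary, and the closing cohomological remark (that the obstruction lives in $H^2(\Zq{n},\mathbb{T})=0$) is a nice addition not present in the paper.
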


\begin{proof}
Write the elements of $\Zq{n}$ as $0, 1, \ldots, n - 1.$
Since $\af_1$ is inner,
there is a unitary $v \in A$ such that $\af_1 (a) = v a v^*$
for all $a \in A.$
In particular,
$\af_1 (v) = v v v^* = v.$
Therefore $\af_1^n (a) = v^n a v^{-n}$ for all $a \in A.$
Since $\af_1^n = \id_A$ and the center of~$A$ is trivial,
it follows that there is $\ld \in \C$ such that
$v^n = \ld \cdot 1.$
Choose $\gm \in \C$ such that $\gm^n = \ld.$
Then $z = \gm^{-1} v$ is a unitary in~$A$
with $z^n = 1$
such that $\af^k (a) = z^k a z^{-k}$
for $k = 0, 1, \ldots, n - 1$ and all $a \in A.$
\end{proof}

We don't know any counterexamples without the hypothesis
that $A$ have trivial center,
although we assume they exist.

\begin{lem}\label{L:Zp}
Let $A$ be a simple \uca, let $p$ be a prime,
and let $\af \colon \Zq{p} \to \Aut (A)$ be an action of
$\Zq{p}$ on~$A.$
If $C^* (\Zq{p}, A, \af)$ is simple, then $\af$ is pointwise outer.
\end{lem}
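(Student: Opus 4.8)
The plan is to prove the contrapositive: if $\af$ is not pointwise outer, then $C^* (\Zq{p}, A, \af)$ has a nontrivial central projection and hence is not simple. So suppose some $\af_{g_0}$ with $g_0 \in \Zq{p} \setminus \{ 1 \}$ is inner. Since $p$ is prime, $g_0$ generates $\Zq{p},$ so every $\af_g$ is a power of $\af_{g_0}$ and is therefore inner. As $A$ is simple and unital it has trivial center, so Lemma~\ref{L:Inn} applies: there is a \hm\  $g \mapsto z_g$ from $\Zq{p}$ into the unitary group of $A$ with $\af_g = \Ad (z_g)$ for all $g \in \Zq{p}.$

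Next I would exhibit a central copy of $C^* (\Zq{p})$ inside the crossed product. Let $u_g \in C^* (\Zq{p}, A, \af)$ be the canonical unitaries, so $u_g a u_g^* = \af_g (a)$ for $a \in A$ and $u_g u_h = u_{g h},$ and set $w_g = z_g^* u_g.$ A routine computation shows that each $w_g$ is unitary, that $w_g$ commutes with every element of $A$ (because $\af_g = \Ad (z_g)$), that $w_g w_h = w_{g h}$ (because $g \mapsto z_g$ is a \hm), and consequently that $w_g$ commutes with each $u_h = z_h w_h$ as well. Thus $w_1 = 1$ and $g \mapsto w_g$ defines a unital embedding of $C^* (\Zq{p}) \cong \C^p$ into the center of $C^* (\Zq{p}, A, \af)$; equivalently, $a \otimes g \mapsto a w_g$ exhibits the crossed product as a quotient of $A \otimes C^* (\Zq{p}).$

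Finally I would extract the contradiction. The minimal projections of $C^* (\Zq{p}) \cong \C^p$ are $e_\chi = \frac{1}{p} \sum_{g \in \Zq{p}} \overline{\chi (g)} \, g$ as $\chi$ ranges over the $p$ characters of $\Zq{p},$ and they satisfy $\sum_\chi e_\chi = 1.$ Their images $f_\chi = \frac{1}{p} \sum_{g} \overline{\chi (g)} \, w_g$ are then central projections in $C^* (\Zq{p}, A, \af)$ with $\sum_\chi f_\chi = 1.$ Applying the canonical conditional expectation $E \colon C^* (\Zq{p}, A, \af) \to A,$ $E \big( \sum_g a_g u_g \big) = a_1,$ and using $z_1 = 1$ together with $E (w_g) = E (z_g^* u_g) = 0$ for $g \neq 1,$ one gets $E (f_\chi) = \frac{1}{p} \cdot 1 \neq 0,$ so each $f_\chi \neq 0.$ Since $p \geq 2,$ there are at least two nonzero $f_\chi$ summing to $1,$ so each is a nontrivial central projection, contradicting simplicity of $C^* (\Zq{p}, A, \af).$ The only steps needing real care are the centrality of the $w_g$ (in particular that they commute with the implementing unitaries, not merely with $A$) and the nonvanishing of the $f_\chi,$ for which the conditional expectation is the convenient tool rather than just the module structure of the crossed product.
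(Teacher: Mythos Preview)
Your proof is correct and follows the same approach as the paper's: argue the contrapositive, use primality of $p$ to get that every $\af_g$ is inner, invoke Lemma~\ref{L:Inn} to straighten the implementing unitaries into a genuine homomorphism $g \mapsto z_g$, and conclude non-simplicity. The only difference is that the paper simply asserts that in this situation $C^* (\Zq{p}, A, \af) \cong C^* (\Zq{p}) \otimes A$ (which is not simple), whereas you unpack this by exhibiting the central unitaries $w_g = z_g^* u_g$ and the resulting nontrivial central projections $f_\chi$; your argument is exactly what one would write to justify that isomorphism, so the two proofs are essentially identical.
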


\begin{proof}
If $\af$ is not pointwise outer,
then, since $p$ is prime, $\af_g$ is inner for all $g \in G.$
So Lemma~\ref{L:Inn} implies that $\af$ is inner.
In this case, the \cp\  is isomorphic to $C^* (\Zq{p}) \otimes A,$
which is not simple.
\end{proof}

Example~\ref{E:939}
shows that this lemma fails if $p$ is not a prime.
The action there is not pointwise outer but the \cp\  is simple.

\begin{lem}\label{L:ZpTens}
Let $A$ be a simple \uca, let $p$ be a prime,
and let $\af \colon \Zq{p} \to \Aut (A)$ be a pointwise outer action of
$\Zq{p}$ on~$A.$
Let $B$ be any simple \uca.
Then the action
$g \mapsto (\af \otimes_{\mathrm{min}} \id_B)_g
    = \af_g \otimes_{\mathrm{min}} \id_B$
of $\Zq{p}$ on $A \otimes_{\mathrm{min}} B$
is pointwise outer.
\end{lem}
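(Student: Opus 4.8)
The plan is to translate everything into a statement about simplicity of a crossed product, so that Lemma~\ref{L:Zp} does the rest. Since $p$ is prime and $A \otimes_{\mathrm{min}} B$ is a simple \uca, Lemma~\ref{L:Zp} (applied with $A \otimes_{\mathrm{min}} B$ in place of $A$ and $\af \otimes_{\mathrm{min}} \id_B$ in place of $\af$) shows that $\af \otimes_{\mathrm{min}} \id_B$ is pointwise outer as soon as $C^* \big( \Zq{p}, \, A \otimes_{\mathrm{min}} B, \, \af \otimes_{\mathrm{min}} \id_B \big)$ is simple. So the only thing to prove is that this crossed product is simple.

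I would prove this in three steps. First, since $\af$ is pointwise outer and $A$ is simple, Theorem~\ref{T:Ks} shows that $C^* (\Zq{p}, A, \af)$ is simple (the group $\Zq{p}$ being finite, hence amenable, its reduced and full crossed products agree). Second, the minimal tensor product of two simple \ca s is again simple, so $C^* (\Zq{p}, A, \af) \otimes_{\mathrm{min}} B$ is simple. Third, there is a natural isomorphism
\[
C^* \big( \Zq{p}, \, A \otimes_{\mathrm{min}} B, \, \af \otimes_{\mathrm{min}} \id_B \big)
   \; \cong \; C^* (\Zq{p}, A, \af) \otimes_{\mathrm{min}} B ,
\]
under which $(a \otimes b) u_g$ corresponds to $(a u_g) \otimes b$. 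Putting these together, the crossed product in question is simple, and the lemma follows.

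The hard part will be the third step: checking that forming the crossed product by $\Zq{p}$ commutes with taking the minimal tensor product by a fixed algebra on which the group acts trivially, and in particular that the tensor norm which appears is the minimal one and not something larger. For a finite group this is routine: both algebras in the displayed isomorphism act faithfully on $H_A \otimes H_B \otimes l^2 (\Zq{p})$ through the regular representations of the respective crossed products (faithfulness uses amenability of $\Zq{p}$), and the stated correspondence of generators is spatially implemented, hence isometric. As an alternative to the crossed product argument, one could argue directly: if $(\af \otimes_{\mathrm{min}} \id_B)_g = \Ad (w)$ for some $g \in \Zq{p} \setminus \{ 1 \}$ and some unitary $w \in A \otimes_{\mathrm{min}} B$, then $w$ commutes with $1 \otimes B$; since $B$ is simple and unital, its center is trivial, and a slice map argument identifies the relative commutant of $1 \otimes B$ in $A \otimes_{\mathrm{min}} B$ with $A \otimes 1$, forcing $w = v \otimes 1$ with $v$ a unitary in $A$ and $\af_g = \Ad (v)$, contradicting pointwise outerness of $\af$. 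In that route the main obstacle is instead the relative commutant computation.
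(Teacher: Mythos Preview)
Your argument is correct and is exactly the paper's approach: use Theorem~\ref{T:Ks} to get simplicity of $C^* (\Zq{p}, A, \af)$, then the isomorphism $C^* (\Zq{p}, A, \af) \otimes_{\mathrm{min}} B \cong C^* (\Zq{p}, \, A \otimes_{\mathrm{min}} B, \, \af \otimes_{\mathrm{min}} \id_B)$, and conclude via Lemma~\ref{L:Zp}. The paper states this in three lines without justifying the isomorphism or the simplicity of the minimal tensor product; your extra discussion of both points, and the alternative slice-map route, are fine but not needed.
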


\begin{proof}
By Theorem~\ref{T:Ks}, the algebra
$C^* (\Zq{p}, A, \af)$ is simple.
Therefore so is
\[
C^* (\Zq{p}, A, \af) \otimes_{\mathrm{min}} B
\cong C^* (\Zq{p}, \, A \otimes_{\mathrm{min}} B,
      \, \af \otimes_{\mathrm{min}} \id_B).
\]
So Lemma~\ref{L:Zp} implies that
$\af \otimes_{\mathrm{min}} \id_B$ is pointwise outer.
\end{proof}

The following three examples show that,
for actions of $\Zqt$ on unital Kirchberg algebras
which satisfy the Universal Coefficient Theorem,
pointwise outerness, K-freeness, and the \rp\  %
are all distinct,
even in situations in which the K-theory is sufficiently
nontrivial that K-freeness should be useful.

\begin{exa}\label{E:PIUHF}
In Definition~\ref{D:PType},
take $d (m) = 3$ and $k (m) = 1$ for all~$m.$
Let $\af \colon \Zqt \to \Aut (A)$ be the resulting action.
Then $B = {\mathcal{O}}_{\infty} \otimes A$
is a unital Kirchberg algebra
which satisfies the Universal Coefficient Theorem.
The action $\af$ is pointwise outer by Example~\ref{E:PTypeOut}.
So $\bt = \id_{{\mathcal{O}}_{\infty}} \otimes \af$
is pointwise outer by Lemma~\ref{L:ZpTens}.
The K\"{u}nneth formula (Theorem~4.1 of~\cite{Sc2})
implies that $a \mapsto 1 \otimes a$
induces an isomorphism $K_* (A) \to K_* (B).$
Since $K_0 (A) \cong \Z \big[ \tfrac{1}{3} \big],$
Proposition~\ref{P:NoRP} shows that $\bt$ does not have the \rp.

We further check that $\bt$ does not have locally discrete K-theory.
There is an obvious isomorphism
$\ph \colon
 {\mathcal{O}}_{\infty} \otimes C^* (\Zqt, A, \af)
  \to C^* (\Zqt, B, \bt),$
which is equivariant for the dual actions,
using the trivial action on ${\mathcal{O}}_{\infty}.$
The K\"{u}nneth formula implies that
$a \mapsto \ph (1 \otimes a)$ defines an isomorphism
from $K_* ( C^* (\Zqt, A, \af))$ to $K_* ( C^* (\Zqt, B, \bt))$
which is equivariant for the dual actions.
Theorem 2.6.1 and Proposition 2.7.10 of~\cite{Ph1}
now imply that
$K_*^{\Zqt} (A) \cong K_*^{\Zqt} (B)$ as $R (\Zqt)$-modules.
Example~\ref{E:PTypeK}
shows that $\af$ does not have locally discrete K-theory.
So neither does~$\bt.$
\end{exa}

\begin{exa}\label{E:PIRPn}
Let $\af \colon \Zqt \to \Aut (A)$ be as in Example~\ref{E:RPn}.
Then $B = {\mathcal{O}}_{\infty} \otimes A$
is a unital Kirchberg algebra
which satisfies the Universal Coefficient Theorem.
The action $\af$ has the \trp\  %
(Example~\ref{E:TRP}(\ref{E:TRP:Torsion})),
so is pointwise outer by Proposition~\ref{T:TRPImpPOut}.
The proof of Proposition~4.2 of~\cite{PhtRp4}
shows that $K_0 (A)$ has a summand isomorphic to the $K_0$-group
of an odd UHF~algebra.
So $\bt$ does not have the \rp\  for the same reason as in
Example~\ref{E:PIUHF}.
However, in this example,
$\bt$ is totally K-free.
It suffices to check that $\bt$ has locally discrete K-theory.
For this, use Example~\ref{E:RPn2} and the same
argument as in Example~\ref{E:PIUHF}.
(Total K-freeness has significant content because
the K-theory of $B$ and of $C^* (\Zqt, B, \bt)$ is highly nontrivial.)
\end{exa}

\begin{exa}\label{E:O2NoRP}
Let $\bt \colon \Zqt \to \Aut ( {\mathcal{O}}_{2} )$
be as in Example~5.8 of~\cite{Iz1}.
The nontrivial group element acts
on the standard generators $s_1$ and $s_2$ of ${\mathcal{O}}_{2}$
by $s_1 \mapsto - s_1$ and $s_2 \mapsto - s_2.$
Then $\bt$ is pointwise outer by the theorem in~\cite{MT}.
In Example~5.8 of~\cite{Iz1}, it is shown that the fixed point
algebra ${\mathcal{O}}_{2}^{\bt}$ is isomorphic to~${\mathcal{O}}_{4}.$
Thus
$K_0 \big( {\mathcal{O}}_{2}^{\bt} \big) \to K_0 ( {\mathcal{O}}_{2} )$
is not injective.
So Theorem~\ref{T:RPKthInj} implies that $\bt$ does not have the \rp.
Thus,
pointwise outerness does not imply the \rp\  %
even in the absence of obstructions
of the type that appear
in Proposition~\ref{P:NoRP} and the preceding discussion.
\end{exa}

It does, however, seem reasonable to hope for a positive
solution to the following problem.
It is not clear that our definition of the \trp\  is right
for actions on unital purely infinite simple \ca s,
and some modification may be needed.

\begin{pbm}\label{Pb:OutImpTRP}
Let $A$ be a unital Kirchberg algebra,
let $G$ be a finite group
and let $\af \colon G \to \Aut (A)$ be a pointwise outer action.
Does it follow that $\af$ has the \trp?
\end{pbm}

\section{Full Connes spectrum}\label{Sec:Connes}

\indent
The noncommutative generalization of Theorem~\ref{CT:MorEq}
is saturation.
We will, however,
see that saturation is very weak,
and we will primarily consider a stronger condition,
hereditary saturation,
which can also be expressed in terms of the strong Connes spectrum.
Saturation says that the fixed point algebra is essentially the
same as the \cp.
Hereditary saturation turns out to be exactly the condition
needed to ensure that every ideal in the \cp\  is the \cp\  of
an invariant ideal in the original algebra.

The following lemma is needed as preparation for the definition
of saturation.

\begin{lem}[Proposition~7.1.3 of~\cite{Ph1}]\label{L:Bimodule}
Let $\af \colon G \to \Aut (A)$ be an action of a compact group
on a \ca~$A.$
Then the following definitions make
a suitable completion of $A$ into an $A^G$--$C^* (G, A, \af)$
bimodule,
which is almost a Morita equivalence bimodule in the
sense of the definition on page~287 of~\cite{Rf}
(the only missing condition is that the range of
$\langle \cdot, \cdot \rangle_{C^* (G, A, \af)}$ need not be dense):
\begin{enumerate}
\item\label{L:Bimodule:1}
$a \cdot x = a x$ for $a \in A^G$ and $x \in A.$
\item\label{L:Bimodule:2}
$x f = \int_G \af_g^{-1} (x f (g)) \, d \mu (g)$
for $x \in A$ and $f \in L^1 (G, A, \af).$
\item\label{L:Bimodule:3}
$\langle x, y \rangle_{A^G} = \int_G \af_g (x y^*) \, d \mu (g)$
for $x, y \in A.$
\item\label{L:Bimodule:4}
$\langle x, y \rangle_{C^* (G, A, \af)}$
is the function $g \mapsto x^* \af_g (y)$
for $x, y \in A.$
\end{enumerate}
\end{lem}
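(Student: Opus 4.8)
The plan is to verify, one requirement at a time, that $A$ equipped with the operations (\ref{L:Bimodule:1})--(\ref{L:Bimodule:4}) satisfies all the conditions of an $A^G$--$C^*(G,A,\af)$ imprimitivity bimodule in the sense of \cite{Rf} \emph{except} density of the range of $\langle\cdot,\cdot\rangle_{C^*(G,A,\af)}$, and then to pass to the completion. First one must see the formulas make sense. Since $g\mapsto\af_g(a)$ is norm continuous for each $a\in A$ and $G$ is compact with normalized Haar measure $\mu$, the integrals in (\ref{L:Bimodule:2}) and (\ref{L:Bimodule:3}) converge in norm; the integrand in (\ref{L:Bimodule:3}) is carried into $A^G$ because $\af_h\big(\int_G\af_g(xy^*)\,d\mu(g)\big)=\int_G\af_{hg}(xy^*)\,d\mu(g)=\int_G\af_g(xy^*)\,d\mu(g)$ by left invariance of $\mu$; and for $x,y\in A$ the function $g\mapsto x^*\af_g(y)$ lies in $C(G,A)\subset L^1(G,A,\af)\subset C^*(G,A,\af)$, so (\ref{L:Bimodule:4}) takes values in the crossed product.

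Next I would verify the algebraic identities. The left action (\ref{L:Bimodule:1}) is an honest $A^G$-module action and commutes with the right action (\ref{L:Bimodule:2}) because $\af_g$ fixes $A^G$; conjugate symmetry $\langle x,y\rangle_{A^G}^{\,*}=\langle y,x\rangle_{A^G}$ is immediate, and $\langle a\cdot x,y\rangle_{A^G}=a\,\langle x,y\rangle_{A^G}$ for $a\in A^G$ follows again from $\af_g(a)=a$. On the crossed product side, a change-of-variables computation with the convolution, and (using unimodularity of the compact group $G$) the involution, of $L^1(G,A,\af)$ gives $(xf)f'=x(f*f')$, $\langle x,y f\rangle_{C^*(G,A,\af)}=\langle x,y\rangle_{C^*(G,A,\af)}*f$, and $\langle x,y\rangle_{C^*(G,A,\af)}^{\,*}=\langle y,x\rangle_{C^*(G,A,\af)}$. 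The bridging identity $\langle x,y\rangle_{A^G}\cdot z=x\cdot\langle y,z\rangle_{C^*(G,A,\af)}$ is the calculation
\[
x\cdot\langle y,z\rangle_{C^*(G,A,\af)}
 =\int_G\af_g^{-1}\big(x\,y^*\af_g(z)\big)\,d\mu(g)
 =\int_G\af_g^{-1}(xy^*)\,z\,d\mu(g)
 =\Big(\int_G\af_g(xy^*)\,d\mu(g)\Big)z,
\]
the last step by invariance of $\mu$ under $g\mapsto g^{-1}$.

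The step with genuine content is positivity of the two inner products. For (\ref{L:Bimodule:3}) this is clear: $\langle x,x\rangle_{A^G}=\int_G\af_g(xx^*)\,d\mu(g)$ is a norm-convergent integral of positive elements, and it is even faithful, since vanishing of the integrand at $g=1$ forces $x=0$. For (\ref{L:Bimodule:4}) I would argue through covariant representations. Let $(\pi,u)$ be a covariant representation of $(A,G,\af)$ on a Hilbert space, with integrated form $\pi\rtimes u$ on $C^*(G,A,\af)$, and set $P=\int_G u_g\,d\mu(g)$, a self-adjoint projection. Using $\pi(\af_g(x))u_g=u_g\pi(x)$ one computes
\[
(\pi\rtimes u)\big(\langle x,x\rangle_{C^*(G,A,\af)}\big)
 =\int_G\pi(x)^*\,u_g\,\pi(x)\,d\mu(g)
 =\pi(x)^*P\,\pi(x)
 =\big(P\pi(x)\big)^*\big(P\pi(x)\big)\ge 0.
\]
Since $C^*(G,A,\af)$ has a faithful representation of this form, $\langle x,x\rangle_{C^*(G,A,\af)}\ge 0$.

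With positivity in place, the facts that the two derived norms $x\mapsto\|\langle x,x\rangle_{A^G}\|^{1/2}$ and $x\mapsto\|\langle x,x\rangle_{C^*(G,A,\af)}\|^{1/2}$ agree, that the module actions and inner products are bounded, and hence that all the structure extends to the completion of $A$, are standard imprimitivity-bimodule theory once the bridging identity is known \cite{Rf}. Fullness on the $A^G$ side, $\overline{\operatorname{span}}\,\langle A,A\rangle_{A^G}=A^G$, follows from an approximate identity $(e_\lambda)$ of $A$: for positive $a\in A^G$ one has $\langle a^{1/2}e_\lambda,\,a^{1/2}\rangle_{A^G}=\int_G\af_g(a^{1/2}e_\lambda a^{1/2})\,d\mu(g)\to\int_G\af_g(a)\,d\mu(g)=a$. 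The only remaining axiom, density of $\overline{\operatorname{span}}\,\langle A,A\rangle_{C^*(G,A,\af)}$ in $C^*(G,A,\af)$, is by definition the saturation property (Definition~\ref{D:Sat}), so it is omitted, exactly as the statement records. The main obstacle is the positivity of the $C^*(G,A,\af)$-valued inner product together with the passage to the completion (compatibility of the two norms and boundedness of the actions); everything else is bookkeeping with the Haar integral.
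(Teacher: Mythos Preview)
The paper does not actually give its own proof of this lemma; it simply quotes the result as Proposition~7.1.3 of~\cite{Ph1}. Your proposal is a correct direct verification of the imprimitivity-bimodule axioms, and the individual computations (associativity of the right action, the bridging identity, positivity of the $C^*(G,A,\af)$-valued inner product via $\pi(x)^*P\pi(x)$ with $P=\int_G u_g\,d\mu(g)$, and fullness on the $A^G$ side via an approximate identity) all check out. This is essentially the standard argument one would expect to find in~\cite{Ph1}, so there is nothing to contrast.
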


The following definition is originally due to Rieffel.
A version for proper actions
of not necessarily compact groups appears in
Definition~1.6 of~\cite{Rf3}.

\begin{dfn}[Definition~7.1.4 of~\cite{Ph1}]\label{D:Sat}
Let $\af \colon G \to \Aut (A)$ be an action of a compact group~$G$
on a \ca~$A.$
Then $\af$ is said to be {\emph{saturated}}
if the bimodule of Lemma~\ref{L:Bimodule}
is a Morita equivalence bimodule,
that is,
the range of $\langle \cdot, \cdot \rangle_{C^* (G, A, \af)}$
is dense in $C^* (G, A, \af).$
\end{dfn}

For an action of a compact group on $C_0 (X),$
it follows from Proposition~7.1.12 and Theorem~7.2.6 of~\cite{Ph1}
that saturation is equivalent to freeness of the action of $G$ on~$X.$
Saturation is a very weak form of freeness,
since even inner actions on simple \ca s can be saturated.
See Proposition~7.2.1 of~\cite{Ph1}.
For example,
this result shows that the action $\af \colon \Zqt \to \Aut (M_2)$
generated by
$\Ad
 \left( \begin{smallmatrix} 1 & 0 \\ 0 & -1 \end{smallmatrix} \right)$
is saturated.
Also see Example~\ref{E:PTypeSat}.
In particular, crossed products of simple \ca s by saturated actions
can be far from simple.
(With $\af$ as above,
$C^* (\Zqt, M_2, \af) \cong \C \oplus \C.$)
Thus, the analog of Theorem~\ref{CT:Outer} fails.
It does ensure, by definition,
that $A^G$ is strongly Morita equivalent to $C^* (G, A, \af),$
which is a noncommutative analog of Theorem~\ref{CT:MorEq}.
In particular, at least in the separable case,
it ensures that the map $K_* (A^G) \to K_* (C^* (G, A, \af) )$
is an isomorphism.
This statement is a noncommutative analog of the conclusion
of Theorem~\ref{CT:KXG}.

\begin{exa}\label{E:PTypeSat}
Let $\af \colon \Zqt \to \Aut (A)$ be as in Definition~\ref{D:PType}.
Then \tfae:
\begin{enumerate}
\item\label{E:PTypeSat:Sat}
$\af$ is saturated.
\item\label{E:PTypeSat:NTr}
For some $n,$ we have $k (n) \neq 0.$
\item\label{E:PTypeSat:NTr2}
$\af$ is nontrivial.
\end{enumerate}

Obviously saturation implies the other two conditions,
and the other two conditions are equivalent.
So assume~(\ref{E:PTypeSat:NTr}).
Choose $n_0$ such that $k (n_0) \neq 0.$
For $n \geq n_0,$
one can use Proposition~7.2.1 of~\cite{Ph1}
to see that
the action of $\Zqt$ on $\bigotimes_{k = 1}^{n} M_{d (k)}$
is saturated.
Proposition~7.1.13 of~\cite{Ph1} now implies that $\af$ is saturated.
\end{exa}

Saturation is far too weak for most purposes.
For something more useful, one must pass to hereditary saturation.

\begin{dfn}[Definition~7.2.2 of~\cite{Ph1}]\label{D:HSat}
Let $\af \colon G \to \Aut (A)$ be an action of a compact group~$G$
on a \ca~$A.$
The action is said to be {\emph{hereditarily saturated}}
if for every nonzero $G$-invariant hereditary subalgebra $B \subset A,$
the restricted action $\af_{( \cdot )} |_B$ is saturated.
\end{dfn}

For our standard product type actions,
we have the same behavior as for pointwise outerness:

\begin{exa}\label{E:PTypeHSat}
Let $\af \colon \Zqt \to \Aut (A)$ be as in Definition~\ref{D:PType}.
Then \tfae:
\begin{enumerate}
\item\label{E:PTypeHSat:HSat}
$\af$ is hereditarily saturated.
\item\label{E:PTypeHSat:Smp}
$C^* (\Zqt, A, \af)$ is simple.
\item\label{E:PTypeHSat:NTr}
For infinitely many $n,$ we have $k (n) \neq 0.$
\end{enumerate}
To see this, use
Theorem~\ref{T:SatAndStConnes}(\ref{T:SatAndStConnes:2}) below,
Corollary~\ref{T:StrConnesAndSimp} below,
and Example~\ref{E:PTypeOut}.
\end{exa}

At least for noncyclic groups,
the behavior of hereditary saturation is not as close to
that of pointwise outerness as Example~\ref{E:PTypeHSat} suggests.

\begin{exa}[Example 4.2.3 of~\cite{Ph1}]\label{E:4Gp}
Let $A = M_{2},$ let $G = (\Z / 2 \Z)^{2}$ with
generators $g_{1}$ and $g_{2},$ and set
\[
\alpha_{g_{1}}
 = \Ad \left( \begin{array}{cc} 1 & 0 \\ 0 & -1 \end{array} \right)
\andeqn
\alpha_{g_{2}}
 = \Ad \left( \begin{array}{cc} 0 & 1 \\ 1 & 0 \end{array} \right).
\]
These generate an action of $G$
such that $\alpha_{g}$ is inner for all $g \in G$, but such that
there is no homomorphism $g \mapsto u_{g} \in U (A)$ with
$\alpha_{g} = \Ad (u_{g})$ for all $g \in G.$
The crossed product $C^* (G, M_2, \af)$ is simple.
% The strong Connes spectrum ${\widetilde{\Gm}} (\af)$ is all of
% ${\widehat{G}}.$
\end{exa}

Lemma~\ref{L:Inn} shows that an analog of Example~\ref{E:4Gp}
is not possible if $G$ is cyclic and $A$ is simple.

For actions on \ca s of type~I, we have the following two results.
Both are contained in Theorem~8.3.7 of~\cite{Ph1}.

\begin{thm}[\cite{Ph1}]\label{T:Type1HSatLie}
Let $\af \colon G \to \Aut (A)$
be an action of a compact Lie group~$G$
on a type~I \ca~$A.$
If $G$ acts freely on $\Prim (A),$ then $\af$ is hereditarily saturated.
\end{thm}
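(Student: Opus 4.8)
The plan is to reduce the assertion to the single implication that, for an action of a compact Lie group on a type~I \ca, freeness of the induced action on the primitive ideal space forces the action to be saturated (Definition~\ref{D:Sat}); hereditary saturation then follows by applying this to each invariant hereditary subalgebra. So the first step is this reduction. Let $B \subset A$ be a nonzero $G$-invariant hereditary subalgebra. Then $B$ is again type~I; the ideal $I_B = \overline{B A B}$ it generates is $G$-invariant, and $B$ is a full hereditary subalgebra of $I_B$, hence $G$-equivariantly Morita equivalent to it. Consequently $\Prim(B)$ is $G$-equivariantly homeomorphic to $\Prim(I_B)$, which is an open $G$-invariant subset of $\Prim(A)$; since $G$ acts freely on $\Prim(A)$, it acts freely on $\Prim(B)$. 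Thus, once we know ``free on $\Prim$ implies saturated'' for compact Lie group actions on type~I algebras, we may apply it to $B$ for every such $B$ and conclude that $\af$ is hereditarily saturated.

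For that implication I would filter $B$ by its spectrum. A compact Lie group is metrizable, so Corollary~8.1.2 of~\cite{Ph1} (the same tool used in the proof of Theorem~\ref{T:StOuterType1}) gives a $G$-invariant composition series $0 = I_0 \subset I_1 \subset \cdots \subset I_\kp = B$, continuous at limit ordinals, whose successive quotients $I_{\ld+1}/I_\ld$ all have Hausdorff primitive ideal space. Each $\Prim(I_{\ld+1}/I_\ld)$ is a locally closed $G$-invariant subset of $\Prim(B)$, so the action induced on every subquotient is still free on its primitive ideal space. I would then invoke two permanence facts for saturation --- that it is preserved under $G$-equivariant Morita equivalence, and that if $I \subset C$ is a $G$-invariant ideal for which the restriction to $I$ and the action induced on $C/I$ are both saturated then the action on $C$ is saturated, together with the transfinite/direct-limit form of the latter (compare Proposition~7.1.13 of~\cite{Ph1}). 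Granting these, the problem is reduced to the case in which $B$ has Hausdorff spectrum.

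For that base case, $\Prim(B) = {\hat B}$ is Hausdorff, hence $T_1$, so $B$ is a CCR algebra; filtering further along the (locally closed, $G$-invariant) loci on which the irreducible representations have a fixed dimension, one reduces to the case that $B$ has continuous trace with fibres of constant dimension. By Theorem~\ref{CT:Bundle}, freeness makes $\hat B \to \hat B / G$ a principal $G$-bundle, and by Theorem~\ref{CT:MorEq} the translation action of $G$ on $C_0(\hat B)$ is saturated. Over a trivializing chart the $G$-action on $B$ is, up to $G$-equivariant Morita equivalence, the translation action on $C_0$ of a chart; transporting saturation across these local equivalences and assembling globally via the direct-limit permanence of saturation shows $\af_{(\cdot)}|_B$ is saturated. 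Alternatively, using that freeness on $\hat B$ forces every point stabilizer to be trivial, one can check saturation representation by representation: every irreducible representation of $C^*(G, B, \af)$ is then induced from one of $B$, which is exactly what density of the range of $\langle \cdot, \cdot \rangle_{C^*(G, B, \af)}$ records.

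The step I expect to be the main obstacle is not the topological bookkeeping but the permanence of saturation through the ideal/quotient decomposition and along the transfinite composition series. Unlike the \rp\ or pointwise outerness, saturation is a fullness condition on an imprimitivity-type bimodule (Lemma~\ref{L:Bimodule}), and keeping control of the range of the $C^*(G,A,\af)$-valued inner product while passing between an invariant ideal, the quotient, and limits --- and verifying that nothing is lost in the gluing --- is delicate. In~\cite{Ph1} this is organized through the strong Connes spectrum; either that machinery, or the representation-theoretic reformulation indicated above, seems unavoidable for making the reduction rigorous.
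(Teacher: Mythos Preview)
The paper does not prove this theorem at all; it simply records that the result is contained in Theorem~8.3.7 of~\cite{Ph1}. So there is no argument in the paper to compare against. That said, your sketch is a faithful outline of the kind of argument that underlies the cited result: the reduction to an arbitrary $G$-invariant hereditary subalgebra via the ideal it generates is correct (so that freeness on $\Prim(A)$ is inherited by $\Prim(B)$), and the decomposition of $B$ through the $G$-invariant composition series of Corollary~8.1.2 of~\cite{Ph1}, followed by reduction to the CCR/continuous-trace base case and the principal-bundle description from Theorem~\ref{CT:Bundle}, is exactly the route taken in Chapter~8 of~\cite{Ph1}.

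You correctly identify the genuine technical point: the permanence of \emph{saturation} (not just hereditary saturation) under extensions and transfinite limits. The direct-limit case you cite (Proposition~7.1.13 of~\cite{Ph1}) is the right tool for the limit ordinals; for the successor step one needs the ideal/quotient two-out-of-three statement for saturation, which in~\cite{Ph1} is handled alongside the hereditary version (Proposition~7.2.3 there) via the strong Arveson spectrum description of saturation (what appears here as Theorem~\ref{T:SatAndStConnes}(\ref{T:SatAndStConnes:1})). Your alternative suggestion---to verify saturation representation by representation, using that freeness forces every irreducible of the crossed product to be induced---is also viable and is essentially the content of the Green--Rieffel imprimitivity analysis in the continuous-trace setting. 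Either way, your outline is sound; the missing ingredients are all in Chapters~7 and~8 of~\cite{Ph1}, which is precisely why the present paper is content to cite rather than reprove.
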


\begin{thm}[\cite{Ph1}]\label{T:Type1HSatCyc}
Let $\af \colon G \to \Aut (A)$
be an action of a finite cyclic group~$G$
on a type~I \ca~$A.$
Then $\af$ is hereditarily saturated
\ifo\  $G$ acts freely on $\Prim (A).$
\end{thm}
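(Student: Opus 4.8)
The plan is to treat the two implications separately. The implication ``free action on $\Prim (A)$ $\Rightarrow$ hereditarily saturated'' is immediate from Theorem~\ref{T:Type1HSatLie}, since a finite cyclic group is a compact Lie group. The real content is the converse, which I would prove in contrapositive form: assuming $G = \Zq{n}$ does not act freely on $\Prim (A)$, I will produce a nonzero $G$-invariant \hsa\ of $A$ on which the restricted action is not saturated.

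Two permanence properties of hereditary saturation serve as the glue, and establishing them cleanly is, I expect, the main technical obstacle. First, hereditary saturation passes to $G$-invariant ideals, because a \hsa\ of a $G$-invariant ideal of $A$ is again a \hsa\ of $A$. Second, and less routinely, hereditary saturation passes to $G$-invariant quotients: I would first observe that saturation itself passes to quotients, since the surjection $C^* (G, A, \af) \to C^* (G, A / I, {\overline{\af}})$ carries the range of $\langle \cdot, \cdot \rangle_{C^* (G, A, \af)}$ onto a dense subspace of the range of the corresponding inner product for ${\overline{\af}}$ (compare Lemma~\ref{L:Bimodule}); then, given an ${\overline{\af}}$-invariant \hsa\ ${\overline{D}} \S A / I,$ I would lift an approximate identity of ${\overline{D}}$ to positive elements of $A$ and average over $G$ to obtain a $G$-invariant \hsa\ $D \S A$ mapping onto ${\overline{D}},$ so that saturation of $\af |_D$ forces saturation of ${\overline{\af}} |_{{\overline{D}}}.$

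With these in hand I would argue along the lines of the proof of Theorem~\ref{T:StOuterType1}. Using Corollary~8.1.2 of~\cite{Ph1}, fix a $G$-invariant composition series $(I_{\ld})_{\ld \leq \kp}$ of $A$ whose composition factors $I_{\ld + 1} / I_{\ld}$ have Hausdorff primitive ideal space (and are automatically type~I). Non-freeness provides $g \in G \setminus \{ 1 \},$ an index $\ld,$ and a point $x_0 \in \Prim (I_{\ld + 1} / I_{\ld})$ with $g x_0 = x_0$; by the two permanence properties it is enough to show that the induced $G$-action $\gm$ on $C := I_{\ld + 1} / I_{\ld}$ is not hereditarily saturated. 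Write $X = \Prim (C),$ which is Hausdorff and locally compact; let $K = \{ h \in G \colon h x_0 = x_0 \},$ a nontrivial cyclic subgroup of $G$ (nontrivial since $g \in K$); and let $S = G x_0 \S X,$ a finite --- hence closed --- $G$-invariant subset. Passing once more to the $G$-invariant quotient $C / I_S,$ where $I_S \S C$ is the ideal with spectrum $X \setminus S,$ reduces us (by the contrapositive of the quotient permanence property) to the case of $C / I_S \cong \bigoplus_{s \in S} K (H_s)$ --- a routine consequence of the type~I structure of $C$ --- where $G$ permutes the summands the way it permutes $S,$ and $K$ stabilizes the summand $K (H_{x_0}).$

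It remains to locate the bad \hsa\ inside $\bigoplus_{s} K (H_s).$ The action of $K$ on $K (H_{x_0})$ is by inner automorphisms, so its extension to $M (K (H_{x_0})) = L (H_{x_0})$ is inner; since $L (H_{x_0})$ is unital with trivial center, Lemma~\ref{L:Inn} yields a unitary representation $\rh \colon K \to U (L (H_{x_0}))$ with $\gm_k = \Ad (\rh_k)$ on $K (H_{x_0})$ --- this is the one place cyclicity is used, and Example~\ref{E:4Gp} shows it is essential. Since $K$ is abelian and $H_{x_0} \neq 0,$ choose a unit common eigenvector $\xi$ for the $\rh_k$; then $q_{x_0} := \xi \xi^*$ is a $K$-invariant rank-one projection in $K (H_{x_0}).$ Transporting it around the orbit ($q_s := \gm_h (q_{x_0})$ for $s = h x_0,$ well-defined because $q_{x_0}$ is $K$-invariant) and summing produces a $G$-invariant projection $q = \sum_{s \in S} q_s \in C / I_S$ whose corner $q (C / I_S) q \cong \bigoplus_{s \in S} \C$ is, as a $G$-algebra, $G$-equivariantly isomorphic to $C (G / K)$ with the translation action --- with no scalar twists, precisely because $q_{x_0}$ is $K$-invariant. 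That action of $G$ on $G / K$ is free \ifo\ $K = \{ 1 \},$ which fails; so by the equivalence of saturation with freeness for actions of compact groups on commutative \ca s (recorded after Definition~\ref{D:Sat}; Proposition~7.1.12 and Theorem~7.2.6 of~\cite{Ph1}), the action on $q (C / I_S) q$ is not saturated. Hence $\gm$ on $C / I_S$ is not hereditarily saturated, and unwinding the reductions shows $\af$ is not hereditarily saturated, as required.
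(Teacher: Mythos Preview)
The paper does not prove this statement itself; it simply records that both Theorem~\ref{T:Type1HSatLie} and Theorem~\ref{T:Type1HSatCyc} are contained in Theorem~8.3.7 of~\cite{Ph1}. Your proposal therefore cannot be compared against a proof in the paper, but it can be checked on its own merits, and it is correct.

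A few remarks. Your ``main technical obstacle'' is already handled in the paper: Proposition~\ref{T:HSatPerm}(\ref{T:HSatPerm:IffExt}) (citing Proposition~7.2.3 of~\cite{Ph1}) gives exactly the two-out-of-three statement for hereditary saturation under extensions, so you may simply invoke it rather than sketch the lifting argument for quotients. The rest of your reduction, via the $G$-invariant composition series with Hausdorff spectrum, mirrors the proof of Theorem~\ref{T:StOuterType1} and is sound; the identification $C / I_S \cong \bigoplus_{s \in S} K (H_s)$ is legitimate because $S$ is a finite (hence discrete) closed subset of the Hausdorff space $\Prim (C)$ and $C$ has type~I. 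Your use of Lemma~\ref{L:Inn} on the multiplier algebra $L (H_{x_0})$ is the right way to handle the nonunital case, and correctly isolates the single place where cyclicity of $G$ (hence of the stabilizer $K$) enters; Example~\ref{E:4Gp} shows this step genuinely fails for $\Zqt^2$. The verification that the corner $q (C / I_S) q$ carries the untwisted translation action of $G$ on $C (G / K)$ is correct: since $\gm_g (q_s) = q_{g s}$ on the nose, the induced maps between the one-dimensional summands are the identity on~$\C$. The final appeal to the commutative characterization of saturation (Proposition~7.1.12 and Theorem~7.2.6 of~\cite{Ph1}) is exactly right.
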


Example~\ref{E:4Gp}
shows that the converse of Theorem~\ref{T:Type1HSatLie} is false.

Hereditary saturation is closely related to the strong Connes spectrum.
Parts (\ref{D:StrongConnes:2}) and~(\ref{D:StrongConnes:3})
of the following definition are a special case
of definitions of Kishimoto.
See the beginning of Section~2 of~\cite{Ks0}.
The definitions in~\cite{Ks0} are given for the case of
an arbitrary locally compact abelian group,
and are more complicated because,
without compactness, the eigenspaces in Part~(\ref{D:StrongConnes:1})
are usually too small to be useful.
One must use approximate eigenspaces instead.
We refer to~\cite{Ks0} for the definition in that case,
but we make some comments below about what happens in the
locally compact case.

\begin{dfn}[\cite{Ks0}]\label{D:StrongConnes}
Let $\af \colon G \to \Aut (A)$
be an action of a compact abelian group~$G$
on a \ca~$A.$
\begin{enumerate}
\item\label{D:StrongConnes:1}
For $\ta \in {\widehat{G}},$
the Pontryagin dual of~$G,$
we let $A_{\ta} \subset A$ be the eigenspace
\[
A_{\ta} = \{ a \in A \colon
   {\mbox{$\af_g (a) = \ta (g) a$ for all $g \in G$}} \}.
\]
\item\label{D:StrongConnes:2}
The {\emph{strong (Arveson) spectrum}}
${\widetilde{\mathrm{Sp}}} (\af)$ of $\af$ is
\[
{\widetilde{\mathrm{Sp}}} (\af)
 = \big\{ \ta \in {\widehat{G}} \colon
     {\overline{A_{\ta}^* A A_{\ta} }} = A \big\}.
\]
\item\label{D:StrongConnes:3}
The {\emph{strong Connes spectrum}}
${\widetilde{\Gm}} (\af)$ of $\af$ is
the intersection over all nonzero $G$-invariant hereditary subalgebras
$B \subset A$
of ${\widetilde{\mathrm{Sp}}} \big( \af_{( \cdot )} |_B \big).$
\end{enumerate}
\end{dfn}

The strong Arveson spectrum is a modification of a much
older notion called the (Arveson) spectrum ${\mathrm{Sp}} (\af),$
defined for actions of compact groups
by the using the condition $A_{\ta} \neq \{ 0 \}$
instead of ${\overline{A_{\ta}^* A A_{\ta} }} = A.$
Thus, the strong Arveson spectrum is smaller.
The Connes spectrum $\Gm (\af)$ is then
as in Definition~\ref{D:StrongConnes}(\ref{D:StrongConnes:3}),
but using the Arveson spectrum instead of the strong Arveson spectrum.
The Connes spectrum was introduced by Connes
(Section~2.2 of~\cite{Cn}) for actions on von Neumann algebras.
The main early work for \ca s was done by Olesen and Pedersen.
See \cite{OPd1}, \cite{OPd2}, and~\cite{OPd3}.
Also see Sections 8.1 and 8.8--8.11 of~\cite{Pd1},
where a third version, the Borchers spectrum, is also treated.
We briefly discuss the significance of the difference after
Theorem~\ref{T:AbConnes}.
The analog of the strong Connes spectrum for von Neumann algebras
gives the same thing as the Connes spectrum
(Remark~2.4 of~\cite{Ks0}).
Some cases in which ${\widetilde{\Gm}} (\af) = \Gm (\af)$ are
discussed at the end of Section~3 of~\cite{Ks0}.

A version of the strong Arveson spectrum for actions of
compact nonabelian groups is given in Definition~1.1(b) of~\cite{GLP},
and a version of the strong Connes spectrum
is given in Definition~1.2(b) of~\cite{GLP}.
The values of both are subsets of the space ${\widehat{G}}$
of unitary equivalence classes of irreducible representations of~$G.$

The relevance here is the following theorem,
which follows from the discussion after Lemma~3.1 of~\cite{GLP}.
For abelian groups,
the first part is essentially originally due to Rieffel.
See Theorem~7.1.15 of~\cite{Ph1} and the comment after its proof,
and Theorem~7.2.7 of~\cite{Ph1}.

\begin{thm}[\cite{GLP}]\label{T:SatAndStConnes}
Let $\af \colon G \to \Aut (A)$
be an action of a compact group~$G$
on a \ca~$A.$
Then:
\begin{enumerate}
\item\label{T:SatAndStConnes:1}
$\af$ is saturated \ifo\  %
${\widetilde{\mathrm{Sp}}} (\af) = {\widehat{G}}.$
\item\label{T:SatAndStConnes:2}
$\af$ is hereditarily saturated \ifo\  %
${\widetilde{\Gm}} (\af) = {\widehat{G}}.$
\end{enumerate}
\end{thm}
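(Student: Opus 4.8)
The plan is to prove part~(\ref{T:SatAndStConnes:1}) by a direct analysis of the bimodule of Lemma~\ref{L:Bimodule}, and then to deduce part~(\ref{T:SatAndStConnes:2}) formally. Consider first $G$ abelian, and let $J$ be the closed linear span of the set $\{ \langle x, y \rangle_{C^* (G, A, \af)} \colon x, y \in A \}$ of $C^* (G, A, \af)$-valued inner products. Using the relations in Lemma~\ref{L:Bimodule} together with the identity $\langle x, y \rangle_{C^* (G, A, \af)}^* = \langle y, x \rangle_{C^* (G, A, \af)},$ one checks that $J$ is a closed two sided ideal of $C^* (G, A, \af),$ so by Definition~\ref{D:Sat} the action $\af$ is saturated if and only if $J = C^* (G, A, \af).$ To detect this I would use the $\widehat{G}$-grading: call $f \in C^* (G, A, \af)$ \emph{$\ta$-homogeneous}, for $\ta \in \widehat{G},$ if $f$ is the function $g \mapsto \ta (g) a$ for some $a \in A;$ since $G$ is compact, the linear span of the $\ta$-homogeneous elements over all $\ta$ is dense in $C^* (G, A, \af).$ By Lemma~\ref{L:Bimodule}(\ref{L:Bimodule:4}), $\langle x, y \rangle_{C^* (G, A, \af)}$ is the function $g \mapsto x^* \af_g (y);$ when $y$ lies in the eigenspace $A_\ta$ of Definition~\ref{D:StrongConnes}(\ref{D:StrongConnes:1}) this equals $g \mapsto \ta (g) x^* y,$ which is $\ta$-homogeneous with coefficient $x^* y \in A A_\ta,$ so taking closed linear spans shows that $J$ contains every $\ta$-homogeneous element with coefficient in $\overline{A A_\ta}.$ Conversely, the Fourier coefficient $\int_G \overline{\ta (g)} f (g) \, dg$ of any $f \in J$ lies in $\overline{A A_\ta},$ since it does so on the generators (there it is $x^* P_\ta (y),$ with $P_\ta$ the spectral projection of $A$ onto $A_\ta$). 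Combining these two observations with the density of the homogeneous elements gives that $J = C^* (G, A, \af)$ if and only if $\overline{A A_\ta} = A$ for every $\ta \in \widehat{G}.$

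It then remains to match the condition ``$\overline{A A_\ta} = A$ for every $\ta$'' with $\widetilde{\mathrm{Sp}} (\af) = \widehat{G},$ i.e. with $\overline{A_\ta^* A A_\ta} = A$ for every $\ta.$ One inclusion is immediate from $A_\ta^* A A_\ta \subset A A_\ta.$ For the other, assuming $\overline{A A_\ta} = A$ for all $\ta$ and applying the involution gives $\overline{A_\ta^* A} = A,$ so for $a \in A$ and $b \in A_\ta$ one may write $a$ as a norm limit of sums of products $c^* d$ with $c \in A_\ta$ and $d \in A,$ and then $a b$ is the corresponding limit of elements $c^* d b \in A_\ta^* A A_\ta;$ hence $\overline{A A_\ta} \subset \overline{A_\ta^* A A_\ta},$ and both sides equal $A.$ This establishes part~(\ref{T:SatAndStConnes:1}) for compact abelian $G$; it is essentially Rieffel's theorem, and may be quoted from Theorem~7.1.15 and Theorem~7.2.7 of~\cite{Ph1}. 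For general compact $G$ the same scheme applies, with the Peter--Weyl isotypic decomposition in place of the $\widehat{G}$-grading and the nonabelian strong Arveson spectrum of~\cite{GLP} in place of Definition~\ref{D:StrongConnes}; this is the content of the discussion after Lemma~3.1 of~\cite{GLP}.

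Part~(\ref{T:SatAndStConnes:2}) then follows formally. By Definition~\ref{D:StrongConnes}(\ref{D:StrongConnes:3}), $\widetilde{\Gm} (\af) = \bigcap_B \widetilde{\mathrm{Sp}} \big( \af_{( \cdot )} |_B \big),$ the intersection over all nonzero $G$-invariant hereditary subalgebras $B \subset A;$ since each $\widetilde{\mathrm{Sp}} \big( \af_{( \cdot )} |_B \big) \subset \widehat{G},$ we have $\widetilde{\Gm} (\af) = \widehat{G}$ if and only if $\widetilde{\mathrm{Sp}} \big( \af_{( \cdot )} |_B \big) = \widehat{G}$ for every such $B.$ Applying part~(\ref{T:SatAndStConnes:1}) to the action $\af_{( \cdot )} |_B$ of $G$ on the \ca~$B,$ this holds exactly when $\af_{( \cdot )} |_B$ is saturated for every nonzero $G$-invariant hereditary subalgebra $B \subset A,$ which is the definition of hereditary saturation (Definition~\ref{D:HSat}).

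The step I expect to be the main obstacle is the grading analysis in part~(\ref{T:SatAndStConnes:1}): one must verify carefully that the Fourier coefficient of an arbitrary element of the ideal $J$ lies in $\overline{A A_\ta}$ and in nothing larger, so that fullness of $J$ really is detected one grading piece at a time, and in the nonabelian case one must keep track of the finite dimensional noncommutative isotypic components. Once the chain ``$J = C^* (G, A, \af)$ $\iff$ every graded piece of $J$ is full $\iff$ $\widetilde{\mathrm{Sp}} (\af) = \widehat{G}$'' is in place, both parts of the theorem follow with no further analytic input.
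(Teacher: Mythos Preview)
Your proposal is correct and matches the paper's treatment: the paper does not give its own argument but simply cites Theorem~7.1.15 and Theorem~7.2.7 of~\cite{Ph1} for the abelian case and the discussion after Lemma~3.1 of~\cite{GLP} in general, and your sketch is precisely a reconstruction of those arguments (you even invoke the same references). Your deduction of part~(\ref{T:SatAndStConnes:2}) from part~(\ref{T:SatAndStConnes:1}) via Definition~\ref{D:StrongConnes}(\ref{D:StrongConnes:3}) and Definition~\ref{D:HSat} is exactly the intended formal step.
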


Saturation,
equivalently full strong Connes spectrum,
is exactly the condition needed for every ideal in the \cp\  %
to be the \cp\  by an invariant ideal.
Combining Theorem~\ref{T:SatAndStConnes}(\ref{T:SatAndStConnes:2})
with Theorem~3.3 of~\cite{GLP},
we get:

\begin{thm}[\cite{GLP}]\label{T:NCConnes}
Let $\af \colon G \to \Aut (A)$
be an action of a compact group~$G$ on a \ca~$A.$
Then \tfae:
\begin{enumerate}
\item\label{T:NCConnes:1}
${\widetilde{\Gm}} (\af) = {\widehat{G}}.$
\item\label{T:NCConnes:2}
$\af$ is hereditarily saturated.
\item\label{T:NCConnes:3}
Every ideal $J \subset C^* (G, A, \af)$ has the form
$C^* (G, I, \af_{(\cdot)} |_I)$
for some $G$-invariant ideal $I \subset A.$
\end{enumerate}
\end{thm}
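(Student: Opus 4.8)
The equivalence of~(\ref{T:NCConnes:1}) and~(\ref{T:NCConnes:2}) is exactly Theorem~\ref{T:SatAndStConnes}(\ref{T:SatAndStConnes:2}), so the work is to prove (\ref{T:NCConnes:2})~$\Rightarrow$~(\ref{T:NCConnes:3}) and (\ref{T:NCConnes:3})~$\Rightarrow$~(\ref{T:NCConnes:2}). Two structural facts will be used repeatedly. First, for a compact group action the assignment $I \mapsto C^* (G, I, \af_{(\cdot)} |_I)$ from $G$-invariant ideals of $A$ to ideals of $C^* (G, A, \af)$ is well defined and injective, because crossed products by compact (indeed, arbitrary) groups preserve short exact sequences. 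Second, saturation of $\af$ (which, under~(\ref{T:NCConnes:2}), holds by Definition~\ref{D:HSat}) makes the bimodule of Lemma~\ref{L:Bimodule} into an $A^G$--$C^* (G, A, \af)$ imprimitivity bimodule, so the Rieffel correspondence gives a bijection between ideals of $A^G$ and ideals of $C^* (G, A, \af).$

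For (\ref{T:NCConnes:2})~$\Rightarrow$~(\ref{T:NCConnes:3}) the plan is to show the injection above is onto, in three steps. \emph{Step 1:} applying hereditary saturation to each $G$-invariant ideal $I \subset A,$ regarded as a $G$-invariant \hsa, and to $A$ itself, show that $I \mapsto I \cap A^G$ is a bijection from $G$-invariant ideals of $A$ onto ideals of $A^G,$ with inverse $\mathfrak{I} \mapsto \overline{A \mathfrak{I} A}$; the nontrivial identities $\overline{A (I \cap A^G) A} = I$ and $\overline{A \mathfrak{I} A} \cap A^G = \mathfrak{I}$ are where saturation of $\af_{(\cdot)} |_I$ is spent. \emph{Step 2:} check that, under the Rieffel correspondence of the bimodule of Lemma~\ref{L:Bimodule}, the ideal $I \cap A^G$ of $A^G$ corresponds precisely to the ideal $C^* (G, I, \af_{(\cdot)} |_I)$ of $C^* (G, A, \af)$; this amounts to identifying the relevant sub-bimodule as the completion of $I$ and computing its inner products, and again uses saturation of $\af_{(\cdot)} |_I.$ \emph{Step 3:} combine: given an ideal $J \subset C^* (G, A, \af),$ let $\mathfrak{K} \subset A^G$ be its Rieffel correspondent and set $I_J = \overline{A \mathfrak{K} A}$; then Steps 1 and~2 give $J = C^* (G, I_J, \af_{(\cdot)} |_{I_J}).$

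For (\ref{T:NCConnes:3})~$\Rightarrow$~(\ref{T:NCConnes:2}) I would argue by contraposition. If $\af$ is not hereditarily saturated, choose a nonzero $G$-invariant \hsa~$B \subset A$ for which $\af_{(\cdot)} |_B$ is not saturated, that is, ${\widetilde{\mathrm{Sp}}} \big( \af_{(\cdot)} |_B \big) \neq {\widehat{G}}$ by Theorem~\ref{T:SatAndStConnes}(\ref{T:SatAndStConnes:1}). Then the bimodule of Lemma~\ref{L:Bimodule} for $\af_{(\cdot)} |_B$ is not a Morita equivalence, so the closure $\mathfrak{J}_0$ of the range of its $C^* (G, B, \af_{(\cdot)} |_B)$-valued inner product is a proper ideal of $C^* (G, B, \af_{(\cdot)} |_B),$ and one checks it is not of the form $C^* (G, K, \af_{(\cdot)} |_K)$ for any $G$-invariant ideal $K \subset B.$ Now $C^* (G, B, \af_{(\cdot)} |_B)$ sits as a \hsa\  of $C^* (G, A, \af),$ and for a $G$-invariant ideal $I \subset A$ one has $C^* (G, I, \af_{(\cdot)} |_I) \cap C^* (G, B, \af_{(\cdot)} |_B) = C^* (G, I \cap B, \af_{(\cdot)} |_{I \cap B})$; hence if the ideal of $C^* (G, A, \af)$ generated by $\mathfrak{J}_0$ were $C^* (G, I, \af_{(\cdot)} |_I),$ intersecting back would exhibit $\mathfrak{J}_0$ as an induced ideal of $C^* (G, B, \af_{(\cdot)} |_B),$ a contradiction. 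So~(\ref{T:NCConnes:3}) fails.

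The main obstacle is Step~2 together with the second half of Step~1: matching the Rieffel correspondence of the global bimodule against the crossed products $C^* (G, I, \af_{(\cdot)} |_I),$ which forces one to use the \emph{hereditary} strength of hereditary saturation (to control $G$-invariant ideals of $A$ through their fixed-point subalgebras) in concert with plain saturation (to have the imprimitivity bimodule at all). This bookkeeping, and the corresponding one in the contrapositive direction, is precisely Theorem~3.3 of~\cite{GLP}, which may be cited directly.
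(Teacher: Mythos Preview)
Your proposal is correct and, at its core, matches the paper's own treatment: the paper does not give an argument at all but simply states that the result follows by combining Theorem~\ref{T:SatAndStConnes}(\ref{T:SatAndStConnes:2}) with Theorem~3.3 of~\cite{GLP}, exactly as you note in your final sentence.

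The difference is that you go further and sketch what the proof of Theorem~3.3 of~\cite{GLP} itself should look like, via the Rieffel correspondence for the imprimitivity bimodule of Lemma~\ref{L:Bimodule} and the passage between $G$-invariant ideals of~$A$, ideals of~$A^G$, and ideals of the crossed product. This is the right shape of argument, and it is useful to see it laid out. A few of your intermediate assertions would need more care if you were really proving them rather than citing: in Step~1 the identity $\overline{A(I\cap A^G)A}=I$ is not automatic and uses saturation of $\af_{(\cdot)}|_I$ in a way that deserves spelling out, and in the contrapositive direction the phrase ``one checks it is not of the form $C^*(G,K,\af_{(\cdot)}|_K)$'' hides the substantive point (one needs to know that $\mathfrak{J}_0$ meets the copy of~$B$ in too large a set to come from a proper $G$-invariant ideal of~$B$). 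None of this is a gap in your proposal, since you correctly fall back on the citation; it is just a reminder that the sketch is a sketch.
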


\begin{cor}\label{T:StrConnesAndSimp}
Let $\af \colon G \to \Aut (A)$ be an action of a
compact group~$G$
on a \ca~$A.$
Then $C^* (G, A, \af)$ is simple
\ifo\  $\af$ is minimal (Definition~\ref{D:Min})
and ${\widetilde{\Gm}} (\af) = {\widehat{G}},$
\ifo\  $\af$ is minimal and hereditarily saturated.
\end{cor}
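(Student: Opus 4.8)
The plan is to deduce this as a formal consequence of Theorem~\ref{T:NCConnes}, together with the exactness of the crossed product functor for compact (hence amenable) groups. First I would observe that the equivalence of the last two conditions in the statement is immediate from the equivalence of~(\ref{T:NCConnes:1}) and~(\ref{T:NCConnes:2}) in Theorem~\ref{T:NCConnes}, or equivalently from Theorem~\ref{T:SatAndStConnes}(\ref{T:SatAndStConnes:2}). So it remains only to prove that $C^* (G, A, \af)$ is simple \ifo\  $\af$ is minimal and hereditarily saturated.

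For the implication from minimality and hereditary saturation to simplicity, I would argue as follows. By Theorem~\ref{T:NCConnes}, hereditary saturation yields condition~(\ref{T:NCConnes:3}): every ideal $J \subset C^* (G, A, \af)$ is of the form $C^* (G, I, \af_{(\cdot)} |_I)$ for some $G$-invariant ideal $I \subset A.$ Minimality (Definition~\ref{D:Min}) forces $I = \{ 0 \}$ or $I = A.$ Since the crossed product of the zero algebra is zero, the first case gives $J = \{ 0 \}$ and the second gives $J = C^* (G, A, \af).$ Hence $C^* (G, A, \af)$ has no nontrivial ideals, that is, it is simple.

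For the converse, suppose $C^* (G, A, \af)$ is simple. Its only ideals are $\{ 0 \}$ and $C^* (G, A, \af),$ and these equal $C^* (G, I, \af_{(\cdot)} |_I)$ for the $G$-invariant ideals $I = \{ 0 \}$ and $I = A$ respectively; thus condition~(\ref{T:NCConnes:3}) of Theorem~\ref{T:NCConnes} holds trivially, and by that theorem $\af$ is hereditarily saturated. For minimality, I would take an arbitrary $G$-invariant ideal $I \subset A$ and note that $C^* (G, I, \af_{(\cdot)} |_I)$ is an ideal in $C^* (G, A, \af),$ hence equals $\{ 0 \}$ or $C^* (G, A, \af).$ Because crossed products by compact groups preserve the short exact sequence $0 \to C^* (G, I, \af_{(\cdot)} |_I) \to C^* (G, A, \af) \to C^* (G, A/I, {\overline{\af}}) \to 0,$ the first alternative forces $I = \{ 0 \}$ and the second forces $A/I = \{ 0 \},$ i.e.\ $I = A.$ Thus $\af$ is minimal.

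Since everything reduces to Theorem~\ref{T:NCConnes}, there is no genuine obstacle here; the only points needing any care are the identification of the two trivial ideals $\{ 0 \}$ and $C^* (G, A, \af)$ with crossed products of the trivial $G$-invariant ideals of~$A$ (used in both directions), and the appeal to exactness of the crossed product functor in the minimality half of the converse. All of the real content sits in Theorem~\ref{T:NCConnes}, and ultimately in~\cite{GLP}.
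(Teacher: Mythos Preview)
Your proof is correct and is exactly the argument the paper has in mind: the corollary is stated without proof immediately after Theorem~\ref{T:NCConnes}, and your derivation from condition~(\ref{T:NCConnes:3}) together with exactness of crossed products by compact groups (cf.\ Lemma~2.8.2 of~\cite{Ph1}, used elsewhere in the paper) is the intended one.
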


See~\cite{GL} for
more on the Connes spectrum
for actions of compact nonabelian groups,
including ways in which their behavior is both like and unlike
that for actions of compact abelian groups.

One gets similar results for abelian but not necessarily compact groups.
These are due to Kishimoto~\cite{Ks0}.

\begin{thm}[Lemma~3.4 of~\cite{Ks0}]\label{T:StrConnesAndIdeals}
Let $\af \colon G \to \Aut (A)$ be an action of a
locally compact abelian group~$G$
on a \ca~$A.$
Then
\[
{\widetilde{\Gm}} (\af)
 = \big\{ \ta \in {\widehat{G}} \colon
    {\mbox{${\widehat{\af}}_{\ta} (I) \subset I$
         for all ideals $I \subset C^* (G, A, \af)$}} \big\}.
\]
\end{thm}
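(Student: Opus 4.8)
Throughout write $C=C^*(G,A,\af)$, with dual action $\widehat{\af}\colon\widehat G\to\Aut(C)$, and recall that $\widetilde{\Gm}(\af)=\bigcap_B\widetilde{\mathrm{Sp}}(\af|_B)$, the intersection taken over the nonzero $\af$-invariant hereditary subalgebras $B$ of $A$. So the asserted identity amounts to the biconditional
\[
\widehat{\af}_\ta(I)\subseteq I\ \text{for every ideal }I\subseteq C
\qquad\Longleftrightarrow\qquad
\ta\in\widetilde{\mathrm{Sp}}(\af|_B)\ \text{for every such }B ,
\]
and the plan is to prove the two implications separately. Both rest on one elementary dictionary: for an $\af$-invariant \hsa\ $B\subseteq A$ the \cp\ $C_B:=C^*(G,B,\af)$ embeds in $C$ as an $\widehat{\af}$-invariant \hsa\ on which $\widehat{\af}$ restricts to the dual action of $\af|_B$, and Rieffel's bijection $K\mapsto\overline{CKC}$ from ideals of $C_B$ onto a class of ideals of $C$ (inverse $I\mapsto I\cap C_B$, with $\overline{CKC}\cap C_B=K$) is $\widehat{\af}_\ta$-equivariant, because $\widehat{\af}_\ta(\overline{CKC})=\overline{C\,\widehat{\af}_\ta(K)\,C}$. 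In particular, if $\widehat{\af}_\ta$ carries every ideal of $C$ into itself, then on intersecting with $C_B$ it carries every ideal of each $C_B$ into itself.

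The implication ``all ideals of $C$ preserved $\Rightarrow$ $\ta\in\widetilde{\mathrm{Sp}}(\af|_B)$ for every $B$'' thereby reduces, by the remark just made, to the following statement about a single action $\gm\colon G\to\Aut(D)$: if $\widehat{\gm}_\ta$ carries every ideal of $C^*(G,D,\gm)$ into itself, then $\ta\in\widetilde{\mathrm{Sp}}(\gm)$; equivalently, contrapositively, a spectral gap at $\ta$ --- failure of the defining fullness condition for the strong Arveson spectrum of $\gm$ --- must produce a ``non-graded'' ideal of $C^*(G,D,\gm)$, one not carried into itself by $\widehat{\gm}_\ta$. This is the first half of the spectral-gap/exotic-ideal dictionary; with $\ta$ allowed to range over all of $\widehat G$ it is the locally compact abelian counterpart of Theorem~\ref{T:NCConnes}, and for a fixed $\ta$ it is the construction in~\cite{Ks0}.

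The reverse implication ``$\ta\in\widetilde{\mathrm{Sp}}(\af|_B)$ for every $B$ $\Rightarrow$ every ideal of $C$ preserved'' I would prove contrapositively. Suppose $I_0\subseteq C$ is an ideal and $x\in I_0$ has $\widehat{\af}_\ta(x)\notin I_0$; after multiplying $x$ on both sides by a suitable element of $C_{\mathrm{c}}(G,A)$ (which keeps it in $I_0$ and, for an appropriate choice, keeps its $\widehat{\af}_\ta$-image out of $I_0$) we may assume $x$ --- and hence $\widehat{\af}_\ta(x)$, since $C_B$ is $\widehat{\af}$-invariant --- lies in $C_B$, where $B$ is the $\af$-invariant \hsa\ of $A$ generated by the $A$-valued coefficients of that element and their translates. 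Then $I_0\cap C_B$ is an ideal of $C_B$ with $\widehat{\af}_\ta(I_0\cap C_B)\not\subseteq I_0\cap C_B$, and the second half of the dictionary --- an exotic ideal of a crossed product exhibits a spectral gap in some invariant \hsa\ --- yields an $\af$-invariant \hsa\ $B'\subseteq B\subseteq A$ with $\ta\notin\widetilde{\mathrm{Sp}}(\af|_{B'})$, contradicting the hypothesis.

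The real obstacle is exactly this two-way ``spectral gap $\leftrightarrow$ exotic ideal'' correspondence, together with the bookkeeping of which hereditary subalgebra realizes the gap. In the non-compact case there are no honest eigenspaces, so both halves must be run with approximate spectral subspaces $A^{\af}(V)$ for shrinking neighbourhoods $V$ of $\ta$ and with mollifiers drawn from the Fourier algebra of $\widehat G$, as in~\cite{Ks0} (modelled on the Olesen--Pedersen treatment of the ordinary Connes spectrum); the delicate point is keeping the norm estimates uniform while passing between $A$, its invariant hereditary subalgebras, and their crossed products. Everything else --- the Rieffel ideal correspondence and its equivariance, the compatibility of the dual action with $B\mapsto C^*(G,B,\af)$, and the correspondence between $\af$-invariant ideals of $A$ and $\widehat{\af}$-invariant ideals of $C$ --- is standard and enters only formally.
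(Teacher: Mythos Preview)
The paper does not give its own proof of this statement; it is simply quoted as Lemma~3.4 of~\cite{Ks0}. So there is no in-paper argument to compare your proposal against.

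As for your outline itself: it correctly identifies the architecture of Kishimoto's argument --- reduce via the hereditary embedding $C^*(G,B,\af)\hookrightarrow C^*(G,A,\af)$ and the ideal correspondence, then invoke the ``spectral gap $\leftrightarrow$ non-invariant ideal'' dichotomy --- but you explicitly defer the substantive step (both halves of that dichotomy, with the approximate spectral subspaces and mollifiers) back to~\cite{Ks0}. That is fine as a roadmap, but it is not an independent proof: the content is precisely what you label the ``real obstacle,'' and your sketch does not supply it. One small point to watch in your contrapositive direction: the passage from a general $x\in I_0$ to an element of $C_c(G,A)$ with the same property requires care, since $I_0$ is closed and you need both $x\in I_0$ and $\widehat{\af}_\ta(x)\notin I_0$ to survive the approximation; this is routine but should be said.
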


The following is a consequence of Theorem~\ref{T:StrConnesAndIdeals}
and Takai duality (7.9.3 of~\cite{Pd1}):

\begin{thm}[\cite{Ks0}]\label{T:AbConnes}
Let $\af \colon G \to \Aut (A)$
be an action of an abelian group~$G$ on a \ca~$A.$
Then \tfae:
\begin{enumerate}
\item\label{T:AbConnes:1}
${\widetilde{\Gm}} (\af) = {\widehat{G}}.$
\item\label{T:AbConnes:3}
Every ideal $J \subset C^* (G, A, \af)$ has the form
$C^* (G, I, \af_{(\cdot)} |_I)$
for some $G$-invariant ideal $I \subset A.$
\end{enumerate}
\end{thm}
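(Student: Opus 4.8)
The plan is to derive the equivalence formally from Theorem~\ref{T:StrConnesAndIdeals} and Takai duality, by checking that each of~(\ref{T:AbConnes:1}) and~(\ref{T:AbConnes:3}) is equivalent to the single condition that every ideal of $C^* (G, A, \af)$ is invariant under the dual action $\widehat{\af}$ of $\widehat{G}$.

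For~(\ref{T:AbConnes:1}): first observe that, for a fixed ideal $I \subseteq C^* (G, A, \af)$, the requirement $\widehat{\af}_\ta (I) \subseteq I$ for all $\ta \in \widehat{G}$ is the same as saying $I$ is $\widehat{\af}$-invariant, since applying $\widehat{\af}_{\ta^{-1}}$ to the inclusion with $\ta$ replaced by $\ta^{-1}$ gives $I \subseteq \widehat{\af}_\ta (I)$. Hence Theorem~\ref{T:StrConnesAndIdeals} says exactly that ${\widetilde{\Gm}} (\af) = \widehat{G}$ if and only if every ideal of $C^* (G, A, \af)$ is $\widehat{\af}$-invariant. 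For~(\ref{T:AbConnes:3}): I would use the fact --- obtained from Takai duality together with exactness of crossed products, exactly as in the discussion following Example~\ref{E:WeakPtOuter} --- that $I \mapsto C^* (G, I, \af_{(\cdot)} |_I)$ is a bijection from the set of $G$-invariant ideals of $A$ onto the set of $\widehat{\af}$-invariant ideals of $C^* (G, A, \af)$. Granting this, an ideal $J \subseteq C^* (G, A, \af)$ has the form $C^* (G, I, \af_{(\cdot)} |_I)$ for some $G$-invariant $I$ precisely when it is $\widehat{\af}$-invariant (the forward implication also being immediate directly, since on $C_{\mathrm{c}} (G, A)$ the automorphism $\widehat{\af}_\ta$ merely multiplies a function pointwise by the character $\ta$, and so preserves $C_{\mathrm{c}} (G, I)$). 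Combining the two equivalences proves the theorem.

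It remains to indicate why $I \mapsto C^* (G, I, \af_{(\cdot)} |_I)$ maps onto the $\widehat{\af}$-invariant ideals, and this is the only real point. Given a $\widehat{\af}$-invariant ideal $J$, put $I = J \cap A$, identifying $A$ with its canonical copy in $C^* (G, A, \af)$; this is an ideal of $A$, and it is $G$-invariant because the unitaries $u_g \in M (C^* (G, A, \af))$ implementing $\af$ normalize $J$ (every closed two-sided ideal is stable under conjugation by unitary multipliers) and $u_g a u_g^* = \af_g (a)$ for $a \in A$. Clearly $C^* (G, I, \af_{(\cdot)} |_I) \subseteq J$; for the reverse inclusion one applies the crossed product by the amenable group $\widehat{G}$ to the $\widehat{\af}$-equivariant exact sequence $0 \to J \to C^* (G, A, \af) \to C^* (G, A, \af) / J \to 0$, invokes exactness of crossed products (Lemma~2.8.2 of~\cite{Ph1}) and Takai duality (7.9.3 of~\cite{Pd1}), $C^* (\widehat{G}, C^* (G, A, \af), \widehat{\af}) \cong A \otimes K (L^2 (G))$, to see that $C^* (\widehat{G}, J, \widehat{\af} |_J)$ is an ideal of $A \otimes K (L^2 (G))$, hence of the form $I' \otimes K (L^2 (G))$ for a closed ideal $I' \subseteq A$, and then runs Takai duality a second time --- crossing with the bidual group $\widehat{\widehat{G}} \cong G$, using that the bidual action on $A \otimes K (L^2 (G))$ is conjugate to $\af$ tensored with an inner action on $K (L^2 (G))$, and that the crossed product of $I' \otimes K (L^2 (G))$ by such an action is $C^* (G, I', \af_{(\cdot)} |_{I'}) \otimes K (L^2 (G))$ --- to recover $J = C^* (G, I', \af_{(\cdot)} |_{I'})$, with $I' = J \cap A = I$. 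I expect the main obstacle to be precisely this last bookkeeping: tracking the naturality of the Takai isomorphism on equivariantly embedded ideals and the explicit form of the bidual action, so as to be sure the ideal extracted from $A \otimes K (L^2 (G))$ really reproduces $J$. Everything else --- the quantifier manipulation powered by Theorem~\ref{T:StrConnesAndIdeals} and the easy direction of the ideal correspondence --- is formal.
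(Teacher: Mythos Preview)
Your proposal is correct and follows exactly the approach the paper indicates: the paper's entire proof is the one-line remark that the theorem ``is a consequence of Theorem~\ref{T:StrConnesAndIdeals} and Takai duality (7.9.3 of~\cite{Pd1}),'' and you have unpacked precisely that, reducing both conditions to $\widehat{\af}$-invariance of every ideal of the crossed product. The bookkeeping you flag as the only real point---that the Takai isomorphism carries the bijection $I \mapsto C^*(G,I,\af_{(\cdot)}|_I)$ to the correspondence between $\af$-invariant ideals of $A$ and $\widehat{\af}$-invariant ideals of $C^*(G,A,\af)$---is indeed the substance, and your sketch via exactness and naturality is the standard way through (it is also exactly the fact the paper invokes without proof in the discussion after Example~\ref{E:WeakPtOuter}).
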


In particular, it follows
(Theorem~3.5 of~\cite{Ks0}) that $C^* (G, A, \af)$ is simple \ifo\  %
$\af$ is minimal (Definition~\ref{D:Min})
and ${\widetilde{\Gm}} (\af) = {\widehat{G}}.$

The corresponding results using the Connes spectrum for
an action of an abelian group are that
\[
\Gm (\af)
 = \big\{ \ta \in {\widehat{G}} \colon
    {\mbox{${\widehat{\af}}_{\ta} (I) \cap I \neq \varnothing$
         for all ideals $I \subset C^* (G, A, \af)$}} \big\}
\]
(Proposition 8.11.8 of~\cite{Pd1}),
and that
$C^* (G, A, \af)$ is prime \ifo\  $A$ is $G$-prime
(any two nonzero $G$-invariant ideals have nonzero intersection)
and $\Gm (\af) = {\widehat{G}}$
(Theorem 8.11.10 of~\cite{Pd1}).
We also mention Corollary 8.9.10 of~\cite{Pd1}:
an automorphism of a simple \ca\  is inner \ifo\  the
Connes spectrum of the action of $\Z$ that it generates is $\{ 1 \}.$
We have chosen to emphasize the strong Connes spectrum because
of Theorems \ref{T:NCConnes} and~\ref{T:AbConnes}.

The \rp\  and the \trp\  imply hereditary saturation:

\begin{prp}\label{P:RPImpHS}
Let $A$ be a \uca, let $G$ be a finite group,
and let $\af \colon G \to \Aut (A)$ have the \rp.
Then $\af$ is hereditarily saturated
and ${\widetilde{\Gm}} (\af) = {\widehat{G}}.$
\end{prp}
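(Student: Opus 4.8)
The plan is to establish hereditary saturation directly and then read off ${\widetilde{\Gm}} (\af) = {\widehat{G}}$ from Theorem~\ref{T:SatAndStConnes}(\ref{T:SatAndStConnes:2}). I would fix a nonzero $G$-invariant \hsa\  $B \S A;$ by Definition~\ref{D:HSat} it then suffices to prove that $\af_{(\cdot)} |_B$ is saturated, which by Definition~\ref{D:Sat} and Lemma~\ref{L:Bimodule} amounts to showing that
\[
J := \overline{{\mathrm{span}}}
 \big\{ \langle x, y \rangle_{C^* (G, B, \af_{(\cdot)} |_B)} \colon x, y \in B \big\}
\]
is all of $C^* (G, B, \af_{(\cdot)} |_B);$ here $\langle x, y \rangle_{C^* (G, B, \af_{(\cdot)} |_B)}$ has value $x^* \af_g (y) \in B$ at $g \in G.$ First I would record two routine reductions: (i) by standard Hilbert-module bookkeeping $J$ is a closed two-sided ideal of $C^* (G, B, \af_{(\cdot)} |_B),$ hence also of its multiplier algebra, which contains the canonical unitaries $u_g$ implementing $\af_{(\cdot)} |_B,$ so that $B \S J$ (where $B$ sits canonically inside $C^* (G, B, \af_{(\cdot)} |_B)$) would already force $B u_g \S J$ for all $g$ and hence $J = C^* (G, B, \af_{(\cdot)} |_B);$ and (ii) since $J$ is closed and the positive elements of $B$ of norm $\leq 1$ are precisely the elements $b^2$ with $b \in B,$ $b \geq 0,$ $\| b \| \leq 1,$ and span a dense subspace of $B,$ it is enough to produce $b^2 \in J$ for each such $b.$

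For the construction, given such a $b$ and an $\ep > 0,$ I would apply the \rp\  (Definition~\ref{SRPDfn}) with the finite set $S = \{ b, b^{1/2} \} \cup \{ \af_g (b^{1/2}) \colon g \in G \} \S A$ and a tolerance $\dt$ (chosen small at the end, depending only on $\ep$ and $\card (G)$), obtaining \mops\  $(e_h)_{h \in G}$ in $A$ with $\sum_{h \in G} e_h = 1,$ with $\| \af_g (e_h) - e_{g h} \| < \dt,$ and with $\| e_h a - a e_h \| < \dt$ for $a \in S.$ Setting $x_h = b^{1/2} e_h b^{1/2} \in B$ (using that $B$ is hereditary), the inner product $\langle x_h, x_h \rangle_{C^* (G, B, \af_{(\cdot)} |_B)}$ has value $x_h \af_g (x_h)$ at $g.$ At $g = 1$ it is $x_h^2 = b^{1/2} (e_h b e_h) b^{1/2},$ and moving $b$ past $e_h$ gives $\| x_h^2 - b^{1/2} e_h b^{3/2} \| < \dt.$ For $g \neq 1,$ writing $x_h \af_g (x_h) = b^{1/2} e_h \big( b^{1/2} \af_g (b^{1/2}) \big) \af_g (e_h) \af_g (b^{1/2}),$ I would replace $\af_g (e_h)$ by $e_{g h}$ and move $b^{1/2}$ and $\af_g (b^{1/2})$ (both in $S$) past $e_h$ and $e_{g h}$ to reduce it, up to an error of norm $O(\dt),$ to a multiple of $e_h e_{g h} = 0,$ so $\| x_h \af_g (x_h) \| = O(\dt).$ Summing over $h$ and using $\sum_{h \in G} b^{1/2} e_h b^{3/2} = b^{1/2} \big( \sum_{h \in G} e_h \big) b^{3/2} = b^2,$ the element $\sum_{h \in G} \langle x_h, x_h \rangle_{C^* (G, B, \af_{(\cdot)} |_B)}$ lies within $O \big( \card (G)^2 \dt \big)$ of $b^2$ in $C^* (G, B, \af_{(\cdot)} |_B);$ taking $\dt$ small enough makes this less than $\ep,$ so $b^2 \in J.$

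Combining, $B \S J,$ so $\af_{(\cdot)} |_B$ is saturated, and since $B$ was arbitrary, $\af$ is hereditarily saturated; then ${\widetilde{\Gm}} (\af) = {\widehat{G}}$ follows from Theorem~\ref{T:SatAndStConnes}(\ref{T:SatAndStConnes:2}). The main obstacle is purely bookkeeping: carrying the several $\dt$-estimates through the evaluation of $\langle x_h, x_h \rangle$ cleanly (everything has norm at most $1,$ so the bounds are harmless, but there are a handful of commutators to slide past). The structural steps (that the closed linear span of the range of a module-valued inner product is a two-sided ideal, and that such an ideal is the whole algebra once it contains the coefficient algebra $B$) are standard. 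As a sanity check, the case $B = A,$ $b = 1,$ $x_h = e_h$ already yields that $\af$ itself is saturated.
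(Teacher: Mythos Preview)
Your proof is correct, and it takes a genuinely different route from the paper's. The paper's argument is a two-line citation: Theorem~\ref{T:RokhIdeals} (Sierakowski) says that the Rokhlin property forces every ideal of $C^* (G, A, \af)$ to be the crossed product of an invariant ideal of~$A,$ and Theorem~\ref{T:NCConnes} (Gootman--Lazar--Peligrad) says this condition is equivalent to hereditary saturation and to ${\widetilde{\Gm}} (\af) = {\widehat{G}}.$ You instead work directly from Definitions~\ref{D:Sat} and~\ref{D:HSat}: for an arbitrary invariant hereditary subalgebra $B$ you compress the Rokhlin projections into $B$ via $x_h = b^{1/2} e_h b^{1/2}$ and check by hand that $\sum_h \langle x_h, x_h \rangle_{C^* (G, B)}$ approximates $b^2 \in B \S C^* (G, B),$ so that the inner-product ideal contains $B$ and hence equals the crossed product. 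The commutator estimates and the ideal reductions are all fine (one small phrasing issue: a closed ideal $J$ of $C^* (G, B)$ is not literally an ideal of its multiplier algebra, but it \emph{is} invariant under multiplier multiplication, which is what you actually use). Your approach is more elementary and self-contained, avoiding both the ideal-structure theorem and the spectral characterization of hereditary saturation; the paper's approach is shorter but leans on those two external results.
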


\begin{proof}
Combine Theorem~\ref{T:RokhIdeals} and Theorem~\ref{T:NCConnes}.
\end{proof}

The result probably also holds
when $G$ is a second countable compact group,
and the statement about ${\widetilde{\Gm}} (\af)$ probably holds
when $G = \Z$ and when $G = \R.$

\begin{prp}\label{P:TRPImpHS}
Let $A$ be an infinite dimensional simple \uca,
let $G$ be a finite group,
and let $\af \colon G \to \Aut (A)$ have the \trp.
Then $\af$ is hereditarily saturated
and ${\widetilde{\Gm}} (\af) = {\widehat{G}}.$
\end{prp}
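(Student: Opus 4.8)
The plan is to run the same argument as for Proposition~\ref{P:RPImpHS}, with the simplicity of the crossed product taking the place of Theorem~\ref{T:RokhIdeals}. First I would observe that $\af$ is pointwise outer by Proposition~\ref{T:TRPImpPOut}. Since $A$ is simple and $G$ is discrete, Theorem~\ref{T:Ks} shows that the reduced crossed product $C^*_{\mathrm{r}} (G, A, \af)$ is simple, and since $G$ is finite (hence amenable) this coincides with the full crossed product $C^* (G, A, \af)$; alternatively, one may cite Corollary~1.6 of~\cite{PhtRp1} for the simplicity of $C^* (G, A, \af)$ directly.

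Next I would feed this into Theorem~\ref{T:NCConnes}, which applies because a finite group is compact. It suffices to check condition~(\ref{T:NCConnes:3}) there, namely that every ideal $J \subset C^* (G, A, \af)$ has the form $C^* (G, I, \af_{(\cdot)} |_I)$ for some $G$-invariant ideal $I \subset A$. Since $A$ is simple, its only $G$-invariant ideals are $\{ 0 \}$ and $A$; the crossed product by $\{ 0 \}$ is $\{ 0 \}$, and the crossed product by $A$ is $C^* (G, A, \af)$ itself. Since $C^* (G, A, \af)$ is simple, its only ideals are $\{ 0 \}$ and $C^* (G, A, \af)$, so condition~(\ref{T:NCConnes:3}) holds trivially. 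Theorem~\ref{T:NCConnes} then gives at once that $\af$ is hereditarily saturated and that ${\widetilde{\Gm}} (\af) = {\widehat{G}}$, which is the desired conclusion; each of these two statements also follows from the other by Theorem~\ref{T:SatAndStConnes}(\ref{T:SatAndStConnes:2}).

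I do not expect a real obstacle here, since the statement is essentially a formal consequence of results already established. If I had to point to the one nontrivial input, it is the simplicity of $C^* (G, A, \af)$, which comes from Kishimoto's simplicity theorem applied to the pointwise outer action provided by Proposition~\ref{T:TRPImpPOut}; everything after that is bookkeeping about the trivial ideal lattice of a simple algebra combined with Theorem~\ref{T:NCConnes}. A more self-contained route would be to verify directly, for each nonzero $G$-invariant hereditary subalgebra $B \subset A$, that $\af_{(\cdot)} |_B$ is saturated --- building the requisite elements from a family of tracial Rokhlin projections for $\af_{(\cdot)} |_B$ --- but this effectively re-proves the simplicity statement and is more laborious, so I would favor the argument above.
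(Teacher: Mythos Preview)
Your proof is correct and follows essentially the same route as the paper: the paper's one-line proof combines Proposition~\ref{T:TRPImpPOut}, Theorem~\ref{T:Ks}, and Corollary~\ref{T:StrConnesAndSimp}, which is exactly your argument with the minor difference that you invoke Theorem~\ref{T:NCConnes} directly rather than its specialization Corollary~\ref{T:StrConnesAndSimp}. Since Corollary~\ref{T:StrConnesAndSimp} is derived from Theorem~\ref{T:NCConnes} by precisely the observation you make about the trivial ideal lattice of a simple algebra, there is no substantive difference.
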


\begin{proof}
Combine Proposition~\ref{T:TRPImpPOut}, Theorem~\ref{T:Ks},
and Corollary~\ref{T:StrConnesAndSimp}.
\end{proof}

For the relationship with strong pointwise outerness,
the following easy to get results
are all we know.
However, strong pointwise outerness ought to imply hereditary
saturation in much greater generality.

\begin{prp}\label{T:HSatImpOut}
Let $A$ be a simple \uca, let $p$ be a prime,
and let $\af \colon \Zq{p} \to \Aut (A)$ be an action of
$\Zq{p}$ on~$A.$
Then $\af$ is hereditarily saturated \ifo\  $\af$ is pointwise outer.
\end{prp}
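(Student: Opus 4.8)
The plan is to package this as a consequence of Corollary~\ref{T:StrConnesAndSimp}, which for a minimal action of a compact group identifies hereditary saturation with simplicity of the crossed product, together with the two one-sided implications relating simplicity of $C^* (\Zq{p}, A, \af)$ to pointwise outerness that appear as Lemma~\ref{L:Zp} and (the finite group case of) Theorem~\ref{T:Ks}.

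First I would observe that since $A$ is simple it has no $G$-invariant ideals other than $\{ 0 \}$ and $A,$ so $\af$ is minimal in the sense of Definition~\ref{D:Min}; this is precisely what makes Corollary~\ref{T:StrConnesAndSimp} applicable in both directions. I would also note that $G = \Zq{p}$ is finite, so the full and reduced crossed products coincide and the results stated for either one may be used interchangeably.

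For the implication ``pointwise outer implies hereditarily saturated'': assuming $\af$ is pointwise outer, Theorem~\ref{T:Ks} (with $G$ discrete and $A$ simple) gives that $C^* (\Zq{p}, A, \af) = C^*_{\mathrm{r}} (\Zq{p}, A, \af)$ is simple; since $\af$ is minimal, Corollary~\ref{T:StrConnesAndSimp} then forces $\af$ to be hereditarily saturated (equivalently ${\widetilde{\Gm}} (\af) = {\widehat{\Zq{p}}}$). For the converse: assuming $\af$ is hereditarily saturated, the minimality of $\af$ together with Corollary~\ref{T:StrConnesAndSimp} shows that $C^* (\Zq{p}, A, \af)$ is simple, and then Lemma~\ref{L:Zp} yields that $\af$ is pointwise outer.

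There is no genuine technical obstacle here; the proposition is a repackaging of results already assembled above. The only two points that deserve a word of care are: (i) the reduction to minimality, which is exactly what permits using Corollary~\ref{T:StrConnesAndSimp} in the backward direction; and (ii) the fact that primality of $p$ is really needed for the converse, since it enters Lemma~\ref{L:Zp} through Lemma~\ref{L:Inn}. Indeed Example~\ref{E:939} exhibits a minimal (hence, by Corollary~\ref{T:StrConnesAndSimp}, hereditarily saturated) action of $\Zq{4}$ on a UHF~algebra with simple crossed product which is not pointwise outer, so the statement genuinely fails when $p$ is replaced by a composite number.
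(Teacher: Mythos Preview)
Your proof is correct and follows exactly the same route as the paper: simplicity of $C^* (\Zq{p}, A, \af)$ is equivalent to pointwise outerness by Lemma~\ref{L:Zp} and Theorem~\ref{T:Ks}, and equivalent to hereditary saturation by Corollary~\ref{T:StrConnesAndSimp}. Your additional remarks on minimality, the coincidence of full and reduced crossed products, and the necessity of primality are accurate and helpful elaborations, but the underlying argument is identical to the paper's.
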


\begin{proof}
The crossed product $C^* (\Zq{p}, A, \af)$ is simple
\ifo\  $\af$ is pointwise outer,
by Lemma~\ref{L:Zp} and Theorem~\ref{T:Ks}.
Also, $C^* (\Zq{p}, A, \af)$ is simple
\ifo\  $\af$ is hereditarily saturated,
by Corollary~\ref{T:StrConnesAndSimp}.
\end{proof}

\begin{prp}\label{T:HSatImpOut2}
Let $\af \colon G \to \Aut (A)$ be a pointwise outer action of a finite
group $G$ on a simple \ca~$A.$
Then $\af$ is hereditarily saturated.
\end{prp}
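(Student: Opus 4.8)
The plan is to deduce Proposition~\ref{T:HSatImpOut2} directly from Kishimoto's simplicity theorem, Theorem~\ref{T:Ks}, together with the characterization of simplicity of the \cp\  in Corollary~\ref{T:StrConnesAndSimp}. This is the same strategy as in the proof of Proposition~\ref{T:HSatImpOut}, except that, pointwise outerness now being assumed outright, we do not need Lemma~\ref{L:Zp} (whose role there was precisely to upgrade ``crossed product simple'' to ``pointwise outer'' for a prime cyclic group). The one preliminary remark is that, since $A$ is simple, $\af$ is automatically minimal in the sense of Definition~\ref{D:Min}: an $\af$-invariant ideal of $A$ is in particular an ideal of $A,$ hence is $\{0\}$ or~$A.$

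The main body of the argument is then a chain of invocations. Since $G$ is finite, it is discrete, so Theorem~\ref{T:Ks} applies to the pointwise outer action $\af$ on the simple \ca~$A$ and shows that the reduced \cp\  $C^*_{\mathrm{r}}(G, A, \af)$ is simple. A finite group is amenable, so its reduced and full crossed products coincide, and hence $C^*(G, A, \af)$ is simple. Finally, $G$ is compact, so Corollary~\ref{T:StrConnesAndSimp} applies: simplicity of $C^*(G, A, \af)$ is equivalent to $\af$ being minimal and hereditarily saturated. Combining this equivalence with the minimality already noted yields that $\af$ is hereditarily saturated, which is the assertion.

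There is no genuine obstacle here; all of the substance is carried by the cited results, which is why these relationships with strong pointwise outerness are described in the text as ``easy to get.'' The only two points that merit a word of justification are the passage from the reduced to the full crossed product (valid because $G$ is finite, hence amenable) and the trivial observation that simplicity of $A$ forces $\af$ to be minimal. If one preferred to avoid Corollary~\ref{T:StrConnesAndSimp} and argue from the definition of hereditary saturation directly, the harder route would be to show that $\af$ restricts to a pointwise outer action on every nonzero $G$-invariant hereditary subalgebra $B\subset A$ (using that $B$ is a full hereditary subalgebra of the simple algebra $A,$ hence simple, and that an automorphism restricting to an inner one on such a $B$ is itself inner) and then apply Theorem~\ref{T:Ks} and Corollary~\ref{T:StrConnesAndSimp} to $\af|_B$; but this is unnecessary once Corollary~\ref{T:StrConnesAndSimp} is available for $A$ itself.
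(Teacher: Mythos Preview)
Your proof is correct and follows exactly the paper's approach: combine Theorem~\ref{T:Ks} with Corollary~\ref{T:StrConnesAndSimp}. The paper's proof is a single sentence to this effect; your additional remarks on minimality of $\af$ and the coincidence of full and reduced crossed products for finite $G$ are sound elaborations of points the paper leaves implicit.
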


\begin{proof}
Combine Theorem~\ref{T:Ks} and Corollary~\ref{T:StrConnesAndSimp}.
\end{proof}

Hereditary saturation has the following permanence properties.

\begin{prp}\label{T:HSatPerm}
Let $A$ be a \uca, let $G$ be a compact group,
and let $\af \colon G \to \Aut (A)$ be an action of $G$ on~$A.$
\begin{enumerate}
\item\label{T:HSatPerm:IffExt}
If $I \subset A$ is a $\af$-invariant ideal,
then $\af$ is hereditarily saturated \ifo\  %
$\af_{( \cdot )} |_I$
and the induced action of $G$ on $A / I$
are both hereditarily saturated.
\item\label{T:HSatPerm:DLim}
If $A = \dirlim A_n$ is a direct limit of \ca s,
and $\af \colon G \to \Aut (A)$ is an action obtained
as the direct limit of actions $\af^{(n)} \colon G \to \Aut (A_n),$
such that $\af^{(n)}$ is hereditarily saturated for all~$n,$
then $\af$ is hereditarily saturated.
\end{enumerate}
\end{prp}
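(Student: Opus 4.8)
The plan is to argue directly from Definitions~\ref{D:Sat} and~\ref{D:HSat}, after isolating two elementary facts about the bimodule of Lemma~\ref{L:Bimodule}. Write ${\overline{\af}}$ for an action induced on a quotient. Both facts rest on the observation that, for a right Hilbert $C^*$-module, the closed linear span of the ranges of the inner products is a closed two sided ideal of the coefficient algebra. First, \emph{saturation passes to quotients by invariant ideals}: if $P \subset A$ is $\af$-invariant, the canonical surjection $C^* (G, A, \af) \to C^* (G, A/P, {\overline{\af}})$ carries $\langle x, y \rangle_{C^* (G, A, \af)}$ to $\langle q (x), q (y) \rangle_{C^* (G, A/P, {\overline{\af}})}$, so it carries a dense set of inner products onto a dense set of inner products. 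Second, \emph{saturation passes through extensions}: if $P \subset A$ is $\af$-invariant and both $\af_{(\cdot)} |_P$ and ${\overline{\af}}$ on $A/P$ are saturated, let $J$ be the closed two sided ideal of $C^* (G, A, \af)$ generated by the ranges of the inner products; for $x, y \in P$ the element $\langle x, y \rangle_{C^* (G, A, \af)}$ already lies in the subalgebra $C^* (G, P, \af_{(\cdot)} |_P)$ and is an inner product for its bimodule, so saturation of $\af_{(\cdot)} |_P$ gives $J \supseteq C^* (G, P, \af_{(\cdot)} |_P)$; on the other hand the image of $J$ in $C^* (G, A/P, {\overline{\af}})$ is a closed ideal containing a dense set of inner products, hence is everything; therefore $J = C^* (G, A, \af)$, i.e.\ $\af$ is saturated.

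For the forward direction of~(\ref{T:HSatPerm:IffExt}): a nonzero $\af$-invariant \hsa\ of $I$ is a nonzero $\af$-invariant \hsa\ of $A$, so its restriction is saturated, giving hereditary saturation of $\af_{(\cdot)} |_I$; and given a nonzero ${\overline{\af}}$-invariant \hsa\ ${\overline{B}} \subset A/I$, its preimage $B$ in $A$ is a nonzero $\af$-invariant \hsa\ containing $I$ with $B / I = {\overline{B}}$, so $\af |_B$ is saturated and hence, by the first fact above, so is the induced action on $B / I = {\overline{B}}$.

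The reverse direction of~(\ref{T:HSatPerm:IffExt}) is the heart of the matter. Let $B \subset A$ be a nonzero $\af$-invariant \hsa; I would treat three cases according to $B \cap I$. If $B \subset I$, then $B$ is a nonzero $\af$-invariant \hsa\ of $I$, so $\af |_B$ is saturated by hypothesis. If $B \cap I = \{ 0 \}$, the quotient map restricts to an isomorphism of $B$ onto a nonzero ${\overline{\af}}$-invariant \hsa\ of $A/I$ (the image of a hereditary subalgebra under a quotient map is again a hereditary subalgebra), so that $\af |_B$ is conjugate to the restriction of the induced action on $A/I$ to that subalgebra, which is saturated by hypothesis. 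Otherwise $B \cap I$ is a nonzero $\af$-invariant \hsa\ of $I$ and simultaneously an $\af$-invariant ideal of $B$; then $\af_{(\cdot)} |_{B \cap I}$ is saturated, and $B / (B \cap I)$, being isomorphic to the image of $B$ in $A/I$, is a nonzero \hsa\ of $A/I$ on which the induced action is saturated, so the second fact above, applied to the $\af |_B$-invariant ideal $B \cap I$ of $B$, shows $\af |_B$ is saturated. I expect the main effort to be less in any one case than in the bookkeeping with hereditary subalgebras: that a \hsa\ of an ideal is a \hsa\ of the algebra, that the image of a \hsa\ in a quotient is a \hsa, and that $B \cap I$ is at once a \hsa\ of $I$ and an ideal of $B$, with $B / (B \cap I)$ closed in $A / I$.

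For~(\ref{T:HSatPerm:DLim}), I would first reduce to injective connecting maps exactly as in the proofs of Proposition~\ref{T:RPerm}(\ref{T:RPerm:DLim}) and Proposition~\ref{T:TKFPerm}(\ref{T:TKFPerm:DLim}): replace each $A_n$ by its quotient by the kernel of $A_n \to A$, which is still hereditarily saturated by the forward direction of~(\ref{T:HSatPerm:IffExt}). Thus we may take $A = {\overline{\bigcup_n A_n}}$ with the $A_n$ an increasing chain of invariant subalgebras and $\af^{(n)} = \af |_{A_n}$. Given a nonzero $\af$-invariant \hsa\ $B \subset A$, the algebras $B \cap A_n$ form an increasing chain of invariant hereditary subalgebras of the $A_n$ with ${\overline{\bigcup_n (B \cap A_n)}} = B$ (a hereditary subalgebra of a direct limit is the closure of the union of its intersections with the building blocks), and each nonzero $\af^{(n)} |_{B \cap A_n}$ is saturated since $\af^{(n)}$ is hereditarily saturated. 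Hence $\af |_B = \dirlim \af^{(n)} |_{B \cap A_n}$ is a direct limit of saturated actions, so $\af |_B$ is saturated by Proposition~7.1.13 of~\cite{Ph1}; therefore $\af$ is hereditarily saturated.
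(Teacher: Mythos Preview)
Your treatment of part~(\ref{T:HSatPerm:IffExt}) is correct and is essentially an unpacking of what the paper cites as Proposition~7.2.3 of~\cite{Ph1}; the two preliminary facts about the bimodule (saturation passes to quotients, and saturation passes through extensions) are sound and give a clean self-contained argument.

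Your argument for part~(\ref{T:HSatPerm:DLim}) has a genuine gap. The parenthetical claim that ``a hereditary subalgebra of a direct limit is the closure of the union of its intersections with the building blocks'' is false. For a concrete counterexample, take $A = K(\ell^2)^+$ with $A_n = M_n + \C \cdot 1$ (upper-left corners plus scalars), and let $B = \C p$ where $p$ is the rank-one projection onto a unit vector with infinitely many nonzero coordinates. Then $B$ is a nonzero \hsa\  of~$A$, but $B \cap A_n = \{0\}$ for every~$n$, so ${\overline{\bigcup_n (B \cap A_n)}} = \{0\} \neq B$. Thus your direct limit realization $\af |_B = \dirlim \af^{(n)} |_{B \cap A_n}$ is simply not available in general, and the appeal to Proposition~7.1.13 of~\cite{Ph1} has nothing to act on. Unlike ideals, hereditary subalgebras of $A$ need not be ``visible'' in the $A_n$ at all.

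The paper does not attempt to approximate an arbitrary invariant \hsa\  from inside the~$A_n$. Instead it invokes Proposition~7.1.13 of~\cite{Ph1} for saturation and then says the passage to hereditary saturation follows the pattern of the proof of Proposition~\ref{T:TKFPerm}(\ref{T:TKFPerm:DLim}), which is an \emph{ideal}-based argument using preimages $\ph_n^{-1}(\cdot)$; this sidesteps the obstruction above. If you want to repair your approach, you should look for a characterization of hereditary saturation that is stable under direct limits (for instance, via Theorem~\ref{T:NCConnes} in terms of ideals of the crossed product, or via the strong Connes spectrum as in Theorem~\ref{T:SatAndStConnes}), rather than work directly with an arbitrary invariant \hsa\  of the limit algebra.
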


\begin{proof}
Part~(\ref{T:HSatPerm:IffExt}) is Proposition~7.2.3 of~\cite{Ph1}.

Part~(\ref{T:HSatPerm:DLim}) was overlooked in~\cite{Ph1}.
For saturation, it is Proposition 7.1.13 of~\cite{Ph1}.
The rest of the proof follows the same argument as
for the proof of Proposition~\ref{T:TKFPerm}(\ref{T:TKFPerm:DLim}).
\end{proof}

One can see from Example~\ref{E:4Gp} that hereditary
saturation does not pass to subgroups,
since the nontrivial subgroups in that case
act via inner actions.
Example~\ref{E:939} shows
(see Remark 9.3.10 of~\cite{Ph1})
that saturation does not even pass to subgroups of cyclic groups.

\begin{pbm}\label{P:SatAndSubgp}
Which actions of finite groups have the property that their
restrictions to all subgroups are hereditarily saturated?
Are such actions necessarily strongly pointwise outer?
\end{pbm}

As far as we know, nobody has looked at this.
Proposition~\ref{T:HSatImpOut} might be taken as evidence
in favor of the second part.

Lemma~\ref{L:ZpTens} and Proposition~\ref{T:HSatImpOut}
imply a very special case of hereditary saturation of
the tensor product of a hereditarily saturated action
and an arbitrary action.
The general result, however, is false.

\begin{exa}\label{E:UnsatTP}
Adopt the notation of Example~\ref{E:4Gp}.
It follows from Corollary~\ref{T:StrConnesAndSimp}
that the action $\af$ is hereditarily saturated.
Let $B = \C^2,$
and define $\bt \colon G \to \Aut (B)$
by $\bt_{g_1} = \id_B$ and
$\bt_{g_2} (\ld_1, \ld_2) = (\ld_2, \ld_1)$
for $\ld_1, \ld_2 \in \C.$
Then $A \otimes B \cong M_2 \oplus M_2,$
and $\af_{g_2} \otimes \bt_{g_2}$ interchanges the summands,
so $\af \otimes \bt$ is minimal.
However, $C^* (G, \, A \otimes B, \, \af \otimes \bt)$
has vector space dimension
$\card (G) \cdot \dim (A \otimes B) = 32,$
and there is no simple \ca\  of this dimension.
Thus $C^* (G, \, A \otimes B, \, \af \otimes \bt)$ is not simple,
so $\af \otimes \bt$ is not hereditarily saturated,
by Corollary~\ref{T:StrConnesAndSimp}.
\end{exa}

\end{document}